\newtheorem{theorem}{Theorem}[section]%
\newtheorem{lemma}[theorem]{Lemma}%
\newtheorem{proposition}[theorem]{Proposition}%
\newtheorem{corollary}[theorem]{Corollary}%
\newtheorem{definition}[theorem]{Definition}%
\newtheorem{remark}[theorem]{Remark}%
\newenvironment{proofP}[1]{\par
  \pushQED{{\small\it Proposition {\rm #1}} {\leavevmode
  \hbox to.77778em{%
  \hfil\vrule
  \vbox to.675em{\hrule width.6em\vfil\hrule}%
  \vrule\hfil}}}%
  \normalfont \topsep6\p@\@plus6\p@\relax
  {\it \textbf{\proofname { of Proposition {\rm #1}\@addpunct{.}}}\newline\rm}
}{%
   \begin{flushright}\popQED\end{flushright}\@endpefalse
}
\newenvironment{proofT}[1]{\par
  \pushQED{{\small\it Theorem {\rm #1}} {\leavevmode
  \hbox to.77778em{%
  \hfil\vrule
  \vbox to.675em{\hrule width.6em\vfil\hrule}%
  \vrule\hfil}}}%
  \normalfont \topsep6\p@\@plus6\p@\relax
  {\it \textbf{\proofname { of Theorem {\rm #1}\@addpunct{.}}}\newline\rm}
}{%
   \begin{flushright}\popQED\end{flushright}\@endpefalse
}
\newenvironment{proofL}[1]{\par
  \pushQED{{\small\it Lemma {\rm #1}} {\leavevmode
  \hbox to.77778em{%
  \hfil\vrule
  \vbox to.675em{\hrule width.6em\vfil\hrule}%
  \vrule\hfil}}}%
  \normalfont \topsep6\p@\@plus6\p@\relax
  {\it \textbf{\proofname { of Lemma {\rm #1}\@addpunct{.}}}\newline\rm}
}{%
   \begin{flushright}\popQED\end{flushright}\@endpefalse
}
\newenvironment{proofC}[1]{\par
  \pushQED{{\small\it Corollary {\rm #1}} {\leavevmode
  \hbox to.77778em{%
  \hfil\vrule
  \vbox to.675em{\hrule width.6em\vfil\hrule}%
  \vrule\hfil}}}%
  \normalfont \topsep6\p@\@plus6\p@\relax
  {\it \textbf{\proofname { of Corollary {\rm #1}\@addpunct{.}}}\newline\rm}
}{%
   \begin{flushright}\popQED\end{flushright}\@endpefalse
}
\newenvironment{ma}{\begin{array}{>{\displaystyle}r >{\displaystyle}c >{\displaystyle}l}}{\end{array}}%
\newcommand{\aleq}[1]{\prec}
\newcommand{\ageq}[1]{\succ}
\newcommand{\aeq}[1]{\approx}
\newcommand{\Cc}{{\mathbb C}}
\newcommand{\N}{{\mathbb N}}
\newcommand{\R}{{\mathbb R}}
\newcommand{\Z}{{\mathbb Z}}
\renewcommand{\S}{{\mathbb S}}
\newcommand{\im}{ \mathbf{i}}
\newcommand{\Rz}{{\mathcal{R}}} 
\newcommand{\Sw}{{\mathcal{S}}}
\newcommand{\Sws}{{\Sw^{'}}}
\newcommand{\Hf}{{H^{\frac{n}{2}}}}
\newcommand{\diam}{\operatorname{diam}}%
\newcommand{\supp}{\operatorname{supp}}%
\newcommand{\dist}{\operatorname{dist}}%
\newcommand{\conv}[1]{\operatorname{conv}\left ( #1 \right )}%
\newcommand{\ddtz}{\ensuremath{{\left. \frac{d}{dt} \right \vert_{t=0}}}}%
\newcommand{\lap}{\Delta}
\newcommand{\laps}[1]{\lap^{\frac{#1}{2}}}
\newcommand{\lapms}[1]{\lap^{-\frac{#1}{2}}}
\newcommand{\lapn}{\lap^{\frac{n}{4}}}
\newcommand{\lapmn}{\lap^{-\frac{n}{4}}}
\newcommand{\abs}[1]{{\left \vert #1 \right \vert}}%
\newcommand{\Babs}[1]{{\Big \vert #1 \Big \vert}}%
\newcommand{\brac}[1]{{\left ( #1 \right )}}%
\newcommand{\ebrac}[1]{{\left [ #1 \right ]}}%
\newcommand{\nl}{${}$\\}
\newcommand{\ontop}[2]{{\genfrac{}{}{0pt}{}{#1}{#2}}}
\newcommand{\mvintl}{\mvint \limits}
\newcommand{\intl}{\int \limits}
\newcommand{\suml}{\sum \limits}
\newcommand{\mvint}{\fint}
\def\XXint#1#2#3{{\setbox0=\hbox{$#1{#2#3}{\int}$}%
     \vcenter{\hbox{$#2#3$}}\kern-.5\wd0}}%
\newcommand{\sref}[2]{#1.\ref{#2}}
\numberwithin{equation}{section}%
\title{Regularity of $\frac{n}{2}$-harmonic maps into spheres}
\author{Armin Schikorra}
\begin{document}
\maketitle
\begin{abstract}
\noindent We prove H\"older continuity for $\frac{n}{2}$-harmonic maps from subsets of $\R^n$ into a sphere. This extends a recent one-dimensional result by F. Da Lio and T. Rivi\`{e}re to arbitrary dimensions. The proof relies on compensation effects which we quantify adapting an approach for Wente's inequality by L. Tartar, instead of Besov-space arguments which were used in the one-dimensional case. Moreover, fractional analogues of Hodge decomposition and higher order Poincar\'e inequalities as well as several localization effects for nonlocal operators similar to the fractional laplacian are developed and applied.
\\[1ex]
{\bf Keywords:} Harmonic maps, nonlinear elliptic PDE, regularity of solutions.\\
{\bf AMS Classification:} 58E20, 35B65, 35J60, 35S05.
\end{abstract}
\tableofcontents
\thispagestyle{empty}
\section{Introduction}
In his seminal work \cite{Hel90} F. H{\'e}lein proved regularity for harmonic maps from the two-dimensional unit disk $B_1(0) \subset \R^2$ into the $m$-dimensional sphere $\S^{m-1} \subset \R^m$ for arbitrary $m \in \N$. These maps are critical points of the functional
\[
 E_2(u) := \intl_{B_1(0) \subset \R^2} \abs{\nabla u}^2, \qquad \mbox{where } u \in W^{1,2}(B_1(0),\S^{m-1}).
\]
The importance of this result is the fact that harmonic maps in two dimensions are special cases of critical points of conformally invariant variational functionals, which play an important role in physics and geometry and have been studied for a long time: H\'elein's approach is based on the discovery of a compensation phenomenon appearing in the Euler-Lagrange equations of $E_2$, using a relation between $\operatorname{div}$-$\operatorname{curl}$ expressions and the Hardy space. This kind of relation had been discovered shortly before in the special case of determinants by S. M\"uller \cite{Mul90} and was generalized by R. Coifman, P.L. Lions, Y. Meyer and S. Semmes \cite{CLMS93}. H\'{e}lein extended his result to the case where the sphere $\S^{m-1}$ is replaced by a general target manifold developing the so-called moving-frame technique which is used in order to enforce the compensation phenomenon in the Euler-Lagrange equations \cite{Hel91}. Finally, T. Rivi\`{e}re \cite{Riv06} was able to prove regularity for critical points of general conformally invariant functionals, thus solving a conjecture by S. Hildebrandt \cite{Hil82}. He used an ingenious approach based on K. Uhlenbeck's results in gauge theory \cite{Uhl82} in order to implement $\operatorname{div}$-$\operatorname{curl}$ expressions in the Euler-Lagrange equations, a technique which can be reinterpreted as an extension of H\'elein's moving frame method; see \cite{IchEnergie}.
For more details and references we refer to H\'{e}lein's book \cite{Hel02} and the extensive introduction in \cite{Riv06} as well as \cite{RivVanc09}.\\
Naturally, it is interesting to see how these results extend to other dimensions: In the four-dimensional case, regularity can be proven for critical points of the following functional, the so-called extrinsic biharmonic maps:
\[
 E_4(u) := \intl_{B_1(0) \subset \R^4} \abs{\lap u}^2, \qquad \mbox{where } u \in W^{2,2}(B_1(0),\R^m).
\]
This was done by A. Chang, L. Wang, and P. Yang \cite{CWY99} in the case of a sphere as the target manifold, and for more general targets by P. Strzelecki \cite{Sbi03}, C. Wang \cite{Wang04} and C. Scheven \cite{Scheven08}; see also T. Lamm and T. Rivi\`{e}re's paper \cite{LR08}. More generally, for all even $n \geq 6$ similar regularity results hold, and we refer to the work of A. Gastel and C. Scheven \cite{GS09} as well as the article of P. Goldstein, P. Strzelecki and A. Zatorska-Goldstein \cite{GSZG09}.\\
In odd dimensions non-local operators appear, and only two results for dimension $n=1$ are available. In \cite{DR09Sphere}, F. Da Lio and T. Rivi\`{e}re prove H\"older continuity for critical points of the functional
\[
E_1(u) = \intl_{\R^1} \abs{\lap^{\frac{1}{4}} u}^2,\qquad \mbox{defined on distributions $u$ with finite energy and $u \in \S^{m-1}$ a.e.} 
\] 
In \cite{DR209} this is extended to the setting of general target manifolds.\\
\\
In general, we consider for $n, m \in \N$ and some domain $D \subset \R^n$ the regularity of critical points on $D$ of the functional
\begin{equation}\label{eq:energy}
E_n(v) = \intl_{\R^n} \abs{\lapn v}^2,\qquad v \in H^{\frac{n}{2}}(\R^n,\R^m),\ v \in \S^{m-1} \mbox{ a.e. in $D$.}
\end{equation}
Here, $\lapn$ denotes the operator which acts on functions $v \in L^2(\R^n)$ according to
\[
\brac{\lapn v}^\wedge(\xi) = \abs{\xi}^{\frac{n}{2}}\ v^\wedge(\xi) \qquad \mbox{for almost every $\xi \in \R^n$,}
\] 
where $()^\wedge$ denotes the application of the Fourier transform. The space $H^{\frac{n}{2}}(\R^n)$ is the space of all functions $v \in L^2(\R^n)$ such that $\lapn v \in L^2(\R^n)$. The term ``critical point'' is defined as usual:
\begin{definition}[Critical Point]\label{def:critpt}
Let $u \in \Hf(\R^n,\R^m)$, $D \subset \R^n$. We say that $u$ is a critical point of $E_n(\cdot)$ on $D$ if $u(x) \in \S^{m-1}$ for almost every $x \in D$ and
\[
 \ddtz E(u_{t,\varphi}) = 0
\]
for any $\varphi \in C_0^\infty(D,\R^m)$ where $u_{t,\varphi} \in \Hf(\R^n)$ is defined as
\[
u_{t,\varphi} = \begin{cases}
               \Pi (u+t\varphi)\qquad &\mbox{in $D$,}\\
		u\qquad &\mbox{in $\R^n \backslash D$.}\\
               \end{cases}
\]
Here, $\Pi$ denotes the orthogonal projection from a tubular neighborhood of $\S^{m-1}$ into $\S^{m-1}$ defined as $\Pi(x) = \frac{x}{\abs{x}}$.
\end{definition}
If $n$ is an even number, the domain of $E_n(\cdot)$ is just the classic Sobolev space $H^{\frac{n}{2}}(\R^n) \equiv W^{\frac{n}{2},2}(\R^n)$, for odd dimensions this is a fractional Sobolev space (see Section \ref{sec:fracsobspace}). Functions in $H^{\frac{n}{2}}(\R^n)$ can contain logarithmic singularities (cf. \cite{Frehse73}) but this space embeds continuously into $BMO(\R^n)$, and even only slightly improved integrability or more differentiability would imply continuity.\\
In the light of the existing results in even dimensions and in the one-dimensional case, one may expect that similar regularity results should hold for any dimension. As a first step in that direction, we establish regularity of $n/2$-harmonic maps into the sphere.
\begin{theorem}\label{th:regul}
For any $n \geq 1$, critical points $u \in \Hf(\R^2)$ of $E_n$ on a domain $D$ are locally H\"older continuous in $D$.
\end{theorem}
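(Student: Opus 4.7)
The plan is to follow the general strategy developed by H\'elein, Rivi\`ere, and Da Lio--Rivi\`ere, transplanted to the fractional setting in arbitrary dimension. First I would derive the Euler--Lagrange equation from Definition \ref{def:critpt}. Since $\Pi(x) = x/\abs{x}$, the admissible first-order variations are $\varphi - (u\cdot \varphi) u$, i.e.\ tangent to $\S^{m-1}$. Coupled with the constraint $\abs{u}=1$ a.e.\ in $D$ (which implies $u\cdot \laps{n/4} u$-type orthogonality after symmetrisation), this should produce a weak equation that, after integration by parts against the test function via the $\laps{n/4}$ pairing, takes the schematic form
\[
 \laps{n/2} u^i \; = \; \sum_j \Omega^{ij} u^j \quad \mbox{in } D,
\]
where $\Omega^{ij}$ is a nonlocal \emph{antisymmetric} object built from $\laps{n/4} u$ and $u$. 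This hidden antisymmetry is the analogue of the Coulomb gauge / antisymmetric potential used by Rivi\`ere \cite{Riv06}.

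Next I would quantify the compensation phenomenon. A generic product of two $H^{n/2}$ functions lies only in $L^2$, so the right-hand side above is \emph{a priori} no better than $\laps{n/2} u$ itself. However, antisymmetric combinations such as three-term commutators $a \laps{n/4} b - b\laps{n/4} a$ enjoy a Wente-type gain. Adapting L.~Tartar's proof of Wente's inequality to the fractional setting (rather than relying on the Besov-space duality of \cite{DR09Sphere}), I would prove a quantitative estimate of the form
\[
 \Babs{\intl_{\R^n} \laps{n/4} \varphi \; \brac{a\laps{n/4} b - b\laps{n/4} a}} \; \leq \; \constant \, \|\laps{n/4} a\|_{L^2} \|\laps{n/4} b\|_{L^2} \|\laps{n/4} \varphi\|_{L^{2,\infty}},
\]
or an analogous Lorentz/Morrey improvement. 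Combined with duality $L^{2,1}$--$L^{2,\infty}$, this converts the $L^2$ right-hand side into a bilinear form that picks up a tiny decay factor on small balls.

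The core of the proof is then a Morrey decay iteration on balls $B_r(x_0) \subset D$. The main obstacle here is the nonlocal character of $\laps{n/2}$: localising the equation by multiplying $u$ with a cutoff $\eta_r$ introduces commutator errors $\laps{n/4}(\eta_r u) - \eta_r \laps{n/4} u$ as well as a genuine tail contribution coming from $\R^n \setminus B_r$. To control these I would rely on the fractional Hodge decomposition, fractional Poincar\'e inequalities, and the localisation lemmas for operators similar to $\laps{n/2}$ that the abstract announces. Feeding the localised Euler--Lagrange equation into the compensation estimate should yield a decay inequality
\[
 \intl_{B_r(x_0)} \abs{\laps{n/4} u}^2 \; \leq \; \theta \intl_{B_{2r}(x_0)} \abs{\laps{n/4} u}^2 \; + \; \mbox{(tail)},
\]
with $\theta < 1$ once the local energy is small, which is available by absolute continuity of the $H^{n/2}$-energy.

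Iterating this inequality in the standard way produces a Morrey growth estimate $\intl_{B_r(x_0)} \abs{\laps{n/4} u}^2 \leq \constant \, r^{2\alpha}$ for some $\alpha \in (0,1)$. Converting this Morrey control of $\laps{n/4} u$ into H\"older continuity of $u$ itself is a Dirichlet-growth / Campanato argument adapted to the fractional Laplacian, and completes the proof. The two steps I expect to be genuinely hard are: (a) rewriting the Euler--Lagrange equation in a form where the antisymmetric structure is manifest (the nonlocality obstructs the simple wedge-product manipulation used for $n=2$); and (b) absorbing the nonlocal tail terms arising in the localisation into the decay iteration without spoiling the smallness of $\theta$.
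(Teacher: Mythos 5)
Your plan reproduces the broad architecture of the paper's proof -- a Tartar-style compensation estimate for a three-term commutator, fractional Hodge decomposition and Poincar\'e inequalities, careful control of the nonlocal tails of a cut-off, and a Morrey decay iteration feeding a Dirichlet--Campanato growth theorem -- so the overall strategy is sound. However, two points deserve comment, and one is a genuine gap.

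First, on the handling of antisymmetry: you propose to rewrite the Euler--Lagrange equation as $\laps{n/2}u^i = \Omega^{ij}u^j$ with a nonlocal antisymmetric $\Omega$, in the spirit of Rivi\`ere's gauge-theoretic form, and you correctly flag this step as hard. In fact the paper \emph{avoids} this reformulation precisely because the Leibniz rule fails for $\lapn$: the manipulation $\lap u^i = \sum_j(u^i\nabla u^j - u^j\nabla u^i)\cdot\nabla u^j$ does not transfer, the error being exactly the three-term commutator $H(u^i,u^j) = \lapn(u^iu^j) - u^i\lapn u^j - u^j\lapn u^i$. The paper's route is instead to test the weak Euler--Lagrange equation directly against antisymmetric $\psi_{ij}u^i$ (equation \eqref{eq:eleqn}), which controls the tangential directions, and to couple this with the separate \emph{structure equation} \eqref{eq:structure} $w\cdot\lapn w = -\tfrac{1}{2}H(w,w) + \tfrac{1}{2}\lapn\eta^2$, which controls the normal direction; the fractional Hodge decomposition (Theorem \ref{th:hodge}) then converts the resulting dual bounds into $L^2$ control of $\lapn w$ on small balls. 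No Coulomb gauge is ever constructed, and for the sphere none is needed. Your proposal would be fighting the nonlocality rather than going around it.

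Second, and this is the real gap: you do not anticipate the higher-order polynomial machinery that the passage from $n=1$ to $n\geq 2$ forces. When localising $\laps{n/4}$ or $\laps{n/2}$ with a cutoff $\eta_r$, the mean-value Poincar\'e inequality requires subtracting not a constant (as in \cite{DR09Sphere}) but the unique polynomial $P$ of degree $\lceil n/2\rceil - 1$ matching the mean values of $\partial^\alpha v$ on the ball (Section~\ref{sec:poincmv}). One then has to compare such polynomials on nested annuli (Propositions~\ref{pr:etarkpbmpkest}, \ref{pr:mvestbrak} and Lemma~\ref{la:mvestbrakShrpr}) and, crucially, control $\lapn(P\varphi) - P\lapn\varphi$ via a fractional Leibniz rule for polynomials (Proposition~\ref{pr:lapsmonprod2}) -- a nontrivial commutator estimate that has no analogue when $P$ is a constant. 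Without this machinery the localisation step of your decay iteration cannot close: the tail contributions produce, after cutting off, a term $\lapn(\eta_r(v - \text{const}))$ that is not small for $n\geq 3$, because constants are not enough to kill the low-order modes of $v$ on $B_r$. This is where most of the technical novelty of the proof lies, and a proposal that does not anticipate it is not yet a proof.
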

Note that here -- in contrast to \cite{DR09Sphere} -- we work on general domains $D \subseteq \R^n$. This is motivated by the facts that H\"older continuity is a local property and that $\lapn$ (though it is a non-local operator) still behaves ``pseudo-local'': We impose our conditions (here: being a critical point and mapping into the sphere) only in some domain $D \subset \R^n$, and still get interior regularity within $D$.\\
Let us comment on the strategy of the proof. As said before, in all even dimensions the key tool for proving regularity is the discovery of \emph{compensation phenomena} built into the respective Euler-Lagrange equation. For example, critical points $u \in W^{1,2}(D,\S^{m-1})$ of $E_2$ satisfy the following Euler-Lagrange equation \cite{Hel90}
\begin{equation}\label{eq:ELhel}
\lap u^i = u^i \abs{\nabla u}^2, \qquad \mbox{weakly in $D$, \quad for all $i =1\ldots m$.}
\end{equation}
For mappings $u \in W^{1,2}(\R^2,\S^{m-1})$ this is a critical equation, as the right-hand side seems to lie only in $L^1$: If we had no additional information, it would seem as if the equation admitted a logarithmic singularity (for examples see, e.g., \cite{Riv06}, \cite{Frehse73}). But, using the constraint $\abs{u} \equiv 1$, one can rewrite the right-hand side of \eqref{eq:ELhel} as
\[
 u^i \abs{\nabla u}^2
= \sum_{j=1}^m \brac{u^i \nabla u^j - u^j \nabla u^i}\cdot \nabla u^j
= \sum_{j=1}^m \brac{\partial_1 B_{ij}\ \partial_2 u^j - \partial_2 B_{ij}\ \partial_1 u^j}
\]
where the $B_{ij}$ are chosen such that $\partial_1 B_{ij} = u^i \partial_2 u^j - u^j \partial_2 u^i$, and $-\partial_2 B_{ij} = u^i \partial_1 u^j - u^j \partial_1 u^i$, a choice which is possible due to Poincar\'e's Lemma and because \eqref{eq:ELhel} implies $\operatorname{div} \brac{u^i \nabla u^j - u^j \nabla u^i} = 0$ for every $i,j = 1\ldots m$. Thus, \eqref{eq:ELhel} transforms into
\begin{equation}\label{eq:ELhelTr}
 \lap u^i = \sum_{j=1}^m \brac{\partial_1 B_{ij}\ \partial_2 u^j - \partial_2 B_{ij}\ \partial_1 u^j},
\end{equation}
a form whose right-hand side exhibits a compensation phenomenon which in a similar way already appeared in the so-called Wente inequality \cite{Wente69}, see also \cite{BC84}, \cite{Tartar85}. In fact, the right-hand side belongs to the Hardy space (cf. \cite{Mul90}, \cite{CLMS93}) which is a proper subspace of $L^1$ with enhanced potential theoretic properties. Namely, members of the Hardy space behave well with Calder\'on-Zygmund operators, and by this one can conclude continuity of $u$.\\
An alternative and for our purpose more viable way to describe this can be found in L. Tartar's proof \cite{Tartar85} of Wente's inequality: Assume we have for $a,b \in L^2(\R^2)$ a solution $w \in H^1(\R^2)$ of
\begin{equation}\label{eq:wentepde}
\lap w = \partial_1 a\ \partial_2 b -  \partial_2 a\ \partial_1 b \qquad \mbox{weakly in $\R^2$}.
\end{equation}
Taking the Fourier-Transform on both sides, this is (formally) equivalent to
\begin{equation}\label{eq:tartarfourierpde}
\abs{\xi}^2 w^\wedge(\xi) = c \intl_{\R^2} a^\wedge(x)\ b^\wedge(\xi-x) \left ( x_1 (\xi_2 - x_2) - x_2 (\xi_1 - x_1)  \right )\ dx,\ \qquad \mbox{for $\xi \in \R^2$}.
\end{equation}
Now the compensation phenomena responsible for the higher regularity of $w$ can be identified with the following inequality:
\begin{equation}\label{eq:tartarcompensation}
\abs{x_1 (\xi_2 - x_2) - x_2 (\xi_1 - x_1)} \leq \abs{\xi}\abs{x}^{\frac{1}{2}}\abs{\xi-x}^\frac{1}{2}.
\end{equation}
Observe, that $\abs{x}$ as well as $\abs{\xi - x}$ appear to the power $1/2$, only. Interpreting these factors as Fourier multipliers, this means that only ``half of the gradient'', more precisely $\lap^{\frac{1}{4}}$, of $a$ and $b$ enters the equation, which implies that the right-hand side is a ``product of lower order'' operators. In fact, plugging \eqref{eq:tartarcompensation} into \eqref{eq:tartarfourierpde}, one can conclude $w^\wedge \in L^1(\R^2)$  just by H\"older's and Young's inequality on Lorentz spaces -- consequently one has proven continuity of $w$, because the inverse Fourier transform maps $L^1$ into $C^0$. As explained earlier, \eqref{eq:ELhel} can be rewritten as \eqref{eq:ELhelTr} which has the form of \eqref{eq:wentepde}, thus we have continuity for critical points of $E_2$, and by a bootstraping argument (see \cite{Tomi69}) one gets analyticity of these points.\\
\\
As in Theorem~\ref{th:regul} we prove only interior regularity, it is natural to work with localized Euler-Lagrange equations which look as follows, see Section \ref{sec:eleqn}:
\begin{lemma}[Euler-Lagrange Equations]
Let $u \in \Hf(\R^n)$ be a critical point of $E_n$ on a domain $D \subset \R^n$. Then, for any cutoff function $\eta \in C_0^\infty(D)$, $\eta \equiv 1$ on an open neighborhood of a ball $\tilde{D} \subset D$ and $w := \eta u$, we have
\begin{equation}\label{eq:eleqn}
 -\intl_{\R^n} w^i\ \lapn w^j\ \lapn \psi_{ij} = \intl_{\R^n} \lapn w^j\ H(w^i,\psi_{ij}) - \intl_{\R^n} a_{ij} \psi_{ij} , \quad \mbox{for any $\psi_{ij} = -\psi_{ji} \in C_0^\infty(\tilde{D})$},
\end{equation}
where $a_{ij} \in L^2(\R^n)$, $i,j = 1,\ldots,m$, depend on the choice of $\eta$. Here, we adopt Einstein's summation convention. Moreover, $H(\cdot,\cdot)$ is defined on $H^{\frac{n}{2}}(\R^n) \times H^{\frac{n}{2}}(\R^n)$ as 
\begin{equation}\label{eq:defH}
H(a,b) := \lapn (a b) - a \lapn b - b \lapn a, \qquad \mbox{for $a,b \in \Hf(\R^n)$}.
\end{equation}
Furthermore, $u \in \S^{m-1}$ on $D$ implies the following structure equation
\begin{equation}\label{eq:structure}
w^i \cdot \lapn w^i = -\frac{1}{2} H(w^i,w^i) + \frac{1}{2}\lapn \eta^2 \qquad \mbox{a.e. in $\R^n$.}
\end{equation}
\end{lemma}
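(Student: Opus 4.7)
The Euler--Lagrange identity \eqref{eq:eleqn} would come from testing criticality against the tangential variation $\varphi^i := \psi_{ij} u^j$ (summed over $j$), which is admissible because antisymmetry of $\psi$ gives $\varphi\cdot u = 0$ pointwise on $D$, whence $d\Pi_u[\varphi] = \varphi$ and $\ddtz u_{t,\varphi} = \varphi\,\chi_D$. Assuming the test class has been extended from $C_0^\infty$ to $\Hf \cap L^\infty$ (routine once $E_n$ is shown to be $C^1$ on $\Hf$), one obtains $0 = \int_{\R^n}\lapn u^i\,\lapn(\psi_{ij} w^j)$, where the replacement $\psi_{ij} u^j = \psi_{ij} w^j$ on all of $\R^n$ is legitimate since $\supp\psi_{ij}\subset\tilde D\subset\{\eta\equiv 1\}$.

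The core computation expands $\lapn(\psi_{ij}w^j) = \psi_{ij}\lapn w^j + w^j\lapn\psi_{ij} + H(\psi_{ij},w^j)$ and splits $\lapn u^i = \lapn w^i + \lapn v^i$ with $v := (1-\eta)u$. The term $\int \lapn w^i\,\psi_{ij}\,\lapn w^j$ vanishes because its integrand is symmetric in $i,j$ while $\psi_{ij}$ is antisymmetric; the two remaining $w$-only pieces, after relabeling $i\leftrightarrow j$ in the antisymmetric sum and using the symmetry of $H$, give precisely the LHS $-\int w^i\lapn w^j\lapn\psi_{ij}$ and the first RHS term $\int\lapn w^j\,H(w^i,\psi_{ij})$ of \eqref{eq:eleqn}.

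The three remaining $v$-terms recombine, by linearity of $\lapn$, into $\int\lapn v^i\,\lapn(\psi_{ij}w^j) = \int(\laps{n}v^i)\,\psi_{ij}w^j$ via self-adjointness. The decisive point is that $v=(1-\eta)u$ vanishes on the open neighborhood $\{\eta\equiv 1\}$ of $\supp\psi_{ij}$, and the singular-integral kernel of $\laps{n}$ is real-analytic off the diagonal; hence $\laps{n}v^i$ is smooth on $\supp\psi_{ij}$, and multiplying by $w^j\in L^2$ and a fixed cutoff slightly larger than $\supp\psi_{ij}$ yields a function $a_{ij}\in L^2(\R^n)$ depending only on $\eta$ (and $u$), so that this remainder equals $-\int a_{ij}\psi_{ij}$. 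The structure equation \eqref{eq:structure} is then immediate: because $\supp\eta\subset D$ and $|u|=1$ on $D$, one has $w^iw^i = \eta^2 u^i u^i = \eta^2$ globally, and applying $\lapn$ together with the defining identity $\lapn(ab) = a\lapn b + b\lapn a + H(a,b)$ componentwise yields $2w^i\lapn w^i + H(w^i,w^i) = \lapn\eta^2$.

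The main obstacle is quantifying the pseudo-local smoothing claim that converts the nonlocal remainder into a genuine $L^2$-pairing with $\psi$: one must justify that the positive distance between $\supp v$ and $\supp\psi$, secured by requiring $\eta\equiv 1$ on an open neighborhood of $\tilde D$, upgrades $\laps{n}v$ from a mere $H^{-n/2}$ distribution to a pointwise smooth (in fact real-analytic) function on $\supp\psi$, with explicit bounds depending only on $\eta$ and $\|u\|_{\Hf}$. A secondary bookkeeping issue is the extension of admissible test directions from $C_0^\infty$ to $\Hf\cap L^\infty$, which is handled by density together with the $C^1$-regularity of $E_n$ on $\Hf$.
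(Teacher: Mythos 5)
Your proposal is correct and follows essentially the same route as the paper's own proof: test criticality against the tangential variation $\psi_{ij}u^j$, use $\psi_{ij}u^j = \psi_{ij}w^j$ on $\tilde D$, expand $\lapn(\psi_{ij}w^j)$ through the defining identity for $H$, kill the diagonal term by the antisymmetry of $\psi$, and treat the contribution of $(1-\eta)u$ via pseudo-locality; the structure equation is likewise obtained from $|w|^2\equiv\eta^2$ and the definition of $H$. The only real difference is bookkeeping on the remainder term: you pass one $\lapn$ over to land on $\laps{n}v^i$ and argue it is a smooth function on $\supp\psi$, whereas the paper avoids realizing $\laps{n}v$ pointwise and instead shows directly (Lemma~\ref{la:bs:localizing}, via the disjoint-support estimates of Corollary~\ref{co:bs:disjsupp}) that $\varphi\mapsto\int\lapn(w^j-u^j)\,\lapn\varphi$ extends to a bounded functional on $L^2(\tilde D)$, from which $\tilde a_j\in L^2$ and then $a_{ij}:=\tilde a_j w^i$ follow by Riesz representation and $w\in L^\infty$. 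This is precisely the rigorous form of the ``pseudo-local smoothing'' obstacle you flag at the end; your sketch identifies the right mechanism, and the paper's lemma is the clean way to discharge it without having to make sense of $\laps{n}v$ as a pointwise function on an $H^{n/2}$ datum.
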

Similar in its spirit to \cite{DR09Sphere} we use that \eqref{eq:eleqn} and \eqref{eq:structure} together control the full growth of $\lapn w$, though here we use a different argument applying an analogue of Hodge decomposition to show this, see below. Note moreover that as we have localized our Euler-Lagrange equation, we do not need further rewriting of the structure condition \eqref{eq:structure} as was done in \cite{DR09Sphere}.\\
Whereas in \eqref{eq:wentepde} the compensation phenomenon stems from the structure of the right-hand side, here it comes from the leading order term $H(\cdot,\cdot)$ appearing in \eqref{eq:eleqn} and \eqref{eq:structure}. This can be proved by Tartar's approach \cite{Tartar85}, using essentially only the following elementary ``compensation inequality'' similar in its spirit to \eqref{eq:tartarcompensation} 
\begin{equation}\label{eq:ourcompensation}
\abs{\abs{x-\xi}^p - \abs{\xi}^p - \abs{x}^p} \leq C_p \begin{cases} 
                                                    \abs{x}^{p-1} \abs{\xi} + \abs{\xi}^{p-1} \abs{x}, \qquad &\mbox{if $p > 1$,}\\
						    \abs{x}^{\frac{p}{2}} \abs{\xi}^{\frac{p}{2}}, \qquad &\mbox{if $p \in (0,1]$.}\\
                                                   \end{cases} 
\end{equation}
More precisely, we will prove in Section~\ref{sec:tart}
\begin{theorem}\label{th:compensation}
For $H$ as in \eqref{eq:defH} and $u,v \in \Hf(\R^n)$ one has
\[
 \Vert H(u,v) \Vert_{L^2(\R^n)} \leq C\ \Vert \brac{\lapn u}^\wedge \Vert_{L^2(\R^n)}\ \Vert \brac{ \lapn v}^\wedge \Vert_{L^{2,\infty}(\R^n)}.
\]
\end{theorem}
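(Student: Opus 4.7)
The plan is to pass to Fourier variables and exploit the compensation inequality \eqref{eq:ourcompensation} in order to reduce the estimate to standard Hölder and Young inequalities on Lorentz spaces. By Plancherel and the definition \eqref{eq:defH}, a direct computation gives
\begin{equation*}
H(u,v)^\wedge(\xi) = c_n \int_{\R^n} u^\wedge(x)\, v^\wedge(\xi - x)\, K(x, \xi)\, dx,
\end{equation*}
where $K(x, \xi) := |\xi|^{n/2} - |\xi - x|^{n/2} - |x|^{n/2}$. Writing $U := (\lapn u)^\wedge$ and $V := (\lapn v)^\wedge$, we have $u^\wedge(x) = U(x)/|x|^{n/2}$ and $v^\wedge(x) = V(x)/|x|^{n/2}$, so that the task is to control $\|H(u,v)^\wedge\|_{L^2}$ in terms of $\|U\|_{L^2}$ and $\|V\|_{L^{2,\infty}}$.

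Next I would apply \eqref{eq:ourcompensation} with $p = n/2$. For $n \in \{1,2\}$ (the $p \leq 1$ branch) the kernel satisfies $|K(x, \xi)| \lesssim |x|^{n/4}|\xi - x|^{n/4}$, while for $n \geq 3$ (the $p > 1$ branch) it satisfies $|K(x, \xi)| \lesssim |x|^{n/2 - 1}|\xi - x| + |\xi - x|^{n/2 - 1}|x|$. In each case the kernel consumes exactly $n/2$ combined powers of $|x|$ and $|\xi - x|$, so after substituting the expressions for $u^\wedge$ and $v^\wedge$ one arrives at a pointwise bound
\begin{equation*}
|H(u,v)^\wedge(\xi)| \lesssim (F * G)(\xi),
\end{equation*}
(a sum of two such convolutions, for $n \geq 3$) with $F(x) = |U(x)|/|x|^{\alpha}$ and $G(x) = |V(x)|/|x|^{\beta}$, where $\alpha + \beta = n/2$; one has $\alpha = \beta = n/4$ for $n \leq 2$ and $(\alpha, \beta) \in \{(1, n/2-1), (n/2-1, 1)\}$ for $n \geq 3$.

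To close the estimate I would use that $1/|\cdot|^\gamma$ lies in $L^{n/\gamma, \infty}(\R^n)$. Hölder's inequality on Lorentz spaces then gives $F \in L^{p_1, 2}$ and $G \in L^{p_2, \infty}$ with $1/p_1 = 1/2 + \alpha/n$ and $1/p_2 = 1/2 + \beta/n$, so that $1/p_1 + 1/p_2 = 1 + (\alpha + \beta)/n = 3/2$ and $1/q_1 + 1/q_2 = 1/2$. Young's inequality on Lorentz spaces then yields $F * G \in L^{2,2} = L^2$ with norm controlled by $\|U\|_{L^2}\|V\|_{L^{2,\infty}}$, and invoking Plancherel concludes the proof. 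The main technical obstacle will be keeping the Lorentz-space bookkeeping tight enough to land in $L^{2,2} = L^2$ rather than a weaker space; this is precisely what forces the asymmetric placement of $U$ in $L^2$ versus $V$ in $L^{2,\infty}$, via the second-index rule in Young's inequality. The split by dimension in the application of \eqref{eq:ourcompensation} is a minor issue, since both branches produce exponents $(\alpha, \beta)$ summing to $n/2$, which is exactly what the Lorentz exponent arithmetic demands.
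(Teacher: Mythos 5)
Your proposal is correct and follows essentially the same route as the paper's proof (via Lemma~\ref{la:tart:prphuvwedgeest} and Theorem~\ref{th:integrability}): pass to the Fourier side, bound the kernel $|\xi|^{n/2}-|\xi-x|^{n/2}-|x|^{n/2}$ via the compensation inequality \eqref{eq:ourcompensation} (splitting on $n\le 2$ versus $n\ge 3$), and then combine Hölder's inequality in Lorentz spaces (Proposition~\ref{pr:dl:lso}~(i),~(v)) with O'Neil's Young inequality (Proposition~\ref{pr:dl:lso}~(ii)), noting that the second Lorentz index of the convolution picks up $\tfrac{1}{2}+\tfrac{1}{\infty}=\tfrac{1}{2}$ so that one lands in $L^{2,2}=L^2$.
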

An equivalent compensation phenomenon was observed in the case $n=1$ in \cite{DR09Sphere}\footnote{In fact, all compensation phenomena appearing in \cite{DR09Sphere} can be proven by our adaption of Tartar's method using simple compensation inequalities, thus avoiding the use of paraproduct arguments (but at the expense of using the theory of Lorentz spaces).}. Note that interpreting again the terms of \eqref{eq:ourcompensation} as Fourier multipliers, it seems as if this equation (and as a consequence Theorem~\ref{th:compensation}) estimates the operator $H(u,v)$ by products of lower order operators applied to $u$ and $v$. Here, by ``products of lower order operators'' we mean products of operators whose differential order is strictly between zero and $\frac{n}{2}$ and where the two operators together give an operator of order $\frac{n}{2}$. In fact, this is exactly what happens in special cases, e.g. if we take the case $n=4$ where $\lapn = \lap$:
\[
 H(u,v) = 2 \nabla u \cdot \nabla u \qquad \mbox{if $n = 4$}.
\]
Another case we will need to control is the case where $u = P$ is a polynomial of degree less than $\frac{n}{2}$. As (at least formally) $\lapn P = 0$ this is to estimate
\[
 \lapn (Pv) - P \lapn v.
\]
This case is not contained in Theorem~\ref{th:compensation} as a non-zero polynomial does not belong to $\Hf(\R^n)$. Obviously, in the one-dimensional case $P$ is only a constant, and thus $H(P,v) \equiv 0$. In higher dimensions, as we will show in Proposition~\ref{pr:lapsmonprod2}, $H(P,v)$ is still a product of lower order operators.\\
As we are going to show in Section~\ref{ss:loolocwell}, products of lower order operators (in the way this term is defined above) ``localize well''. By that we mean that the $L^2$-norm of such a product evaluated on a ball is estimated by the product of $L^2$-norms of $\lapn$ applied to the factors evaluated at a slightly bigger ball, up to harmless error terms. As a consequence, one expects this to hold as well for the term $H(u,v)$, and in fact, we can show the following ``localized version'' of Theorem~\ref{th:compensation}, proven in Section~\ref{sec:localTart}.
\begin{theorem}[Localized Compensation Results]\label{th:localcomp}
There is a uniform constant $\gamma > 0$ depending only on the dimension $n$, such that the following holds. Let $H(\cdot,\cdot)$ be defined as in \eqref{eq:defH}. For any $v \in \Hf(\R^n)$ and $\varepsilon > 0$ there exist constants $R > 0$ and $\Lambda_1 > 0$ such that for any ball $B_r(x) \subset \R^n$, $r \in (0,R)$,
\[
\Vert H(v,\varphi) \Vert_{L^2(B_r(x))} \leq \varepsilon\ \Vert \lapn \varphi \Vert_{L^2(\R^n)} \qquad \mbox{for any $\varphi \in C_0^\infty(B_r(x))$},
\]
and
\[
\Vert H(v,v) \Vert_{L^2(B_r(x))} \leq \varepsilon\ [[v]]_{B_{\Lambda_1 r}(x)} + C_{\varepsilon,v} \sum_{k=-\infty}^\infty 2^{-\gamma \abs{k}} [[v]]_{B_{2^{k+1} r}(x) \backslash B_{2^{k} r}(x)}.
\]
Here, $[[v]]_{A}$ is a pseudo-norm, which in a way measures the $L^2$-norm of $\lapn v$ on $A \subset \R^n$. More precisely, for odd $n$
\[
[[v]]_{A} := \Vert \lapn v \Vert_{L^2(A)} + \brac{\intl_A \intl_A \abs{x-y}^{-n-1}\ \abs{\nabla^{\frac{n-1}{2}}v(x) - \nabla^{\frac{n-1}{2}} v(y)}^2\ dx\ dy}^{\frac{1}{2}},
\]
and for even $n$ we set $[[v]]_{A} := \Vert \lapn v \Vert_{L^2(A)} + \Vert \nabla^{\frac{n}{2}} v \Vert_{L^2(A)}$.
\end{theorem}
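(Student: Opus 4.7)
The plan is to reduce both inequalities to the global compensation estimate of Theorem~\ref{th:compensation} combined with the pseudo-locality of $\lapn$ developed earlier in the paper (in particular the localization lemmas of Section~\ref{ss:loolocwell}). In both parts, I would split $v$ around $B_r(x)$ into a near piece $v_0$ and a far piece $v_\infty$ and treat the bilinear form $H$ accordingly, using Theorem~\ref{th:compensation} on the local-local interactions and pointwise kernel decay of $\lapn$ on the local-far and far-far interactions.

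For the first estimate, fix a smooth cutoff $\eta$ with $\eta \equiv 1$ on $B_{2r}(x)$ and $\supp \eta \subset B_{\Lambda r}(x)$ for some large $\Lambda$, and write $v_0 := \eta v$, $v_\infty := (1-\eta)v$, so $H(v,\varphi) = H(v_0,\varphi) + H(v_\infty,\varphi)$. Applying Theorem~\ref{th:compensation} to $H(v_0,\varphi)$ gives a bound of the form $C\,\|(\lapn v_0)^\wedge\|_{L^{2,\infty}}\,\|(\lapn\varphi)^\wedge\|_{L^{2}}$; by Plancherel the second factor is $\|\lapn\varphi\|_{L^2}$, while the first is controlled by $\|\lapn v_0\|_{L^2}$, which by absolute continuity of the integral $\int |\lapn v|^2$ together with the localization results for products against a cutoff can be made smaller than $\varepsilon$ by choosing $r < R(v,\varepsilon,\Lambda)$. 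For the far contribution, the supports of $v_\infty$ and $\varphi$ are disjoint, so $v_\infty \varphi \equiv 0$, which forces $H(v_\infty,\varphi) = -v_\infty\lapn\varphi - \varphi\lapn v_\infty$; on $B_r(x)$ the first summand vanishes, and the second one I would estimate by the pointwise kernel bound $|\lapn v_\infty(y)| \lesssim \int_{|z-y|\geq r} |v_\infty(z)|\,|z-y|^{-n-n/2}\,dz$ and a fractional Poincar\'e inequality $\|\varphi\|_{L^2(B_r)} \lesssim r^{n/2}\|\lapn\varphi\|_{L^2}$, which together yield a bound of the form $C_v r^{\delta}\,\|\lapn\varphi\|_{L^2}$ for some $\delta > 0$, again absorbable into $\varepsilon$ for $r$ small.

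For the second estimate I would apply the same ideas to both arguments simultaneously by performing a dyadic decomposition $v = \sum_k v_k$ using a smooth radial partition of unity with $\supp v_k \subset B_{2^{k+1}r}(x) \setminus B_{2^{k-1}r}(x)$, and expanding $H(v,v) = \sum_{k,\ell} H(v_k,v_\ell)$. For pairs of indices with both $k,\ell$ bounded (i.e.\ both factors localized near $B_r$), apply Theorem~\ref{th:compensation}, using that $\|(\lapn v_k)^\wedge\|_{L^{2,\infty}}$ is made small by taking $r$ small while $\|\lapn v_\ell\|_{L^2}$ contributes the seminorm $[[v]]_{B_{\Lambda_1 r}(x)}$. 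For pairs in which at least one of $k,\ell$ is large, use the pointwise kernel bound for $\lapn$ applied to functions supported on the $k$-th annulus to produce a factor $2^{-\gamma|k|}$, together with Cauchy--Schwarz and the identity $H(v_k,v_\ell)|_{B_r} = -v_k\lapn v_\ell - v_\ell\lapn v_k$ (which is available when the product $v_k v_\ell$ vanishes in a neighborhood of $B_r(x)$) to get a summand of the form $2^{-\gamma|k|}\,[[v]]_{B_{2^{k+1}r}(x)\setminus B_{2^k r}(x)}$.

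The main obstacle I expect is the bookkeeping of the pseudo-local error terms arising from cutoffs interacting with the nonlocal operator $\lapn$, in particular producing a \emph{summable} decay factor $2^{-\gamma|k|}$ rather than, say, $|k|\,2^{-\gamma|k|}$. This requires that the kernel decay of $\lapn$ on functions supported far from the ball of observation be exploited simultaneously with the algebraic cancellation in $H(\cdot,\cdot)$, and that the approximation of $\lapn v_0$ by the localization operators developed earlier be sharp enough that the resulting remainder fits into the dyadic sum with uniform constants. Once these technical estimates are in place, assembling the global-local and dyadic-tail contributions yields the two bounds as stated.
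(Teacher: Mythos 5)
Your high-level architecture (near/far split via cutoffs, Theorem~\ref{th:compensation} on near pieces, kernel decay of $\lapn$ on far pieces, dyadic annular decomposition for $H(v,v)$) matches the paper's strategy in Lemmas~\ref{la:hvphilocest} and~\ref{la:hwwlocest}. But there is a genuine gap: you never subtract a \emph{mean-value polynomial} before applying the cutoffs, and this step is essential in dimensions $n\geq 2$.

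Concretely, for the first estimate you bound $H(v_0,\varphi)$ with $v_0=\eta v$ via Theorem~\ref{th:compensation}, which produces the factor $\Vert (\lapn v_0)^\wedge\Vert_{L^{2,\infty}}\lesssim \Vert \lapn(\eta v)\Vert_{L^2}$, and you claim this can be made small by absolute continuity of $\int\abs{\lapn v}^2$. That fails: $\lapn$ is nonlocal, so $\Vert \lapn(\eta v)\Vert_{L^2}$ is \emph{not} controlled by $\Vert\lapn v\Vert_{L^2(\supp\eta)}$ or by $[v]_{B_{\Lambda r},\frac{n}{2}}$. Already for $v\equiv c$ constant one has $\lapn v=0$ but $\Vert\lapn(\eta c)\Vert_{L^2}\approx\abs{c}$, independent of $r$, so nothing goes to zero. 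The paper circumvents this by writing $v_\Lambda=\eta_{\Lambda r}(v-P)$ where $P$ is the polynomial of degree $\lceil\frac{n}{2}\rceil-1$ making $v-P$ satisfy the mean-value condition \eqref{eq:meanvalueszero}, so that the Poincar\'e-with-mean-value inequality, Lemma~\ref{la:poincmv}, gives $\Vert\lapn(\eta_{\Lambda r}(v-P))\Vert_{L^2}\lesssim [v]_{B_{4\Lambda r},\frac{n}{2}}$; the polynomial then has to be carried through the other terms, leading to $\widetilde{II}=\lapn(P\varphi)-P\lapn\varphi$ (handled by the fractional product rule for polynomials, Proposition~\ref{pr:lapsmonprod2}, and Proposition~\ref{pr:pvarphiest}) and the far term $\widetilde{III}=\varphi\,\lapn(P+(1-\eta_{\Lambda r})(v-P))$.

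The same issue infects your dyadic treatment of $H(v,v)$: you need $\Vert\eta^k_{r}\,v\Vert_{L^2}$ (or $\Vert\lapn(\eta^k_r v)\Vert_{L^2}$) to be controlled by $(2^k r)^{n/2}\Vert\lapn v\Vert_{L^2}$ or by $[v]_{\tilde A_k,\frac{n}{2}}$ on the annulus, and neither holds without subtracting a polynomial $P_k$ adapted to the annulus (Propositions~\ref{pr:estetarkvmp}, \ref{pr:mvestnachbarn}). Once the polynomials are introduced, one must also control the \emph{differences} $P_{B_r}-P_{A_k}$ between mean-value polynomials on the ball and on the annuli, which produce $(1+\abs{k})$ factors (Proposition~\ref{pr:etarkpbmpkest}); your worry about ending up with $\abs{k}2^{-\gamma\abs{k}}$ instead of $2^{-\gamma\abs{k}}$ is real, but the paper resolves it by the sharper weighted estimate of Lemma~\ref{la:mvestbrakShrpr}, which sums the polynomial-difference bounds against $2^{-\gamma\abs{k}}$ and produces a geometric weight $2^{-\tilde\gamma\abs{j}}$ acting on $[v]_{\tilde A_j,\frac{n}{2}}$. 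Without the polynomial subtraction and Lemma~\ref{la:mvestbrakShrpr} the dyadic sums simply do not close.
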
%
As mentioned before, by the structure of our Euler-Lagrage equations, these local estimates control the local growth of the $\frac{n}{4}$-operator of any critical point, as we will show using an analogue of the Hodge decomposition in the fractional case, see Section \ref{ss:hodge}.
\begin{theorem}\label{th:hodge}
There are uniform constants $\Lambda_2 > 0$ and $C > 0$ such that the following holds: For any $x \in \R^n$ and any $r > 0$ we have for every $v \in L^2(\R^n)$, $\supp v \subset B_r(x)$, 
\[
\Vert v \Vert_{L^2(B_r(x))} \leq C \sup_{\varphi \in C_0^\infty(B_{\Lambda_2 r}(x))} \frac{1}{\Vert \lapn \varphi \Vert_{L^2(\R^n)}}\ \intl_{\R^n} v\ \lapn \varphi.
\]
\end{theorem}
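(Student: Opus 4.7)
My approach would be $L^2$-duality combined with an explicit cut-off Riesz-potential construction. Since $\supp v \subset B_r(x)$, we have $\Vert v \Vert_{L^2(B_r(x))} = \Vert v \Vert_{L^2(\R^n)}$, so it suffices to exhibit one test function $\varphi \in C_0^\infty(B_{\Lambda_2 r}(x))$ whose ratio saturates (up to a uniform constant) the supremum on the right-hand side. The unconstrained optimum is $\varphi_0 := \lapmn v$: since $\lapn \varphi_0 = v$, the ratio then equals $\Vert v \Vert_{L^2}$ exactly. The only obstruction is that $\varphi_0$ is not compactly supported: it only decays like $|y-x|^{-n/2}$ at infinity. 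My plan is therefore to set
\[
\varphi := \eta \lapmn v,
\]
where $\eta \in C_0^\infty(B_{\Lambda_2 r}(x))$ is a standard cut-off equal to one on $B_{2r}(x)$, and then show that the truncation damages neither the matching nor the energy by too much.

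Two ingredients drive the argument. First, an \emph{energy bound} $\Vert \lapn \varphi \Vert_{L^2} \leq C \Vert v \Vert_{L^2}$. Using the identity \eqref{eq:defH},
\[
\lapn(\eta \lapmn v) = \eta v + \lapmn v \cdot \lapn \eta + H(\eta, \lapmn v),
\]
I would bound the three summands separately: the first trivially, the second via Riesz-potential / BMO-type estimates for $\lapmn v$ against the scale-invariant size of $\lapn \eta$, and the third directly via Theorem~\ref{th:compensation}. Second, an \emph{error bound}
\[
\Babs{\intl v \cdot \lapn((1-\eta)\lapmn v)} \leq \omega(\Lambda_2)\, \Vert v \Vert_{L^2}^2
\]
with $\omega(\Lambda_2) \to 0$ as $\Lambda_2 \to \infty$. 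This is exactly the defect between $\intl v\,\lapn \varphi$ and $\Vert v \Vert_{L^2}^2 = \intl v\, \lapn \varphi_0$, and it is the heart of the argument.

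To tame the error I would apply the same Leibniz-type identity to $(1-\eta)\lapmn v$: its leading piece $(1-\eta)\lapn\lapmn v = (1-\eta)v$ vanishes identically (because $v$ is supported where $\eta \equiv 1$), so what remains of the pairing is
\[
- \intl v \cdot \lapmn v \cdot \lapn \eta \;+\; \intl v \cdot H(1-\eta, \lapmn v).
\]
Both surviving pieces have the ``product of lower-order operators'' form that the paper shows to localize well; combined with the pointwise decay $|\lapmn v(y)| \lesssim r^{n/2}\, |y-x|^{-n/2}\, \Vert v\Vert_{L^2}$ of the Riesz potential away from $B_r(x)$, and with the quantitative decay of $\lapn \eta$ across the shell $B_{\Lambda_2 r}(x) \setminus B_{2r}(x)$, each is of order $\omega(\Lambda_2) \Vert v\Vert_{L^2}^2$. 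Fixing $\Lambda_2$ so large that $\omega(\Lambda_2) \leq \tfrac12$ yields $\intl v\,\lapn \varphi \geq \tfrac12 \Vert v\Vert_{L^2}^2$ together with $\Vert \lapn \varphi \Vert_{L^2} \leq C \Vert v \Vert_{L^2}$, hence the ratio is at least $\Vert v\Vert_{L^2}/(2C)$, which is the claim. A density argument approximating $v$ by smooth compactly supported functions legitimizes the formal manipulations.

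The main obstacle I anticipate is precisely this quantitative error bound: turning the spatial separation of $v$ (living in $B_r(x)$) and $(1-\eta)\lapmn v$ (living outside $B_{2r}(x)$) into honest decay in $\Lambda_2$ for the truly non-local operator $\lapn$, uniformly in $r$ and especially in odd dimensions. This is exactly the type of localisation estimate developed elsewhere in the paper, so I would invoke (or mimic) those technical tools rather than redo them from scratch.
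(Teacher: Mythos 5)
Your approach is correct and genuinely different from the paper's. The paper proves Theorem~\ref{th:hodge} (restated as Theorem~\ref{th:localest}) by $L^2$-duality together with the fractional Hodge decomposition: write an arbitrary $f\in L^2$ with $\Vert f\Vert_{L^2}\leq 1$ as $f=\lapn\varphi+h$, $\supp\varphi\subset\bar{B}_{\Lambda r}$, via the variational Lemma~\ref{la:hodge}, then establish the decay $\Vert h\Vert_{L^2(B_r)}\leq C\Lambda^{-1/4}\Vert h\Vert_{L^2}$ for the $\frac{n}{2}$-harmonic remainder (Lemma~\ref{la:estharmonic}) and absorb. You bypass the decomposition entirely and exhibit a concrete near-optimizer $\varphi=\eta\lapmn v$; this is a valid and arguably leaner wrapping, and even yields slightly more (the supremum is essentially attained by an explicit competitor). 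But the two routes share the same technical engine, namely Proposition~\ref{pr:estlapnetlapmnvL2}, i.e.\ $\Vert\lapn(\eta^k_{r,x}\lapmn v)\Vert_{L^2}\lesssim 2^{-k/4}\Vert v\Vert_{L^2}$, which is exactly what the paper's Lemma~\ref{la:estharmonic} reduces to as well.

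One step of your sketch is too optimistic as phrased. After the Leibniz split, the error piece $\intl v\,H(1-\eta,\lapmn v)$ is \emph{not} made small by Theorem~\ref{th:compensation} plus the pointwise Riesz-potential decay you cite. Theorem~\ref{th:compensation} applied to $H(\eta,\lapmn v)$ gives $\Vert H(\eta,\lapmn v)\Vert_{L^2}\lesssim\Vert\lapn\eta_{\Lambda_2 r}\Vert_{L^2}\,\Vert v\Vert_{L^2}$, and $\Vert\lapn\eta_{\Lambda_2 r}\Vert_{L^2}\sim 1$ is scale-invariant, so there is no gain in $\Lambda_2$; meanwhile the pointwise decay of $\lapmn v$ does not propagate through the nonlocal operator $H$. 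The decisive smallness $\omega(\Lambda_2)\to 0$ instead comes from the $L^1$ Fourier bound $\Vert(\laps{s}\eta^k_{r,x})^\wedge\Vert_{L^1}\lesssim(2^kr)^{-s}$ of Proposition~\ref{pr:etarkgoodest} fed into the compensation inequality of Lemma~\ref{la:tart:prphuvwedgeest} — this is precisely Proposition~\ref{pr:estconvolutlplmp} — or, equivalently, from decomposing $1-\eta=\sum_{k>K}\eta^k_{r,x}$ and summing Proposition~\ref{pr:estlapnetlapmnvL2} over $k>K$, which gives $\sum_{k>K}2^{-k/4}\sim\Lambda_2^{-1/4}$. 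Since you explicitly defer to the paper's localization machinery here, I count this as a flagged dependency rather than a hole, but the reader should be told that the decisive input is the $L^1$ Fourier estimate, not the compensation theorem or the Riesz-potential decay you named.
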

Then, by an iteration technique adapted from the one in \cite{DR09Sphere} (see the appendix) we conclude in Section~\ref{sec:growth} that the critical point $u$ of $E_n$ lies in a Morrey-Campanato space, which implies H\"older continuity.\\
As for the sections not mentioned so far: In Section~\ref{sec:basics} we will cover some basic facts on Lorentz and Sobolev spaces. In Section~\ref{sec:poincmv} we will prove a fractional Poincar\'{e} inequality with a mean value condition of arbitrary order. In Section~\ref{sec:locEf} various localizing effects are studied. In Section~\ref{sec:homognormhn2} we compare two pseudo-norms $\Vert \lapn v \Vert_{L^2(A)}$ and $[v]_{\frac{n}{2},A}$ of $H^{\frac{n}{2}}$, and finally, in Section~\ref{sec:growth}, Theorem~\ref{th:regul} is proved.\\[1em]

Finally, let us remark the following two points: As we cut off the critical points $u$ to bounded domains, the assumption $u \in L^2(\R^n)$ is not necessary, one could, e.g., assume $u \in L^\infty(\R^n)$, $\lapn u \in L^2(\R^n)$, thus regaining a similar ``global'' result as in \cite{DR09Sphere}. 
Observe moreover, that the application of a cut-off function within $D$ to the critical point $u$ is a very brute operation, which nevertheless suffices our purposes as in this note we are only interested in \emph{interior} regularity. For the analysis of the boundary behaviour of $u$ one probably would need a more careful cut-off argument.\\[1em]

We will use fairly standard \emph{notation}:\\
As usual, we denote by $\Sw \equiv \Sw(\R^n)$ the Schwartz class of all smooth functions which at infinity go faster to zero than any quotient of polynomials, and by $\Sws \equiv \Sws(\R^n)$ its dual. For a set $A \subset \R^n$ we will denote its $n$-dimensional Lebesgue measure by $\abs{A}$, and $rA$, $r > 0$, will be the set of all points $rx \in \R^n$ where $x \in A$. By $B_r(x) \subset \R^n$ we denote the open ball with radius $r$ and center $x \in \R^n$. If no confusion arises, we will abbreviate $B_r \equiv B_r(x)$. When we speak of a multiindex $\alpha$ we will usually mean $\alpha = (\alpha_1,\ldots,\alpha_n) \in \brac{\N \cup \{0\}}^n \equiv \brac{\N_0}^n$ with length $\abs{\alpha} := \sum_{i=1}^n \alpha_i$. For such a multiindex $\alpha$ and $x = (x_1,\ldots,x_n)^T \in \R^n$ we denote by $x^\alpha = \prod_{i=1}^n \brac{x_i}^{\alpha_i}$ where we set $(x_i)^0 := 1$ even if $x_i = 0$. For a real number $p \geq 0$ we denote by $\lfloor p \rfloor$ the biggest integer below $p$ and by $\lceil p \rceil$ the smallest integer above $p$. If $p \in [1,\infty]$ we usually will denote by $p'$ the H\"older conjugate, that is $\frac{1}{p} + \frac{1}{p'} = 1$. By $f \ast g$ we denote the convolution of two functions $f$ and $g$. As mentioned before, we will denote by $f^\wedge$ the Fourier transform and by $f^\vee$ the inverse Fourier transform, which on the Schwartz class $\Sw$ are defined as
\[
 f^\wedge(\xi) := \intl_{\R^n} f(x)\ e^{-2\pi \im\ x\cdot \xi}\ dx,\quad f^\vee(x) := \intl_{\R^n} f(\xi)\ e^{2\pi \im\ \xi\cdot x}\ d\xi.
\]
By $\im$ we denote here and henceforth the imaginary unit $\im^2 = -1$. $\Rz$ is the Riesz operator which transforms $v \in \Sw(\R^n)$ according to $(\Rz v)^\wedge(\xi) := \im \frac{\xi}{\abs{\xi}} v^\wedge(\xi)$. More generally, we will speak of a zero-multiplier operator $M$, if there is a function $m \in C^\infty(\R^n \backslash \{0\})$ homogeneous of order $0$ and such that $(Mv)^\wedge(\xi) = m(\xi)\ v^\wedge(\xi)$ for all $\xi \in \R^n \backslash \{0\}$. For a measurable set $D \subset \R^n$, we denote the integral mean of an integrable function $v: D \to \R$ to be $(v)_D \equiv \mvint_D v \equiv \frac{1}{\abs{D}} \int_D v$. Lastly, our constants -- usually denoted by $C$ or $c$ --  can possibly change from line to line and usually depend on the space dimensions involved, further dependencies will be denoted by a subscript, though we will make no effort to pin down the exact value of those constants. If we consider the constant factors to be irrelevant with respect to the mathematical argument, for the sake of simplicity we will omit them in the calculations, writing $\aleq{}$, $\ageq{}$, $\aeq{}$ instead of $\leq$, $\geq$ and $=$.\\

\vspace{2ex}\noindent
{\bf Acknowledgment.} The author would like to thank Francesca Da Lio and Tristan Rivi\`{e}re for introducing him to the topic, and Pawe{\l} Strzelecki for suggesting to extend the results of \cite{DR09Sphere} to higher dimensions. Moreover, he is very grateful to his supervisor Heiko von der Mosel for the constant support and encouragement, as well as for many comments and remarks on the drafts of this work. The author is supported by the Studienstiftung des Deutschen Volkes.
\section{Lorentz-, Sobolev Spaces and Cutoff Functions}\label{sec:basics}
\subsection{Interpolation}
In the following section we will state some fundamental properties of interpolation theory, which will be used to ``translate'' results from classical Lebesgue and Sobolev spaces into the setting of Lorentz and fractional Sobolev spaces. For more on interpolation spaces, we refer to Tartar's monograph \cite{Tartar07}.\\
There are different methods of interpolation. We state here the so-called $K$-Method, only.
\begin{definition}[Interpolation by the $K$-Method]\label{def:KMethod}
(Compare \cite[Definition 22.1]{Tartar07})\\
Let $X,Y$ be normed spaces with respective norms $\Vert \cdot \Vert_X$, $\Vert \cdot \Vert_Y$ and assume that  $Z = X+Y$ is a normed space with norm
\[
\Vert z \Vert_{X+Y} := \inf_{z = x+y} \brac{\Vert x\Vert_X + \Vert y \Vert_Y}.
\]
For $t \in (0,\infty)$ and $z \in X+Y$ we denote
\[
K(z,t) = \inf_{\ontop{z = x +y}{x \in X, y \in Y}} \Vert x \Vert_X + t \Vert y \Vert_Y,
\]
and for $\theta \in (0,1)$ and $q \in [1,\infty]$,
\[
\Vert z \Vert_{[X,Y]_{\theta,q}}^q := \intl_{t=0}^\infty \brac{t^{-\theta}\ K(z,t)}^q \frac{dt}{t}.
\]
The space $[X,Y]_{\theta,q}$ with norm $\Vert \cdot \Vert_{[X,Y]_{\theta,q}}$ is then defined as every $z \in X+Y$ such that $\Vert z \Vert_{[X,Y]_{\theta,q}} < \infty$.
\end{definition}

\begin{proposition}\label{pr:bs:xyqintoxyqs}
(Compare \cite[Lemma 22.2]{Tartar07})\\
Let $X,Y,Z$ be as in Definition \ref{def:KMethod}. If $1 \leq q < q' \leq \infty$, $\theta \in (0,1)$, then
\[
[X,Y]_{\theta,q} \subset [X,Y]_{\theta,q'},
\]
and the embedding is continuous.
\end{proposition}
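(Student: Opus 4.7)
The plan is to exploit two monotonicity properties of $t\mapsto K(z,t)$ that follow directly from Definition~\ref{def:KMethod}, namely that $t\mapsto K(z,t)$ is non-decreasing and $t\mapsto K(z,t)/t$ is non-increasing. The first is immediate from the definition (a larger $t$ only enlarges the functional being minimized). The second follows from rewriting $K(z,t)/t=\inf_{z=x+y}\bigl(\|x\|_X/t+\|y\|_Y\bigr)$, whose infimand is pointwise non-increasing in $t$. As a consequence, for any $\lambda\ge 1$ one has $K(z,\lambda t)\le \lambda\,K(z,t)$ and $K(z,\lambda t)\ge K(z,t)$.

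The key reduction is to first establish the endpoint case $q'=\infty$, i.e.
\[
\sup_{t>0} t^{-\theta}K(z,t) \;\le\; C_{\theta,q}\,\|z\|_{[X,Y]_{\theta,q}}.
\]
For this, I would fix $t>0$ and integrate over $s\in[t,2t]$: by the monotonicity above, $K(z,s)\ge K(z,t)$ for such $s$, and $s^{-\theta}\ge (2t)^{-\theta}$ (for $\theta\in(0,1)$, $s^{-\theta}$ is decreasing), so
\[
\|z\|_{[X,Y]_{\theta,q}}^{q} \;\ge\; \intl_{t}^{2t}\bigl(s^{-\theta}K(z,s)\bigr)^{q}\,\frac{ds}{s} \;\ge\; (\ln 2)\,(2t)^{-\theta q}\,K(z,t)^{q},
\]
which yields the claimed pointwise bound with $C_{\theta,q}=2^{\theta}(\ln 2)^{-1/q}$.

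With the $L^\infty$-bound in hand, the general case is a standard interpolation trick. For $1\le q<q'<\infty$, write
\[
\bigl(t^{-\theta}K(z,t)\bigr)^{q'} \;=\; \bigl(t^{-\theta}K(z,t)\bigr)^{q'-q}\,\bigl(t^{-\theta}K(z,t)\bigr)^{q},
\]
estimate the first factor by $\bigl(C_{\theta,q}\,\|z\|_{[X,Y]_{\theta,q}}\bigr)^{q'-q}$ using the endpoint bound, and integrate $\frac{dt}{t}$ against the second factor to recover $\|z\|_{[X,Y]_{\theta,q}}^{q}$. This gives $\|z\|_{[X,Y]_{\theta,q'}}\le C\,\|z\|_{[X,Y]_{\theta,q}}$, proving both the set inclusion and the continuity of the embedding. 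The case $q'=\infty$ is already handled above.

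I do not anticipate any real obstacle: the whole argument is driven by the two monotonicity properties of $K(z,\cdot)$, and the only subtlety is choosing the interval $[t,2t]$ (or any $[t,\lambda t]$ with $\lambda>1$) so that $K(z,s)$ stays comparable to $K(z,t)$ while $s^{-\theta}$ remains bounded below. Once those estimates are noted, the step from $L^q$-integrability to an $L^\infty$-bound, and then from $L^\infty\cap L^q$ to $L^{q'}$, is purely mechanical.
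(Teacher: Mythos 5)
Your proposal is correct and takes essentially the same approach as the paper: both first establish the endpoint bound $\|z\|_{[X,Y]_{\theta,\infty}} \lesssim \|z\|_{[X,Y]_{\theta,q}}$ using the monotonicity of $t \mapsto K(z,t)$, and then conclude by the log-convexity of $L^p$-norms on $(0,\infty)$ with measure $dt/t$. The only cosmetic difference is that the paper integrates the lower bound over $(t_0,\infty)$ whereas you integrate over $[t,2t]$; both work, and your aside about $K(z,t)/t$ being non-increasing is not actually needed.
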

\begin{proofP}{\ref{pr:bs:xyqintoxyqs}}
Fix $\theta \in (0,1)$. Denote
\[
E_p := [X,Y]_{\theta,p}, \quad \mbox{for }p \in [1,\infty].
\]
Then for $q < \infty$, $t_0 > 0$, using that $K(z,t)$ is monotone rising in $t$,
\[
\begin{ma}
\Vert z \Vert_{E_q}^q &=& \intl_{t=0}^\infty t^{-\theta q} \brac{K(z,t)}^q\ \frac{dt}{t}\\
&\geq& \intl_{t=t_0}^\infty t^{-\theta q} \brac{K(z,t)}^q\ \frac{dt}{t}\\
&\geq& \brac{K(z,t_0)}^q \frac{\brac{t_0}^{-\theta q}}{\theta q},
\end{ma}
\]
that is
\[
t_0^{-\theta}\ K(z,t_0)  \aleq{} \Vert z \Vert_{E_q}, \quad \mbox{for every $t_0 > 0$},
\]
which implies
\begin{equation}\label{eq:bs:xyinftyxyq}
\Vert z \Vert_{E_\infty} \leq C_{\theta,q} \Vert z \Vert_{E_q} \quad \mbox{for any $q \in [1,\infty]$}.
\end{equation}
Thus, by H\"older inequality for $\infty > q' > q$,
\[
\begin{ma}
\Vert z \Vert_{E_{q'}}^{q'} &=& \Vert t^{-\theta} K(z,t) \Vert_{L^{q'}\brac{(0,\infty),\frac{dt}{t}}}^{q'}\\
&\aleq{}& \Vert z \Vert_{E_\infty}^{q'-q}\ \Vert z \Vert_{E_q}^q\\
&\overset{\eqref{eq:bs:xyinftyxyq}}{\aleq{}}& \Vert z \Vert_{E_q}^{q'}.
\end{ma}
\] 
\end{proofP}

The following two fundamental lemmata tell us how linear and bounded or linear and compact operators defined on the spaces $X$ and $Y$ from Definition \ref{def:KMethod} behave on the interpolated spaces.
\begin{lemma}[Interpolation Theorem]\label{la:interpolation}
(See \cite[Lemma 22.3]{Tartar07})\\
Let $X_1,Y_1,Z_1$, $X_2,Y_2,Z_2$ be as in Definition \ref{def:KMethod}. Assume there is a linear operator $T$ defined on $Z = X+Y$ such that $T: X_1 \to X_2$ and $T: Y_1 \to Y_2$ and assume there are constants $\Lambda_X, \Lambda_Y > 0$ such that
\begin{equation}\label{eq:interpol:Tx1x2y1y2}
\Vert T \Vert_{\mathcal{L}(X_1,X_2)} \leq \Lambda_X,\quad \Vert T \Vert_{\mathcal{L}(Y_1,Y_2)} \leq \Lambda_Y.
\end{equation}
Denote for $\theta \in (0,1)$ and $q \in [1,\infty]$, $E_1 := [X_1,Y_1]_{\theta,q}$ and $E_2 := [X_2,Y_2]_{\theta,q}$.
Then $T$ is a linear, bounded operator $T: E_1 \to E_2$ such that 
\[
\Vert T \Vert_{\mathcal{L}(E_1,E_2)} \leq \Lambda_X^{1-\theta} \Lambda_Y^{\theta}.
\]
\end{lemma}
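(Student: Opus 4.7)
The plan is to prove the operator norm bound by first estimating the $K$-functional of $Tz$ in terms of the $K$-functional of $z$, and then performing a change of variables in the defining integral of the interpolation norm to extract the product $\Lambda_X^{1-\theta}\Lambda_Y^\theta$.

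First I would fix $z \in E_1 \subset X_1 + Y_1$ and an arbitrary decomposition $z = x + y$ with $x \in X_1$ and $y \in Y_1$. Since $T$ is linear, $Tz = Tx + Ty$ is an admissible decomposition in $X_2 + Y_2$. Using the hypotheses \eqref{eq:interpol:Tx1x2y1y2}, for every $t > 0$ one has
\[
\|Tx\|_{X_2} + t\,\|Ty\|_{Y_2} \leq \Lambda_X \|x\|_{X_1} + t\,\Lambda_Y \|y\|_{Y_1} = \Lambda_X \Bigl(\|x\|_{X_1} + t\,\tfrac{\Lambda_Y}{\Lambda_X}\,\|y\|_{Y_1}\Bigr).
\]
Taking the infimum over all admissible decompositions $z = x + y$ yields the key pointwise estimate
\[
K(Tz,t) \leq \Lambda_X\,K\!\left(z,\,t\,\tfrac{\Lambda_Y}{\Lambda_X}\right) \qquad \text{for every } t > 0.
\]

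Next I would plug this into the defining integral of $\|\cdot\|_{E_2}$. For $q \in [1,\infty)$, applying the above bound and then changing variables $s := t\,\Lambda_Y/\Lambda_X$ (so that $dt/t = ds/s$) gives
\[
\|Tz\|_{E_2}^q = \intl_0^\infty \bigl(t^{-\theta} K(Tz,t)\bigr)^q \frac{dt}{t} \leq \Lambda_X^q \intl_0^\infty t^{-\theta q}\, K\!\left(z,\,t\,\tfrac{\Lambda_Y}{\Lambda_X}\right)^q \frac{dt}{t} = \Lambda_X^{q(1-\theta)}\,\Lambda_Y^{\theta q}\,\|z\|_{E_1}^q,
\]
where the factor $(\Lambda_X/\Lambda_Y)^{-\theta q}$ arises from the substitution in the weight $t^{-\theta q}$. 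Taking $q$-th roots gives the announced bound $\|Tz\|_{E_2} \leq \Lambda_X^{1-\theta}\Lambda_Y^\theta\,\|z\|_{E_1}$.

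Finally, the case $q = \infty$ follows by the same change of variables applied to the essential supremum: from $K(Tz,t) \leq \Lambda_X K(z, t\Lambda_Y/\Lambda_X)$ one gets
\[
\sup_{t>0} t^{-\theta}K(Tz,t) \leq \Lambda_X \sup_{t>0} t^{-\theta} K\!\left(z,\,t\,\tfrac{\Lambda_Y}{\Lambda_X}\right) = \Lambda_X^{1-\theta}\Lambda_Y^\theta \sup_{s>0}s^{-\theta}K(z,s).
\]
There is no real obstacle here; the only thing to be careful about is the arithmetic of the exponents under the substitution, and confirming that linearity of $T$ (not just sublinearity) is enough so that an arbitrary decomposition of $z$ transports to a decomposition of $Tz$. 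If $T$ were merely sublinear one would have to work a bit harder, but under the stated hypothesis the argument is direct.
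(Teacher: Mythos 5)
Your proof is correct and follows essentially the same route as the paper's: establish the pointwise $K$-functional bound $K(Tz,t) \leq \Lambda_X K(z, t\Lambda_Y/\Lambda_X)$ from an arbitrary decomposition and linearity, then exploit the scale-invariance of $dt/t$ under the substitution $s = t\Lambda_Y/\Lambda_X$ to pull out the factor $\Lambda_X^{1-\theta}\Lambda_Y^\theta$. The paper formats the same computation as a single displayed chain of inequalities rather than an explicit change of variables, but the content is identical.
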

\begin{proofL}{\ref{la:interpolation}}
Denote by $K_1$, $K_2$ the $K(\cdot,\cdot)$ used to define $E_1$ and $E_2$, respectively. For $z \in E_1$ and any decomposition $z = x_1 + y_1$, $x_1 \in X_1$, $y_1 \in Y_1$ we have
\[
\begin{ma}
t^{-\theta}K_2(Tz,t) &\leq& t^{-\theta} \brac{\Vert Tx_1 \Vert_{X_2} + t \Vert Ty_1 \Vert_{Y_2}}\\
&\overset{\eqref{eq:interpol:Tx1x2y1y2}}{\leq}& \Lambda_X^{1-\theta} \Lambda_Y^\theta \ \brac{\frac{\Lambda_Y}{\Lambda_X} t}^{-\theta}  \brac{\Vert x_1 \Vert_{X_1} + t \frac{\Lambda_Y}{\Lambda_X} \Vert y_1 \Vert_{Y_1}}.\\
\end{ma}
\]
Taking the infimum over all decompositions $z = x_1 + y_1$, this implies for $\gamma := \frac{\Lambda_Y}{\Lambda_X} > 0$,
\[
t^{-\theta}K_2(Tz,t) \leq \Lambda_X^{1-\theta}\ \Lambda_Y^\theta\ (\gamma t)^{-\theta} K_1(z,\gamma t).
\]
Using the definition of $E_1,E_2$, we have shown
\[
\Vert Tz \Vert_{E_2} \leq \Lambda_X^{1-\theta}\ \Lambda_Y^\theta\ \Vert z \Vert_{E_1}.
\]
\end{proofL}

\begin{lemma}[Compactness]\label{la:interpol:compact}
(See \cite[Lemma 41.4]{Tartar07})\\
Let $X,Y,Z$ be as in Definition \ref{def:KMethod}. Let moreover $G$ be a Banach space and assume there is an operator $T$ defined on $Z = X + Y$ such that $T: X \to G$ is linear and continuous and $T: Y \to G$ is linear and compact. Then for any $\theta \in (0,1)$, $q \in [1,\infty]$, $T: [X,Y]_{\theta,q} \to G$ is compact.
\end{lemma}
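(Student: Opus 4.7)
The plan is to exploit the structure of the $K$-functional directly, reducing to the case $q=\infty$ and then combining the compactness of $T:Y\to G$ with a Cantor diagonal extraction. First, by Proposition~\ref{pr:bs:xyqintoxyqs} we have continuous embeddings $[X,Y]_{\theta,q}\hookrightarrow [X,Y]_{\theta,\infty}$ for every $q\in[1,\infty]$, so a bounded sequence in $[X,Y]_{\theta,q}$ is automatically bounded in $[X,Y]_{\theta,\infty}$. It therefore suffices to prove compactness of $T:[X,Y]_{\theta,\infty}\to G$. Recall that $z\in [X,Y]_{\theta,\infty}$ is equivalent to $\sup_{t>0} t^{-\theta}K(z,t)<\infty$.

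Next, given a sequence $(z_n)$ with $\Vert z_n\Vert_{[X,Y]_{\theta,\infty}}\le M$, for each $t>0$ I would fix near-optimal decompositions $z_n = x_n(t)+y_n(t)$ with $x_n(t)\in X$, $y_n(t)\in Y$, satisfying
\[
\Vert x_n(t)\Vert_X + t\Vert y_n(t)\Vert_Y \le 2K(z_n,t) \le 2M t^{\theta};
\]
in particular $\Vert x_n(t)\Vert_X\le 2M t^{\theta}$ and $\Vert y_n(t)\Vert_Y\le 2M t^{\theta-1}$. I pick a scale sequence $t_k\downarrow 0$ (for instance $t_k = 2^{-k}$). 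For each fixed $k$, the family $(y_n(t_k))_n$ is bounded in $Y$, so by the compactness of $T:Y\to G$ the sequence $(Ty_n(t_k))_n$ admits a convergent subsequence in $G$. A standard Cantor diagonal procedure then produces a single subsequence $(z_{n_j})_j$ of $(z_n)$ along which $(Ty_{n_j}(t_k))_j$ converges in $G$ simultaneously for every $k$.

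Finally, I would verify that $(Tz_{n_j})_j$ is Cauchy in $G$. Writing
\[
Tz_{n_j}-Tz_{n_\ell} = T\bigl(x_{n_j}(t_k)-x_{n_\ell}(t_k)\bigr) + T\bigl(y_{n_j}(t_k)-y_{n_\ell}(t_k)\bigr),
\]
the first summand is bounded by $\Vert T\Vert_{\mathcal{L}(X,G)}\bigl(\Vert x_{n_j}(t_k)\Vert_X+\Vert x_{n_\ell}(t_k)\Vert_X\bigr)\le 4M\Vert T\Vert_{\mathcal{L}(X,G)}\,t_k^\theta$, which can be made smaller than any prescribed $\varepsilon/2$ by choosing $k$ large, thanks to $\theta>0$. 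Once $k$ is fixed, the second summand is smaller than $\varepsilon/2$ for $j,\ell$ large by the diagonal convergence. Completeness of $G$ then finishes the proof.

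The only subtle point is that the decompositions $z_n = x_n(t)+y_n(t)$ genuinely depend on the scale $t$, so one cannot extract a convergent subsequence from the compactness on a single scale; the diagonal argument across $t_k\downarrow 0$, coupled with the quantitative bound $\Vert x_n(t_k)\Vert_X = O(t_k^\theta)$ which forces the $X$-part to vanish in the limit, is what makes the scheme work. Beyond this bookkeeping I do not foresee a real obstacle.
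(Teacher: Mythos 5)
Your proof is correct and follows essentially the same route as the paper's: reduce to $q=\infty$ via Proposition~\ref{pr:bs:xyqintoxyqs}, pick near-optimal $K$-functional decompositions at scales $t\downarrow 0$ (the paper uses $t=1/n$), observe that the $X$-part has norm $O(t^\theta)\to 0$ while the $Y$-part is bounded at each fixed scale, apply compactness of $T\colon Y\to G$ scale-by-scale, extract a Cantor diagonal subsequence, and verify the Cauchy property by combining the two estimates. The paper spells out the nested subsequence bookkeeping more explicitly (and notes the trivial case where infinitely many $z_k$ vanish), but the mathematical content is identical.
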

\begin{proofL}{\ref{la:interpol:compact}}
Fix $\theta \in (0,1)$. By Proposition~\ref{pr:bs:xyqintoxyqs} it suffices to prove the compactness of the embedding for $q = \infty$. Set $E := [X,Y]_{\theta,\infty}$. We denote by $\Lambda$ the norm of $T$ as a linear operator from $X$ to $G$.\\
Let $z_k \in E$ and assume that 
\begin{equation}\label{eq:int:zkeleq1}
\Vert z_k \Vert_{E} \leq 1 \quad \mbox{for any $k \in \N$.}
\end{equation}
If there are infinitly many $z_k = 0$, there is nothing to prove, so assume that $z_k \neq 0$ for all $k \in \N$. Pick for any $k,n \in \N$, $x^n_k, y^n_k$ such that $x^n_k + y^n_k = z_k$ and
\[
\Vert x^n_k \Vert + \frac{1}{n} \Vert y^n_k \Vert \leq 2K(z_k,\frac{1}{n}) \overset{\eqref{eq:int:zkeleq1}}{\leq} 2 \frac{1}{n^\theta}.
\] 
Consequently, for any $k,l,n \in \N$,
\[
\begin{ma}
\Vert T z_k - T z_l \Vert_G &\leq& \Vert T (x^n_k -x^n_l) \Vert_G + \Vert T(y^n_k - y^n_l) \Vert_G\\
&\leq& \Lambda \brac{\Vert x^n_k \Vert_X + \Vert x^n_l \Vert_X} + \Vert T(y^n_k - y^n_l) \Vert_G\\
&\leq& \frac{4\Lambda}{n^\theta} + \Vert T(y^n_k - y^n_l) \Vert_G,
\end{ma}
\]
and
\begin{equation}\label{eq:int:ynk}
 \Vert y^n_k \Vert_Y \leq 2 n^{1-\theta} \quad \mbox{for any $k,n \in \N$}
\end{equation}
Now we apply a Cantor diagonal sequence argument as follows: Set
\[
 \left (k_{i,0} \right )_{i=1}^\infty := \left (i \right )_{i=1}^\infty
\]
and choose for a given sequence $\left (k_{i,n} \right )_{i=1}^\infty$ a subsequence $\left (k_{i,n+1} \right )_{i=1}^\infty$ such that
\[
 k_{i,n} = k_{i,n+1} \quad \mbox{for any $1 \leq i \leq n$}
\]
and
\[
 \Vert T(y^{n+1}_{k_{i,n+1}} - y^{n+1}_{k_{j,n+1}}) \Vert_G \leq \frac{1}{n+1} \quad \mbox{for any $i,j \geq n+1$}.
\]
The latter is possible, as $T$ is a compact operator from $Y$ to $G$ and \eqref{eq:int:ynk} implies for any fixed $n+1 \in \N$ a uniform bound of $y^{n+1}_{k_i,n}$, $i \in \N$.\\
Finally for any $1 < i < j < \infty$, setting $k_i := k_{i,i+1}$
\[
 \Vert T z_{k_i} - T z_{k_j} \Vert_G \leq \frac{4\Lambda}{i^\theta} + \frac{1}{i},
\]
which implies convergence for $i \to \infty$.
\end{proofL}

\subsection{Lorentz Spaces}
In this section, we recall the definition of Lorentz spaces, which are a refinement of the standard Lebesgue-spaces. For more on Lorentz spaces, the interested reader might consider \cite{Hunt66}, \cite{Ziemer}, \cite[Section 1.4]{GrafC08}.
\begin{definition}[Lorentz Space]
Let $f: \R^n \to \R$ be a Lebesgue-measurable function. We denote
\[
d_f(\lambda) := \abs{\{ x \in \R^n\ :\ \abs{f(x)} > \lambda\}}.
\]
The decreasing rearrangement of $f$ is the function $f^\ast$ defined on $[0,\infty)$ by
\[
f^\ast(t) := \inf \{s > 0:\ d_f(s) \leq t\}.
\]
For $1 \leq p \leq \infty$, $1 \leq q \leq \infty$, the Lorentz space $L^{p,q} \equiv L^{p,q}(\R^n)$, is the set of measurable functions $f: \R^n \to \R$ such that $\Vert f \Vert_{L^{p,q}} < \infty$, where
\[
\Vert f \Vert_{L^{p,q}} := \begin{cases}
													\left (\intl_0^\infty \left (t^{\frac{1}{p}} f^\ast (t) \right )^q \frac{dt}{t} \right )^\frac{1}{q}, \quad &\mbox{if $q < \infty$,}\\
													\sup_{t > 0} t^{\frac{1}{p}} f^\ast (t),\quad &\mbox{if $q = \infty$, $p < \infty$},\\
													\Vert f \Vert_{L^\infty(\R^n)},\quad &\mbox{if $q = \infty$, $p = \infty$}.
														\end{cases}
\]
Observe that $\Vert \cdot \Vert_{L^{p,q}}$ does \emph{not} satisfy the triangle inequality.
\end{definition}
\begin{remark}
We have not defined the space $L^{\infty,q}$ for $q \in [1,\infty)$. For the sake of overview, whenever a result on Lorentz spaces is stated in a way that $L^{p,q}$ for $p = \infty$, $q \in [1,\infty]$ is admissible, we in fact only claim that result for $p = \infty$, $q = \infty$.
\end{remark}
An alternative definition of Lorentz spaces using interpolation can be stated as follows.
\begin{lemma}\label{la:lpqinterpoldef}
(See \cite[Lemma 22.6, Theorem 26.3]{Tartar07}\\
Let $q \in [1,\infty]$. For $1 < p < \infty$
\[
L^{p,q} = \left [L^1(\R^n), L^\infty(\R^n)\right ]_{1-\frac{1}{p},q},
\]
for $2 < p < \infty$
\[
L^{p,q} = \left [L^2(\R^n), L^\infty(\R^n)\right ]_{1-\frac{2}{p},q},
\]
and finally for $1 < p < 2$,
\[
L^{p,q} = \left [L^1(\R^n), L^2(\R^n)\right ]_{2-\frac{2}{p},q},
\]
and the norms of the respective spaces are equivalent.
\end{lemma}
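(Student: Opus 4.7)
The plan is to explicitly compute the $K$-functional of each of the three pairs $(L^1,L^\infty)$, $(L^2,L^\infty)$, $(L^1,L^2)$ in terms of the decreasing rearrangement $f^\ast$, and then recognise the Lorentz norm after substituting into Definition~\ref{def:KMethod}.

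First I would establish the basic Calder\'on-type identity
\[
K(f,t;L^1,L^\infty) = \intl_0^t f^\ast(s)\, ds,
\]
by using the ``truncation decomposition'' $f = g_\lambda + h_\lambda$ with $g_\lambda = \operatorname{sgn}(f)\,(|f|-\lambda)_+$ and $h_\lambda = f - g_\lambda$; choosing $\lambda = f^\ast(t)$ minimises the right-hand side of the $K$-infimum up to an absolute constant. More generally, by the same decomposition argument,
\[
K(f,t;L^{p_0},L^\infty) \aeq{} \brac{\intl_0^{t^{p_0}} (f^\ast(s))^{p_0}\, ds}^{1/p_0},\qquad 1\le p_0<\infty.
\]

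Next I would plug the first identity into the definition of $\Vert\cdot\Vert_{[L^1,L^\infty]_{\theta,q}}$ with $\theta = 1-1/p$. This produces
\[
\Vert f \Vert_{[L^1,L^\infty]_{\theta,q}}^q = \intl_0^\infty t^{-\theta q}\brac{\intl_0^t f^\ast(s)\,ds}^q \frac{dt}{t}.
\]
The upper bound $\Vert f\Vert_{L^{p,q}}\aleq{} \Vert f\Vert_{[L^1,L^\infty]_{\theta,q}}$ is immediate from the monotonicity of $f^\ast$, namely $\int_0^t f^\ast\ge tf^\ast(t)$. The converse bound is the classical Hardy inequality applied to the averaging operator $g\mapsto \frac{1}{t}\int_0^t g$, which is bounded on $L^q((0,\infty),t^{q/p-1}dt/t)$ precisely when $1/p>0$, i.e.\ $p<\infty$. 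This settles the first identity. The case $q=\infty$ is handled by the usual $\sup$-variant of Hardy's inequality (or directly from the monotonicity of $f^\ast$).

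For the remaining two identities my preferred route is the reiteration theorem rather than recomputing $K$. Indeed, the identity already proved gives $L^2 = [L^1,L^\infty]_{1/2,2}$, so the standard reiteration property of the $K$-method yields
\[
[L^2,L^\infty]_{\theta,q} = \bebrac{[L^1,L^\infty]_{1/2,2},\, L^\infty}_{\theta,q} = [L^1,L^\infty]_{\frac{1+\theta}{2},q},
\]
and the choice $\theta = 1-2/p$ makes the right-hand exponent equal $1-1/p$, giving $L^{p,q}$. The third identity follows analogously from $[L^1,L^2]_{\theta,q} = [L^1,L^\infty]_{\theta/2,q}$ with $\theta = 2 - 2/p$. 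Alternatively, one may reprove both identities directly by repeating the $K$-functional computation for $(L^2,L^\infty)$ and $(L^1,L^2)$ via the same truncation decomposition and then applying Hardy's inequality in the corresponding range.

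The only delicate point is the range of validity of Hardy's inequality, which forces the restrictions $1<p<\infty$, $2<p<\infty$, $1<p<2$ in the three statements and rules out the endpoints $p\in\{1,2,\infty\}$ where the interpolation identities fail or degenerate. The computations themselves are routine once the $K$-functional formula is in hand; the main obstacle is really just bookkeeping the change of variables $u=t^{p_0}$ that relates the measure $t^{-\theta q}\,dt/t$ in Definition~\ref{def:KMethod} to the Lorentz measure $s^{q/p}\,ds/s$.
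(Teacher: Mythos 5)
The paper does not prove this lemma at all: it simply cites it from Tartar's monograph (\cite[Lemma~22.6, Theorem~26.3]{Tartar07}), which is consistent with the way the paper treats other foundational facts from interpolation theory. So there is no ``paper proof'' to compare your argument against. That said, your proof is the standard argument one finds in Tartar's book and in the interpolation literature, and it is essentially correct: compute the Calder\'on $K$-functional $K(f,t;L^1,L^\infty)=\int_0^t f^\ast$, plug it into Definition~\ref{def:KMethod}, use monotonicity of $f^\ast$ for the easy direction and Hardy's inequality for the hard direction, and then either repeat the $K$-computation or invoke reiteration for the other two couples.

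Two small imprecisions are worth flagging. First, the condition under which the weighted Hardy inequality applies here is $\theta>0$, i.e.\ $p>1$, not ``$1/p>0$, i.e.\ $p<\infty$'' as you wrote: substituting $K(f,t)=\int_0^t f^\ast$ into $\int_0^\infty t^{-\theta q}K(f,t)^q\,\frac{dt}{t}$ and applying $\int_0^\infty(\int_0^t g)^q t^{-r}\,dt\le C\int_0^\infty g^q t^{q-r}\,dt$ requires $r=\theta q+1>1$, i.e.\ $\theta>0$. (Separately, $\theta<1$, i.e.\ $p<\infty$, is needed for the interpolation space to be nondegenerate.) Also, the weight in your ``bounded on $L^q((0,\infty),t^{q/p-1}\,dt/t)$'' should be $t^{q/p-1}\,dt$, not $t^{q/p-1}\,dt/t$; this is the measure $t^{q/p}\,\frac{dt}{t}$ that actually appears in the Lorentz norm. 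Second, your reiteration step $[[L^1,L^\infty]_{1/2,2},\,L^\infty]_{\theta,q}=[L^1,L^\infty]_{(1+\theta)/2,q}$ uses an endpoint form of the reiteration theorem (one of the interior parameters sits at $\theta_1=1$), which is not covered by the standard reiteration statement and needs the Holmstedt variant or a one-sided reiteration theorem; you should cite that explicitly or, as you mention, fall back to the direct $K$-functional computation for $(L^2,L^\infty)$ and $(L^1,L^2)$, which avoids the issue entirely.
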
%
For H\"older's inequality on Lorentz spaces, we will need moreover the following result on the decreasing rearrangement.
\begin{proposition}\label{pr:fgastleqfaga}(See \cite[Proposition 1.4.5]{GrafC08})\\
For any $f,g \in \Sw(\R^n)$ and any $t > 0$,
\[
(fg)^\ast(2t) \leq f^\ast(t)\ g^\ast(t).
\]
\end{proposition}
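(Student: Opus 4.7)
The plan is to exploit the elementary set inclusion for the super-level sets of a product, combined with the basic identity relating the decreasing rearrangement and the distribution function. Write $d_f(\lambda) = \abs{\{\abs{f} > \lambda\}}$ as before. The central observation is the pointwise implication: if $\abs{f(x) g(x)} > a b$ with $a,b > 0$, then either $\abs{f(x)} > a$ or $\abs{g(x)} > b$. Indeed, if both $\abs{f(x)} \leq a$ and $\abs{g(x)} \leq b$, then $\abs{f(x)g(x)} \leq ab$. Consequently
\[
\{\abs{fg} > ab\} \subset \{\abs{f} > a\} \cup \{\abs{g} > b\},
\]
so $d_{fg}(ab) \leq d_f(a) + d_g(b)$.

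First I would dispose of the degenerate case. If $f^\ast(t) = 0$, then by definition of $f^\ast$ we have $d_f(s) \leq t$ for every $s > 0$, which (letting $s \downarrow 0$) forces $\abs{f} = 0$ on a set of measure at least $\abs{\R^n} - t$; in any case the right-hand side of the claimed inequality is zero, and the left-hand side must also be zero whenever this forces $fg = 0$ a.e.\ on a large set, making $(fg)^\ast(2t) = 0$. So we may assume $f^\ast(t) > 0$ and $g^\ast(t) > 0$.

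Next I would apply the inclusion above with $a = f^\ast(t)$ and $b = g^\ast(t)$, obtaining
\[
d_{fg}\bigl(f^\ast(t)\, g^\ast(t)\bigr) \leq d_f\bigl(f^\ast(t)\bigr) + d_g\bigl(g^\ast(t)\bigr).
\]
The key step is the standard estimate $d_f(f^\ast(t)) \leq t$, which follows from the right-continuity of $\lambda \mapsto d_f(\lambda)$: by definition of $f^\ast(t)$ as an infimum, one can choose $s_k \downarrow f^\ast(t)$ with $d_f(s_k) \leq t$, and right-continuity yields the limit estimate. Applying the same to $g$ gives $d_{fg}(f^\ast(t) g^\ast(t)) \leq 2t$, so by definition of the rearrangement $(fg)^\ast(2t) \leq f^\ast(t)\, g^\ast(t)$.

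I do not expect any serious obstacle here; this is a short and classical argument. The only point requiring care is the right-continuity of the distribution function used in the inequality $d_f(f^\ast(t)) \leq t$, which for Schwartz (or any measurable) functions is a consequence of the monotone convergence theorem applied to the indicator functions $\chi_{\{\abs{f} > s_k\}}$ as $s_k \downarrow f^\ast(t)$.
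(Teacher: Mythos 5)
Your proof is correct and follows essentially the same strategy as the paper: the pointwise implication $\abs{f(x)g(x)} > ab \Rightarrow \abs{f(x)} > a$ or $\abs{g(x)} > b$, the resulting distribution-function inequality $d_{fg}(ab) \leq d_f(a) + d_g(b)$, and then evaluation at $a = f^\ast(t)$, $b = g^\ast(t)$. The only genuine difference is in the final step: you invoke right-continuity of $\lambda \mapsto d_f(\lambda)$ to assert $d_f(f^\ast(t)) \leq t$ directly, whereas the paper sidesteps right-continuity entirely by using $d_f(f^\ast(t) + \tfrac{1}{k}) \leq t$ (immediate from the monotonicity of $d_f$ and the definition of $f^\ast$ as an infimum) and then letting $k \to \infty$ in $(fg)^\ast(2t) \leq (f^\ast(t) + \tfrac{1}{k})(g^\ast(t) + \tfrac{1}{k})$. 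Your route is slightly shorter but imports the right-continuity lemma; the paper's $\varepsilon$-approximation is marginally more self-contained. Both are standard. Your separate discussion of the degenerate case $f^\ast(t)=0$ is unnecessary — the main argument already covers it, since $d_f(f^\ast(t)) \leq t$ and the set inclusion remain valid when $f^\ast(t)=0$ — and the reasoning you give there is somewhat loose (the assertion that $(fg)^\ast(2t)=0$ ``whenever this forces $fg=0$ a.e.\ on a large set'' is not a tight derivation). Dropping that paragraph and applying the general argument uniformly would make the proof cleaner.
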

\begin{proofP}{\ref{pr:fgastleqfaga}}
We have for any $s$, $s_1$, $s_2 > 0$ such that $s = s_1 s_2$,
\[
\{x \in \R^n:\ \abs{f(x)g(x)} > s \} \subset \{x \in \R^n:\ \abs{f(x)} > s_1 \} \cup \{x \in \R^n:\ \abs{g(x)} > s_2 \}, 
\]
so
\[
d_{fg}(s) \leq d_f(s_1) + d_f(s_2).
\]
Consequently, for any $t > 0$,
\[
\{s > 0:\ d_{fg}(s) \leq 2t\} \supset \{s = s_1 s_2 > 0:\ d_{f}(s_1) \leq t,\ d_{g}(s_2) \leq t\},
\]
which implies
\[
 (fg)^\ast(2t) \leq \inf \{s = s_1 s_2 > 0:\ d_{f}(s_1) \leq t,\ d_{g}(s_2) \leq t\}.
\]
Of course,
\[
 d_f(f^\ast(t) + \frac{1}{k}) \leq t,\quad \mbox{and}\quad d_g(g^\ast(t) + \frac{1}{k}) \leq t \quad \mbox{for any $k \in \N$},
\]
so for any $k \in \N$
\[
\inf \{s = s_1 s_2 > 0:\ d_{f}(s_1) \leq t,\ d_{g}(s_2) \leq t\} \leq (f^\ast(t) + \frac{1}{k})g^\ast(t) + \frac{1}{k}).
\]
We conclude by letting $k$ go to $\infty$.
\end{proofP}

\begin{proposition}[Basic Lorentz Space Operations]\label{pr:dl:lso}
Let $f \in L^{p_1,q_1}$ and $g \in L^{p_2,q_2}$, $1 \leq p_1,p_2,q_1,q_2 \leq \infty$.
\begin{itemize}
 \item [(i)] If $\frac{1}{p_1} + \frac{1}{p_2} = \frac{1}{p} \in [0,1]$ and $\frac{1}{q_1} + \frac{1}{q_2} = \frac{1}{q}$ then $f g \in L^{p,q}$ and
 \[
 \Vert fg \Vert_{L^{p,q}} \aleq{p,q} \Vert f \Vert_{L^{p_1,q_1}}\ \Vert g \Vert_{L^{p_2,q_2}}. 
 \]
 \item [(ii)] If $\frac{1}{p_1} + \frac{1}{p_2} - 1 = \frac{1}{p} > 0$ and $\frac{1}{q_1} +  \frac{1}{q_2} = \frac{1}{q}$ then $f \ast g \in L^{p,q}$ and
 \[
 \Vert f\ast g \Vert_{L^{p,q}} \aleq{p,q} \Vert f \Vert_{L^{p_1,q_1}}\ \Vert g \Vert_{L^{p_2,q_2}}. 
 \]
 
 \item [(iii)] For $p_1 \in (1,\infty)$, $f$ belongs to $L^{p_1}(\R^n)$ if and only if $f \in L^{p_1,p_1}$. The ''norms`` of $L^{p_1,p_1}$ and $L^{p_1}$ are equivalent.
 \item [(iv)] If $p_1 \in (1,\infty)$ and $q \in [q_1,\infty]$ then also $f \in L^{p_1,q}$.
 \item [(v)] Finally, $\frac{1}{\abs{\cdot}^\lambda} \in L^{\frac{n}{\lambda},\infty}$, whenever $\lambda \in (0,n)$.
 \end{itemize}
\end{proposition}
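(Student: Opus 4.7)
The plan is to tackle the five statements in the order $(v), (iii), (iv), (i), (ii)$, since the later items rely on the earlier ones or on the already-established interpolation machinery.

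For $(v)$, I would just compute. If $f(x) = |x|^{-\lambda}$ on $\R^n$ with $\lambda \in (0,n)$, then the level set $\{|f| > s\}$ is the ball $\{|x| < s^{-1/\lambda}\}$, whose volume is $\omega_n s^{-n/\lambda}$. Hence $d_f(s) = \omega_n s^{-n/\lambda}$, and solving $\omega_n s^{-n/\lambda} \leq t$ for the smallest admissible $s$ gives $f^{\ast}(t) = (\omega_n/t)^{\lambda/n}$. In particular $\sup_{t>0} t^{\lambda/n} f^{\ast}(t) = \omega_n^{\lambda/n} < \infty$, i.e.\ $f \in L^{n/\lambda,\infty}$. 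Statement $(iii)$ is the standard identification $L^{p,p} = L^p$, which follows from the layer-cake formula combined with the fact that $f$ and $f^{\ast}$ are equidistributed: both of $\|f\|_{L^p}^p$ and $\|f\|_{L^{p,p}}^p = \int_0^\infty (f^{\ast}(t))^p\, dt$ equal $\int_0^\infty p\lambda^{p-1}d_f(\lambda)\,d\lambda$, up to the factor $p$ that produces the equivalence of norms (rather than equality). For $(iv)$, I invoke Lemma~\ref{la:lpqinterpoldef} to identify $L^{p_1,q_1}$ and $L^{p_1,q}$ as $[X,Y]_{\theta,q_1}$ and $[X,Y]_{\theta,q}$ for the same couple $(X,Y)$ and the same $\theta = \theta(p_1) \in (0,1)$, and then apply Proposition~\ref{pr:bs:xyqintoxyqs} directly; the embedding is continuous with constant depending on $p_1, q_1, q$.

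For the H\"older estimate $(i)$, the main tool is Proposition~\ref{pr:fgastleqfaga}. Assuming first $q < \infty$, I change variables $t \mapsto 2t$ inside the definition of $\|fg\|_{L^{p,q}}$ and then use $(fg)^{\ast}(2t) \leq f^{\ast}(t)\, g^{\ast}(t)$ to bound the integrand by $t^{q/p} (f^{\ast}(t))^q (g^{\ast}(t))^q$. Writing $t^{q/p} = t^{q/p_1} \cdot t^{q/p_2}$ since $1/p = 1/p_1 + 1/p_2$, and then applying the classical H\"older inequality on $((0,\infty), dt/t)$ with exponents $q_1/q$ and $q_2/q$ (whose conjugacy is the assumption $1/q_1 + 1/q_2 = 1/q$) separates the two factors and produces $\|f\|_{L^{p_1,q_1}}^q \|g\|_{L^{p_2,q_2}}^q$. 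The case $q = \infty$ is handled by passing to the supremum rather than integrating, using the same pointwise inequality.

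Finally, $(ii)$ follows from $(i)$ and the classical Young convolution inequalities via the interpolation theorem (Lemma~\ref{la:interpolation}) applied twice. Fix $g \in L^{p_2,q_2}$ and consider the linear operator $T_g \colon f \mapsto f \ast g$. Using the standard Young inequality $L^1 \ast L^{p_2} \hookrightarrow L^{p_2}$ and $L^{p_2'} \ast L^{p_2} \hookrightarrow L^\infty$ together with the identification of $L^{p_1,q_1}$ as an interpolation space between $L^1$ and $L^{p_2'}$ from Lemma~\ref{la:lpqinterpoldef}, one obtains boundedness of $T_g$ from $L^{p_1,q_1}$ into an intermediate Lorentz space; then a second interpolation argument in the variable $g$ (again fixing $f$) yields the full bilinear estimate with Lorentz exponents on both sides. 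The slight subtlety that I expect to be the main obstacle is keeping the interpolation parameters consistent: one must check that the two successive interpolations of the exponents $p_i, q_i$ combine to give exactly $1/p = 1/p_1 + 1/p_2 - 1$ and $1/q = 1/q_1 + 1/q_2$, which is a bookkeeping exercise using Lemma~\ref{la:lpqinterpoldef} and the reiteration property of the $K$-method.
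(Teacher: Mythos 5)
Items (i), (iii), (iv), (v) match the paper's own argument: (v) by the explicit calculation of $d_f$ and $f^\ast$, (iii) by equidistribution/layer-cake, (iv) by Lemma~\ref{la:lpqinterpoldef} combined with Proposition~\ref{pr:bs:xyqintoxyqs}, and (i) by a change of variables $t \mapsto 2t$, Proposition~\ref{pr:fgastleqfaga}, the split $t^{q/p} = t^{q/p_1}\cdot t^{q/p_2}$, and classical H\"older on $((0,\infty),dt/t)$. These are all fine.

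Item (ii) is where your plan breaks. The paper does not prove (ii); it simply cites O'Neil \cite[Theorem 2.6]{ONeil63}. Your proposal tries to derive it from the classical Young inequality by two successive applications of the $K$-method interpolation theorem, and that route cannot yield the sharp second index. Here is the concrete obstruction. After the first interpolation (in $f$, with $g$ fixed in a Lebesgue space $L^{r}$) you obtain, via Lemma~\ref{la:lpqinterpoldef} and Lemma~\ref{la:interpolation}, a bound of the form
\[
T_g : L^{p_1,q_1} \to L^{p(r),\,q_1}, \qquad \|T_g\| \lesssim \|g\|_{L^r},
\]
where $1/p(r) = 1/p_1 + 1/r - 1$. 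Doing this for two exponents $r_0 \neq r_1$ and then interpolating in $g$ with parameter $\sigma$ and second index $q_2$ gives, by reiteration,
\[
[L^{p(r_0),q_1}, L^{p(r_1),q_1}]_{\sigma,q_2} = L^{p,\,q_2},
\]
because in real interpolation of two Lorentz spaces with \emph{different} primary exponents the second indices of the endpoints are irrelevant --- only the second index of the interpolation parameter survives. Thus you end with $\|f\ast g\|_{L^{p,q_2}} \lesssim \|f\|_{L^{p_1,q_1}}\|g\|_{L^{p_2,q_2}}$, not the claimed $L^{p,q}$ with $1/q = 1/q_1+1/q_2$. Since $q \leq q_2$ and the embedding $L^{p,q}\subset L^{p,q_2}$ is strict in general, this is a genuinely weaker conclusion. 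Moreover the paper actually needs the sharp version: in the proof of Theorem~\ref{th:integrability} it uses $L^{4/3,2}\ast L^{4/3,2}\subset L^{2,1}$, which has $q=1 < q_1 = q_2 = 2$ and cannot be reached by the double interpolation above. There is also a preliminary snag: the first interpolation requires $g \in L^{p_2}$, whereas the hypothesis only gives $g \in L^{p_2,q_2}$, which is a strictly larger space when $q_2 > p_2$; this is precisely why one needs the second interpolation to begin with, but as explained it comes at the cost of the second index. To prove (ii) in full strength you need O'Neil's direct rearrangement argument (or a genuinely bilinear interpolation theorem), not iterated linear interpolation.
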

\begin{proofP}{\ref{pr:dl:lso}}
As for (i), this is proved using classical H\"older inequality and Proposition~\ref{pr:fgastleqfaga} in the following way:
\[
\begin{ma}
 &&\intl_0^\infty \brac{t^{\theta} (fg)^\ast(t) }^q \frac{dt}{t}\\
&\overset{\sref{P}{pr:fgastleqfaga}}{\aleq{}}& \intl_0^\infty \brac{t^{\theta q_1} f^\ast(t)^{q_1}\ t^{-1} }^{\frac{q}{q_1}}\ \brac{t^{\theta q_2} g^\ast(t)^{q_2}\ t^{-1} }^{\frac{q}{q_2}}\ dt\\
&\leq& \brac{\intl_0^\infty t^{\theta q_1} f^\ast(t)^{q_1} \ \frac{dt}{t}}^{\frac{q}{q_1}}\ \brac{\intl_0^\infty t^{\theta q_2} g^\ast(t)^{q_2}  \ \frac{dt}{t}}^{\frac{q}{q_2}}.
\end{ma}
\]
As for (ii), this is the result in \cite[Theorem 2.6]{ONeil63}. As for (iii), this follows by the definition of $f^\ast$. Property (iv) was proven in Proposition~\ref{pr:bs:xyqintoxyqs}.\\
Lastly we consider Property (v). One checks that
\[
 \{x\in \R^n: \abs{x}^{-\lambda} > s\} = B_{s^{-\frac{1}{\lambda}}}(0),
\]
so
\[
 \brac{\abs{\cdot}^{-\lambda}}^\ast(t) = c_n\ t^{-\frac{\lambda}{n}},
\]
which readily implies
\[
 \Vert \abs{\cdot}^{-\lambda} \Vert_{L^{p,\infty}} = c_n \sup_{t > 0} t^{\frac{1}{p}}\ t^{-\frac{\lambda}{n}},
\]
which is finite if and only if $p = \frac{n}{\lambda}$. 
\end{proofP}

As the Lorentz spaces can be defined by interpolation, see Lemma~\ref{la:lpqinterpoldef}, by the Interpolation Theorem, Lemma~\ref{la:interpolation}, the following holds.
\begin{proposition}[Fourier Transform in Lorentz Spaces] \label{pr:fourierlpest}
For any $f \in \Sw$, $p \in (1,2)$, $q \in [1,\infty]$ we have
\[
\Vert f^\wedge \Vert_{L^{p',q}} \leq C_p \Vert f \Vert_{L^{p,q}}, \quad \Vert f^\vee \Vert_{L^{p',q}} \leq C_p \Vert f \Vert_{L^{p,q}}.
\]%
Here, $\frac{1}{p'} + \frac{1}{p} = 1$.
\end{proposition}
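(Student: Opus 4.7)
The plan is to deduce both estimates from the two endpoint Hausdorff--Young bounds for the Fourier transform together with the Interpolation Theorem (Lemma~\ref{la:interpolation}) and the identification of Lorentz spaces as real interpolation spaces (Lemma~\ref{la:lpqinterpoldef}).

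First I would record the two classical endpoint estimates. The trivial bound $\Vert f^\wedge \Vert_{L^\infty(\R^n)} \leq \Vert f \Vert_{L^1(\R^n)}$ (and the analogous inequality for $f^\vee$) shows that the Fourier transform extends continuously as $\mathcal{F}: L^1(\R^n) \to L^\infty(\R^n)$ with operator norm at most $1$; Plancherel's theorem yields $\Vert f^\wedge \Vert_{L^2(\R^n)} = \Vert f \Vert_{L^2(\R^n)}$, so $\mathcal{F}: L^2(\R^n) \to L^2(\R^n)$ is bounded with norm $1$ as well. The same two estimates hold for $\mathcal{F}^{-1}$.

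Next I would apply Lemma~\ref{la:interpolation} with $X_1 := L^1(\R^n)$, $Y_1 := L^2(\R^n)$, $X_2 := L^\infty(\R^n)$, $Y_2 := L^2(\R^n)$, and $T := \mathcal{F}$. For every $\theta \in (0,1)$ and every $q \in [1,\infty]$ this provides a continuous mapping
\[
\mathcal{F}: [L^1(\R^n), L^2(\R^n)]_{\theta,q} \longrightarrow [L^\infty(\R^n), L^2(\R^n)]_{\theta,q}
\]
of norm at most $1$. The final step is to identify these interpolation spaces with the Lorentz spaces in the statement. By Lemma~\ref{la:lpqinterpoldef} one has $[L^1(\R^n), L^2(\R^n)]_{2-\frac{2}{p},q} = L^{p,q}(\R^n)$ for $p \in (1,2)$. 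For the target, the elementary symmetry relation $[X,Y]_{\theta,q} = [Y,X]_{1-\theta,q}$, with equivalent norms (immediate from Definition~\ref{def:KMethod} via the substitution $t \mapsto 1/t$ in the integral defining $\Vert \cdot \Vert_{[X,Y]_{\theta,q}}$), yields $[L^\infty(\R^n), L^2(\R^n)]_{\theta,q} = [L^2(\R^n), L^\infty(\R^n)]_{1-\theta,q}$; Lemma~\ref{la:lpqinterpoldef} identifies the latter space with $L^{p',q}(\R^n)$ precisely when $1-\theta = 1 - \frac{2}{p'}$, i.e.\ $\theta = \frac{2}{p'}$. Since $\frac{1}{p} + \frac{1}{p'} = 1$ gives $\frac{2}{p'} = 2 - \frac{2}{p}$, the two conditions on $\theta$ match, and the first inequality follows. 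The second is obtained by the identical argument applied to $\mathcal{F}^{-1}$.

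I do not anticipate a genuine obstacle: the endpoint bounds (Riemann--Lebesgue together with Plancherel) and the real interpolation description of $L^{p,q}$ have already been recorded, so the proof reduces to a routine application of the Interpolation Theorem. The only minor bookkeeping point is the matching of the interpolation parameters between domain and codomain spaces, which is precisely the identity $\frac{2}{p'} = 2 - \frac{2}{p}$.
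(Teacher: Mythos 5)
Your proof is correct and takes exactly the route the paper intends: the paper dispenses with the proof by simply remarking, immediately before the statement, that it "follows by the Interpolation Theorem, Lemma~\ref{la:interpolation}, and Lemma~\ref{la:lpqinterpoldef}." You have merely written out that argument in full — the two endpoint bounds (Riemann--Lebesgue and Plancherel), the application of Lemma~\ref{la:interpolation} with $X_1=L^1$, $Y_1=L^2$, $X_2=L^\infty$, $Y_2=L^2$, the symmetry $[X,Y]_{\theta,q}=[Y,X]_{1-\theta,q}$ (indeed an isometry, via $t\mapsto 1/t$ in the $K$-functional integral), and the parameter match $2-\tfrac{2}{p}=\tfrac{2}{p'}$. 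No gaps.
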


\begin{proposition}[Scaling in Lorentz Spaces]\label{pr:scalinglorentz}
Let $\lambda > 0$ and $f \in \Sw(\R^n)$. If we denote $\tilde{f}(\cdot) := f(\lambda \cdot)$, then
\[
\Vert \tilde{f} \Vert_{L^{p,q}} = \lambda^{-\frac{n}{p}} \Vert f \Vert_{L^{p,q}}.
\]
\end{proposition}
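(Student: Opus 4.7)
The plan is to reduce everything to the behaviour of the decreasing rearrangement under dilation, and then just apply the definition of $\Vert \cdot \Vert_{L^{p,q}}$ together with a single substitution.

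First I would compute the distribution function of $\tilde f$. By the change of variables $y = \lambda x$,
\[
d_{\tilde f}(\alpha) = \abs{\{x \in \R^n :\ \abs{f(\lambda x)} > \alpha\}} = \lambda^{-n}\, \abs{\{y \in \R^n :\ \abs{f(y)} > \alpha\}} = \lambda^{-n}\, d_f(\alpha).
\]
Inserting this into the definition of the decreasing rearrangement and observing that $d_{\tilde f}(s) \leq t$ is equivalent to $d_f(s) \leq \lambda^n t$, I obtain the key identity
\[
\tilde f^\ast(t) = \inf \{s > 0:\ d_f(s) \leq \lambda^n t\} = f^\ast(\lambda^n t), \qquad t > 0.
\]

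Next I would plug this into the Lorentz quasi-norm. For $q < \infty$ and $p < \infty$, the substitution $s = \lambda^n t$ gives $\frac{dt}{t} = \frac{ds}{s}$ and $t^{1/p} = \lambda^{-n/p}\, s^{1/p}$, so
\[
\Vert \tilde f \Vert_{L^{p,q}}^q
= \intl_0^\infty \brac{t^{\frac{1}{p}}\, f^\ast(\lambda^n t)}^q \frac{dt}{t}
= \lambda^{-\frac{nq}{p}} \intl_0^\infty \brac{s^{\frac{1}{p}}\, f^\ast(s)}^q \frac{ds}{s}
= \lambda^{-\frac{nq}{p}}\, \Vert f \Vert_{L^{p,q}}^q.
\]
The case $q = \infty$, $p < \infty$ follows analogously with $\sup$ in place of the integral, yielding $\sup_{t>0} t^{1/p} f^\ast(\lambda^n t) = \lambda^{-n/p} \sup_{s>0} s^{1/p} f^\ast(s)$; the degenerate case $p = q = \infty$ reduces to $\Vert \tilde f \Vert_{L^\infty} = \Vert f \Vert_{L^\infty}$, consistent with the convention $\lambda^{-n/\infty} = 1$. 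There is no real obstacle here: the only point to get right is the bookkeeping of the factor $\lambda^n$ inside $f^\ast$ versus the factor $t^{1/p}$ outside, and the resulting exponent $-n/p$ matches the familiar scaling of $L^p$ in the case $p = q$, as predicted by Proposition~\ref{pr:dl:lso}(iii).
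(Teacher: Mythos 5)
Your proof is correct and follows the same route as the paper: compute $d_{\tilde f}(\alpha)=\lambda^{-n}d_f(\alpha)$, deduce $\tilde f^\ast(t)=f^\ast(\lambda^n t)$, and substitute $s=\lambda^n t$ in the defining integral. You additionally write out the $q=\infty$ and $p=q=\infty$ cases, which the paper leaves implicit; this is a harmless bit of extra care, not a different argument.
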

\begin{proofP}{\ref{pr:scalinglorentz}}
We have that $d_{\tilde{f}}(s) = \lambda^{-n} d_f(s)$ for any $s > 0$ and thus $\tilde{f}^\ast(t) = f^\ast (\lambda^n t)$ for any $t > 0$. Hence, 
\[
\intl_0^\infty \left (t^{\frac{1}{p}} \tilde{f}^\ast(t) \right )^q\ \frac{dt}{t} = \lambda^{-q\frac{n}{p}} \intl_0^\infty \left ((\lambda^n t)^{\frac{1}{p}} f^\ast(\lambda t) \right )^q\ \frac{dt}{t}
= \lambda^{-q\frac{n}{p}} \Vert f \Vert_{L^{p,q}}^q. 
\]
We can conclude.
\end{proofP}

\begin{proposition}[H\"older inequality in Lorentz Spaces]\label{pr:hoeldercompactsupp}
Let $\supp f \subset \bar{D}$, where $D \subset \R^n$ is a bounded measurable set. Then, whenever $\infty > p_1 > p \geq 1$, $q \in [1,\infty]$
\begin{equation}\label{eq:bs:estvp1q1}
\Vert f \Vert_{L^{p,q}} \leq C_{p,p_1,q}\ \abs{D}^{\frac{1}{p} - \frac{1}{p_1}}\ \Vert f \Vert_{L^{p_1}}.
\end{equation}
\end{proposition}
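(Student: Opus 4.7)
The plan is to exploit the support condition through the distribution function. Since $\supp f \subset \bar D$, the distribution function satisfies $d_f(s) \leq \abs{D}$ for every $s > 0$, which forces $f^\ast(t) = 0$ for all $t > \abs{D}$. Consequently the defining integral for $\Vert f \Vert_{L^{p,q}}$ truncates:
\[
\Vert f \Vert_{L^{p,q}}^q = \intl_0^{\abs{D}} \brac{t^{1/p} f^\ast(t)}^q \frac{dt}{t}, \qquad q < \infty,
\]
with the analogous truncation of the supremum when $q = \infty$.

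Next, I would pointwise dominate $f^\ast$ using the weak norm: by definition $\Vert f \Vert_{L^{p_1,\infty}} = \sup_{t>0} t^{1/p_1} f^\ast(t)$, so $f^\ast(t) \leq t^{-1/p_1} \Vert f \Vert_{L^{p_1,\infty}}$. Substituting this into the truncated integral (for $q < \infty$) gives
\[
\Vert f \Vert_{L^{p,q}}^q \leq \Vert f \Vert_{L^{p_1,\infty}}^q \intl_0^{\abs{D}} t^{q(1/p - 1/p_1) - 1}\ dt = \frac{\abs{D}^{q(1/p - 1/p_1)}}{q(1/p - 1/p_1)} \Vert f \Vert_{L^{p_1,\infty}}^q,
\]
where convergence of the integral at $0$ uses exactly the hypothesis $p_1 > p$. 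For $q = \infty$, factoring $t^{1/p} = t^{1/p - 1/p_1}\cdot t^{1/p_1}$ and taking the supremum over $(0, \abs{D}]$ gives the same bound with no integral.

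Finally, I close by passing from $L^{p_1,\infty}$ to $L^{p_1}$: since the hypothesis $p_1 > p \geq 1$ forces $p_1 \in (1,\infty)$, Proposition~\ref{pr:dl:lso}(iii) identifies $L^{p_1}$ with $L^{p_1,p_1}$ up to equivalent norms, and Proposition~\ref{pr:dl:lso}(iv) then yields $\Vert f \Vert_{L^{p_1,\infty}} \leq C_{p_1} \Vert f \Vert_{L^{p_1}}$. Combining this with the previous display produces the claimed inequality \eqref{eq:bs:estvp1q1}. There is no real obstacle here; the only point requiring attention is checking that the power $q(1/p - 1/p_1) - 1$ is integrable near $0$, which is automatic from $p < p_1$, and that the constant depends only on $p,p_1,q$ as claimed.
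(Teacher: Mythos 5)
Your argument is correct, and it is a genuinely different route from the paper's. The paper introduces the characteristic function $\chi = \chi_D$, computes $\chi^\ast$ and hence $\Vert \chi \Vert_{L^{p_2,q}}$ for $\frac{1}{p_2} = \frac{1}{p} - \frac{1}{p_1}$, and then applies the Lorentz-space H\"older inequality (Proposition~\ref{pr:dl:lso}(i)) to $f = f\chi$ together with the embedding $L^{p_1} \subset L^{p_1,\infty}$. You instead work directly from the rearrangement: the support condition truncates the defining integral to $(0,\abs{D}]$, the weak-norm bound $f^\ast(t) \le t^{-1/p_1}\Vert f\Vert_{L^{p_1,\infty}}$ dominates the integrand pointwise, and the remaining power integral converges precisely because $p < p_1$. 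Both proofs exploit the same underlying fact -- $f^\ast$ vanishes past $\abs{D}$, which in the paper is encoded by $\chi^\ast$ -- but yours is more elementary: it avoids invoking H\"older in Lorentz spaces (which itself rests on Proposition~\ref{pr:fgastleqfaga}) and reduces everything to a one-line integral computation, at the modest cost of handling $q < \infty$ and $q = \infty$ separately. The paper's version is shorter once the H\"older machinery is in place and keeps the argument at the level of norms rather than rearrangements. Both give constants depending only on $p, p_1, q$, and both close the gap from $L^{p_1,\infty}$ to $L^{p_1}$ the same way via Proposition~\ref{pr:dl:lso}(iii)--(iv), which is legitimate since $p_1 > p \ge 1$ forces $p_1 \in (1,\infty)$.
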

\begin{proofP}{\ref{pr:hoeldercompactsupp}}
Denote by $\chi \equiv \chi_D$ the characteristic function of the set $D \subset \R^n$. One checks that
\[
\chi^\ast(t) = \begin{cases}
	1\quad  &\mbox{if $t < \abs{D}$},\\
	0\quad &\mbox{if $t \geq \abs{D}$.}
	\end{cases}
\]
Consequently,
\[
\Vert \chi \Vert_{L^{p_2,q_2}} \aeq{} \abs{D}^{\frac{1}{p_2}} \quad \mbox{whenever $1 \leq p_2 < \infty$, $q_2 \in [1,\infty]$}.
\]
One concludes by applying H\"older's inequality in Lorentz spaces, Proposition~\ref{pr:dl:lso} (i), choosing $q_2 = q$ and $p_2$ such that
\[
 \frac{1}{p_2} + \frac{1}{p_1} = \frac{1}{p},
\]
using also the continuous embedding $L^{p_1} \subset L^{p_1,\infty}$,
\[
 \Vert f \Vert_{L^{p,q}} = \Vert f \chi \Vert_{L^{p,q}} \aleq{} \Vert f \Vert_{L^{p_1,\infty}}\ \Vert \chi \Vert_{L^{p_2,q}} \aleq{} \Vert f \Vert_{L^{p_1}}\ \abs{D}^{\frac{1}{p}-\frac{1}{p_1}}. \]

\end{proofP}

\subsection{Fractional Sobolev Spaces}\label{sec:fracsobspace}
In the following section we will give two equivalent definitions of the fractional Sobolev space $H^s \equiv H^s(\R^n)$, $s > 0$. The first definition is motivated by the interpretation of the Laplace operator as Fourier multiplier operator.
\begin{definition}[Fractional Sobolev Spaces by Fourier Transform]\label{def:fracSob}
Let $f \in L^2(\R^n)$. We say that for some $s \geq 0 $ the function $f \in H^s \equiv H^s(\R^n)$ if and only if $\lap^\frac{s}{2} f \in L^2(\R^n)$. Here, the operator $\lap^{\frac{s}{2}}$ is defined as
\[
\lap^{\frac{s}{2}} f := \left (\abs{\cdot}^s f^\wedge\right )^\vee.
\]
The norm, under which $H^s(\R^n)$ becomes a Hilbert space is
\[
\Vert f \Vert_{H^s(\R^n)}^2 := \Vert f \Vert_{L^2(\R^n)}^2 + \Vert \laps{s} f \Vert_{L^2(\R^n)}^2. 
\]
\end{definition}
\begin{remark}\label{rem:lapsClasLap}
Observe, that the definition of $\laps{2}$ coincides with the usual laplacian only up to a multiplicative constant, but this saves us from the nuisance to deal with those standard factors in every single calculation.   
\end{remark}
\begin{remark}
Observe that $\laps{s} f$ is a real function whenever $f \in \Sw(\R^n,\R)$. In fact, this is true for any multiplier operator $M$ defined for some multiplier $m \in C^\infty(\R^n \backslash \{0\}$ as
\[
 (Mf)^\wedge(\cdot) := m(\cdot)\ f^\wedge (\cdot),
\]
once we assume the additional condition
\begin{equation}\label{eq:remfracsob:mmxibarmxi}
 \overline{m(\xi)} = m(-\xi) \quad \mbox{for any $\xi \in \R^n \backslash \{0\}$},
\end{equation}
where by $\overline{\cdot}$ we denote the complex conjugate. This again can be seen as follows:
\[
\begin{ma}
 \overline{M f} &=& \overline{\brac{m(\cdot)\ f^\wedge}^\vee}\\
&=& \brac{\overline{m(\cdot)\ f^\wedge}}^\wedge\\
&\overset{\eqref{eq:remfracsob:mmxibarmxi}}{=}& \brac{m(-\cdot)\ f^\wedge(-\cdot)}^\wedge\\
&=& \brac{ \brac{Mf}^{\wedge}(-\cdot)}^\wedge\\
&=& \brac{ \brac{Mf}^{\vee}(\cdot)}^\wedge \qquad = Mf.
\end{ma}
\]
\end{remark}
\begin{remark}
In Section \ref{ss:idlaps} we will prove an integral representation for the fractional laplacian $\laps{s}$. 
\end{remark}
On the other hand, fractional Sobolev spaces can be defined by interpolation:
\begin{lemma}[Fractional Sobolev Spaces by Interpolation]\label{la:fracsobdef2}\nl
(See \cite[Chapter 23]{Tartar07})\\
Let $s \in (0,\infty)$. Then
\[
 H^s(\R^n) = [W^{i,2}(\R^n),W^{j,2}(\R^n)]_{\theta,2},
\]
with equivalent norms, whenever $\theta = \frac{s-i}{j-i} \in(0,1)$ for $i < s < j$, $i,j \in \N_0$. 
\end{lemma}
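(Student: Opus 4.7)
The plan is to reduce the statement, via the Fourier transform, to a clean interpolation identity for weighted $L^2$ spaces on $\R^n$, and then compute the $K$-functional explicitly. By Plancherel, $\mathcal{F}$ is a unitary isomorphism of $L^2(\R^n)$ with itself, and standard computations give $\Vert f \Vert_{W^{k,2}}^2 \aeq{} \int_{\R^n} (1+\abs{\xi}^2)^k \abs{f^\wedge(\xi)}^2 d\xi$ for $k \in \N_0$ and $\Vert f \Vert_{H^s}^2 \aeq{} \int_{\R^n} (1+\abs{\xi}^2)^s \abs{f^\wedge(\xi)}^2 d\xi$ (using $1+\abs{\xi}^{2s} \aeq{} (1+\abs{\xi}^2)^s$). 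Writing $w_k(\xi) := (1+\abs{\xi}^2)^k$ and $L^2(w_k)$ for the corresponding weighted $L^2$-space, and observing that interpolation functors commute with isomorphisms, the claim reduces to $[L^2(w_i),L^2(w_j)]_{\theta,2} = L^2(w_s)$ with equivalent norms, where $s = (1-\theta)i + \theta j$.

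For this identity I would compute the $K$-functional directly. Because $a+tb \aeq{} \sqrt{a^2+t^2b^2}$, we may replace $K(g,t)$ by the equivalent quadratic quantity $\tilde{K}(g,t)^2 := \inf_{g=g_1+g_2}\brac{\Vert g_1\Vert_{L^2(w_i)}^2 + t^2 \Vert g_2\Vert_{L^2(w_j)}^2}$. The variational problem decouples pointwise: minimizing $\abs{a}^2 w_i(\xi) + t^2\abs{g(\xi)-a}^2 w_j(\xi)$ over $a \in \Cc$ yields the explicit value $\frac{t^2 w_i(\xi) w_j(\xi)}{w_i(\xi) + t^2 w_j(\xi)} \abs{g(\xi)}^2$, whose integral is $\tilde{K}(g,t)^2$; the measurable-selector issue is trivial here since the pointwise minimizer is an algebraic function of $g(\xi)$.

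Next I would integrate the $K$-functional against $t^{-2\theta}\,dt/t$ on $(0,\infty)$, apply Fubini to exchange the integrals, and evaluate the inner one-dimensional integral via the substitution $u = t^2 w_j(\xi)/w_i(\xi)$:
\[
\intl_0^\infty \frac{t^{1-2\theta} w_i w_j}{w_i+t^2 w_j}\,dt = \tfrac{1}{2}\, w_i(\xi)^{1-\theta} w_j(\xi)^{\theta} \intl_0^\infty \frac{u^{-\theta}}{1+u}\,du = c_\theta\, w_s(\xi),
\]
where the last integral is a Beta function evaluating to $\pi/\sin(\pi\theta)$. Thus $\Vert g \Vert_{[L^2(w_i),L^2(w_j)]_{\theta,2}}^2 \aeq{} c_\theta \Vert g \Vert_{L^2(w_s)}^2$, and transferring back through the Fourier transform finishes the proof.

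The main (mild) obstacle is bookkeeping between the two equivalent $K$-functionals and justifying the exchange of integration orders; both are routine once one notices that the integrand is non-negative and the pointwise minimizer is explicit. Everything else is the substitution computation and a Plancherel reduction.
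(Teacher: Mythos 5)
Your proof is correct. The paper does not supply its own proof of this lemma (it cites Tartar's monograph directly), so there is nothing internal to compare against; your argument — Plancherel to reduce to weighted $L^2$ spaces, the equivalent quadratic $K$-functional with pointwise minimization $\min_a\bigl(|a|^2 w_i + t^2|g-a|^2 w_j\bigr) = \frac{t^2 w_i w_j}{w_i + t^2 w_j}|g|^2$, Fubini, and the Beta-function computation of $\int_0^\infty u^{-\theta}(1+u)^{-1}\,du = \pi/\sin(\pi\theta)$ — is the standard Lions--Peetre argument and is essentially the one Tartar's treatment rests on.
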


\begin{lemma}[Compactly Supported Smooth Functions are Dense]\label{la:Tartar07:Lemma15.10}
(see \cite[Lemma 15.10.]{Tartar07})\\
The space $C_0^\infty(\R^n) \subset H^s(\R^n)$ is dense for any $s \geq 0$, t.
\end{lemma}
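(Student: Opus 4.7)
The plan is to proceed by the classical truncate–and–mollify construction, using the Fourier-side definition of $H^s$ (Definition~\ref{def:fracSob}) to handle the mollification step and the interpolation characterization (Lemma~\ref{la:fracsobdef2}) to handle the cutoff step.

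\emph{Mollification.} Let $\rho\in C_0^\infty(\R^n)$ satisfy $\rho\ge 0$, $\int\rho=1$, $\supp\rho\subset B_1(0)$, and set $\rho_\varepsilon(x):=\varepsilon^{-n}\rho(x/\varepsilon)$. For $f\in H^s(\R^n)$ I define $f_\varepsilon := \rho_\varepsilon\ast f$, so that on the Fourier side $f_\varepsilon^{\wedge}(\xi)=\rho^{\wedge}(\varepsilon\xi)\,f^{\wedge}(\xi)$, where $\rho^{\wedge}\in\Sw$ satisfies $\rho^{\wedge}(0)=1$ and $\|\rho^{\wedge}\|_{L^\infty}\le 1$. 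The Schwartz decay of $\rho^{\wedge}$ forces $|\xi|^k\,\rho^{\wedge}(\varepsilon\xi)\in L^\infty$ for every $k\in\N_0$, hence $f_\varepsilon\in C^\infty(\R^n)\cap W^{k,2}(\R^n)$ for all $k$. Using Definition~\ref{def:fracSob},
\[
\|\laps{s}(f-f_\varepsilon)\|_{L^2}^2 \;=\; \intl_{\R^n}\abs{\xi}^{2s}\,\abs{1-\rho^{\wedge}(\varepsilon\xi)}^2\,\abs{f^{\wedge}(\xi)}^2\,d\xi
\]
has integrand bounded by $4\abs{\xi}^{2s}\abs{f^{\wedge}(\xi)}^2\in L^1$ and vanishing pointwise as $\varepsilon\to 0^+$; the same argument with $s=0$ handles the $L^2$-part, so dominated convergence yields $f_\varepsilon\to f$ in $H^s$.

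\emph{Cutoff.} Fix $\eta\in C_0^\infty(\R^n)$ with $\eta\equiv 1$ on $B_1(0)$, $\supp\eta\subset B_2(0)$, and set $\eta_R(x):=\eta(x/R)$. For the smooth function $g:=f_\varepsilon$ just produced, $\eta_R g\in C_0^\infty(\R^n)$, and my goal is $\eta_R g\to g$ in $H^s$ as $R\to\infty$. On the integer scale, Leibniz gives, for any integer $k$ and multiindex $\abs{\alpha}\le k$,
\[
\partial^\alpha\brac{(\eta_R-1)g} \;=\; (\eta_R-1)\,\partial^\alpha g \;+\; \suml_{0<\beta\le\alpha}\binom{\alpha}{\beta}(\partial^\beta\eta_R)\,\partial^{\alpha-\beta}g.
\]
The $\beta=0$ term tends to $0$ in $L^2$ by dominated convergence (the factor $(\eta_R-1)$ is bounded and converges pointwise to $0$, while $\partial^\alpha g\in L^2$), and the $\abs{\beta}>0$ terms carry a vanishing factor $\|\partial^\beta\eta_R\|_{L^\infty}\le R^{-\abs{\beta}}\|\partial^\beta\eta\|_{L^\infty}$, so $\|\eta_R g-g\|_{W^{k,2}}\to 0$ for every integer $k$. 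Choosing integers $i<j$ with $i\le s\le j$ and $\theta=(s-i)/(j-i)$, Lemma~\ref{la:fracsobdef2} identifies $H^s$ with $[W^{i,2},W^{j,2}]_{\theta,2}$ up to equivalence of norms; splitting the defining $K$-functional integral at $t_0=\|z\|_{W^{i,2}}/\|z\|_{W^{j,2}}$ yields the classical interpolation bound $\|z\|_{H^s}\le C_\theta\,\|z\|_{W^{i,2}}^{1-\theta}\,\|z\|_{W^{j,2}}^{\theta}$. Applied to $z=\eta_R g-g$, the integer-order limits above force $\eta_R g\to g$ in $H^s$.

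Combining both steps proves the lemma: given $f\in H^s$ and $\delta>0$, first pick $\varepsilon$ with $\|f-f_\varepsilon\|_{H^s}<\delta/2$ and then $R$ with $\|\eta_R f_\varepsilon-f_\varepsilon\|_{H^s}<\delta/2$, so that $\eta_R f_\varepsilon\in C_0^\infty(\R^n)$ is within $\delta$ of $f$ in $H^s$. The only non-routine point, in my view, is the cutoff step at non-integer $s$: the product-rule manipulations above only control integer-order norms, and the interpolation inequality extracted from the $K$-functional is the device that transports integer-order convergence to the fractional norm. Everything else reduces to dominated convergence on the Fourier side.
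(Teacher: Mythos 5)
The paper itself offers no proof of this lemma; it simply cites \cite[Lemma 15.10]{Tartar07}. Your truncate-and-mollify argument is the standard one and is essentially correct. Two small points of care: first, to write $\|z\|_{H^s}\le C_\theta \|z\|_{W^{i,2}}^{1-\theta}\|z\|_{W^{j,2}}^{\theta}$ via Lemma~\ref{la:fracsobdef2} you need $\theta\in(0,1)$, i.e.\ strict inequalities $i<s<j$; if $s$ itself is an integer you should either pick $i<s<j$ with $s$ strictly between them (which still works, since then $H^s=W^{s,2}=[W^{i,2},W^{j,2}]_{\theta,2}$), or simply observe that $H^s=W^{s,2}$ and the integer-order convergence you established already finishes the job. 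Second, your inequality $K(z,t)\le\min(\|z\|_{W^{i,2}},\,t\|z\|_{W^{j,2}})$ and the splitting at $t_0=\|z\|_{W^{i,2}}/\|z\|_{W^{j,2}}$ do deliver the product bound, but it is worth stating explicitly that $z=\eta_Rg-g\in W^{j,2}\subset W^{i,2}$, so both quantities are finite and $t_0$ is well defined (treating separately the trivial case $z=0$). Apart from these cosmetic points, the mollification step (Plancherel plus dominated convergence on the Fourier side) and the cutoff step (Leibniz on integer Sobolev scales, transported to $H^s$ by interpolation) are both sound and constitute a complete proof.
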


Our next goal is Poincar\'{e}'s inequality. As we want to use the standard blow up argument to prove it, we premise a (trivial) uniqueness and a compactness result:
\begin{lemma}[Uniqueness of solutions]\label{la:bs:uniqueness}
Let $f \in H^s(\R^n)$, $s > 0$. If $\lap^{\frac{s}{2}} f \equiv 0$, then $f \equiv 0$.
\end{lemma}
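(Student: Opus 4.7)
The statement is essentially an immediate consequence of the Fourier-analytic definition of $\laps{s}$, so the plan is simply to unfold that definition and invoke injectivity of the Fourier transform on $L^2$.

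First I would observe that since $f \in H^s(\R^n) \subset L^2(\R^n)$, Plancherel gives $f^\wedge \in L^2(\R^n)$, and by Definition~\ref{def:fracSob} the hypothesis $\laps{s} f \equiv 0$ means that $\brac{\abs{\cdot}^s f^\wedge}^\vee = 0$ as an element of $L^2(\R^n)$. Applying the Fourier transform to this identity (equivalently, using Plancherel), I get
\[
\abs{\xi}^s\, f^\wedge(\xi) = 0 \qquad \text{for almost every } \xi \in \R^n.
\]

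Next, since $\abs{\xi}^s > 0$ whenever $\xi \neq 0$, dividing through yields $f^\wedge(\xi) = 0$ for almost every $\xi \in \R^n \setminus \{0\}$, and because $\{0\}$ has Lebesgue measure zero in $\R^n$ this in fact gives $f^\wedge = 0$ almost everywhere on $\R^n$. By the injectivity of the Fourier transform on $L^2(\R^n)$ (i.e.\ the fact that $\brac{f^\wedge}^\vee = f$), we conclude $f \equiv 0$.

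There is no serious obstacle: the one tiny subtlety, that $\abs{\xi}^s$ vanishes at the origin but only on a null set, is harmless in $\R^n$ because a single point has measure zero. Thus the argument goes through verbatim for every $s > 0$ and every dimension $n$.
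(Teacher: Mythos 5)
Your proof is correct and takes essentially the same approach as the paper: unfold the Fourier multiplier definition of $\laps{s}$, use that $\abs{\xi}^s \neq 0$ almost everywhere to conclude $f^\wedge = 0$, and invoke injectivity of the Fourier transform on $L^2$.
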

\begin{proofL}{\ref{la:bs:uniqueness}}
As $f \in H^s(\R^n)$, $f^\wedge$ exists and $f^\wedge(\xi) = \abs{\xi}^{-s} 0 = 0$ for almost every $\xi \in \R^n$. Thus, $f^\wedge \equiv 0$ as $L^2$-function and we conclude that also $f \equiv 0$.
\end{proofL}

\begin{lemma}[Compactness]\label{la:bs:compact}
Let $D \subset \R^n$ be a smoothly bounded domain, $s > 0$. Assume that there is a constant $C > 0$ and $f_k \in H^s(\R^n)$, $k \in \N$, such that for any $k \in \N$ the conditions $\supp f_k \subset \bar{D}$ and $\Vert f_k \Vert_{H^s} \leq C$ hold. Then there exists a subsequence $f_{k_i}$, such that $f_{k_i} \xrightarrow{i \to \infty} f \in H^s$ weakly in $H^s$, strongly in $L^2(\R^n)$, and pointwise almost everywhere. Moreover, $\supp f \subset \bar{D}$.
\end{lemma}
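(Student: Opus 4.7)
The plan is to combine weak compactness in the Hilbert space $H^s$ with a cutoff-plus-interpolation argument (using the compactness result Lemma~\ref{la:interpol:compact} and the identification Lemma~\ref{la:fracsobdef2}) to upgrade the weak convergence to strong $L^2$ convergence. The pointwise a.e.\ convergence then follows by a standard subsequence extraction.

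First I would extract a weakly convergent subsequence, still denoted $f_{k_i}$, with $f_{k_i} \rightharpoonup f$ in the Hilbert space $H^s(\R^n)$; this is immediate from the uniform bound $\Vert f_{k_i}\Vert_{H^s}\leq C$. Weak lower semicontinuity gives $\Vert f\Vert_{H^s}\leq C$, and testing against $\varphi \in C_0^\infty(\R^n\setminus\bar D)$ shows $\int f\varphi = \lim_i \int f_{k_i}\varphi = 0$, hence $\supp f\subset \bar D$.

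The main step is to upgrade to strong $L^2$ convergence. Pick a cutoff function $\eta \in C_0^\infty(\R^n)$ with $\eta\equiv 1$ on a neighborhood of $\bar D$, so $f_{k_i} = \eta f_{k_i}$. Consider the multiplication operator $M_\eta \colon g \mapsto \eta g$. Clearly $M_\eta\colon L^2(\R^n)\to L^2(\R^n)$ is bounded, and for any integer $k > s$, $M_\eta\colon W^{k,2}(\R^n)\to L^2(\R^n)$ is compact: the product $\eta g$ has support in the fixed compact set $\supp \eta$ and satisfies $\Vert \eta g\Vert_{W^{k,2}}\leq C_\eta \Vert g\Vert_{W^{k,2}}$, so the classical Rellich--Kondrachov theorem applies. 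Choosing integers $0\leq i<s<j$ (e.g. $i=0$, $j=k$), Lemma~\ref{la:fracsobdef2} identifies $H^s(\R^n)$ with $[L^2(\R^n),W^{k,2}(\R^n)]_{\theta,2}$ for $\theta = s/k$. Then Lemma~\ref{la:interpol:compact}, applied with $X=L^2(\R^n)$, $Y=W^{k,2}(\R^n)$, $G=L^2(\R^n)$, and $T=M_\eta$, yields compactness of $M_\eta\colon H^s(\R^n)\to L^2(\R^n)$.

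Applying this compact operator to the bounded sequence $f_{k_i}\subset H^s$, we extract a further subsequence (not relabeled) along which $M_\eta f_{k_i} = f_{k_i}$ converges strongly in $L^2$. The weak $L^2$ limit must coincide with the strong $L^2$ limit, so the strong limit is exactly $f$. Finally, passing to yet another subsequence, we obtain pointwise a.e.\ convergence. The potential obstacle is the identification of the interpolation space with $H^s$ and the verification of the hypotheses of Lemma~\ref{la:interpol:compact}, but both are handled cleanly once the cutoff $\eta$ is introduced, since the cutoff removes the failure of Rellich on $\R^n$ and reduces the compactness to the classical bounded-domain case.
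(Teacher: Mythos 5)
Your proof is correct and follows essentially the same route as the paper: the paper also defines the multiplication operator $S\colon v\mapsto \eta v$ with a cutoff $\eta$ that is $1$ on $D$, observes that $S$ is compact $W^{j,2}(\R^n)\to L^2(\R^n)$ and bounded $L^2\to L^2$, and invokes Lemma~\ref{la:interpol:compact} together with Lemma~\ref{la:fracsobdef2} to conclude compactness $H^s(\R^n)\to L^2(\R^n)$, using that $S$ is the identity on functions supported in $\bar D$. The only cosmetic differences are that you verify the support condition by testing against $C_0^\infty(\R^n\setminus\bar D)$ whereas the paper extracts it from the pointwise a.e.\ convergence, and that you name Rellich--Kondrachov explicitly.
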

\begin{proofL}{\ref{la:bs:compact}}
Fix $D \subset \R^n$ and let $\eta \in C_0^\infty(2D)$, $\eta \equiv 1$ on $D$. Define the operator
\[
 S:\ v \in L^2(\R^n) \mapsto \eta v.
\]
As $D$ is a bounded subset, $S$ is compact as an operator $W^{j,2}(\R^n) \to L^2(\R^n)$ for any $j \in \N$ and continuous as an operator $L^2(\R^n) \to L^2(\R^n)$. Consequently, Lemma~\ref{la:interpol:compact} for $G = X = L^2$, $Y = W^{j,2}$ and Lemma~\ref{la:fracsobdef2} imply that $S$ is also a compact operator $H^s(\R^n) \to L^2(\R^n)$ for all $s \in (0,j)$. As $S$ is the identity on all functions $f \in L^2(\R^n)$ such that $\supp f \subset \bar{D}$, we conclude the proof of the claim of convergence in $L^2$ and pointwise almost everywhere, which implies also the support condition. Lastly, the weak convergence result stems from the fact that $H^s$ is a Hilbert space.
\end{proofL}%
\begin{remark}
As for weak convergence, one can prove that $f_k \to f$ weakly in $H^s(\R^n)$ for some $s > 0$ implies that $\laps{s} f_k \to \laps{s}f$ weakly in $L^2$. In fact assume that $f_k \to f$ weakly in $H^s(\R^n)$ and in particular $\Vert f_k \Vert_{H^s} \leq C$. For any $\varphi \in C_0^\infty(\R^n)$,
\[
 \intl_{\R^n} \laps{s}f_k\ \varphi = \intl_{\R^n} f_k\ \laps{s}\varphi \xrightarrow{k \to \infty} \intl_{\R^n} f\ \laps{s}\varphi \intl_{\R^n} \laps{s}f\ \varphi.
\]
Next, for any $w \in L^2(\R^n)$ and $w_\varepsilon \in C_0^\infty(\R^n)$ such that $\Vert w - w_\varepsilon\Vert_{L^2} \leq \varepsilon$,
\[
 \big \vert \intl_{\R^n} \laps{s}f_k\ w -\intl_{\R^n} \laps{s}f \ w \big \vert \leq  \big \vert \intl_{\R^n} \laps{s}f_k\ w_\varepsilon -\intl_{\R^n} \laps{s}f \ w_\varepsilon \big \vert + C \varepsilon. 
\]
Thus, letting $\varepsilon$ go to zero and $k$ to infininity, we can prove weaky convergence of $\laps{s} f_k$ in $L^2(\R^n)$.
\end{remark}

With the compactness lemma, Lemma~\ref{la:bs:compact}, we can prove Poincar\'{e}'s inequality. As in \cite[Theorem A.2]{DR09Sphere} we will use a support-condition in order to ensure compactness of the embedding $H^s(\R^n)$ into $L^2(\R^n)$ (see Lemma~\ref{la:bs:compact}). This support condition can be seen as saying that all derivatives up to order $\lfloor \frac{s}{2} \rfloor$ are zero at the boundary, therefore it is not surprising that such an inequality should hold.
\begin{lemma}[Poincar\'{e} Inequality]\label{la:poinc}
For any smoothly bounded domain $D \subset \R^n$, $s > 0$, there exists a constant $C_{D,s} > 0$ such that
\begin{equation}\label{eq:poinc}
\Vert f \Vert_{L^2(\R^n)} \leq C_{D,s}\ \Vert \laps{s} f \Vert_{L^2(\R^n)}, \quad \mbox{for all $f \in H^s(\R^n)$, $\supp f \subset \bar{D}$}. 
\end{equation}
If $D = r\tilde{D}$ for some $r > 0$, then $C_{D,s} = C_{\tilde{D},s}r^{s}$.
\end{lemma}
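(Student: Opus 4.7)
The plan is to prove the inequality by a standard blow-up/contradiction argument, relying precisely on the uniqueness Lemma~\ref{la:bs:uniqueness} and the compactness Lemma~\ref{la:bs:compact} that were proved just for this purpose. Suppose the inequality fails. Then for each $k \in \N$ one finds $f_k \in H^s(\R^n)$ with $\supp f_k \subset \bar{D}$ satisfying, after normalization, $\Vert f_k \Vert_{L^2(\R^n)} = 1$ and $\Vert \laps{s} f_k \Vert_{L^2(\R^n)} \leq 1/k$. In particular the sequence $(f_k)$ is uniformly bounded in $H^s(\R^n)$.

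Apply Lemma~\ref{la:bs:compact} to extract a subsequence (still denoted $f_k$) converging weakly in $H^s$ and strongly in $L^2(\R^n)$ to some $f \in H^s(\R^n)$ with $\supp f \subset \bar{D}$. The $L^2$-convergence forces $\Vert f \Vert_{L^2(\R^n)} = 1$. On the other hand, by the remark following Lemma~\ref{la:bs:compact}, weak $H^s$-convergence of $f_k$ entails weak $L^2$-convergence of $\laps{s} f_k$ to $\laps{s} f$, so by weak lower semicontinuity
\[
\Vert \laps{s} f \Vert_{L^2(\R^n)} \leq \liminf_{k \to \infty} \Vert \laps{s} f_k \Vert_{L^2(\R^n)} = 0.
\]
Lemma~\ref{la:bs:uniqueness} then yields $f \equiv 0$, contradicting $\Vert f \Vert_{L^2(\R^n)} = 1$. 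This proves the existence of $C_{D,s}$.

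For the scaling statement, given $f$ with $\supp f \subset \overline{r\tilde{D}}$, set $g(x) := f(rx)$, so $\supp g \subset \bar{\tilde{D}}$. A short Fourier computation using $g^\wedge(\xi) = r^{-n} f^\wedge(\xi/r)$ shows $\laps{s} g(x) = r^s (\laps{s} f)(rx)$, and a change of variables gives
\[
\Vert g \Vert_{L^2(\R^n)}^2 = r^{-n} \Vert f \Vert_{L^2(\R^n)}^2, \qquad \Vert \laps{s} g \Vert_{L^2(\R^n)}^2 = r^{2s - n} \Vert \laps{s} f \Vert_{L^2(\R^n)}^2.
\]
Applying \eqref{eq:poinc} on $\tilde{D}$ to $g$ and cancelling the common factor $r^{-n/2}$ yields $\Vert f \Vert_{L^2(\R^n)} \leq C_{\tilde{D},s}\, r^s\, \Vert \laps{s} f \Vert_{L^2(\R^n)}$, i.e. $C_{D,s} = C_{\tilde{D},s}\, r^s$.

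The only nontrivial step is the passage $\laps{s} f_k \rightharpoonup \laps{s} f$ in $L^2$, since $\laps{s}$ is a nonlocal operator; the main obstacle is taken care of by the remark already recorded in the paper (test against $C_0^\infty$ functions and use the self-adjointness of $\laps{s}$ under the Fourier transform, then approximate an arbitrary $L^2$-test function by smooth compactly supported ones, exploiting the uniform $H^s$-bound). With this in hand, all other ingredients are routine.
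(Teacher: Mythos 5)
Your proof is correct and follows exactly the paper's own blow-up/contradiction argument (normalize, invoke Lemma~\ref{la:bs:compact}, use weak lower semicontinuity, conclude with Lemma~\ref{la:bs:uniqueness}), plus the same scaling argument via $\tilde f(x)=f(rx)$. The only difference is that you spell out the weak lower semicontinuity step (via the remark after Lemma~\ref{la:bs:compact}) more carefully than the paper does.
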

\begin{remark}
One checks as well, that $C_{D,s} = C_{\tilde{D},s}$ if $D$ is a mere translation of some smoothly bounded domain $\tilde{D}$. This is clear, as the operator $\laps{s}$ commutes with translations.
\end{remark}
\begin{proofL}{\ref{la:poinc}}
We proceed as in the standard blow-up proof of Poincar\'{e}'s inequality: Assume \eqref{eq:poinc} is false and that there are functions $f_k \in H^s(\R^n)$, $\supp f_k \subset \bar{D}$, such that
\begin{equation}\label{eq:poinc:small}
\Vert f_k \Vert_{L^2(\R^n)} > k \Vert \laps{s} f_k \Vert_{L^2(\R^n)}, \quad \mbox{for every $k \in \N$}.
\end{equation}
Dividing by $\Vert f_k \Vert_{L^2(\R^n)}$ we can assume w.l.o.g. that $\Vert f_k \Vert_{L^2(\R^n)} = 1$ for every $k \in \N$. Consequently, we have for every $k \in \N$
\[
\Vert f_k \Vert_{H^s(\R^n)} \aleq{} \Vert f_k \Vert_{L^2(\R^n)} + \Vert \laps{s} f_k \Vert_{L^2(\R^n)} \aleq{} 1.
\]
Modulo passing to a subsequence of $(f_k)_{k \in \N}$, we can assume by Lemma~\ref{la:bs:compact} that $f_k$ converges weakly to some $f \in H^s(\R^n)$ with $\supp f \subset \bar{D}$ and strongly in $L^2(\R^n)$. This implies, that $\Vert f \Vert_{L^2(\R^n)} = 1$ and 
\[
\Vert \laps{s} f \Vert_{L^2(\R^n)} \leq \liminf_{k \to \infty}\ \Vert \laps{s} f_k \Vert_{L^2(\R^n)} \overset{\eqref{eq:poinc:small}}{=} 0.
\] 
But this is a contradiction, as Lemma~\ref{la:bs:uniqueness} implies that $f \equiv 0$.\\
If $D = r\tilde{D}$ for some $r > 0$, we define as usual a scaled function $\tilde{f}(x) := f(rx)$ and use that
\[
 \brac{\laps{s} \tilde{f}}(x) = r^s\ \brac{\laps{s} v}(rx)
\]
in order to conclude.
\end{proofL}

A simple consequence of the ``standard Poincar\'{e} inequality'' is the following
\begin{lemma}[Slightly more general Poincar\'{e} inequality]\label{la:poincExt}
For any smoothly bounded domain $D \subset \R^n$, $0 < s \leq t$, there exists a constant $C_{D,t} > 0$ such that
\[
\Vert \laps{s} f \Vert_{L^2(\R^n)} \leq C_{D,t}\ \Vert \laps{t} f \Vert_{L^2(\R^n)}, \quad \mbox{for all $f \in H^t(\R^n)$, $\supp f \subset \bar{D}$}. 
\]
If $D = r\tilde{D}$ for some $r > 0$, then $C_{D,t} = C_{\tilde{D},t}r^{t-s}$.
\end{lemma}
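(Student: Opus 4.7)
The plan is to reduce to the standard Poincaré inequality of Lemma~\ref{la:poinc} by a simple Fourier-side split into low and high frequencies. Since $f \in H^t(\R^n)$ implies $\laps{s} f \in L^2(\R^n)$ via Plancherel, I would write
\[
 \Vert \laps{s} f \Vert_{L^2(\R^n)}^2 = \intl_{\abs{\xi} \leq 1} \abs{\xi}^{2s}\abs{f^\wedge(\xi)}^2\,d\xi + \intl_{\abs{\xi} > 1} \abs{\xi}^{2s}\abs{f^\wedge(\xi)}^2\,d\xi.
\]
On the low-frequency piece I would bound $\abs{\xi}^{2s} \leq 1$ and use $\Vert f^\wedge \Vert_{L^2} = \Vert f \Vert_{L^2}$ to control this by $\Vert f\Vert_{L^2(\R^n)}^2$. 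On the high-frequency piece I would use the assumption $s \leq t$ together with $\abs{\xi} > 1$ to estimate $\abs{\xi}^{2s} \leq \abs{\xi}^{2t}$, obtaining the bound $\Vert \laps{t} f\Vert_{L^2(\R^n)}^2$.

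Next I would apply Lemma~\ref{la:poinc} (with exponent $t$, which is available since $\supp f \subset \bar D$ and $f \in H^t(\R^n)$) to absorb the $\Vert f\Vert_{L^2}^2$ summand into a constant times $\Vert \laps{t} f\Vert_{L^2(\R^n)}^2$. This yields the desired inequality
\[
 \Vert \laps{s} f\Vert_{L^2(\R^n)}^2 \leq \brac{1 + C_{D,t}^2} \Vert \laps{t} f \Vert_{L^2(\R^n)}^2,
\]
with $C_{D,t}$ the constant from Lemma~\ref{la:poinc}.

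For the scaling statement $D = r\tilde D$, I would run the usual blow-up: set $\tilde f(x) := f(rx)$, so $\supp \tilde f \subset \bar{\tilde D}$, and use the commutation relation $(\laps{\sigma}\tilde f)(x) = r^\sigma (\laps{\sigma} f)(rx)$ for $\sigma \in \{s,t\}$ together with the change-of-variables identity $\Vert \laps{\sigma}\tilde f\Vert_{L^2}^2 = r^{2\sigma - n}\Vert \laps{\sigma} f\Vert_{L^2}^2$. Applying the already proved inequality to $\tilde f$ on $\tilde D$ and cancelling the common factor $r^{-n/2}$ yields exactly the factor $r^{t-s}$ in front of $C_{\tilde D, t}$.

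I do not anticipate any genuine obstacle: the content of the lemma is essentially a two-line Fourier argument combined with the already-established Lemma~\ref{la:poinc}, and the scaling is standard.
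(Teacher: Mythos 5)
Your proof is correct and follows essentially the same route as the paper: a Fourier-side split at $\abs{\xi}=1$, bounding $\abs{\xi}^{s}\leq 1$ on the low-frequency part and $\abs{\xi}^{s}\leq\abs{\xi}^{t}$ on the high-frequency part, then invoking Lemma~\ref{la:poinc} to control the resulting $\Vert f\Vert_{L^2}$ term, and finally the standard rescaling $\tilde f(x)=f(rx)$ for the constant's dependence on $r$.
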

\begin{proofL}{\ref{la:poincExt}}
We have
\[
\begin{ma}
 \Vert \laps{s} f \Vert_{L^2} &=& \Vert \abs{\cdot}^s\ f^\wedge \Vert_{L^2}\\
&\leq& \Vert \abs{\cdot}^t\ f^\wedge \Vert_{L^2(\R^n \backslash B_1(0))} + \Vert f^\wedge \Vert_{L^2(B_1(0))}\\
&\leq& \Vert \laps{t}f \Vert_{L^2} + \Vert f \Vert_{L^2}\\
&\overset{\sref{L}{la:poinc}}{\leq}& C_{D,t}\ \Vert \laps{t} f \Vert_{L^2}.
\end{ma}
\]
By scaling one concludes.
\end{proofL}
The following lemma can be interpreted as an existence result for the equation $\laps{s} w = v$ - or as a variant of Poincar\'{e}'s inequality:
\begin{lemma}\label{la:lapmsest2}
Let $s \in (0,n)$, $p \in [2,\infty)$ such that
\begin{equation}\label{eq:lapmsest:pcond}
\frac{n-s}{n} > \frac{1}{p} \geq \frac{n-2s}{2n}.
\end{equation}
Then for any smoothly bounded set $D \subset \R^n$ there is a constant $C_{D,s,p}$ such that for any $v \in \Sw(\R^n)$, $\supp v \subset \bar{D}$, we have $\lapms{s} v \in L^p(\R^n)$ and
\[
\Vert \lapms{s} v \Vert_{L^p(\R^n)} \leq C_{D,p,s}\ \Vert v \Vert_{L^2}.
\]
Here, $\lapms{s} v$ is defined as $(\abs{\cdot}^{-s}  v^\wedge )^\vee$. In particular, if $s \in (0,\frac{n}{2})$,
\[
\Vert \lapms{s} v \Vert_{L^2(\R^n)} \leq C_{D,s}\ \Vert v \Vert_{L^2}.
\]
If $D = r\tilde{D}$, then $C_{D,p,s} = r^{s+\frac{n}{p}-\frac{n}{2}}\ C_{\tilde{D},p,s}$.
\end{lemma}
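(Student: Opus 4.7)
The plan is to represent $\lapms{s}v$ as a Riesz potential and then apply Young's inequality for convolutions in Lorentz spaces. Since $\abs{\cdot}^{-s}$ is (up to a dimensional constant) the Fourier transform of $\abs{\cdot}^{s-n}$ for $s\in(0,n)$---a fact to be verified later in Section~\ref{ss:idlaps}---the operator acts as $\lapms{s}v = c_{n,s}\, \abs{\cdot}^{s-n}\ast v$, and this convolution is pointwise well-defined because $v$ is Schwartz and compactly supported. The main ingredients are then the endpoint identification $\abs{\cdot}^{s-n}\in L^{\frac{n}{n-s},\infty}(\R^n)$ from Proposition~\ref{pr:dl:lso} (v), Young's inequality in Lorentz spaces from Proposition~\ref{pr:dl:lso} (ii), and---to gain a little integrability on $v$ when needed---the compact-support H\"older inequality of Proposition~\ref{pr:hoeldercompactsupp}.

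The natural choice of auxiliary exponent is the $p_1\in(1,\infty)$ determined by
\[
\frac{1}{p_1}+\frac{n-s}{n}-1 = \frac{1}{p}, \quad\mbox{i.e.,}\quad \frac{1}{p_1}=\frac{1}{p}+\frac{s}{n}.
\]
The two assumptions on $p$ in the lemma translate precisely into $p_1>1$ (from $\frac{1}{p}<\frac{n-s}{n}$) and $p_1\leq 2$ (from $\frac{1}{p}\geq\frac{n-2s}{2n}$, which is a nontrivial restriction only when $s<\frac{n}{2}$). In the endpoint case $p_1=2$ (hence $p=\frac{2n}{n-2s}>2$), I would simply use $v\in L^{2,2}=L^2$ and Young's inequality to obtain $\lapms{s}v\in L^{p,2}$; since $p>2$, Proposition~\ref{pr:dl:lso} (iv) then embeds this into $L^{p,p}=L^p$. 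In the non-endpoint case $p_1<2$, Proposition~\ref{pr:hoeldercompactsupp} upgrades $v\in L^2$ to $\Vert v\Vert_{L^{p_1,p}}\leq C_{D,p,p_1}\Vert v\Vert_{L^2}$, and Young's inequality directly produces $\lapms{s}v\in L^{p,p}=L^p$. The ``in particular'' assertion for $s<\frac{n}{2}$, $p=2$, is just the sub-case $p_1<2$ of this argument.

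The scaling identity follows from the change of variables $\tilde v(x):=v(rx)$, supported in $\tilde D$. A direct computation gives $\Vert\tilde v\Vert_{L^2}=r^{-n/2}\Vert v\Vert_{L^2}$ and $(\lapms{s}v)(rx)=r^{s}(\lapms{s}\tilde v)(x)$, whence $\Vert \lapms{s}v\Vert_{L^p} = r^{s+n/p}\Vert\lapms{s}\tilde v\Vert_{L^p}$; combining these produces the stated factor $r^{s+n/p-n/2}$.

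The step I expect to require the most care is the bookkeeping at the endpoint $p_1=2$: there the Riesz kernel lies only in the \emph{weak} Lorentz space $L^{\frac{n}{n-s},\infty}$, so Young's inequality must be invoked in its Lorentz version and the Lorentz-to-Lebesgue embedding $L^{p,2}\hookrightarrow L^p$ is essential. Off the endpoint, the compact support of $v$ provides ample slack via Proposition~\ref{pr:hoeldercompactsupp}, and the remainder is routine.
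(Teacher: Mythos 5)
Your proof is correct, but it takes a genuinely different route from the paper's. The paper works entirely on the Fourier side: it first converts $\Vert \lapms{s}v \Vert_{L^p}$ into a Lorentz norm of $(\lapms{s}v)^\wedge = \abs{\cdot}^{-s}v^\wedge$ via Proposition~\ref{pr:fourierlpest}, applies H\"older's inequality in Lorentz spaces (Proposition~\ref{pr:dl:lso}(i)) with the factor $\abs{\cdot}^{-s}\in L^{\frac{n}{s},\infty}$, and then converts back to $\Vert v \Vert_{L^{q',2}}$ with a second application of Proposition~\ref{pr:fourierlpest}, finishing with Proposition~\ref{pr:hoeldercompactsupp}. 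You instead stay on the physical side, representing $\lapms{s}$ as convolution with the Riesz kernel $\abs{\cdot}^{s-n}\in L^{\frac{n}{n-s},\infty}$ and invoking Young's inequality in Lorentz spaces (Proposition~\ref{pr:dl:lso}(ii)); the only use of compact support is the same Proposition~\ref{pr:hoeldercompactsupp} upgrade of $v$. Both arguments are equally long and land on the same exponent arithmetic, but the paper's version is the more self-contained one given the toolkit it develops: it needs nothing beyond the Fourier multiplier definition of $\lapms{s}$, whereas your version rests on the classical identity $\bigl(\abs{\cdot}^{s-n}\bigr)^\wedge = c_{n,s}\abs{\cdot}^{-s}$ for $s\in(0,n)$, which the paper never states (Section~\ref{ss:idlaps} only derives the integral formula for $\laps{s}$ with $s\in(0,1)$, via the dual identity for $\abs{\xi}^s$, $s\in(0,1)$ — not the full Riesz-potential range you need). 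That identity is of course standard (see e.g.\ \cite[Theorem 2.4.6]{GrafC08} in full generality), so this is a matter of what one is willing to cite rather than a gap, but it should be stated explicitly rather than deferred to a section that does not actually prove it. Your scaling argument and the endpoint bookkeeping ($p_1=2$ forcing the detour through $L^{p,2}\hookrightarrow L^p$, which exists since $p>2$ there) are both fine.
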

\begin{proofL}{\ref{la:lapmsest2}}
We want to make the following reasoning rigorous:
\[
\begin{ma}
\Vert \lapms{s} v \Vert_{L^p} &\overset{\ontop{\sref{P}{pr:fourierlpest}}{p \in [2,\infty)}}{\leq}& C_p\ \Vert (\lapms{s} v)^\wedge \Vert_{L^{p',p}}\\
&=& C_p\ \Vert \abs{\cdot}^{-s}\ v^\wedge \Vert_{L^{p',p}}\\
&\overset{(\star)}{\leq}& C_p\ \Vert \abs{\cdot}^{-s} \Vert_{L^{\frac{n}{s},\infty}}\ \Vert v^\wedge \Vert_{L^{q,p}}\\
&\overset{p \geq 2}{\leq}& C_p\ \Vert \abs{\cdot}^{-s} \Vert_{L^{\frac{n}{s},\infty}}\ \Vert v^\wedge \Vert_{L^{q,2}}\\
&\overset{\ontop{\sref{P}{pr:fourierlpest}}{q \geq 2}}{\leq}& C_{p,s,q}\ \Vert v \Vert_{L^{q',2}}\\
&\overset{\ontop{\sref{P}{pr:hoeldercompactsupp}}{q' \leq 2}}{\leq}& C_{s,q}\ C_D\ \Vert v \Vert_{L^2}.
\end{ma}
\]
To do so, we need to find $q \in [2,\infty)$ such that $(\star)$ holds:
\[
\frac{1}{p'} = \frac{1}{q} + \frac{s}{n}
\]
which is possible by virtue of \eqref{eq:lapmsest:pcond}. Then the validity of $(\star)$ follows from Proposition~\ref{pr:dl:lso} and we conclude scaling as in Proposition~\ref{pr:scalinglorentz}.
\end{proofL}
The next lemma can be seen as an adaption of Hodge decomposition to the setting of the fractional laplacian:
\begin{lemma}[Hodge Decomposition]\label{la:hodge}
Let $f \in L^2(\R^n)$, $s > 0$. Then for any smoothly bounded domain $D \subset \R^n$ there are functions $\varphi \in H^{s}(\R^n)$, $h \in L^2(\R^n)$ such that
\[
 \supp \varphi \subset \bar{D},
\]
\[
 \intl_{\R^n} h\ \laps{s} \psi = 0, \quad \mbox{for all } \psi \in C_0^\infty(D),
\]
and
\[
 f = \laps{s} \varphi + h\quad \mbox{almost everywhere in $\R^n$}.
\]
Moreover, 
\begin{equation}\label{eq:hodge:est}
\Vert h \Vert_{L^2(\R^n)} + \Vert \laps{s} \varphi \Vert_{L^2(\R^n)} \leq 5 \Vert f \Vert_{L^2(\R^n)}.
\end{equation}
\end{lemma}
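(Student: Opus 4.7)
The plan is to realize the decomposition as the orthogonal projection of $f$ onto an appropriate closed subspace of $L^2(\R^n)$, exploiting the Poincaré inequality (Lemma~\ref{la:poinc}) to control $H^s$-norms by $L^2$-norms of $\laps{s}$.

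Concretely, I would define
\[
H^s_{\bar D} := \{\psi \in H^s(\R^n) \ :\ \supp \psi \subset \bar D\}, \qquad V := \laps{s}(H^s_{\bar D}) \subset L^2(\R^n),
\]
and show that $V$ is a closed linear subspace of $L^2(\R^n)$. Linearity is immediate; for closedness, let $\laps{s}\psi_k \to g$ in $L^2(\R^n)$ with $\psi_k \in H^s_{\bar D}$. Lemma~\ref{la:poinc} gives the equivalence of norms $\Vert \psi \Vert_{H^s} \approx \Vert \laps{s}\psi\Vert_{L^2}$ on $H^s_{\bar D}$, so $(\psi_k)$ is Cauchy in $H^s$ and converges to some $\psi \in H^s(\R^n)$. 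The $L^2$-convergence and the support condition pass to the limit, giving $\psi \in H^s_{\bar D}$, and continuity of $\laps{s}:H^s \to L^2$ yields $g = \laps{s}\psi \in V$.

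Once $V$ is closed, the orthogonal projection theorem in the Hilbert space $L^2(\R^n)$ produces a unique decomposition $f = g + h$ with $g \in V$ and $h \in V^\perp$. Writing $g = \laps{s}\varphi$ for some $\varphi \in H^s_{\bar D}$ (so $\supp \varphi \subset \bar D$ as required), the orthogonality $h \perp V$ translates directly into
\[
\intl_{\R^n} h\ \laps{s}\psi = 0 \qquad \mbox{for all } \psi \in H^s_{\bar D},
\]
which in particular holds for every $\psi \in C_0^\infty(D)$. The estimate \eqref{eq:hodge:est} is then a very loose consequence of orthogonality: $\Vert f \Vert_{L^2}^2 = \Vert \laps{s}\varphi \Vert_{L^2}^2 + \Vert h \Vert_{L^2}^2$ gives $\Vert h \Vert_{L^2} + \Vert \laps{s}\varphi \Vert_{L^2} \leq \sqrt{2}\,\Vert f \Vert_{L^2} \leq 5\Vert f \Vert_{L^2}$.

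The only nontrivial step is the closedness of $V$, and this rests entirely on Poincaré (Lemma~\ref{la:poinc}) together with the compactness/support-preservation provided by Lemma~\ref{la:bs:compact}; everything else is abstract Hilbert space theory. Note that the result is actually stronger than stated, since the orthogonality of $h$ holds against all of $V = \laps{s}(H^s_{\bar D})$ and not merely against $\laps{s}(C_0^\infty(D))$.
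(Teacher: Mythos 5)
Your proof is correct. It is essentially the same decomposition as the paper's, but you phrase it through the abstract orthogonal projection theorem whereas the paper carries out a direct variational argument: it minimizes $E(v) = \Vert \laps{s} v - f\Vert_{L^2}^2$ over $v \in H^s$ with $\supp v \subset \bar D$, extracting a minimizer from a minimizing sequence via the compactness Lemma~\ref{la:bs:compact} (weak compactness in $H^s$) together with weak lower semicontinuity of $E$, and then reads the orthogonality condition off the Euler--Lagrange equations. Both routes hinge on the same ingredient --- Poincar\'e's inequality, Lemma~\ref{la:poinc}, to control $\Vert \cdot \Vert_{H^s}$ on functions supported in $\bar D$ by $\Vert \laps{s}\cdot\Vert_{L^2}$ --- but you use it to prove closedness of $V = \laps{s}(H^s_{\bar D})$ in $L^2$ and then invoke the projection theorem, which dispenses entirely with weak compactness and lower semicontinuity; the only completeness you need is that of $H^s$, and the projection itself comes from the parallelogram law. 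One small remark: you credit Lemma~\ref{la:bs:compact} for ``support-preservation'' in your closedness argument, but since your $\psi_k$ converge \emph{strongly} in $H^s$ and hence in $L^2$, passing the support condition $\supp\psi_k \subset \bar D$ to the limit is elementary and does not require compactness at all --- your argument is actually lighter on prerequisites than you claim. Also, your observation that orthogonality holds against all of $\laps{s}(H^s_{\bar D})$ and not just $\laps{s}(C_0^\infty(D))$ is correct, and the paper's minimization actually yields the same (the Euler--Lagrange variations can be taken in $H^s_{\bar D}$), it just states the weaker version because that is all the later applications need.
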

\begin{proofL}{\ref{la:hodge}}
Set
\[
 E(v) := \intl_{\R^n} \abs{ \laps{s}  v - f}^2,\quad \mbox{for $v \in H^s(\R^n)$ with $\supp v \subset \bar{D}$}.
\]
Then,
\begin{equation}\label{eq:hodge:coerc}
 \Vert \laps{s}  v \Vert_{L^2(\R^n)}^2 \leq 2 E(v) + 2\Vert f \Vert_{L^2(\R^n)}^2.
\end{equation}
As $D$ is smoothly bounded, Poincar\'{e}'s inequality, Lemma~\ref{la:poinc}, implies for any $v \in H^s(\R^n)$ with $\supp v \subset \bar{D}$
\[
 \Vert v \Vert_{H^s}^2 \leq C_{s,D} (E(v) + \Vert f \Vert_{L^2(\R^n)}^2).
\]
Thus $E(\cdot)$ is coercive, i.e. for an $E(\cdot)$-minimizing sequence $(\varphi_k)_{k=1}^\infty \subset H^s(\R^n)$ with $\supp \varphi_k \subset \bar{D}$  we can assume
\[
 \Vert \varphi_k \Vert_{H^s}^2 \leq C (E(0) + \Vert f \Vert_{L^2(\R^n)}^2) = 2C \Vert f \Vert_{L^2(\R^n)}^2,\quad \mbox{for every $k \in \N$}.
\]
By compactness, see Lemma~\ref{la:bs:compact}, up to taking a subsequence of $k \to \infty$, we have weak convergence of $\varphi_k$ to some $\varphi$ in $H^s(\R^n)$ and strong convergence in $L^2$, as well as $\supp \varphi \subset \bar{D}$.\\
$E(\cdot)$ is lower semi-continuous with respect to weak convergence in $H^s(\R^n)$, so $\varphi$ is a minimizer of $E(\cdot)$.\\
If we call $h := \laps{s} \varphi - f$, Euler-Lagrange-Equations give that
\[
 \intl_{\R^n} h\ \laps{s}  \psi = 0, \quad \mbox{for any $\psi \in C_0^\infty(D)$}.
\]
Estimate \eqref{eq:hodge:coerc} for $\varphi$ and the fact that $\Vert h \Vert_{L^2}^2 = E(\varphi) \leq E(0)$ imply \eqref{eq:hodge:est}. 
\end{proofL}
\begin{remark}
In fact, $h$ will satisfy enhanced local estimates, similar to estimates for harmonic function, see Lemma~\ref{la:estharmonic}.
\end{remark}

\subsection{Annuli-Cutoff Functions}\label{ss:cutoff}

We will have to localize our equations, so we introduce as in \cite{DR09Sphere} a decomposition of unity as follows: Let $\eta \equiv \eta^0 \in C_0^\infty(B_2(0))$, $\eta \equiv 1$ in $B_1(0)$ and $0 \leq \eta \leq 1$ in $\R^n$. Let furthermore $\eta^k \in C_0^\infty(B_{2^{k+1}}(0)\backslash B_{2^{k-1}}(0))$, $k \in \N$, such that $0 \leq \eta^k \leq 1$, $\sum_{k=0}^\infty \eta^k = 1$ pointwise in $\R^n$ and $\abs{\nabla^i \eta^k} \leq C_i 2^{-ki}$ for any $i \in \N_0$.\\
We call $\eta^k_{r,x} := \eta^k (\frac{\cdot - x}{r})$, though we will often omit the subscript when $x$ and $r$ should be clear from the context.\\
For the sake of completeness we sketch the construction of those $\eta^k$:
\begin{proof}[Construction of suitable cutoff functions]
Firstly, pick $\eta \equiv \eta^0 \in C_0^\infty(B_2(0))$, $\eta \equiv 1$ on, say, $B_{\frac{3}{2}}(0)$ and $\eta(x) \in [0,1]$ for any $x \in \R^n$. We set for $k \in \N$,
\begin{equation}\label{eq:defcutoff}
\eta^k(\cdot) := \left (1-\suml_{l=0}^{k-1} \eta^l(\cdot) \right ) \suml_{l=0}^{k-1} \eta^l \left (\frac{\cdot}{2} \right ).
\end{equation}
Obviously, $\eta^k$ is smooth and we have the following crucial properties
\begin{itemize}
\item[(i)] $\eta^k \in C_0^\infty(B_{2^{k+1}}(0) \backslash \overline{B_{2^{k-1}}(0)})$, if $k \geq 1$, and 
\item[(ii)] $\sum_{l=0}^k \eta^l \equiv 1$ in $B_{2^k}(0)$, for every $k \geq 0$.
\end{itemize}
Indeed, this can be shown by induction: First, one checks that (i), (ii) are true for $k = 0,1$. Then, assume that (i) and (ii) hold for some positive integer $k -1$. By (ii) we have that $1-\sum_{l=0}^{k-1} \eta_l \equiv 0$ in $B_{2^{k-1}}(0)$ and (i) implies that $\sum_{l=0}^{k-1} \eta_l \left (\frac{\cdot}{2} \right ) \equiv 0$ in $\R^n \backslash B_{2^{k-1+1}2}$. This implies (i) for $k$. Moreover,
\[
\suml_{l=0}^k \eta^l = \suml_{l=0}^{k-1} \eta^l + \left (1-\suml_{l=0}^{k-1} \eta^l \right )(\cdot) \suml_{l=0}^{k-1} \eta^l \left (\frac{\cdot}{2} \right ).
\]
By (ii) for $k-1$ on $B_{2^{k-1}2} = B_{2^{k}}$ the sum $\sum_{l=0}^{k-1} \eta^l \left (\frac{\cdot}{2} \right )$ is identically $1$ so (ii) holds for $k$ as well. Consequently, by induction (i) and (ii) hold for all $k \in \N_0$. It is easy to check that also $0 \leq \eta_k \leq 1$.\\
Moreover, one checks that $\abs{\nabla^i \eta^k} \leq C_i 2^{-ki}$ for every $i \in \N_0$: In fact, if we abbreviate $\psi^k := \sum_{l=0}^{k} \eta^k$, we have of course \[
\abs{\nabla^i \eta^k} \leq \abs{\nabla^i \psi^k} + \abs{\nabla^i \psi^{k-1}}.
\]
It is enough, to show that $\abs{\nabla^i \psi^k} \leq C_i 2^{-ki}$: We have
\[
\psi^{k} = \psi^{k-1} + (1-\psi^{k-1})(\cdot)\ \psi^{k-1} \brac{\frac{1}{2} \cdot}.
\]
By property (ii) we know that $\psi^{k} \equiv 1$ in $B_{2^{k}}$ and $\psi^{k} \equiv 0$ in $\R^n \backslash B_{2^{k+1}}$, so the gradient in those sets is trivial. On the other hand, in $B_{2^{k+1}} \backslash B_{2^{k}}$ we know that $\psi^{k-1} \equiv 0$, by property (i), hence $\psi^k = \psi^{k-1} (\frac{1}{2} \cdot)$ in this set. This implies
\[
\nabla^i \psi^k = 2^{-i} (\nabla^i \psi^{k-1}) \left (\frac{1}{2} \cdot \right ).
\]
By induction one arrives then at $\abs{\nabla^i \psi^k} \leq 2^{-ki} \Vert \nabla^i \eta^0 \Vert_{L^\infty}$.
\end{proof}
%
We want to estimate some $L^p$-Norms of $\laps{s} \eta^k_{r,x}$. In order to do so, we will need the following Proposition:
\begin{proposition}\label{pr:weirdgwedgeest}
(Cf. \cite[Exercise 2.2.14, p.108]{GrafC08})\\
For every $g \in \Sw(\R^n)$, $p \in [1,2]$, $s \geq 0$, $-\infty < \alpha < n \frac{p-2}{p} < \beta < \infty$, we have
\[
\Vert \brac{\laps{s}g}^\wedge \Vert_{L^p(\R^n)} \leq C_{\alpha,\beta,p}\ \left (\Vert \laps{s+\alpha} g \Vert_{L^2(\R^n)} + \Vert \laps{s+\beta} g \Vert_{L^2(\R^n)} \right ). 
\]
\end{proposition}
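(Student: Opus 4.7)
The plan is to read the quantity $\bigl(\laps{s}g\bigr)^\wedge(\xi)=|\xi|^{s}\,g^\wedge(\xi)$ in frequency space and split the $L^p(\R^n)$-norm along the unit ball $B_1(0)$, handling low and high frequencies by different weights. On $B_1(0)$ we would use the parameter $\alpha$ to absorb the singularity of $|\xi|^{-\alpha}$ near the origin, and on $\R^n\setminus B_1(0)$ we would use $\beta$ to ensure the decay of $|\xi|^{-\beta}$ at infinity. The condition $p\in[1,2]$ is exactly what makes the Hölder pairing with an $L^2$-factor admissible, since then $\tfrac{1}{p}=\tfrac{1}{q}+\tfrac{1}{2}$ has a valid solution $q=\tfrac{2p}{2-p}\in[2,\infty]$ (with the convention $q=\infty$ if $p=2$).

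Concretely, write on $B_1(0)$
\[
|\xi|^{s}\,g^\wedge(\xi)=|\xi|^{-\alpha}\cdot|\xi|^{s+\alpha}g^\wedge(\xi),
\]
and apply Hölder with the exponents above to get
\[
\bigl\||\xi|^{s}g^\wedge\bigr\|_{L^p(B_1)}\leq \bigl\||\xi|^{-\alpha}\bigr\|_{L^q(B_1)}\;\bigl\||\xi|^{s+\alpha}g^\wedge\bigr\|_{L^2(B_1)}.
\]
The second factor is bounded by $\|\laps{s+\alpha}g\|_{L^2(\R^n)}$ via Plancherel. For the first factor, pass to polar coordinates: it is finite precisely when $-\alpha q>-n$, i.e.\ when $\alpha$ lies below the critical threshold dictated by $q$; the hypothesis $\alpha<n(p-2)/p$ (which forces $\alpha\le 0$) ensures this. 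The analogous decomposition on $\R^n\setminus B_1(0)$, using $|\xi|^{s}g^\wedge=|\xi|^{-\beta}\cdot|\xi|^{s+\beta}g^\wedge$, gives
\[
\bigl\||\xi|^{s}g^\wedge\bigr\|_{L^p(\R^n\setminus B_1)}\leq \bigl\||\xi|^{-\beta}\bigr\|_{L^q(\R^n\setminus B_1)}\;\bigl\|\laps{s+\beta}g\bigr\|_{L^2(\R^n)},
\]
where the weighted $L^q$-norm on the complement of the ball is finite thanks to the lower bound on $\beta$.

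Summing the two partial estimates yields the claim, with $C_{\alpha,\beta,p}$ absorbing both polar-coordinate integrals. The only place where care is needed is matching the Hölder exponent $q=\tfrac{2p}{2-p}$ against the hypothesis on $\alpha,\beta$ so that the two radial integrals $\int_0^1 r^{-\alpha q+n-1}\,dr$ and $\int_1^\infty r^{-\beta q+n-1}\,dr$ converge, and verifying the degenerate case $p=2$ separately (where $q=\infty$ and the condition simplifies to $\alpha<0<\beta$, with the $L^\infty$-norms of $|\xi|^{\mp\alpha}$ and $|\xi|^{\mp\beta}$ on the respective regions being trivially bounded). I do not expect any real obstacle beyond this bookkeeping: the whole argument is, as hinted, a variant of the standard Plancherel–Hölder splitting used to embed weighted $L^2$-spaces into $L^p$ on the Fourier side.
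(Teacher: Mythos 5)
Your frequency splitting of $(\laps{s}g)^\wedge$ across $B_1(0)$, paired with H\"older at exponents $\tfrac{1}{p}=\tfrac{1}{2}+\tfrac{1}{q}$, $q=\tfrac{2p}{2-p}$, is exactly the paper's argument. The one place where you deferred to ``bookkeeping'' is, however, where a genuine issue lurks: the high-frequency integral $\int_1^\infty r^{-\beta q+n-1}\,dr$ converges if and only if $\beta q>n$, i.e.\ $\beta>\tfrac{n}{q}=n\tfrac{2-p}{2p}$, whereas the proposition's hypothesis supplies only $\beta>n\tfrac{p-2}{p}=-\tfrac{2n}{q}$, which is strictly weaker whenever $p<2$. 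For instance $n=1$, $p=1$, $s=0$, $\beta=0$ satisfies the stated hypothesis, yet $\Vert |\cdot|^{-\beta}\Vert_{L^{2}(\R\setminus B_1)}=\infty$ and $\Vert g^\wedge\Vert_{L^1(\R)}$ is not controlled by $\Vert\laps{\alpha}g\Vert_{L^2}+\Vert g\Vert_{L^2}$. The paper's own proof asserts ``$q\beta>n$'' without deriving it from the hypothesis, so the threshold $n\tfrac{p-2}{p}$ in the statement is almost certainly a typo for $n\tfrac{2-p}{2p}=\tfrac{n}{q}$; the downstream application in Proposition~\ref{pr:etarkgoodest}, which simply picks ``admissible $\alpha,\beta\geq 0$'', is unaffected. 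On the low-frequency side your observation is correct: $\alpha<n\tfrac{p-2}{p}\leq 0$ trivially implies $\alpha q<n$, so the stated bound on $\alpha$ is merely over-restrictive rather than wrong. Your proof is sound once the $\beta$-threshold is corrected; just do not claim the hypothesis as written ``ensures'' convergence of the outer integral without making that correction explicit.
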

\begin{proofP}{\ref{pr:weirdgwedgeest}}
Set $q := \frac{2p}{2-p}$. We abbreviate $f := \brac{\laps{s} g}^\wedge$ and set $f = f_1 + f_2$, where $f_1 = f \chi_{B_1(0)}$. Here, $\chi_{B_1(0)}$ denotes as usual the characteristic function of $B_1(0)$. Then $f_1(x) = \abs{x}^\alpha f_1(x)\ \abs{x}^{-\alpha}$ and hence
\[
\begin{ma}
\Vert f_1(x) \Vert_{L^p(\R^n)} &\leq& \Vert \abs{\cdot}^\alpha\ f_1 \Vert_{L^2(B_1(0))}\ \Vert \abs{\cdot}^{-\alpha} \Vert_{L^{q}(B_1(0))}\\
 &\overset{q\alpha < n}{\leq}& C_\alpha \Vert \abs{\cdot}^\alpha f \Vert_{L^2(B_1(0))}.  
\end{ma}
\]
The same works for $f_2$, using that $q\beta > n$. Consequently, one arrives at
\[
\Vert f \Vert_{L^p(\R^n)} \leq C_{\alpha,\beta,p} (\Vert \abs{\cdot}^\alpha f \Vert_{L^2(\R^n)} + \Vert \abs{\cdot}^\beta f \Vert_{L^2(\R^n)}). 
\]
Replacing again $f = \brac{\laps{s} g}^\wedge$ and using that $\abs{\cdot}^\alpha \brac{\laps{s}g}^\wedge = (\laps{\alpha+s} g)^\wedge$, $\abs{\cdot}^\beta \brac{\laps{s}g}^\wedge = (\laps{\beta+s} g)^\wedge$ and then applying Plancherel Theorem for $L^2$-functions, one concludes.
\end{proofP}

\begin{proposition}\label{pr:etarkgoodest}
For any $s > 0$, $p \in [1,2]$, there is a constant $C_{s,p} > 0$, such that for any $k \in \N_0$, $x \in \R^n$, $r > 0$ denoting as usual $p' := \frac{p}{p-1}$,
\begin{equation}\label{eq:etarkgoodest:fourier}
\Vert \left (\laps{s} \eta_{r,x}^k \right )^\wedge \Vert_{L^{p}(\R^n)} \leq C_{s,p}\ (2^k r)^{-s + \frac{n}{p'}}. 
\end{equation}
In particular,
\begin{equation}\label{eq:etarkgoodest:linfty}
\Vert \laps{s} \eta_{r,x}^k \Vert_{L^{p'}(\R^n)} \leq C_{s,p}\ (2^k r)^{-s + \frac{n}{p'}}. 
\end{equation}
\end{proposition}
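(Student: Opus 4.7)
\emph{Proof plan.} The estimate \eqref{eq:etarkgoodest:fourier} will follow from a two-fold scaling reduction combined with a uniform smoothness estimate on a model annular bump, after which \eqref{eq:etarkgoodest:linfty} is obtained from Hausdorff--Young. By translation invariance of the $L^p$-norm of the Fourier transform we may assume $x=0$. A direct change of variables, together with the identity $\bigl(\laps{s}f(\cdot/\lambda)\bigr)^\wedge(\xi)=\lambda^n|\xi|^s f^\wedge(\lambda\xi)$, yields the clean scaling
\[
\Vert \bigl(\laps{s}f(\cdot/\lambda)\bigr)^\wedge\Vert_{L^p(\R^n)}=\lambda^{-s+n/p'}\Vert \bigl(\laps{s}f\bigr)^\wedge\Vert_{L^p(\R^n)}.
\]
Applying this first with $\lambda=r$ and then with $\lambda=2^k$ (writing $\eta^k(y)=\tilde\eta^k(y/2^k)$ where $\tilde\eta^k(y):=\eta^k(2^k y)$ is supported in $B_2(0)\setminus B_{1/2}(0)$ for $k\ge 1$, respectively in $B_2(0)$ for $k=0$), \eqref{eq:etarkgoodest:fourier} is reduced to the $k$-uniform claim
\[
\Vert \bigl(\laps{s}\tilde\eta^k\bigr)^\wedge\Vert_{L^p(\R^n)}\le C_{s,p}.
\]

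For this uniform estimate, the derivative bounds $|\nabla^i \eta^k|\le C_i\,2^{-ki}$ transform into $|\nabla^i \tilde\eta^k|\le C_i$ uniformly in $k$, while the supports stay inside the fixed ball $B_2(0)$. The natural move is to invoke Proposition~\ref{pr:weirdgwedgeest}: pick $\beta\in\N$ larger than $n(p-2)/p$ so that $\Vert \laps{s+\beta}\tilde\eta^k\Vert_{L^2}$ is controlled by a classical Sobolev norm of $\tilde\eta^k$ of integer order, which is uniformly bounded. For $\alpha$ pick any value in the interval $\bigl(-\tfrac{n}{2}-s,\,n(p-2)/p\bigr)$: if $s+\alpha\ge 0$ we again have a classical Sobolev norm, while if $s+\alpha<0$ then Lemma~\ref{la:lapmsest2} applied to the compactly supported $\tilde\eta^k$ with exponent $-s-\alpha\in(0,n/2)$ gives $\Vert\lapms{-s-\alpha}\tilde\eta^k\Vert_{L^2}\le C\Vert\tilde\eta^k\Vert_{L^2}\le C$ uniformly in $k$. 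The only mildly delicate point is that for very small $s$ combined with $p$ close to $1$ this interval for $\alpha$ degenerates; in that corner case Proposition~\ref{pr:weirdgwedgeest} is sidestepped by noting that $(\tilde\eta^k)^\wedge$ is uniformly Schwartz, since integration by parts gives $|\xi^\gamma(\tilde\eta^k)^\wedge(\xi)|\le\Vert\partial^\gamma\tilde\eta^k\Vert_{L^1}\le C_\gamma$ independently of $k$, so $|\xi|^s(\tilde\eta^k)^\wedge(\xi)$ lies in every $L^p(\R^n)$ with $k$-independent norm. This is where I expect the main (if minor) technical obstacle.

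Finally, \eqref{eq:etarkgoodest:linfty} follows from \eqref{eq:etarkgoodest:fourier} by the Hausdorff--Young inequality $\Vert g\Vert_{L^{p'}}\le \Vert g^\wedge\Vert_{L^p}$ for $p\in[1,2]$, applied to $g=\laps{s}\eta_{r,x}^k$.
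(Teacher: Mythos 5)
Your proof is correct, and it takes essentially the same route as the paper: normalize $x=0$, rescale by $\lambda=2^k r$ to reduce to a uniform bound $\Vert(\laps{s}\tilde\eta^k)^\wedge\Vert_{L^p}\le C_{s,p}$ for the normalized bump supported in $B_2(0)$ (or $B_2\setminus B_{1/2}$), then invoke Proposition~\ref{pr:weirdgwedgeest} together with the uniform $W^{i,2}$-control of $\tilde\eta^k$. Where you genuinely improve on the paper is in the ``corner case'' you flag. The paper asserts that Proposition~\ref{pr:weirdgwedgeest} applies ``for some admissible $\alpha,\beta\ge 0$,'' but since that proposition forces $\alpha<n\frac{p-2}{p}\le0$, one has $\alpha<0$ for every $p<2$; to keep $s+\alpha\ge0$ (so that interpolation against $W^{i,2}$ makes sense) one then needs $s>n\frac{2-p}{p}$, and if one instead allows $s+\alpha<0$ and reaches for Lemma~\ref{la:lapmsest2}, one still needs $s>n\frac{4-3p}{2p}$. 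Neither window is nonempty for small $s$ and $p$ near $1$, which is a range the paper actually uses (e.g.\ Proposition~\ref{pr:eqdeflaps1} with $p=1$, $s\in(0,1)$, $n\ge2$). Your elementary repair --- uniform smoothness and uniform compact support of $\tilde\eta^k$ give $|\xi^\gamma(\tilde\eta^k)^\wedge(\xi)|\le\Vert\partial^\gamma\tilde\eta^k\Vert_{L^1}\le C_\gamma$ independently of $k$, hence $(1+|\xi|)^N|(\tilde\eta^k)^\wedge(\xi)|\le C_N$ uniformly, so $|\xi|^s(\tilde\eta^k)^\wedge\in L^p$ with a $k$-free bound for every $s>0$ and $p\ge1$ --- is correct and in fact covers \emph{all} parameters, so the detour through Proposition~\ref{pr:weirdgwedgeest} and Lemma~\ref{la:lapmsest2} could simply be dropped. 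The final Hausdorff--Young step (equivalently Proposition~\ref{pr:fourierlpest} for the inverse transform) is the same as the paper's.
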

\begin{proofP}{\ref{pr:etarkgoodest}}
Fix $r > 0$, $k \in \N$ and $x \in \R^n$. Set $\tilde{\eta}(\cdot) := \eta^k_{r,x} (x+2^kr\cdot)$. By scaling it then suffices to show that for a uniform constant $C_{s,p} > 0$
\begin{equation}\label{eq:etarkgoodest:prescaled}
\Vert \left (\laps{s} \tilde{\eta} \right )^\wedge \Vert_{L^p(\R^n)} \leq C_{s,p}.
\end{equation}
First of all, for any $i \in \N$ there is a constant $C_i > 0$ independent of $r$, $x$, $k$ such that
\[
\Vert \tilde{\eta} \Vert_{W^{i,2}} \leq C_{i}.
\]
In fact, by the choice of the scaling for $\tilde{\eta}$, we have that $\supp \tilde{\eta} \subset B_2(0)$, $\abs{\nabla^j \tilde{\eta}} \leq C_i$ for any $1 \leq j \leq i$.\\
Consequently, as for any $\alpha,\beta \geq 0$ the spaces $H^{s+\alpha}$ and $H^{s+\beta}$ are by Lemma~\ref{la:fracsobdef2} (equivalent to) the interpolation spaces $[L^2(\R^n),W^{i,2}(\R^n)]_{\theta,2}$, for some $i = i_{\alpha,\beta} \in \N$ and $\theta \in (0,1)$, we have for any $\alpha,\beta,s \geq 0$
\begin{equation}\label{eq:etarkgoodest:etawi2est}
\Vert \tilde{\eta} \Vert_{H^{s+\alpha}} + \Vert \tilde{\eta} \Vert_{H^{s+\beta}} \leq C_{\alpha,\beta,s} \Vert \tilde{\eta}. \Vert_{W^{i,2}(\R^n)}
\end{equation}
But by Proposition~\ref{pr:weirdgwedgeest} for some admissible $\alpha, \beta \geq 0$ (depending on $p$; in the case $p = 2$ we can choose $\alpha = \beta = 0$),
\[
\begin{ma}
\Vert \left (\laps{s} \tilde{\eta} \right )^\wedge \Vert_{L^p(\R^n)} &\leq& C_{\alpha,\beta,p} (\Vert \laps{s+\alpha} \tilde{\eta} \Vert_{L^2} + \Vert \laps{s+\beta} \tilde{\eta} \Vert_{L^2})\\
&\leq& C_{\alpha,\beta,p}\ (\Vert \tilde{\eta} \Vert_{H^{s+\alpha}}\ + \Vert \tilde{\eta} \Vert_{H^{s+\beta}})\\
&\leq& C_{\alpha,\beta,p,s}.
\end{ma}
\]
Consequently, we have shown \eqref{eq:etarkgoodest:prescaled}, and by scaling back we conclude the proof of \eqref{eq:etarkgoodest:fourier}. Equation \eqref{eq:etarkgoodest:linfty} then follows by the continuity of the inverse Fourier-transform from $L^p$ to $L^{p'}$ whenever $p \in [1,2]$, see Proposition~\ref{pr:fourierlpest}.
\end{proofP}
One important consequence is, that in a weak sense $\laps{s} P$ vanishes for a polynomial $P$, if $s$ is greater than the degree of $P$:
\begin{proposition}\label{pr:lapspol}
Let $\alpha$ be a multiindex $\alpha = (\alpha_1,\ldots,\alpha_n)$, where $\alpha_i \in \N_0$, $1 \leq i \leq n$. If $s > 0$ such that $\abs{\alpha} = \suml_{i=1}^n \abs{\alpha_i} < s$ then
\[
 \lim_{R \to \infty} \intl_{\R^n} \eta_R x^\alpha\ \laps{s} \varphi = 0, \quad \mbox{for every $\varphi \in \Sw(\R^n)$}.
\]
Here, $x^\alpha := (x_1)^{\alpha_1}\cdots (x_n)^{\alpha_n}$.
\end{proposition}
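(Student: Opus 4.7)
My plan is to transfer $\laps{s}$ off $\varphi$ via Fourier duality and extract a negative power of $R$ through a single scaling. Since $\eta_R x^\alpha \in C_0^\infty(\R^n)$ is bounded with compact support and $\abs{\xi}^s\, \varphi^\wedge(\xi) \in L^1(\R^n)$ (using that $\varphi^\wedge \in \Sw(\R^n)$, that $s > 0$ makes $\abs{\xi}^s$ locally integrable at the origin, and that the rapid decay of $\varphi^\wedge$ dominates $\abs{\xi}^s$ at infinity), I can write $\laps{s}\varphi(x) = \intl_{\R^n} \abs{\xi}^s\, \varphi^\wedge(\xi)\, e^{2\pi \im\, x\cdot \xi}\, d\xi$ and apply Fubini to obtain
\begin{equation*}
\intl_{\R^n} \eta_R\, x^\alpha\, \laps{s}\varphi(x)\, dx \;=\; \intl_{\R^n} \abs{\xi}^s\, \varphi^\wedge(\xi)\, \brac{\eta_R x^\alpha}^\vee(\xi)\, d\xi.
\end{equation*}

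For the Fourier transform on the right, the rule $\brac{x^\alpha f}^\wedge = c_\alpha\, \partial^\alpha f^\wedge$ combined with the scaling identity $\brac{\eta(\cdot/R)}^\wedge(\xi) = R^n\, \eta^\wedge(R\xi)$ yields
\[
\brac{\eta_R x^\alpha}^\vee(\xi) \;=\; R^{n+\abs{\alpha}}\, \psi(R\xi),
\]
where $\psi \in \Sw(\R^n)$ is a fixed Schwartz function depending only on $\alpha$ and $\eta$ (essentially $\psi(\zeta)=c_\alpha (\partial^\alpha \eta^\wedge)(-\zeta)$). Substituting this and changing variables $\zeta := R\xi$, the previous identity becomes
\begin{equation*}
\intl_{\R^n} \eta_R\, x^\alpha\, \laps{s}\varphi\, dx \;=\; R^{\abs{\alpha} - s}\, \intl_{\R^n} \abs{\zeta}^s\, \psi(\zeta)\, \varphi^\wedge(\zeta/R)\, d\zeta.
\end{equation*}

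The remaining $\zeta$-integral stays bounded uniformly in $R$ by dominated convergence: its integrand is dominated pointwise by $\Vert \varphi^\wedge \Vert_{L^\infty}\, \abs{\zeta}^s \abs{\psi(\zeta)}$, which lies in $L^1(\R^n)$ since $\psi \in \Sw(\R^n)$ and $s > 0$. Consequently, because $\abs{\alpha} < s$, the prefactor $R^{\abs{\alpha}-s}$ forces the whole expression to vanish as $R \to \infty$. The only nontrivial point is the first step, where one must verify absolute convergence of the double integral to justify Fubini and confirm that multiplication by $\abs{\zeta}^s$ preserves integrability against a Schwartz function; both are routine once $s > 0$ is used, and I anticipate no further obstacle.
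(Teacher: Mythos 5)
Your proof is correct, and it takes a genuinely different and more elementary route than the paper. You transfer $\laps{s}$ to the Fourier side once, pull out the scaling $(\eta_R x^\alpha)^\vee(\xi) = R^{n+\abs{\alpha}}\psi(R\xi)$ for a fixed $\psi \in \Sw(\R^n)$, and then a single change of variables collapses everything to $R^{\abs{\alpha}-s}$ times a uniformly bounded integral; the conclusion is immediate from $\abs{\alpha} < s$. The paper instead performs the Fourier-side integration by parts to replace $x^\alpha \laps{s}\varphi$ by a finite sum $\sum_{\abs{\beta}\leq\abs{\alpha}} c_{\alpha,\beta}\ (x^\beta\varphi)\ M_{\alpha,\beta,s}\laps{s-\abs{\alpha}+\abs{\beta}}\eta_R$ involving zero-multiplier operators, then estimates the cutoff factor in $L^{p'}$ via Proposition~\ref{pr:etarkgoodest} and H\"ormander's theorem, choosing $p'$ large so that $-s+\abs{\alpha}+n/p' < 0$. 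Your argument is shorter, avoids H\"ormander's multiplier theorem and Proposition~\ref{pr:etarkgoodest} entirely, and yields the sharp decay rate $R^{\abs{\alpha}-s}$ rather than a suboptimal rate $R^{-s+\abs{\alpha}+n/p'}$. The paper's heavier route is largely a matter of economy elsewhere: Proposition~\ref{pr:etarkgoodest} and the multiplier-operator product rule (Proposition~\ref{pr:lapsmonprod2}) are infrastructure reused repeatedly throughout Sections~5--7, so reusing them here costs nothing once they are established, whereas your self-contained scaling argument is the cleaner proof if this proposition is read in isolation. One cosmetic remark: you invoke dominated convergence to control $\intl \abs{\zeta}^s\psi(\zeta)\varphi^\wedge(\zeta/R)\,d\zeta$, but you only need the cruder uniform bound $\Vert\varphi^\wedge\Vert_{L^\infty}\intl\abs{\zeta}^s\abs{\psi(\zeta)}\,d\zeta$, so the DCT citation is slight overkill.
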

\begin{proofP}{\ref{pr:lapspol}}
One checks that for some constant $c_\alpha$,
\begin{equation}\label{eq:lapspol:xalphapsi}
 x^\alpha \psi = c_\alpha \brac{\partial^\alpha \psi^\vee}^\wedge \quad \mbox{for all $\psi \in \Sw(\R^n)$}.
\end{equation}
This and the fact that for any $\psi \in \Sw(\R^n)$ we have also $\psi^\wedge \in \Sw(\R^n)$ and $x^\alpha \psi \in \Sw(\R^n)$ implies (using as well integration by parts)
\[
\begin{ma}
 &&\intl_{\R^n} \psi\ x^\alpha \laps{s} \varphi\\
&\overset{\eqref{eq:lapspol:xalphapsi}}{=}& c_\alpha \intl_{\R^n} \brac{\partial^\alpha \psi^\vee}^\wedge\ \laps{s} \varphi\\
&=& c_\alpha \intl_{\R^n} \abs{\cdot}^s \varphi^\wedge\ \partial^\alpha \psi^\vee\\
&=& \sum_{\abs{\beta} \leq \abs{\alpha}} c_{\alpha,\beta} \intl_{\R^n} m_{\alpha,\beta,s}(\cdot)\ \abs{\cdot}^{s-\abs{\alpha}+\abs{\beta}}\ \brac{\partial^\beta \varphi^\wedge}\ \psi^\wedge(-\cdot),
\end{ma}
\]
where $m_{\alpha,\beta,s} \in C^\infty(\R^n \backslash \{0\})$ is some zero multiplier. Denoting by $M_{\alpha,\beta,s}$ the respective Fourier multiplier operator with multiplier $m_{\alpha,\beta,s}$ we arrive at
\[
 \intl_{\R^n} \psi\ x^\alpha\ \laps{s} \varphi = \sum_{\abs{\beta} \leq \abs{\alpha}} c_{\alpha,\beta} \intl_{\R^n} \brac{x^\beta \varphi}\ M_{\alpha,\beta,s}\ \laps{s-\abs{\alpha}+\abs{\beta}} \psi.
\]
In particular, this is true for $\psi := \eta_R$, and we have for any $p \in (1,2)$, $R > 1$,
\[
\begin{ma}
 &&\intl_{\R^n} \eta_R\ x^\alpha \laps{s} \varphi\\
&\aleq{}& \sup_{\abs{\beta} \leq \abs{\alpha}} \Vert x^\beta \varphi \Vert_{L^p(\R^n)}\ \Vert M_{\alpha,\beta,s} \laps{s-\abs{\alpha}+\abs{\beta}} \eta_R \Vert_{L^{p'}(\R^n)}\\
&\aleq{}& C_{\alpha,\varphi,p,s}\  \sup_{\abs{\beta} \leq \abs{\alpha}} \Vert \laps{s-\abs{\alpha}+\abs{\beta}} \eta_R \Vert_{L^{p'}(\R^n)}\\
&\overset{\sref{P}{pr:etarkgoodest}}{\aleq{}}& R^{-s+\abs{\alpha}+\frac{n}{p'}}.
\end{ma}
\]
Here we used as well that multiplier operators such as $M_{\alpha,\beta,s}$ map $L^{p'}$ into $L^{p'}$ continuously for $p' \in (1,\infty)$ by H\"ormander's theorem \cite{Hoermander60}. As $\abs{\alpha} < s$, we can choose $p' \in (2,\infty)$ such that $-s+\abs{\alpha}+\frac{n}{p'} < 0$, and taking the limit $R \to \infty$ we conclude.
\end{proofP}
\begin{remark} \label{rem:cutoffPolbdd}
One can even show, that
\[
 \Vert \laps{s} \brac{\eta_{r,0} x^\alpha} \Vert_{L^p(\R^n)} \leq C_{s,p}\ r^{-s + \abs{\alpha} + \frac{n}{p}}  \quad \mbox{for any $p \in [2,\infty]$, $\abs{\alpha} < s$, $r > 0$}.
\]
This is done similar to the proof of Proposition~\ref{pr:etarkgoodest}: First one proves the claim for $r = 1$, then scaling implies the claim, using that
\[
 \eta_{r,0}(x) x^\alpha = r^\abs{\alpha} \eta_{1,0}(r^{-1} x) (r^{-1} x)^\alpha.
\]
\end{remark}

\begin{remark}
We will use Proposition~\ref{pr:lapspol} in a formal way, by saying that formally $\laps{s} x^\alpha = 0$ whenever $\abs{\alpha} < s$. Of course, as we defined the operator $\laps{s}$ on $L^2$-Functions only, this formal argument should be verified in each calculation by using that 
\[
\lim_{R\to \infty} \laps{s} \brac{\eta_R x^\alpha} = 0,
\]
where the limit will be taken in an appropriate sense. For the sake of simplicity, we will omit this recurring argument.
\end{remark}

\subsection{An Integral Definition for the Fractional Laplacian}\label{ss:idlaps}
A further definition of the fractional laplacian for small order without the use of the Fourier transform are based on the following two propositions.
\begin{proposition}\label{pr:eqdeflaps1}
Let $s \in (0,1)$. For some constant $c_n$ and any $v \in \Sw(\R^n)$,
\[
 \laps{s} v (\bar{y})= c_n \intl_{\R^n} \frac{v(x)-v(\bar{y})}{\abs{x-\bar{y}}^{n+s}}\ dx  \quad \mbox{for any $\bar{y} \in \R^n$}.
\]
\end{proposition}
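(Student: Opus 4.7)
The strategy is to compute both sides via the Fourier transform and match them, exploiting the fact that the kernel $|x|^{-n-s}$ is a tempered distribution that, after regularization, produces the multiplier $|\xi|^s$ up to a dimensional constant.

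First I would denote the right-hand side by
\[
I(\bar{y}) := \intl_{\R^n} \frac{v(x)-v(\bar{y})}{|x-\bar{y}|^{n+s}}\,dx
= \intl_{\R^n} \frac{v(\bar{y}+h)-v(\bar{y})}{|h|^{n+s}}\,dh,
\]
and verify that this defines a Schwartz-like function of $\bar{y}$: near $h=0$ a Taylor expansion gives $|v(\bar{y}+h)-v(\bar{y})| \lesssim |h|$, so the integrand is dominated by $|h|^{1-n-s}$ which is locally integrable precisely because $s<1$, while at infinity the Schwartz decay of $v$ and the factor $|h|^{-n-s}$ (with $s>0$) make the integral absolutely convergent and yield decay in $\bar{y}$.

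Next I would compute the Fourier transform of $I$ in the variable $\bar{y}$. Using that $\widehat{v(\cdot+h)}(\xi) = e^{2\pi \im h\cdot\xi}\,v^\wedge(\xi)$ and Fubini (justified once one splits $h$ into a neighbourhood of $0$ and its complement, using the pointwise bound on $v(\bar y+h)-v(\bar y)$ described above), one obtains
\[
I^\wedge(\xi) = v^\wedge(\xi)\,\intl_{\R^n}\frac{e^{2\pi \im h\cdot \xi}-1}{|h|^{n+s}}\,dh.
\]
The imaginary part vanishes by the symmetry $h\mapsto -h$, so this equals $v^\wedge(\xi)\cdot K(\xi)$ with $K(\xi)=\int_{\R^n}\frac{\cos(2\pi h\cdot\xi)-1}{|h|^{n+s}}\,dh$; this integral is absolutely convergent for $s\in(0,1)$ by the same dichotomy as above (using $|\cos t - 1|\leq t^2/2$ near the origin). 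A change of variables $h=|\xi|^{-1}y$ produces
\[
K(\xi)=|\xi|^s\intl_{\R^n}\frac{\cos(2\pi y\cdot\xi/|\xi|)-1}{|y|^{n+s}}\,dy,
\]
and the remaining integral is rotation-invariant (only depends on $|\xi/|\xi||=1$), hence equal to a constant $c_n^{-1}$ that depends only on $n$ and $s$.

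Combining, $I^\wedge(\xi)=c_n^{-1}|\xi|^s v^\wedge(\xi) = c_n^{-1}(\laps{s}v)^\wedge(\xi)$. Since both $I$ and $\laps{s}v$ lie in $L^2$ (or equivalently are tempered distributions with equal Fourier transforms), Fourier inversion gives $I(\bar{y})=c_n^{-1}\laps{s}v(\bar{y})$ pointwise (both sides being continuous in $\bar{y}$), which is the desired identity after renaming the constant. The only genuinely delicate step is the Fubini interchange and the pointwise identification; the cleanest way to make them rigorous is to first truncate the kernel to $\{\varepsilon<|h|<R\}$, carry out the Fourier calculation on the truncation (where everything is absolutely integrable and well-defined), and then pass to the limits $\varepsilon\to 0$, $R\to\infty$ using the dominated convergence theorem together with the elementary bounds on $\cos(2\pi h\cdot\xi)-1$ described above.
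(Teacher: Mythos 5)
Your proof is correct, but it takes a genuinely different route from the paper's. You compute the Fourier transform of the right-hand side directly as a function of $\bar y$, exploiting that the subtraction $v(\bar y + h) - v(\bar y)$ regularizes the kernel at $h=0$ so that the Fubini interchange is justified by absolute convergence of the double integral (indeed, $\int\!\int |v(\bar y+h)-v(\bar y)|\,|h|^{-n-s}\,d\bar y\,dh \leq \|\nabla v\|_{L^1}\int_{|h|<1}|h|^{1-n-s}dh + 2\|v\|_{L^1}\int_{|h|\geq 1}|h|^{-n-s}dh < \infty$, so the truncation you suggest is not even necessary). The resulting multiplier $K(\xi)=\int(\cos(2\pi h\cdot\xi)-1)|h|^{-n-s}\,dh$ is identified as $c|\xi|^s$ by scaling and rotation invariance, and $c\neq 0$ since the integrand is strictly negative on a set of positive measure. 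The paper instead reduces to $\bar y = 0$ by translation invariance, decomposes $v$ using several cutoff functions, discards certain pieces by Proposition~\ref{pr:etarkgoodest}, applies the cited identity $\int|\xi|^s\psi^\wedge(\xi)\,d\xi = c_n\int|y|^{-n-s}\psi(y)\,dy$ for $\psi\in C_0^\infty(\R^n\setminus\{0\})$ from \cite[Theorem 2.4.6]{GrafC08}, and then extracts the pointwise value with an approximate identity as $\varepsilon\to 0$. Both arguments ultimately rest on the same fact --- that $|\cdot|^{-n-s}$ has Fourier transform $c|\cdot|^s$ as a homogeneous tempered distribution --- but you re-derive it via the regularized kernel, while the paper quotes it and pays for that shortcut with cutoff bookkeeping. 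Your argument is more self-contained and shorter; the paper's has the advantage that its decomposition adapts without change to the symmetric second-difference kernel needed in Proposition~\ref{pr:eqdeflaps2} for $s\in(0,2)$, and it exercises the same localization machinery used throughout the rest of the paper.
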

\begin{proofP}{\ref{pr:eqdeflaps1}}
It is enough to prove the claim for $\bar{y} = 0$. In fact, denote by $\tau_{\bar{y}}$ the translation operator
\[
\tau_{\bar{y}} v (\cdot) := v (\cdot+\bar{y}).
\]
Then, as any multiplier operator commutes with translations, assuming the claim to be true for $\bar{y} = 0$ ,
\[
\begin{ma}
\laps{s}v (\bar{y}) &=& \laps{s} \brac{\tau_{\bar{y}}v} (0)\\
&=& c_n \intl_{\R^n} \frac{\tau_{\bar{y}}v(x)-\tau_{\bar{y}}v(0)}{\abs{x}^{n+s}}\ dx\\
&=& c_n \intl_{\R^n} \frac{v(x+\bar{y})-v(\bar{y})}{\abs{x}^{n+s}}\ dx\\
&=& c_n \intl_{\R^n} \frac{v(x)-v(\bar{y})}{\abs{x-\bar{y}}^{n+s}}\ dx,
\end{ma}
\]
where the transformation formula is valid because the integral converges absolutely as $s \in (0,1)$.\\
So let $\bar{y} = 0$, $v \in \Sw(\R^n)$. For any $R > 1 > \varepsilon > 0$ we set
\[
 \eta_R := \eta_{R,0}^0,\quad \mbox{and}\quad \eta_{4\varepsilon} = \eta_{4\varepsilon,0}^0,
\]
and decompose $v = v_1 + v_2 + v_3 + v_4$ as follows:
\[
 \begin{ma}
  v &=& \eta_{4\varepsilon} \brac{v-v(0)} + (1-\eta_{4\varepsilon}) \brac{v-v(0)} + v(0)\\
&=:& v_1 + \eta_{R} (1-\eta_{4\varepsilon}) \brac{v-v(0)} + \eta_{R} v(0)\\
&&\quad + \brac{1-\eta_{R}} \ebrac{(1-\eta_{4\varepsilon}) \brac{v-v(0)} + v(0)}\\
&=:& v_1 + v_2 + v_3 + v_4,
 \end{ma}
\]
that is
\[
 \begin{ma}
  	v_1 &=& \eta_{4\varepsilon} \brac{v-v(0)},\\
 	v_2 &=& \eta_{R} (1-\eta_{4\varepsilon}) \brac{v-v(0)},\\
	v_3 &=& \eta_{R} v(0),\\
	v_4 &=& \brac{1-\eta_{R}} \ebrac{(1-\eta_{4\varepsilon}) \brac{v-v(0)} + v(0)}\\
&=& \brac{1-\eta_{R}} \ebrac{(1-\eta_{4\varepsilon}) v + \eta_{4\varepsilon} v(0)}.
 \end{ma}
\]
Observe that $v_k \in \Sw(\R^n)$, $k=1\ldots 4$, and in particular $\laps{s} v_k$ is well defined in the sense of Definition \ref{def:fracSob}. So for any $\varphi \in C_0^\infty(B_{2\varepsilon}(0))$
\[
 \intl_{\R^n} \laps{s} v\ \varphi = I_1 + I_2 + I_3 + I_4,
\]
where
\[
 I_k := \intl_{\R^n} \laps{s} v_k\ \varphi,\quad k=1,2,3,4.
\]
First, observe that by the Lebesgue-convergence theorem,
\begin{equation}\label{eq:eqdeflaps:I4}
 \lim_{R \to \infty} I_4 = \lim_{R \to \infty} \intl_{\R^n} \brac{1-\eta_{R}} \ebrac{(1-\eta_{4\varepsilon}) v + \eta_{4\varepsilon} v(0)} \laps{s} \varphi  = 0.
\end{equation}
By Proposition~\ref{pr:etarkgoodest}, more precisely using \eqref{eq:etarkgoodest:linfty} for $p' = \infty$,
\[
 \abs{I_3} \aleq{} \abs{v(0)} \Vert \varphi \Vert_{L^1} R^{-s},
\]
so
\begin{equation}\label{eq:eqdeflaps:I3}
 \lim_{R \to \infty} I_3 = 0.
\end{equation}
As for $v_2$, 
\[
\begin{ma} 
\intl_{\R^n} \laps{s} v_2\ \varphi &=& \intl_{\R^n} \abs{\cdot}^s\ v_2^\wedge(\cdot)\ \varphi^\wedge(-\cdot)\\
&=&  \intl_{\R^n} \abs{\xi}^{s}\ \brac{v_2 \ast \varphi(-\cdot)}^\wedge(\xi)\ d\xi\\
&=& c_n \intl_{\R^n} \abs{x}^{-n-s}\ \brac{v_2 \ast \varphi(-\cdot)}(x)\ dx.
\end{ma}
\]
The last equality is true, as $\supp (v_2 \ast \varphi) \subset \R^n \backslash B_\varepsilon(0)$ and (see \cite[Theorem 2.4.6]{GrafC08})
\[
 \intl_{\R^n} \abs{\xi}^s\ \psi^\wedge(\xi)\ d\xi = c_n \intl_{\R^n} \abs{y}^{-n-s}\ \psi(y)\ dy,\quad \mbox{for any $\psi \in C_0^\infty(\R^n \backslash \{0\})$.}
\]
Consequently, as the integrals involved converge absolutely, Fubini's theorem implies
\[
\begin{ma}
&&\intl_{\R^n} \laps{s} v_2\ \varphi\\
 &=& c_n \intl_{\R^n} \intl_{\R^n} \varphi(-y)\ \frac{v_2(x-y)}{\abs{x}^{n+s}}\ dy\ dx\\
&=&c_n \intl_{B_{2\varepsilon}} \varphi(-y) \intl_{\R^n \backslash B_\varepsilon} \eta_R(x-y)(1-\eta_{4\varepsilon}(x-y)) \frac{v(x-y)-v(0)}{\abs{x}^{n+s}}\ dx\ dy.
\end{ma}
\]
By Lebesgue's dominated convergence theorem,
\begin{equation}\label{eq:eqdeflaps:I2}
\begin{ma}
\lim_{R\to \infty} I_2 &=& c_n \intl_{\R^n} \varphi(-y) \intl_{\R^n \backslash B_\varepsilon} (1-\eta_{4\varepsilon}(x-y)) \frac{v(x-y)-v(0)}{\abs{x}^{n+s}}\ dx\ dy\\
&=& c_n \intl_{\R^n} \varphi(-y) \intl_{\R^n} (1-\eta_{4\varepsilon}(x-y)) \frac{v(x-y)-v(0)}{\abs{x}^{n+s}}\ dx\ dy.
\end{ma}
\end{equation}
Together, we infer from equations \eqref{eq:eqdeflaps:I4}, \eqref{eq:eqdeflaps:I3} and \eqref{eq:eqdeflaps:I2} that for any $\varepsilon \in (0,1)$ and any $\varphi \in C_0^\infty(B_{2\varepsilon}(0))$,
\[
\begin{ma}
  \intl_{\R^n} \laps{s}v\ \varphi &=& \intl_{\R^n} \eta_{4\varepsilon} (v-v(0))\ \laps{s}\varphi\\
  &&\quad + c_n \intl_{\R^n} \varphi(-y) \intl_{\R^n} (1-\eta_{4\varepsilon}(x-y)) \frac{v(x-y)-v(0)}{\abs{x}^{n+s}}\ dx\ dy.
\end{ma}
\]
We choose a specific $\varphi := \omega \varepsilon^{-n} \eta_{\varepsilon}$, where $\omega > 0$ is chosen such that
\begin{equation}\label{eq:eqdeflaps:intvpeq1}
 \intl_{\R^n} \varphi = \intl_{\R^n} \abs{\varphi} = 1.
\end{equation}
The function $\laps{s}v$ is continuous because for $v \in \Sw(\R^n)$ in particular $(\laps{s}v)^\wedge \in L^1(\R^n)$. Consequently,
\[
\lim_{\varepsilon \to 0} \intl_{\R^n} \laps{s} v\ \varphi = \laps{s} v(0).
\]
It remains to compute the limit $\varepsilon \to 0$ of
\[
 \widetilde{I} := \intl_{\R^n} \eta_{4\varepsilon} (v-v(0))\ \laps{s}\varphi,
\]
and
\[
 \widetilde{II} := \intl_{\R^n} \varphi(-y) \intl_{\R^n} (1-\eta_{4\varepsilon}(x-y)) \frac{v(x-y)-v(0)}{\abs{x}^{n+s}}\ dx\ dy.
\]
As for $\tilde{I}$, by Proposition~\ref{pr:etarkgoodest}, that is \eqref{eq:etarkgoodest:linfty} for $p' = \infty$, applied to $\varphi$,
\[
\begin{ma}
\abs{\widetilde{I}} &\aleq{}& \varepsilon^{-n-s}\ \intl_{B_{8\varepsilon}(0)} \abs{v(y)-v(0)}\ dy\\
&\aleq{}& \Vert \nabla v \Vert_{L^\infty}\ \varepsilon^{-n-s+1} \abs{B_{8\varepsilon}}\\
&\aleq{}& \Vert \nabla v \Vert_{L^\infty}\ \varepsilon^{1-s}.
\end{ma}
\]
As $s < 1$, this implies
\[
 \lim_{\varepsilon \to 0} \widetilde{I} = 0.
\]
As for $\widetilde{II}$, we write
\[
\begin{ma}
&& \varphi(-y) (1-\eta_{4\varepsilon}(x-y)) \frac{v(x-y)-v(0)}{\abs{x}^{n+s}}\\
&=& \varphi(-y) \frac{v(x)-v(0)}{\abs{x}^{n+s}}\\
&&- \eta_{4\varepsilon}(x-y)\ \varphi(-y)\ \frac{v(x)-v(0)}{\abs{x}^{n+s}}\\
&&+  \varphi(-y) (1-\eta_{4\varepsilon}(x-y)) \frac{v(x-y)-v(x)}{\abs{x}^{n+s}}\\
&=:& ii_1 + ii_2 + ii_3.
\end{ma}
\]
By choice of $\varphi$, and by Fubini's theorem which is applicable as all integrals are absolutely convergent,
\[
 \intl_{\R^n} \intl_{\R^n} ii_1\ dy\ dx= \intl_{\R^n} \frac{v(x)-v(0)}{\abs{x}^{n+s}}\ dx.
\]
Moreover, using \eqref{eq:eqdeflaps:intvpeq1}
\[
 \intl_{\R^n} \intl_{\R^n} \abs{ii_2}\ dy\ dx \aleq{} \Vert \nabla v \Vert_{L^\infty}\ \intl_{B_{10\varepsilon}(0)} \frac{1}{\abs{x}^{n+s-1}}\ dx \aleq{} \varepsilon^{1-s},
\]
and
\[
 \intl_{\R^n} \intl_{\R^n} \abs{ii_3}\ dy\ dx \aleq{} \varepsilon\ \Vert \nabla v \Vert_{L^\infty}\ \intl_{\R^n \backslash B_{\varepsilon}(0)} \frac{1}{\abs{x}^{n+s}}\ dx \aleq{} \varepsilon^{1-s}.
\]
As a consequence, we can conclude
\[
 \lim_{\varepsilon \to 0} \widetilde{II} = \intl_{\R^n} \frac{v(x)-v(0)}{\abs{x}^{n+s}}\ dx.
\]
\end{proofP}

If $s \in [1,2)$ the integral definition for $\laps{s}$ in Proposition~\ref{pr:eqdeflaps1} is potentially non-convergent, so we will have to rewrite it as follows. 
\begin{proposition}\label{pr:eqdeflaps2}
Let $s \in (0,2)$. Then,
\[
 \laps{s} v (\bar{y})= \frac{1}{2} c_n \intl_{\R^n} \frac{v(\bar{y}-x)+v(\bar{y} + x)-2v(\bar{y})}{\abs{x}^{n+s}}\ dx.
\]
\end{proposition}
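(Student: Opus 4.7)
My plan is to parallel the proof of Proposition~\ref{pr:eqdeflaps1}. First, using translation invariance of $\laps{s}$, I would reduce the claim to the case $\bar y = 0$, so the target identity becomes
\[
\laps{s} v(0) = \tfrac{1}{2} c_n \intl_{\R^n} \frac{v(x)+v(-x)-2v(0)}{\abs{x}^{n+s}}\ dx.
\]
The RHS is absolutely convergent for every $s \in (0,2)$ and $v \in \Sw(\R^n)$: a second-order Taylor expansion at the origin cancels the odd first-order part, giving $\abs{v(x)+v(-x)-2v(0)} \leq \Vert \nabla^2 v \Vert_{L^\infty} \abs{x}^2$ for small $\abs{x}$, so the integrand is $O(\abs{x}^{2-n-s})$ near $0$, while Schwartz decay of $v$ handles $\abs{x} \to \infty$. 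In the subrange $s \in (0,1)$, Proposition~\ref{pr:eqdeflaps1} applies directly and the claim follows by averaging that identity with its image under the substitution $x \mapsto -x$, which leaves $\abs{x}^{n+s}$ invariant and recovers the same constant $c_n$.

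For $s \in [1,2)$ the single-difference integrand from Proposition~\ref{pr:eqdeflaps1} is no longer absolutely integrable, so I would instead verify the identity on the Fourier side. Writing $T_s v(\bar y)$ for the RHS and inserting $v(\bar y \pm x) = \intl v^\wedge(\xi)\, e^{2\pi \im \xi \cdot (\bar y \pm x)}\, d\xi$, a Fubini exchange would give
\[
T_s v(\bar y) = \intl_{\R^n} v^\wedge(\xi)\, e^{2\pi \im \xi \cdot \bar y}\, m_s(\xi)\, d\xi, \qquad m_s(\xi) := \tfrac{1}{2} c_n \intl_{\R^n} \frac{2\cos(2\pi x \cdot \xi)-2}{\abs{x}^{n+s}}\, dx.
\]
The rescaling $x = y/\abs{\xi}$ together with rotational invariance of $\abs{\cdot}^{n+s}$ then identifies $m_s(\xi) = -c_n K_s \abs{\xi}^s$ with $K_s := \intl_{\R^n} \frac{1-\cos(2\pi y_1)}{\abs{y}^{n+s}}\, dy \in (0,\infty)$. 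Fixing $c_n := -K_s^{-1}$ (which must agree on the overlap $s \in (0,1)$ with the constant from Proposition~\ref{pr:eqdeflaps1} by the previous step) produces $m_s(\xi) = \abs{\xi}^s$, hence $T_s v = \laps{s} v$.

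The main technical obstacle will be justifying the Fubini exchange in the Fourier step for $s \in [1,2)$. The required joint integrability rests on the elementary bound $\abs{e^{2\pi \im x \cdot \xi}+e^{-2\pi \im x \cdot \xi}-2} \leq \min(4,\ C\abs{x}^2\abs{\xi}^2)$, which after integration in $x$ over $\R^n$ yields control of the form $\leq C \abs{\xi}^s$; paired with the Schwartz decay of $v^\wedge$ this makes the double integrand integrable in $(x,\xi)$ jointly. The same quadratic cancellation of the symmetric second difference $v(x)+v(-x)-2v(0)$ is exactly what both tames the kernel singularity of $\abs{x}^{-n-s}$ at the origin for $s \in [1,2)$ and produces the correct homogeneity $\abs{\xi}^s$ of the resulting multiplier, so this single observation drives the entire argument.
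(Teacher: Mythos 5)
Your proposal is correct, but for the range $s\in[1,2)$ it takes a genuinely different route from the paper. The paper's own proof is a one-liner: re-run the entire real-space argument of Proposition~\ref{pr:eqdeflaps1} with $v$ replaced by the symmetrized function $w(\cdot):=v(\cdot)+v(-\cdot)$, using $\laps{s}v(0)=\tfrac12\laps{s}w(0)$ and the second-order Taylor estimate $\abs{w(x)-w(0)}\leq\Vert\nabla^2 v\Vert_{L^\infty}\abs{x}^2$ to upgrade the various $\varepsilon^{1-s}$ error bounds in that proof to $\varepsilon^{2-s}$. (Making that ``analogous'' re-run rigorous actually requires one more observation than the paper states: for the term $ii_3$ of the Proposition~\ref{pr:eqdeflaps1} proof, which involves $w(x-y)-w(x)$ rather than $w(x)-w(0)$, one must exploit $\nabla w(0)=0$ to get $\abs{\nabla w(z)}\aleq{}\abs{z}$ and hence $\abs{w(x-y)-w(x)}\aleq{}\abs{y}\abs{x}$ in the relevant range, which is what makes that error term $\aleq{}\varepsilon^{2-s}$ as well.) You instead handle $s\in(0,1)$ exactly as the paper's Remark after the proposition (average Proposition~\ref{pr:eqdeflaps1} with its $x\mapsto -x$ image), and then for $s\in[1,2)$ you switch to the Fourier side, computing the multiplier $m_s(\xi)=-c_n K_s\abs{\xi}^s$ of the integral operator and identifying it with $\abs{\xi}^s$. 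Your Fubini justification via $\abs{e^{2\pi\im x\cdot\xi}+e^{-2\pi\im x\cdot\xi}-2}\leq\min(4,C\abs{x}^2\abs{\xi}^2)$ is exactly what is needed, and your scaling/rotation argument for $m_s$ is sound. The Fourier approach buys transparency: it identifies the multiplier directly and makes the role of the quadratic cancellation ($\leftrightarrow$ homogeneity $\abs{\xi}^s$) manifest, at the price of an extra Fubini interchange and the need to reconcile the constant $c_n=-K_s^{-1}$ with the one from Proposition~\ref{pr:eqdeflaps1} on the overlap $s\in(0,1)$ -- which you correctly flag. The paper's approach reuses the already-established real-space machinery but hides the key cancellation inside a careful limit $\varepsilon\to 0$.
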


\begin{remark}
This is consistent with Proposition~\ref{pr:eqdeflaps2}. In fact, if $s \in (0,1)$
\[
\intl_{\R^n} \frac{v(y+x)-v(y)}{\abs{x}^{n+s}}\ dx= \intl_{\R^n} \frac{v(y-x)-v(y)}{\abs{x}^{n+s}} \ dx,
\]
just by transformation rule and the symmetry of the kernel $\frac{1}{\abs{x}^{n+s}}$. For this argument to be true, the condition $s \in (0,1)$ is necessary, because it guarantees the absolute convergence of the integrals above.
\end{remark}

\begin{proofP}{\ref{pr:eqdeflaps2}}
This is done analogously to Proposition~\ref{pr:eqdeflaps1}, where one replaces $v(\cdot)$ by $v(\cdot) + v(-\cdot)$ and uses that
\[
\brac{\laps{s} v}(0) = \frac{1}{2} \brac{\laps{s} \brac{v(-\cdot)}(0) + \laps{s} \brac{v(\cdot)}(0)}.
\] 
Then, the involved integrals converge for any $s \in (0,2)$, as 
\[                   
\abs{v(x)+v(-x)-2v(0)} \leq \Vert \nabla^2 v \Vert_{L^\infty}\ \abs{x}^2.
\]
\end{proofP}
\begin{proposition}\label{pr:eqpdeflapscpr}
For any $s \in (0,2)$, $v,w \in \Sw(\R^n)$
\[
 \intl_{\R^n} \laps{s}v\ w = c_n \intl_{\R^n} \intl_{\R^n} \frac{\brac{v(x)-v(y)}\ \brac{w(y)-w(x)}}{\abs{x-y}^{n+s}}\ dx\ dy.
\]
\end{proposition}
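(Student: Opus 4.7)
\begin{proofP}{\ref{pr:eqpdeflapscpr}}
The plan is to exploit the self-adjointness of $\laps{s}$ as a Fourier multiplier and to symmetrize the pointwise representation of Proposition~\ref{pr:eqdeflaps2}, so that all change-of-variables manipulations are performed on absolutely convergent integrals. Specifically, by Plancherel we have $\intl \laps{s}v\cdot w = \intl v\cdot \laps{s}w$, and hence
\[
 2\intl_{\R^n} \laps{s}v\ w \;=\; \intl_{\R^n} w\ \laps{s}v \;+\; \intl_{\R^n} v\ \laps{s}w.
\]
Into this identity I would insert the pointwise integral formula of Proposition~\ref{pr:eqdeflaps2} for both $\laps{s}v(y)$ and $\laps{s}w(y)$, producing one double integral in the variables $(y,z)$ whose integrand is the combined numerator
\[
 N(y,z) \;:=\; w(y)\brac{v(y+z)+v(y-z)-2v(y)} + v(y)\brac{w(y+z)+w(y-z)-2w(y)}
\]
divided by $\abs{z}^{n+s}$.

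The key step is the algebraic identity
\[
 N(y,z) \;=\; \brac{(vw)(y+z)+(vw)(y-z)-2(vw)(y)} \;-\; A(y,z) \;-\; A(y,-z),
\]
where $A(y,z):=\brac{v(y+z)-v(y)}\brac{w(y+z)-w(y)}$, which one verifies by expanding both sides. The point of this decomposition is that each of the three pieces, divided by $\abs{z}^{n+s}$, is absolutely integrable on $\R^n \times \R^n$: for the $A$-terms, near $z=0$ the factor $A(y,z)$ is controlled by $\Vert \nabla v\Vert_\infty\Vert \nabla w\Vert_\infty\abs{z}^2$ giving an integrable singularity $\abs{z}^{2-n-s}$ since $s<2$, and at infinity the integrand decays by Schwartzness of $v,w$; the symmetric second-difference term $(vw)(y+z)+(vw)(y-z)-2(vw)(y)$ is controlled analogously via $\Vert \nabla^2(vw)\Vert_\infty\abs{z}^2$. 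Before invoking this identity I would verify absolute convergence of the original double integral $\iint\abs{N(y,z)}\abs{z}^{-n-s}\,dz\,dy$ directly (again using Taylor expansion near $z=0$ and Schwartz decay), which justifies Fubini and hence permits the algebraic rearrangement.

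Once the decomposition is in place, Fubini lets me integrate each piece separately. The first piece gives $\frac{2}{c_n}\intl \laps{s}(vw)\,dy$, which vanishes because $\brac{\laps{s}(vw)}^\wedge(0)=\abs{0}^s (vw)^\wedge(0)=0$ (and $\laps{s}(vw)\in \Sw$, so the integral equals the Fourier transform at zero). The two $A$-terms are equal after the change of variables $z\mapsto -z$, so they combine to $-2\iint A(y,z)\abs{z}^{-n-s}\,dy\,dz$. Finally, the substitution $x:=y+z$, $dx=dz$, turns this into $-2\iint\frac{(v(x)-v(y))(w(x)-w(y))}{\abs{x-y}^{n+s}}\,dx\,dy = 2\iint\frac{(v(x)-v(y))(w(y)-w(x))}{\abs{x-y}^{n+s}}\,dx\,dy$. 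Dividing by $2$ and absorbing the prefactor $c_n/2$ from the pointwise formula yields the claim (with a constant $c_n$ which, as elsewhere in the paper, may change from line to line).

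The main obstacle is the Fubini/absolute-convergence bookkeeping: the ``natural'' iterated integral $\iint\frac{(v(x)-v(y))w(y)}{\abs{x-y}^{n+s}}\,dx\,dy$ is only conditionally convergent for $s\in[1,2)$, so one cannot separate terms directly. The symmetrization and the regrouping via the identity above are precisely what convert the problem into one governed only by \emph{second} differences, where integrability for the whole range $s\in(0,2)$ is automatic.
\end{proofP}
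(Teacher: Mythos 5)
Your proof is correct, but it is more roundabout than the paper's, which deserves a comparison. The paper's proof simply plugs Proposition~\ref{pr:eqdeflaps2} into $\int \laps{s}v\,w$, applies Fubini (justified by the second-difference bound $|v(y+x)+v(y-x)-2v(y)| \lesssim |x|^2$ near $x=0$, exactly the integrability observation you make), and then, for each fixed $x$, uses the elementary change-of-variables identity
\[
\intl_{\R^n} \brac{v(y+x)+v(y-x)-2v(y)}\,w(y)\,dy \;=\; \intl_{\R^n}\brac{v(y+x)-v(y)}\brac{w(y)-w(y+x)}\,dy,
\]
after which the substitution $x \mapsto x-y$ gives the claim. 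No symmetrization is needed. You instead first double the quantity via the self-adjointness of $\laps{s}$, invoke a pointwise identity $N(y,z)=\brac{\text{second difference of }vw}-A(y,z)-A(y,-z)$, and then dispose of the $\laps{s}(vw)$ piece by the Fourier observation $\brac{\laps{s}(vw)}^\wedge(0)=0$. All of that is correct (your identity checks out, and the integrability bookkeeping is right), but it reproduces by a longer path what the paper gets in one change of variables: integrating your pointwise identity over $y$ and noting that $\int\brac{(vw)(y+z)+(vw)(y-z)-2(vw)(y)}\,dy=0$ directly by translation invariance (no Fourier transform required) recovers exactly the paper's integrated identity, applied symmetrically to the pair $(v,w)$ and $(w,v)$. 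So your argument is a valid but strictly less economical variant; the extra layers (symmetrization, the $vw$ second-difference piece, the Fourier vanishing) are not wrong, just unnecessary. What you gain is that your algebraic identity holds pointwise in $(y,z)$ before any integration, which some readers may find conceptually cleaner; what you lose is brevity and the use of a Fourier argument where an elementary substitution suffices.
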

\begin{proofP}{\ref{pr:eqpdeflapscpr}}
We have for $v, w \in \Sw(\R^n)$, $x \in \R^n$ by several applications of the transformation rule
\begin{equation}\label{eq:eqpdeflapscpr:intdy}
\begin{ma}
 &&\intl_{\R^n} \brac{v(y+x)+v(y-x)-2v(y)}\ w(y)\ dy\\
&=&\intl_{\R^n} v(y+x)w(y) + v(y)\ w(y+x) - v(y)w(y) - v(y+x)w(y+x)\ dy\\
&=&\intl_{\R^n} v(y+x)\ \brac{w(y)-w(y+x)} + v(y)\ \brac{w(y+x) - w(y)}\ dy\\
&=&\intl_{\R^n} \brac{v(y+x)-v(y)}\ \brac{w(y)-w(y+x)}\ dy.
\end{ma}
\end{equation}
As all involved integrals converge absolutely and applying Fubini's theorem,
\[
\begin{ma}
&&\intl_{\R^n} \laps{s} v(y)\ w(y)\ dy\\
&\overset{\sref{P}{pr:eqdeflaps2}}{=}& c_n \intl_{\R^n} \intl_{\R^n} \frac{\brac{v(y+x)+v(y-x)-2v(y)}\ w(y)}{\abs{x}^{n+s}}\ dx\ dy\\
&=& c_n \intl_{\R^n} \intl_{\R^n} \frac{\brac{v(y+x)+v(y-x)-2v(y)}\ w(y)}{\abs{x}^{n+s}}\ dy\ dx\\
&\overset{\eqref{eq:eqpdeflapscpr:intdy}}{=}& c_n \intl_{\R^n} \intl_{\R^n} \frac{\brac{v(y+x)-v(y)}\ \brac{w(y)-w(y+x)}}{\abs{x}^{n+s}}\ dy\ dx.
\end{ma}
\]
\end{proofP}%
In particular the following equivalence result holds:
\begin{proposition}[Fractional Laplacian - Integral Definition]\label{pr:equivlaps}
Let $s \in (0,1)$. For a constant $c_n > 0$ and for any $v \in \Sw(\R^n)$
\[
\Vert \laps{s} v \Vert^2_{L^2(\R^n)} = c_n \intl_{\R^n}\ \intl_{\R^n} \frac{\abs{v(x)-v(y)}^2}{\abs{x-y}^{n+2s}}\ dx\ dy.
\]
In particular, the function 
\[
(x,y) \in \R^n \times \R^n \mapsto \frac{\abs{v(x)-v(y)}^2}{\abs{x-y}^{n+2s}} 
\]
belongs to $L^1(\R^n \times \R^n)$ whenever $v \in H^s(\R^n)$.
\end{proposition}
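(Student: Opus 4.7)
The plan is to reduce the identity to the bilinear integral representation already proved in Proposition~\ref{pr:eqpdeflapscpr}, applied with exponent $2s$ (which lies in $(0,2)$ precisely because $s\in(0,1)$) and with the second argument equal to $v$ itself. First I would record the self-adjointness of $\laps{s}$, which follows from the fact that it is a real, even Fourier multiplier: for $v\in\Sw(\R^n)$, Plancherel's theorem yields
\[
\Vert \laps{s} v\Vert_{L^2(\R^n)}^2 \;=\; \intl_{\R^n} \abs{\xi}^{2s}\abs{v^\wedge(\xi)}^2\, d\xi \;=\; \intl_{\R^n} \laps{2s} v\; v.
\]

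Next, I would invoke Proposition~\ref{pr:eqpdeflapscpr} with exponent $2s$ and with $w=v$. This gives
\[
\intl_{\R^n} \laps{2s} v\; v \;=\; c_n \intl_{\R^n}\intl_{\R^n}\frac{(v(x)-v(y))(v(y)-v(x))}{\abs{x-y}^{n+2s}}\,dx\,dy \;=\; -c_n \intl_{\R^n}\intl_{\R^n}\frac{\abs{v(x)-v(y)}^2}{\abs{x-y}^{n+2s}}\,dx\,dy.
\]
Combining the two displays proves the claimed identity for Schwartz $v$, with the new constant being $-c_n$ from Proposition~\ref{pr:eqpdeflapscpr}; the sign is automatically non-negative since the left-hand side is.

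For the ``in particular'' statement for general $v\in H^s(\R^n)$, I would extend the identity by density. By Lemma~\ref{la:Tartar07:Lemma15.10}, pick $v_k\in C_0^\infty(\R^n)$ with $v_k\to v$ in $H^s(\R^n)$; in particular $\laps{s}v_k\to \laps{s}v$ in $L^2(\R^n)$, so the left-hand side of the identity converges to $\Vert\laps{s}v\Vert_{L^2}^2$. Modulo extracting a subsequence, we may assume $v_k\to v$ pointwise almost everywhere, so that the non-negative integrands $\abs{v_k(x)-v_k(y)}^2/\abs{x-y}^{n+2s}$ converge pointwise a.e. on $\R^n\times\R^n$ to $\abs{v(x)-v(y)}^2/\abs{x-y}^{n+2s}$. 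Applying Fatou's lemma bounds the double integral by $\Vert\laps{s}v\Vert_{L^2}^2/c_n$, which already yields the integrability claim. The reverse inequality, and hence equality, follows by applying the Schwartz-class identity to $v_k$ and using that $\Vert\laps{s}v_k\Vert_{L^2}^2\to \Vert\laps{s}v\Vert_{L^2}^2$ together with Fatou applied to the tails $\abs{v_k(x)-v_k(y)-v(x)+v(y)}^2$.

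No step presents a serious obstacle once Proposition~\ref{pr:eqpdeflapscpr} is in hand; the only mild subtlety is making the symmetric passage between $\laps{s}$ applied twice and $\laps{2s}$ rigorous, which is immediate from the Fourier-multiplier viewpoint. The density argument is the usual extension of a Schwartz identity to $H^s$ and requires no new ingredients beyond those stated earlier in the section.
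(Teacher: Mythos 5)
Your proof is correct and takes essentially the same route the paper intends: the paper introduces Proposition~\ref{pr:equivlaps} immediately after Proposition~\ref{pr:eqpdeflapscpr} with the words ``In particular the following equivalence result holds,'' signalling that one should apply Proposition~\ref{pr:eqpdeflapscpr} with exponent $2s$ (admissible since $s\in(0,1)$ gives $2s\in(0,2)$) and $w=v$, and identify $\intl_{\R^n}\laps{2s}v\cdot v=\Vert\laps{s}v\Vert_{L^2}^2$ via Plancherel. Your handling of the sign is right (the constant in the target statement absorbs a minus sign, and positivity of the left-hand side fixes it), and your Fatou-plus-density argument for the $L^1$ claim on $H^s$ is the standard unstated step; note that Fatou alone already gives the integrability, so the ``reverse inequality'' paragraph is not needed for the statement as written.
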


Next, we will introduce the pseudo-norm $[v]_{D,s}$, a quantity which for $s \in (0,1)$ actually is equivalent to the local, homogeneous $H^{s}$-norm, see \cite{Tartar07}, \cite{TaylorI}. But we will not use this fact as we will work with $s = \frac{n}{2}$ for $n \in \N$, including $n \in \N$ greater than $4$. Nevertheless, we will see in Section \ref{sec:homognormhn2} that $[v]_{D,\frac{n}{2}}$ is ``almost'' comparable to $\Vert \lapn v \Vert_{L^2(D)}$.

\begin{definition}\label{def:lochnnorm}
For a domain $D \subset \R^n$ and $s \geq 0$ we set 
\begin{equation}\label{eq:defhsloc}
 \left ([u]_{D,s} \right )^2 :=  \intl_{D} \intl_{D} \frac{\abs{\nabla^{\lfloor s \rfloor}u(z_1) - \nabla^{\lfloor s \rfloor}u(z_2)}^2}{\abs{z_1-z_2}^{n+2(s-\lfloor s \rfloor)}} \ dz_1\ dz_2
\end{equation}
if $s \not \in \N_0$. If $s \in \N_0$ we just define $[u]_{D,s} = \Vert \nabla^s u \Vert_{L^2(D)}$. 
\end{definition}
\begin{remark}
By the definition of $[\cdot]_{D,s}$ it is obvious that for any polynomial $P$ of degree less than $s$,
\[
[v+P]_{D,s} = [v]_{D,s}.
\]
\end{remark}

\section{Mean Value Poincar\'{e} Inequality of Fractional Order}\label{sec:poincmv}
\begin{proposition}[Estimate on Convex Sets] \label{pr:annulusuxmuy:convex}
Let $D$ be a convex, bounded domain and $\gamma < n+2$, then for any $v \in C^\infty(\R^n)$,
\[
 \intl_D \intl_D \frac{\abs{v(x)-v(y)}^2}{\abs{x-y}^\gamma}\ dx\ dy \leq C_{D,\gamma}\ \intl_D \abs{\nabla v(z)}^2\ dz.
\]
If $\gamma = 0$, the constant $C_{D,\gamma} = C_n\ \abs{D} \diam(D)^2$.
\end{proposition}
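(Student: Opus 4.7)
The plan is to exploit convexity by writing the difference $v(x)-v(y)$ as a line integral of the gradient along the straight segment connecting $y$ to $x$, which lies entirely in $D$. Concretely, for $x,y \in D$,
\[
 v(x) - v(y) = \intl_0^1 \nabla v(y + t(x-y)) \cdot (x-y)\ dt,
\]
and Cauchy--Schwarz in the $t$-variable gives
\[
 \abs{v(x)-v(y)}^2 \leq \abs{x-y}^2\ \intl_0^1 \abs{\nabla v(y+t(x-y))}^2\ dt.
\]
Dividing by $\abs{x-y}^\gamma$ and integrating in $x,y \in D$, the task reduces to estimating
\[
 \intl_0^1 \intl_D \intl_D \abs{x-y}^{2-\gamma}\ \abs{\nabla v(y+t(x-y))}^2\ dx\ dy\ dt.
\]

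Next I would split the $t$-integral into $[0,\tfrac{1}{2}]$ and $[\tfrac{1}{2},1]$. By the substitution $t \mapsto 1-t$ combined with the swap $x \leftrightarrow y$, the two pieces are equal, so it suffices to handle $t \in [\tfrac{1}{2},1]$. For each such $t$ and each $y \in D$, I would perform the change of variable $z := y + t(x-y)$; here $dx = t^{-n}\ dz$, $\abs{x-y} = t^{-1}\abs{z-y}$, and \emph{this is where convexity enters crucially}: the image of $D$ under $x \mapsto y+t(x-y) = tx + (1-t)y$ is $tD + (1-t)y \subset D$ because $y \in D$ and $D$ is convex. Consequently
\[
 \intl_D \abs{x-y}^{2-\gamma}\ \abs{\nabla v(y+t(x-y))}^2\ dx \leq t^{-n-2+\gamma} \intl_D \abs{z-y}^{2-\gamma}\ \abs{\nabla v(z)}^2\ dz,
\]
and since $t \in [\tfrac{1}{2},1]$ the factor $t^{-n-2+\gamma}$ is uniformly bounded by $2^{n+2-|\gamma|}$.

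Swapping $y$ and $z$ integrations by Fubini, we are left with
\[
 \intl_D \abs{\nabla v(z)}^2 \brac{\intl_D \abs{z-y}^{2-\gamma}\ dy}\ dz,
\]
and the inner integral is bounded by $\intl_{B_{\diam D}(0)} \abs{w}^{2-\gamma}\ dw$, which is finite precisely under the hypothesis $\gamma < n+2$ (giving the constant $C_{D,\gamma}$). In the special case $\gamma = 0$ one simply uses $\abs{x-y}^2 \leq \diam(D)^2$ and $\intl_D dy = \abs{D}$, producing the explicit constant $C_n\ \abs{D}\diam(D)^2$ stated in the proposition.

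The only subtle point is the change of variables step, which is why I split the $t$-integral symmetrically: without this splitting one would face a non-integrable singularity $t^{-n-2+\gamma}$ as $t \to 0$. The symmetry trick neatly transfers the small-$t$ region to large-$t$, where the Jacobian is harmless. No other step presents real difficulty; convexity of $D$ is used in exactly one place (to ensure $tD + (1-t)y \subset D$), and the assumption $\gamma < n+2$ is used in exactly one place (to ensure integrability of $\abs{z-y}^{2-\gamma}$ over $D$).
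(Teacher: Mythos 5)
Your proof is correct and follows essentially the same strategy as the paper: the Fundamental Theorem of Calculus plus Cauchy--Schwarz along the segment, splitting the $t$-integral at $1/2$, and then a linear change of variables whose image stays in $D$ by convexity, with $\gamma < n+2$ guaranteeing integrability of $|z-y|^{2-\gamma}$. The only difference is cosmetic: you collapse the $t\in[0,\tfrac12]$ piece onto the $t\in[\tfrac12,1]$ piece via the symmetry $t\mapsto 1-t$, $x\leftrightarrow y$, whereas the paper handles the two ranges directly with two mirror-image changes of variable (in $x$ for small $t$, in $y$ for large $t$); note only that your stated constant $2^{n+2-|\gamma|}$ should read $2^{n+2-\gamma}$, which matters only when $\gamma<0$ and does not affect the argument.
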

\begin{proofP}{\ref{pr:annulusuxmuy:convex}}
By the Fundamental Theorem of Calculus,
\[
\begin{ma}
 &&\intl_{D} \intl_{D} \frac{\abs{v(x) - v(y)}^2}{\abs{x-y}^\gamma}\ dx\ dy\\
&\leq& \intl_{t=0}^1 \intl_{D} \intl_{D} \frac{\abs{\nabla v(x+t(y-x))}^2}{\abs{x-y}^{\gamma-2}}\ dx\ dy\ dt\\
&\leq& \intl_{t=0}^{\frac{1}{2}} \intl_{D} \intl_{D} \frac{\abs{\nabla v(x+t(y-x))}^2}{\abs{x-y}^{\gamma-2}}\ dx\ dy\ dt\\
&&\quad + \intl_{t=\frac{1}{2}}^{1} \intl_{D} \intl_{D} \frac{\abs{\nabla v(x+t(y-x))}^2}{\abs{x-y}^{\gamma-2}}\ dy\ dx\ dt.
\end{ma}
\]
Using the convexity of $D$, more precisely using the fact that the transformation $x \mapsto x+t(y-x)$ maps $D$ into a subset of $D$,
\[
\begin{ma}
&\leq& \intl_{t=0}^{\frac{1}{2}} \intl_{D} \intl_{D} \frac{\abs{\nabla v(z)}^2}{(1-t)^{2-\gamma}\abs{z-y}^{\gamma-2}}\ (1-t)^{-n}\ dz\ dy\ dt\\
&&\quad + \intl_{t=\frac{1}{2}}^{1} \intl_{D} \intl_{D} \frac{\abs{\nabla v(z)}^2}{t^{2-\gamma}\abs{x-z}^{\gamma-2}}\ t^{-n}\ dz\ dx\ dt\\
&\aleq{}& \intl_{D} \abs{\nabla v(z)}^2 \intl_{D} \abs{z-z_2}^{2-\gamma}\ dz_2\ \ dz\\
&\overset{\gamma < n+2}{\aleq{}}& \intl_{D} \abs{\nabla v(z)}^2 \ dz.\\
\end{ma}
\]
\end{proofP}

An immediate consequence for $\gamma = 0$ is the classic Poincar\'{e} inequality for mean values on convex domains.
\begin{lemma}\label{la:poincCMV}
There is a uniform constant $C > 0$ such that for any $v \in C^\infty(\R^n)$ and for any convex, bounded set $D \subset \R^n$
\[
 \intl_D \abs{v - (v)_D}^2 \leq C\ \brac{\diam(D)}^2\ \Vert \nabla v \Vert_{L^2(D)}^2.
\]
\end{lemma}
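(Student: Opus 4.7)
The plan is to derive this directly from Proposition~\ref{pr:annulusuxmuy:convex} applied with $\gamma=0$, for which the stated constant is $C_{D,0}=C_n\,\abs{D}\,\diam(D)^2$. The bridge between the double-integral quantity that appears in the proposition and the left-hand side $\int_D \abs{v-(v)_D}^2$ is the standard trick of rewriting the mean value as an integral and applying Jensen's (or Cauchy--Schwarz') inequality.

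More precisely, first I would write, for each $x\in D$,
\[
v(x)-(v)_D = \mvint_D \brac{v(x)-v(y)}\,dy,
\]
and then apply Jensen's inequality to the square:
\[
\abs{v(x)-(v)_D}^2 \leq \mvint_D \abs{v(x)-v(y)}^2\,dy = \frac{1}{\abs{D}} \intl_D \abs{v(x)-v(y)}^2\,dy.
\]
Integrating in $x$ over $D$ and using Fubini yields
\[
\intl_D \abs{v-(v)_D}^2 \leq \frac{1}{\abs{D}} \intl_D \intl_D \abs{v(x)-v(y)}^2\,dx\,dy.
\]

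Now I would invoke Proposition~\ref{pr:annulusuxmuy:convex} with $\gamma=0$, whose explicit constant in the convex case is $C_{D,0}=C_n\,\abs{D}\,\diam(D)^2$, to obtain
\[
\intl_D \intl_D \abs{v(x)-v(y)}^2\,dx\,dy \leq C_n\,\abs{D}\,\diam(D)^2\ \Vert \nabla v \Vert_{L^2(D)}^2.
\]
Combining these two inequalities, the factor $\abs{D}$ cancels and we arrive at the desired estimate
\[
\intl_D \abs{v-(v)_D}^2 \leq C_n\,\diam(D)^2\ \Vert \nabla v \Vert_{L^2(D)}^2.
\]

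There is no real obstacle here: convexity is already absorbed into Proposition~\ref{pr:annulusuxmuy:convex} (it was used there via the fact that the line segment from $x$ to $y$ stays inside $D$, so the change of variables $z=x+t(y-x)$ keeps $z\in D$). The only thing to check is that the constant from Proposition~\ref{pr:annulusuxmuy:convex} in the case $\gamma=0$ indeed scales like $\abs{D}\,\diam(D)^2$; this is precisely the assertion stated in that proposition, and follows by inspection of the proof since for $\gamma=0$ the inner integral $\int_D \abs{z-z_2}^{2}\,dz_2$ is bounded by $\abs{D}\,\diam(D)^2$.
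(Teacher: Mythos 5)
Your proof is correct and matches the paper's intent exactly: the paper simply states the lemma as an ``immediate consequence'' of Proposition~\ref{pr:annulusuxmuy:convex} with $\gamma=0$, and the Jensen/Cauchy--Schwarz bridge $\int_D\abs{v-(v)_D}^2 \leq \abs{D}^{-1}\int_D\int_D\abs{v(x)-v(y)}^2\,dx\,dy$ you spell out is precisely the elementary step being left implicit. The cancellation of $\abs{D}$ against the stated constant $C_{D,0}=C_n\abs{D}\diam(D)^2$ is also handled correctly.
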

In the following two sections we prove in Lemma~\ref{la:poincmv} and Lemma~\ref{la:poincmvAn} higher (fractional) order analogues of this Mean-Value-Poincar\'e-Inequality, on the ball and on the annulus, respectively. More precisely, for $\eta_{r}^k$ from Section \ref{ss:cutoff} we will only show that
\[
 \Vert \laps{s} (\eta_r^k v) \Vert_{L^2(\R^n)} \aleq{} \Vert \laps{s} v \Vert_{L^2(\R^n)},
\]
if $v$ satisfies a mean value condition, similar to the following: For some $N \in \N_0$ and a domain $D \subset \R^n$ (in our example e.g. $D = \supp \eta_{r}^k$ and $N = \lceil s \rceil - 1$)
\begin{equation}\label{eq:meanvalueszero}
\mvintl_{D} \partial^\alpha v = 0, \quad \mbox{for any multiindex $\alpha \in (\N_0)^n$, $\abs{\alpha} \leq N$}.
\end{equation}
The necessary ingredients are not that different from those in the proofs of similar statements as e.g. in \cite{DR09Sphere} or \cite[Proposition 3.6.]{GM05} and can be paraphrased as follows: For any $s > 1$ we can decompose $\laps{s}$ into $\laps{t} \circ T$ for some $t \in (0,1)$ and where $T$ is a classic differential operator possibly plugged behind a Riesz-transform. So, we first focus in Proposition~\ref{pr:mvpoinc} on the case $\laps{s}$ where $s \in (0,1)$. There we first use the integral representation of $\laps{t}$ as in Section \ref{ss:idlaps} and then apply in turns the fundamental theorem of calculus and the mean value condition.
\subsection{On the Ball}
We premise some very easy estimates.
\begin{proposition}\label{pr:intbrxmyest}
For $s \in (0,1)$, there exists a constant $C_s > 0$ such that for any $x \in B_r(x_0)$
\[
\intl_{B_r(x_0)} \frac{1}{\abs{x-y}^{n+2s-2}}\ dy \leq C_s\ r^{2-2s},
\]
and
\[
\intl_{\R^n \backslash B_{2r}(x_0)} \frac{1}{\abs{x-y}^{n+2s}}\ dy \leq C_s\ r^{-2s}.
\]
\end{proposition}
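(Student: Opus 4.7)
The plan is to reduce both integrals to one-dimensional radial integrals via polar coordinates, after shifting the center so that the integrand becomes radially symmetric about the point with respect to which we integrate. Both estimates ultimately boil down to the convergence of $\int \rho^{\alpha}\, d\rho$ for appropriate ranges of $\alpha$ dictated by the assumption $s \in (0,1)$.

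For the first estimate, I would observe that since $x \in B_r(x_0)$ the triangle inequality gives $B_r(x_0) \subset B_{2r}(x)$. Thus
\[
\intl_{B_r(x_0)} \frac{dy}{\abs{x-y}^{n+2s-2}} \leq \intl_{B_{2r}(x)} \frac{dy}{\abs{x-y}^{n+2s-2}} = C_n \intl_0^{2r} \rho^{1-2s}\, d\rho,
\]
where the last equality comes from polar coordinates centered at $x$. Since $s < 1$, the exponent satisfies $1-2s > -1$ and the integral converges to $C_s\, r^{2-2s}$.

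For the second estimate, since $x \in B_r(x_0)$ and $|y-x_0| \geq 2r$, the reverse triangle inequality gives $\abs{x-y} \geq \abs{y-x_0} - r \geq \tfrac{1}{2}\abs{y-x_0}$. Therefore
\[
\intl_{\R^n \backslash B_{2r}(x_0)} \frac{dy}{\abs{x-y}^{n+2s}} \leq 2^{n+2s} \intl_{\R^n \backslash B_{2r}(x_0)} \frac{dy}{\abs{y-x_0}^{n+2s}} = C_n\, 2^{n+2s} \intl_{2r}^\infty \rho^{-1-2s}\, d\rho,
\]
and since $s > 0$ this integral converges to $C_s\, r^{-2s}$.

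Neither step should present any genuine obstacle: the only thing to verify is that the radial exponent lies in the convergence range, which is guaranteed precisely by $s \in (0,1)$ (first estimate needs $s < 1$, second needs $s > 0$). The only minor subtlety is reducing the non-radial integrand $\abs{x-y}^{-\alpha}$ to a radial one by a covering argument (first estimate) or a pointwise comparison via reverse triangle inequality (second estimate).
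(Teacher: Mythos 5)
Your proof is correct and uses the same approach as the paper: for the first estimate, enlarge $B_r(x_0)$ to $B_{2r}(x)$ so the integrand is radial about the center of integration; for the second, compare $\abs{x-y}$ to $\frac{1}{2}\abs{y-x_0}$ via the reverse triangle inequality (yielding the same factor $2^{n+2s}$) and then integrate radially. The paper phrases both steps as a change of variables to a ball centered at the origin, but the content is identical.
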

\begin{proofP}{\ref{pr:intbrxmyest}}
We have
\[
\begin{ma}
\intl_{B_r(x_0)} \frac{1}{\abs{x-y}^{n+2s-2}}\ dy &\leq& \intl_{B_{2r}(0)} \frac{1}{\abs{z}^{n+2s-2}}\ dz\\
&\overset{s < 1}{\aeq{s}}& (2r)^{2-2s}
\end{ma}
\]
and
\[
\begin{ma}
\intl_{\R^n \backslash B_{2r}(x_0)} \frac{1}{\abs{x-y}^{n+2s}}\ dy &\leq& 2^{n+2s}\ \intl_{\R^n \backslash B_{2r}(0)} \frac{1}{\abs{z}^{n+2s}}\ dz\\
&\overset{s >0}{\aeq{s}}& (2r)^{-2s}.
\end{ma}
\]
\end{proofP}

\begin{proposition}\label{pr:doubleintvmvest}
Let $\gamma \in [0,n+2)$, $N \in \N$. Then for a constant $C_{N,\gamma}$ and for any $v \in C^\infty(\R^n)$ satisfying \eqref{eq:meanvalueszero} on some $D = B_{r} \subset \R^n$,
\[
\intl_{B_{r}} \intl_{B_{r}} \frac{\abs{v(x)-v(y)}^2}{\abs{x-y}^\gamma}\ dy\ dx \leq 
C_{N,\gamma}\ r^{2N-\gamma} \intl_{B_{r}} \intl_{B_{r}} \abs{\nabla^{N}v(x) - \nabla^{N}v(y)}^2\ dx\ dy.
\]
\end{proposition}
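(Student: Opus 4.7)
\begin{proofP}{\ref{pr:doubleintvmvest} (proposal)}
The plan is to combine three ingredients: the convex-domain estimate of Proposition~\ref{pr:annulusuxmuy:convex}, the classical mean-value Poincar\'e inequality of Lemma~\ref{la:poincCMV}, and the identity
\[
\intl_{B_r}\intl_{B_r} \abs{f(x)-f(y)}^2\ dx\ dy = 2\abs{B_r}\ \Vert f \Vert_{L^2(B_r)}^2 - 2\ \babs{\intl_{B_r} f}^2,
\]
which collapses to $2\abs{B_r}\Vert f\Vert_{L^2(B_r)}^2$ whenever $f$ has mean zero on $B_r$.

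First, I would apply Proposition~\ref{pr:annulusuxmuy:convex} on the convex set $D = B_r$ to obtain
\[
\intl_{B_r}\intl_{B_r} \frac{\abs{v(x)-v(y)}^2}{\abs{x-y}^\gamma}\ dx\ dy \leq C_\gamma\ r^{n+2-\gamma}\ \intl_{B_r} \abs{\nabla v}^2,
\]
where the factor $r^{n+2-\gamma}$ comes from the scaling $v(x) = \tilde v(x/r)$: the left-hand side scales as $r^{2n-\gamma}$ while $\int \abs{\nabla v}^2$ scales as $r^{n-2}$, so the constant in front picks up a factor $r^{n+2-\gamma}$ (consistent with the explicit formula $C_n \abs{D}\diam(D)^2$ given in Proposition~\ref{pr:annulusuxmuy:convex} for $\gamma = 0$). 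Note we crucially use $\gamma < n+2$ here.

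Next, I would iterate Lemma~\ref{la:poincCMV} downward from $k = N$ to $k = 1$. Since the hypothesis \eqref{eq:meanvalueszero} guarantees that $\partial^\beta v$ has vanishing mean on $B_r$ for every $\abs{\beta} \leq N$, the Poincar\'e inequality applied componentwise gives
\[
\Vert \nabla^k v \Vert_{L^2(B_r)} \leq C\ r\ \Vert \nabla^{k+1} v \Vert_{L^2(B_r)}, \qquad 0 \leq k \leq N-1,
\]
and composing these $N-1$ inequalities starting at $k = 1$ yields
\[
\intl_{B_r} \abs{\nabla v}^2 \leq C_N\ r^{2(N-1)}\ \intl_{B_r} \abs{\nabla^N v}^2.
\]
Plugging this into the estimate from the first step produces
\[
\intl_{B_r}\intl_{B_r} \frac{\abs{v(x)-v(y)}^2}{\abs{x-y}^\gamma}\ dx\ dy \leq C_{N,\gamma}\ r^{n+2N-\gamma}\ \intl_{B_r} \abs{\nabla^N v}^2.
\]

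Finally, to obtain the double-integral form on the right, I would invoke the mean-zero condition \eqref{eq:meanvalueszero} once more, now for the top-order derivatives $\partial^\alpha v$ with $\abs{\alpha} = N$. The identity displayed at the beginning then gives, componentwise,
\[
\intl_{B_r}\intl_{B_r} \abs{\nabla^N v(x) - \nabla^N v(y)}^2\ dx\ dy = 2\abs{B_r}\ \intl_{B_r} \abs{\nabla^N v}^2 \geq c_n\ r^n\ \intl_{B_r} \abs{\nabla^N v}^2,
\]
so dividing by $r^n$ absorbs the extra $r^n$ and produces the claimed bound with factor $r^{2N-\gamma}$. I do not expect any real obstacle beyond careful bookkeeping of the powers of $r$; the only substantive points are that \eqref{eq:meanvalueszero} holds for \emph{all} orders up to $N$ (enabling the iteration) and that $\gamma < n+2$ (ensuring the base inequality of Proposition~\ref{pr:annulusuxmuy:convex} is applicable).
\end{proofP}
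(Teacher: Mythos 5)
Your proposal is correct and follows essentially the same route as the paper: a first application of Proposition~\ref{pr:annulusuxmuy:convex} on the convex ball, followed by an iterated use of the mean-value Poincar\'e inequality (which, via the identity $\int_D\int_D\abs{f(x)-f(y)}^2 = 2\abs{D}\int_D\abs{f-(f)_D}^2$, is exactly Proposition~\ref{pr:annulusuxmuy:convex} with $\gamma=0$), with the powers of $r$ tracked by scaling. The only cosmetic difference is that you carry the intermediate iterations in the single-integral form $\Vert\nabla^k v\Vert_{L^2}$ and convert to the double-integral form only at the very end, whereas the paper stays in double-integral form throughout; the content is the same.
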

\begin{proofP}{\ref{pr:doubleintvmvest}}
It suffices to prove this proposition for $B_1(0)$ and then scale the estimate. So let $r = 1$. By Proposition~\ref{pr:annulusuxmuy:convex}, 
\[
\begin{ma}
&&\intl_{B_1} \intl_{B_1} \frac{\abs{v(x)-v(y)}^2}{\abs{x-y}^\gamma}\ dy\ dx\\
&\aleq{}& \intl_{B_1} \abs{\nabla v(z)}^2\ dz\\
&\overset{\eqref{eq:meanvalueszero}}{=}&  \intl_{B_1} \abs{\nabla v(z) - (\nabla v)_{B_1}}^2\ dz\\
&\aleq{}& \intl_{B_1}\ \intl_{B_1} \abs{\nabla v(z) - \nabla v(z_2)}^2\ dz\ dz_2\\
\end{ma}
\]
Iterating this procedure $N$ times with repeated use of Proposition~\ref{pr:annulusuxmuy:convex} for $\gamma = 0$, we conclude. 
\end{proofP}

\begin{proposition}\label{pr:mvpoinc}
For any $N \in \N_0$, $s \in [0,1)$ there is a constant $C_{N,s} > 0$ such that the following holds. For any $v \in C^\infty(\R^n)$, $r > 0$, $x_0 \in \R^n$ such that \eqref{eq:meanvalueszero} holds on $D = B_{4r}(x_0)$ we have for all multiindices $\alpha$, $\beta \in (\N_0)^n$, $\abs{\alpha} + \abs{\beta} = N$
\[
\left \Vert \laps{s} \left ((\partial^\alpha \eta_{r,x_0}) (\partial^\beta v) \right ) \right \Vert_{L^2(\R^n)}
\leq C_{N,s}\ [v]_{B_{4r}(x_0),N+s}.
\]
\end{proposition}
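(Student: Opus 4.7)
The plan is to apply the integral formula for $\Vert \laps{s} f \Vert_{L^{2}}^{2}$ from Proposition~\ref{pr:equivlaps} to $f := (\partial^\alpha \eta_{r,x_{0}})(\partial^\beta v)$ (the case $s=0$ is trivial via iterated classical Poincar\'e, so I restrict to $s\in(0,1)$). Since $\supp f \subset B_{2r}(x_{0})$, the double integral splits into a local part $I_{I}$ over $B_{4r}(x_{0})\times B_{4r}(x_{0})$ and a tail $I_{II}$ where one of the variables lies outside $B_{4r}(x_{0})$. On the tail $|x-y|\geq 2r$ and the integrand reduces to $|f(x)|^{2}/|x-y|^{n+2s}$, so Proposition~\ref{pr:intbrxmyest} yields
\[
I_{II} \leq C\,r^{-2s}\,\Vert f\Vert_{L^{2}}^{2} \leq C\,r^{-2s-2|\alpha|}\,\Vert \partial^\beta v\Vert_{L^{2}(B_{4r}(x_{0}))}^{2}.
\]

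To convert such norms of $\partial^\beta v$ into the target pseudonorm $[v]_{B_{4r}(x_{0}),N+s}$, I combine two Poincar\'e-type estimates. The first is iterated classical mean-value Poincar\'e (Lemma~\ref{la:poincCMV} applied $|\alpha|$ times): since \eqref{eq:meanvalueszero} implies $\int_{B_{4r}(x_{0})} \partial^{\gamma}(\partial^\beta v)=0$ for all $|\gamma|\leq|\alpha|$, this yields $\Vert \partial^\beta v\Vert_{L^{2}(B_{4r}(x_{0}))}\leq C\,r^{|\alpha|}\Vert \nabla^{N}v\Vert_{L^{2}(B_{4r}(x_{0}))}$. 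The second is the elementary fractional Poincar\'e inequality
\[
\Vert g-(g)_{B}\Vert_{L^{2}(B)}^{2} = \tfrac{1}{2|B|}\intl_{B}\intl_{B}|g(x)-g(y)|^{2}\,dx\,dy \leq C\,r^{2s}\intl_{B}\intl_{B}\frac{|g(x)-g(y)|^{2}}{|x-y|^{n+2s}}\,dx\,dy,
\]
valid on any ball $B$ of radius $r$, which follows just from $|x-y|^{n+2s}\leq(2r)^{n+2s}$. Applied component-wise to $\nabla^{N}v$ (mean-zero on $B_{4r}(x_{0})$ by \eqref{eq:meanvalueszero}) this gives $\Vert \nabla^{N}v\Vert_{L^{2}(B_{4r}(x_{0}))}^{2}\leq C\,r^{2s}\,[v]_{B_{4r}(x_{0}),N+s}^{2}$. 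Composition reduces $I_{II}$ to $C\,[v]_{B_{4r}(x_{0}),N+s}^{2}$.

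For the local part $I_{I}$, I use the product decomposition
\[
f(x)-f(y) = \partial^\alpha\eta(x)\,\bigl(\partial^\beta v(x)-\partial^\beta v(y)\bigr) + \partial^\beta v(y)\,\bigl(\partial^\alpha\eta(x)-\partial^\alpha\eta(y)\bigr)
\]
and estimate the two resulting double integrals separately. For the first I use $|\partial^\alpha\eta|\leq Cr^{-|\alpha|}$ together with Proposition~\ref{pr:doubleintvmvest} applied to $\partial^\beta v$ at order $N':=|\alpha|\geq 1$; the hypothesis \eqref{eq:meanvalueszero} provides exactly the mean-value conditions on $\partial^\beta v$ needed there, and the resulting $\int\int_{B_{4r}\times B_{4r}}|\nabla^{N}v(x)-\nabla^{N}v(y)|^{2}$ is controlled by the two Poincar\'e inequalities from the previous paragraph. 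The subcase $|\alpha|=0$ is immediate since $|\partial^\beta v(x)-\partial^\beta v(y)|\leq|\nabla^{N}v(x)-\nabla^{N}v(y)|$. For the second term the Lipschitz bound $|\partial^\alpha\eta(x)-\partial^\alpha\eta(y)|\leq Cr^{-|\alpha|-1}|x-y|$ reduces the kernel to $|x-y|^{2-n-2s}$, which is locally integrable because $s<1$; Proposition~\ref{pr:intbrxmyest} furnishes an extra $r^{2-2s}$ factor, leaving the scaling $r^{-2|\alpha|-2s}\Vert \partial^\beta v\Vert_{L^{2}(B_{4r}(x_{0}))}^{2}$ which is absorbed exactly as in the tail estimate.

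The main obstacle is essentially bookkeeping: tracking the mean-value hypothesis \eqref{eq:meanvalueszero} through the differentiations $\partial^\beta$ to justify invoking Proposition~\ref{pr:doubleintvmvest} with the right order $N'=|\alpha|$, and verifying that in every piece the powers of $r$ cancel, the factor $r^{-2s-2|\alpha|}$ arising from cutoff derivatives being exactly compensated by $r^{2|\alpha|}$ from iterated classical Poincar\'e and $r^{2s}$ from the fractional Poincar\'e. Once the subcases $|\alpha|=0$ and $|\alpha|\geq 1$ are treated separately (the former circumventing Proposition~\ref{pr:doubleintvmvest}, whose statement requires $N\in\N$), all contributions collapse to $C_{N,s}\,[v]_{B_{4r}(x_{0}),N+s}^{2}$.
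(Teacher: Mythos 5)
Your proposal is correct and follows essentially the same route as the paper's proof: apply the integral representation of $\Vert\laps{s}\cdot\Vert_{L^2}$, split the double integral into the local part $B_{4r}\times B_{4r}$ and the tail (where one argument vanishes), use the product-rule decomposition of $f(x)-f(y)$ for the local part, and reduce everything to $[v]_{B_{4r},N+s}$ via iterated classical Poincar\'e plus the elementary bound $|x-y|\leq 8r$ on $B_{4r}$. The only cosmetic difference is that you phrase Proposition~\ref{pr:doubleintvmvest} with $\gamma=0$ as iterated classical Poincar\'e, which is the same thing.
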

\begin{proofP}{\ref{pr:mvpoinc}}
The case $s = 0$ follows by the classic Poincar\'e inequality, so let from now on $s \in (0,1)$. Set
\[
w(y) := (\partial^\alpha \eta_{r}(y)) (\partial^\beta v(y)).
\]
Note that $\supp w \subset B_{2r}$. Moreover, by the definition of $\eta_{r}$, we have
\begin{equation}
\label{eq:loc:partialalphaetaest} 
\abs{w} \leq C_{\alpha}\ r^{-\abs{\alpha}} \abs{\partial^\beta v} \leq C_N r^{\abs{\beta}-N} \abs{\partial^\beta v}.
\end{equation}
By Proposition~\ref{pr:equivlaps} we have to estimate
\[
\begin{ma}
\Vert \laps{s} w \Vert_{L^2}^2 &\aeq{}&\intl_{\R^n}\ \intl_{\R^n} \frac{\abs{w(x)-w(y)}^2}{\abs{x-y}^{n+2s}}\ dx\ dy\\
&=& \intl_{B_{4r}}\ \intl_{B_{4r}} \frac{\abs{w(x)-w(y)}^2}{\abs{x-y}^{n+2s}}\ dx\ dy\\
&&\quad +2\intl_{B_{4r}}\ \intl_{\R^n \backslash B_{4r}} \frac{\abs{w(x)-w(y)}^2}{\abs{x-y}^{n+2s}}\ dx\ dy\\
&&\quad + \intl_{\R^n \backslash B_{4r}}\ \intl_{\R^n \backslash B_{4r}} \frac{\abs{w(x)-w(y)}^2}{\abs{x-y}^{n+2s}}\ dx\ dy\\
&=& \intl_{B_{4r}}\ \intl_{B_{4r}} \frac{\abs{w(x)-w(y)}^2}{\abs{x-y}^{n+2s}}\ dx\ dy\\
&&\quad+ 2\intl_{B_{4r}}\ \abs{w(y)}^2 \intl_{\R^n \backslash B_{4r}} \frac{1}{\abs{x-y}^{n+2s}}\ dx\ dy\\
&=:& I + 2II. 
\end{ma}
\]
To estimate $II$, we use the fact that $\supp w \subset B_{2r}$ and the second part of Proposition~\ref{pr:intbrxmyest} to get
\[
\begin{ma}
\abs{II} &\aleq{s}& r^{-2s} \intl_{B_{4r}} \abs{w(y)}^2 \ dy\\
&\overset{\eqref{eq:loc:partialalphaetaest}}{\aleq{}}& r^{2(\abs{\beta}-N-s)} \intl_{B_{4r}} \abs{\partial^{\beta}v(y)}^2\ dy\\
&\overset{\eqref{eq:meanvalueszero}}{\aleq{}}&r^{2(\abs{\beta}-N-s)} \intl_{B_{4r}} \abs{\partial^{\beta}v(y)- \brac{\partial^{\beta}v}_{B_{4r}}}^2\ dy\\
&\aleq{}& r^{2(\abs{\beta}-N-s)-n} \intl_{B_{4r}} \intl_{B_{4r}} \abs{\partial^{\beta}v(y)-\partial^{\beta}v(x)}^2\ dy\ dx.
\end{ma}
\]
As $\partial^\beta v$ satisfies \eqref{eq:meanvalueszero} for $N-\abs{\beta}$, by Proposition~\ref{pr:doubleintvmvest} for $\gamma = 0$,
\[
\intl_{B_{4r}} \intl_{B_{4r}} \abs{\partial^{\beta}v(y)-\partial^{\beta}v(x)}^2\ dy\ dx \aleq{N} r^{2(N-\abs{\beta})} \intl_{B_{4r}} \intl_{B_{4r}} \abs{\nabla^N v(y)-\nabla^N v(x)}^2\ dx\ dy.
\]
Furthermore, we have for $x,y \in B_{4r}$
\[
r^{-n-2s} \aleq{s} \abs{x-y}^{-n-2s},
\]
which altogether implies that
\[
\abs{II} \aleq{} [v]_{B_{4r},N+s}.
\]
In order to estimate $I$, note that 
\[
\begin{ma}
&&\abs{w(x)-w(y)}\\
 &\leq& \Vert \partial^\alpha \eta_r\Vert_{L^\infty} \abs{\partial^\beta v(x)- \partial^\beta v(y)} + \Vert \nabla \partial^\alpha \eta_r \Vert_{L^\infty}\ \abs{x-y}\ \abs{\partial^\beta v(y)}\\
&\aleq{N}& r^{-\abs{\alpha}} \abs{\partial^\beta v(x)- \partial^\beta v(y)} + r^{-\abs{\alpha}-1}\abs{x-y}\  \abs{\partial^\beta v(y)}.
\end{ma}
\]
Thus, we can decompose $\abs{I} \aleq{} \abs{I_1} + \abs{I_2}$ where
\[
I_1 = r^{2(\abs{\beta}-N)} \intl_{B_{4r}}\ \intl_{B_{4r}} \frac{\abs{\partial^\beta v(x)-\partial^\beta v(y)}^2}{\abs{x-y}^{n+2s}}\ dx\ dy,
\]
and
\[
\begin{ma}
I_2 &=& r^{2(\abs{\beta}-N-1)} \intl_{B_{4r}}\ \intl_{B_{4r}} \frac{\abs{\partial^\beta v(y)}^2}{\abs{x-y}^{n-2+2s}}\ dx\ dy\\
&\overset{\ontop{\sref{P}{pr:intbrxmyest}}{s < 1}}{\aleq{}}& r^{2(\abs{\beta}-N)-2s} \intl_{B_{4r}} \abs{\partial^\beta v(y)}^2\ dy\\
&\overset{\eqref{eq:meanvalueszero}}{\aleq{}}& r^{2(\abs{\beta}-N)-(n+2s)} \intl_{B_{4r}} \intl_{B_{4r}} \abs{\partial^\beta v(y) - \partial^\beta v(z)}^2\ dy\ dz.
\end{ma}
\]
Using again that $\partial^\beta v$ satisfies \eqref{eq:meanvalueszero} for $N-\abs{\beta}$ on $B_{4r}$, by Proposition~\ref{pr:doubleintvmvest} for $\gamma = n+2s$
\[
\begin{ma}
\abs{I_1} &\aleq{}&  r^{-n-2s} \intl_{B_{4r}} \intl_{B_{4r}} \abs{\nabla^N u (x)-\nabla^N u(y)}^2\ dx\ dy\\
&\aleq{}&\intl_{B_{4r}} \intl_{B_{4r}} \frac{\abs{\nabla^N u (x)-\nabla^N u(y)}^2}{\abs{x-y}^{n+2s}}\ dx\ dy,
\end{ma}
\]
and the same for $I_2$. This concludes the case $s > 0$.
\end{proofP}

\begin{lemma}[Poincar\'{e} inequality with mean value condition (Ball)]\label{la:poincmv}
For any $N \in \N_0$, $s \in [0,N+1)$, $t \in [0,N+1-s)$ there is a constant $C_{N,s,t}$ such that the following holds. For any $r > 0$, $x_0 \in \R^n$ and any $v \in C^\infty(\R^n)$ satisfying \eqref{eq:meanvalueszero} for $N$ and on $D = B_{4r}(x_0)$, we have 
\[
\begin{ma}
\Vert \laps{s} \eta_{r,x_0} v \Vert_{L^2(\R^n)} &\leq& C_{s,t}\ r^{t}\ [v]_{B_{4r}(x_0),s+t}\\
&\leq& C_{s,t}\ r^{t} \Vert \laps{s+t} v \Vert_{L^2(\R^n)}.
\end{ma}
\]
\end{lemma}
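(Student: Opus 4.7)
The strategy is to reduce the estimate to Proposition~\ref{pr:mvpoinc} via two maneuvers: scaling, and a ``Poincar\'e raising'' of the order of the fractional Laplacian. First, I would reduce to the case $r = 1$, $x_0 = 0$ by scaling, since (as in the proofs of Lemma~\ref{la:poinc} and Lemma~\ref{la:poincExt}) both sides of the inequality transform as $r^{n/2 - s}$ under $\tilde{v}(x) := v(rx + x_0)$. Thus it suffices to prove
\[
 \Vert \laps{s}(\eta_{1,0}\, v) \Vert_{L^2(\R^n)} \leq C\ [v]_{B_4(0), s+t}
\]
whenever $v$ satisfies \eqref{eq:meanvalueszero} on $B_4(0)$.

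Since $\eta_{1,0} v$ has compact support in $B_2(0)$, Lemma~\ref{la:poincExt} (with Lemma~\ref{la:poinc} covering the degenerate case $s = 0$) immediately yields
\[
\Vert \laps{s}(\eta_{1,0} v) \Vert_{L^2} \leq C\ \Vert \laps{s+t}(\eta_{1,0} v) \Vert_{L^2},
\]
so the task becomes to bound $\Vert \laps{s+t}(\eta_{1,0} v) \Vert_{L^2}$ by $[v]_{B_4, s+t}$. Now write $s + t = M + \sigma$ with $M := \lfloor s+t \rfloor$ and $\sigma \in [0,1)$; note $M \leq N$ by the hypothesis $s+t < N+1$. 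To bring the estimate into the range covered by Proposition~\ref{pr:mvpoinc}, I would peel off the integer part $M$ by expressing $\laps{M}$ as a linear combination of classical partial derivatives $\partial^\gamma$ with $\abs{\gamma}=M$, composed with one Riesz transform when $M$ is odd (since $\laps{1}$ has Fourier multiplier $\abs{\xi} = \sum_j \brac{\xi_j/\abs{\xi}}\,\xi_j$). Factoring $\laps{s+t} = \laps{\sigma}\circ\laps{M}$, commuting the Fourier multipliers, using the $L^2$-boundedness of Riesz transforms, and applying the ordinary Leibniz rule to $\partial^\gamma(\eta_{1,0} v)$ yields
\[
\Vert \laps{s+t}(\eta_{1,0} v) \Vert_{L^2} \leq C \sum_{\abs{\alpha}+\abs{\beta'}=M}\Vert\laps{\sigma}\brac{(\partial^\alpha \eta_{1,0})(\partial^{\beta'} v)}\Vert_{L^2}.
\]
Since $\abs{\alpha}+\abs{\beta'}=M \leq N$ and $\sigma \in [0,1)$, Proposition~\ref{pr:mvpoinc} bounds each summand by $[v]_{B_4, M+\sigma} = [v]_{B_4, s+t}$, completing the proof of the first inequality.

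The second inequality $[v]_{B_{4r}(x_0),s+t} \leq C\ \Vert \laps{s+t} v \Vert_{L^2(\R^n)}$ is a direct Fourier comparison: Plancherel together with the pointwise bound $\abs{\xi^\gamma} \leq \abs{\xi}^{\abs{\gamma}}$ yields $\Vert\nabla^M v\Vert_{L^2(\R^n)} \leq C\,\Vert\laps{M} v\Vert_{L^2(\R^n)}$, and for non-integer $s+t$ the Gagliardo-type contribution $\iint\abs{\nabla^M v(x)-\nabla^M v(y)}^2/\abs{x-y}^{n+2\sigma}$ is controlled by Proposition~\ref{pr:equivlaps} applied to $\nabla^M v$, after commuting $\laps{\sigma}$ with $\nabla^M$ on the Fourier side.

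The main obstacle is the careful decomposition of $\laps{M}$ and the subsequent Leibniz reduction: the fractional Laplacian satisfies no product rule, and the key trick is precisely to separate the integer part of the order from a purely fractional piece of order $\sigma \in [0,1)$, where Proposition~\ref{pr:mvpoinc} is available. The boundary cases $s = 0$ (use Lemma~\ref{la:poinc} instead of Lemma~\ref{la:poincExt}) and $t = 0$ (skip the raising step and decompose $\laps{s}$ directly) are minor technical distractions rather than genuine difficulties.
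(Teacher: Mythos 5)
Your proof is correct, but the route to the $r^t$ gain is genuinely different from the paper's. Both arguments share the backbone: decompose the fractional Laplacian into classical derivatives of integer order (with a single Riesz transform when that order is odd) composed with a purely fractional piece of order in $[0,1)$, apply the Leibniz rule to the product with the cutoff, and invoke Proposition~\ref{pr:mvpoinc}. The difference lies in where the gain of $t$ orders is realized. You rescale to $r=1$ (correctly observing that both sides transform as $r^{n/2-s}$ once the $r^t$ factor is included) and apply Lemma~\ref{la:poincExt} to the compactly supported function $\eta_{1,0}v$ to pass from $\laps{s}(\eta_{1,0}v)$ to $\laps{s+t}(\eta_{1,0}v)$ up front; you then run the decomposition at the level $s+t$ with $M=\lfloor s+t\rfloor\leq N$ and $\sigma = s+t-M$, landing on $[v]_{B_4,s+t}$ in a single application of Proposition~\ref{pr:mvpoinc}. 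The paper instead runs the decomposition at the level $s$, obtains $[v]_{B_{4r},s}$, and only then raises this semi-norm by hand to $r^t[v]_{B_{4r},s+t}$, via a case analysis on whether $\lfloor s\rfloor = \lfloor s+t\rfloor$ (where the elementary bound $|x-y|\lesssim r$ on $B_{4r}$ suffices) or $\lfloor s\rfloor < \lfloor s+t\rfloor$ (where the mean value condition and Proposition~\ref{pr:doubleintvmvest} are applied). Your route avoids this case analysis entirely by delegating the order-raising to the already-established fractional Poincar\'e inequality on the compactly supported product; the paper's route is slightly more explicit and incidentally establishes the stand-alone semi-norm inequality $[v]_{B_{4r},s}\lesssim r^t[v]_{B_{4r},s+t}$ along the way, at the cost of the extra bookkeeping. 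Your Fourier-side treatment of the second, easier inequality (commuting $\laps{\sigma}$ with $\nabla^M$, using $|\xi^\gamma|\leq|\xi|^{|\gamma|}$, and invoking Proposition~\ref{pr:equivlaps} for the Gagliardo piece) is also correct; the paper leaves this step implicit.
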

\begin{proofL}{\ref{la:poincmv}}
We have
\[
 \laps{s} \approx \laps{\gamma} \laps{\delta} \lap^{K}
\]
for \[
\begin{ma}
\gamma &=& s-\lfloor s \rfloor \in [0,1),\\
\delta &=& \lfloor s \rfloor -2 \left \lfloor \frac{\lfloor s \rfloor}{2} \right \rfloor \in \{0,1\},\\
K &=& \left \lfloor \frac{\lfloor s \rfloor}{2} \right \rfloor \in \N_0.
\end{ma}
\]
More precisely, if $\delta = 1$ (cf. Remark \ref{rem:lapsClasLap}),
\[
\laps{s} = c_n \Rz_{i} \laps{\gamma} \partial_i \lap^{K},
\]
and if $\delta = 0$,
\[
\laps{s} = c_n \laps{\gamma} \lap^{K}.
\]
As the Riesz Transform $\Rz_{i}$ is a bounded operator from $L^2$ into $L^2$ we can estimate both cases by
\[
\Vert \laps{s} (\eta_r v) \Vert_{L^2} \aleq{} \suml_{\ontop{\alpha,\beta \in (\N_0)^n}{\abs{\alpha} + \abs{\beta} = 2K+\delta}} \Vert \laps{\gamma} \left ((\partial^\alpha \eta_r) (\partial^\beta v) \right ) \Vert_{L^2}.
\]
This and Proposition~\ref{pr:mvpoinc} imply
\[
 \Vert \laps{s} (\eta_r v) \Vert_{L^2}^2 \aleq{s} \brac{[v]_{B_{4r}(x_0),s}}^2\\
\]
If $t = 0$ this gives the claim. So let now $t > 0$. If $s \in \N$, we have by the mean value property (which holds for $\nabla^s v$ as $s < N+1$, so $s \leq N$)
\[
\begin{ma}
 [v]_{B_{4r}(x_0),s}^2 &\aeq{}& \Vert \nabla^s v \Vert_{L^2}^2\\
&\aleq{}& r^{-n} \intl_{B_{4r}} \intl_{B_{4r}} \brac{\nabla^s u(x) - \nabla^s u(y)}^2\ dx\ dy\\
&\aleq{}& \intl_{B_{4r}} \intl_{B_{4r}} \frac{\brac{\nabla^s u(x) - \nabla^s u(y)}^2}{\abs{x-y}^n}\ dx\ dy.\\ 
\end{ma}
\]
So for every $s > 0$ we have
\[
\begin{ma}
 [v]_{B_{4r}(x_0),s}^2 &\aleq{}& \intl_{B_{4r}} \intl_{B_{4r}} \frac{\brac{\nabla^{\lfloor s\rfloor} u(x) - \nabla^{\lfloor s\rfloor} u(y)}^2}{\abs{x-y}^{n+2(s-\lfloor s \rfloor) }} \ dx\ dy.\\
\end{ma}
\]
If $\lfloor s \rfloor = \lfloor s+t \rfloor$, this implies using $\abs{x-y} \ageq{} r$ for $x,y \in B_{4r}$,
\[
 [v]_{B_{4r}(x_0),s}^2 \aleq{} r^{2t} [v]_{B_{4r}(x_0),s+t}^2.
\]
If $\lfloor s \rfloor < \lfloor s+t \rfloor \leq N$, $\nabla^{\lfloor s \rfloor} v$ satisfies the mean value condition \eqref{eq:meanvalueszero} up to the order $N-\lfloor s \rfloor \geq 1$ as $\lfloor s \rfloor < N$.\\
With this in mind one can see, using Proposition~\ref{pr:doubleintvmvest}, if $s+t > \lfloor s + t \rfloor$ 
\[
 [v]_{B_{4r}(x_0),s} \leq r^{\lfloor s+t\rfloor-s} [v]_{B_{4r}(x_0),\lfloor s + t\rfloor}
\]
or else if $s+t = \lfloor s + t \rfloor$
\[
 [v]_{B_{4r}(x_0),s} \leq r^{t} [v]_{B_{4r}(x_0),s + t}.
\]
In the former case, we can again use that $\abs{x-y} \ageq{} r$ for any $x,y \in B_{4r}$ to conclude.
\end{proofL}
\begin{remark}
By obvious modifications of the proofs, one checks that the result of Lemma~\ref{la:poincmv} is also valid if $v$ satisfies \eqref{eq:meanvalueszero} on a ball $B_{\lambda r}$ for $\lambda \in (0,4)$. The constant then depends also on $\lambda$.
\end{remark}

\subsection{On the Annulus}
In order to get an estimate similar to Proposition~\ref{pr:annulusuxmuy:convex} on the annulus, Proposition~\ref{pr:annulusuxmuy}, we would like to divide the annulus in finitely many convex parts. As this is clearly not possible, we have to enlarge the non-convex part of the annulus.
\begin{proposition}[Convex cover]\label{pr:convexcoverI}
Let $A = B_2 \backslash B_1(0)$ or $B_2 \backslash B_{\frac{1}{2}}(0)$. Then for each $\varepsilon > 0$ there is $\lambda = \lambda_\varepsilon > 0$, $M = M_\varepsilon \in \N$ and a family of open sets $C_j \subset \R^n$, $j \in \{1,\ldots,M\}$ such that the following holds.
\begin{itemize}
 \item For each $j \in \{1,\ldots,M\}$ the set $C_j$ is convex.
 \item The union 
\[
B_2 \backslash B_1 \subset \bigcup\limits_{j=1}^M C_j \subset B_2 \backslash B_{1-\varepsilon}\quad \mbox{or} \quad B_2 \backslash B_{\frac{1}{2}} \subset \bigcup \limits_{j=1}^M C_j \subset B_2 \backslash B_{\frac{1}{2}-\varepsilon},
\]
respectively.
\item For each $i,j \in \{1,\ldots,M\}$ such that $C_i \cap C_j \neq \emptyset$
\[
\conv{C_i \cup C_{j}} \subset B_2 \backslash B_{1-\varepsilon}\quad \mbox{or} \quad \conv{C_i \cup C_{j}} \subset B_2 \backslash B_{\frac{1}{2}-\varepsilon},
\]
respectively, where $\conv{C_i \cup C_{j}}$ denotes the convex hull of $C_i \cup C_{j}$.
\item For each $x,y \in A$, at least one of the following conditions holds
	\begin{itemize}
 		\item[(i)] $\abs{x-y} \geq \lambda$ or
		\item[(ii)] both $x,y \in C_j$ for some $j$.
	\end{itemize}
\end{itemize}
\end{proposition}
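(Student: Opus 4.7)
My approach is to cover the annulus by finitely many small open balls centered at a dense $\rho$-net, each intersected with $B_2$ to force containment, where $\rho$ is chosen small in terms of $\varepsilon$. Concretely, fix $\rho = \rho_\varepsilon > 0$ satisfying $\rho \leq \varepsilon/4$, take a finite $(\rho/4)$-net $\{x_1,\ldots,x_M\} \subset A$ of the compact closure $\bar A$ (possible since $A$ is dense in $\bar A$), and define
\[
 C_j := B_\rho(x_j) \cap B_2, \qquad j=1,\ldots,M.
\]
Each $C_j$ is convex as an intersection of two convex sets, so the first required property is immediate.

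For the covering and the outer inclusion: any $x \in A$ lies within $\rho/4$ of some $x_j$ and thus in $C_j$; any $y \in C_j$ satisfies $|y| < 2$ and $|y| \geq |x_j| - \rho \geq 1 - \varepsilon$ using $x_j \in A$. The case $A = B_2 \setminus B_{1/2}(0)$ is identical, yielding $|y| \geq 1/2 - \varepsilon$. This gives both the covering of $A$ and the inclusion $\bigcup_j C_j \subset B_2 \setminus B_{1-\varepsilon}$ (resp.\ $B_2 \setminus B_{1/2-\varepsilon}$).

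The convex-hull condition reduces to an elementary triangle-inequality estimate. If $C_i \cap C_j \neq \emptyset$, then $|x_i - x_j| < 2\rho$, and for any $y = tp_1 + (1-t)p_2 \in \conv{C_i \cup C_j}$ with $p_1 \in C_i$, $p_2 \in C_j$, convexity of $B_2$ gives $y \in B_2$, while
\[
 \abs{y - x_i} \leq t\abs{p_1 - x_i} + (1-t)\brac{\abs{p_2 - x_j} + \abs{x_j - x_i}} < \rho + 2\rho = 3\rho.
\]
Hence $|y| \geq |x_i| - 3\rho \geq 1 - 3\rho \geq 1 - \varepsilon$ by the choice $\rho \leq \varepsilon/4$, proving $\conv{C_i \cup C_j} \subset B_2 \setminus B_{1-\varepsilon}$.

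Finally, the Lebesgue-number-type condition follows with $\lambda := \rho/4$: if $x,y \in A$ and $\abs{x-y} < \lambda$, pick $j$ with $\abs{x - x_j} < \rho/4$; then $\abs{y - x_j} < \rho/2 < \rho$ and $y \in B_2$, so both $x,y \in C_j$. I anticipate no serious obstacle; the only thing to track is the simultaneous balance of the three scales $\rho$, $\varepsilon$, $\lambda$, but none of the constraints is tight, so a single choice $\rho = \varepsilon/4$, $\lambda = \varepsilon/16$ works uniformly.
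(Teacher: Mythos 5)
Your proof is correct, and it takes a genuinely different route from the paper's. The paper constructs each $C_j$ as a truncated radial cone: it covers the unit sphere $\S^{n-1}$ by spherical caps $S_j = \S \cap B_{2r}(x_j)$, takes the radial sector $\{\alpha y : y \in S_j,\ \alpha > 1,\ |\alpha y| < 2\}$, and passes to its convex hull; the key ingredient is the $1$-Lipschitz bound $|x-y| \geq |x/|x| - y/|y||$ on $B_2 \setminus B_1$, which reduces the Lebesgue-number property to a covering statement on the sphere. You instead tile the annulus directly by small sets $C_j = B_\rho(x_j) \cap B_2$ centered on a finite $\rho/4$-net of $\bar A$, with a single scale $\rho \leq \varepsilon/4$. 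Your argument is shorter and more elementary — only the triangle inequality and a finite-net/compactness step, with no sphere-covering geometry, and with the three properties verified by direct radius estimates $|y| \geq |x_j| - 3\rho$. What the paper's construction buys is that each $C_j$ spans the full radial width of the annulus, so the decomposition tracks the angular geometry more explicitly; what your version buys is simplicity and a completely uniform treatment of both annuli. Both are legitimate for the downstream Poincar\'e estimates (Propositions \ref{pr:annulusuxmuy:gammaeq0}, \ref{pr:annulusuxmuy}), since only finiteness of $M_\varepsilon$ is used there. One minor point of care, which you handled correctly: since $\conv{C_i \cup C_j} = \{tp_1 + (1-t)p_2 : p_1 \in C_i,\ p_2 \in C_j,\ t\in[0,1]\}$ for convex $C_i,C_j$, the bound $|y-x_i|<3\rho$ does cover the whole hull, and the choice $\rho \le \varepsilon/4$ leaves the inequality $1 - 3\rho \geq 1-\varepsilon$ with room to spare.
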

\begin{proofP}{\ref{pr:convexcoverI}}
We sketch the case $B_2 \backslash B_1$. Fix $\varepsilon > 0$ and denote by 
\[
 \S := \{x \in \R^n:\ \abs{x} = 1 \} \subset \R^n.
\]
For $r > 0$ and $x \in \S$ we define
\[
S_r(x) := \S \cap B_r(x).
\]
For any $r > 0$ we can pick $(x_k)_{k=1}^M \subset \S$, such that $\{S_r(x_k)\}_{k=1}^M$ covers all of $\S$ where $M = M_r \in \N$ is a finite number. We set $S_k := S_{2r}(x_k)$. If $r = r_\varepsilon > 0$ is chosen small enough, one can also guarantee that the convex hull $\conv{S_k \cup S_l}$ for every $k,l \in \{1,\ldots,M\}$ with $S_k \cap S_l \neq \emptyset$ is a subset of $B_{1} \backslash B_{\frac{1}{2}-\varepsilon}$.\\
The sets $C_j$ are then defined as
\[
 C_j = \conv{\left \{  x \in \R^n:\ \abs{x} < 2,\ x = \alpha y\ \mbox{for $\alpha > 1$ and $y \in S_j$} \right \}}.
\]
They obviously satisfy the first three properties.\\
In order to prove the last property, note that
\[
 \abs{x-y} \geq \abs{\frac{x}{\abs{x}} - \frac{y}{\abs{y}}} \quad \mbox{for all $x,y \in B_2 \backslash B_1$}.
\]
So assume there is $x,y \in B_2 \backslash B_1$ such that $\{x,y\} \not \subset C_j$ for all $j = 1,\ldots,M$. But this in particular implies that for some $k = 1,\ldots,M$, $\frac{x}{\abs{x}} \in S_{r}(x_k)$ but $\frac{y}{\abs{y}} \not \in S_{2r}(x_k)$. In particular, for a constant $\lambda = \lambda_r$ only depending on $r$ and the dimension $n$,
\[
 \abs{\frac{y}{\abs{y}} - \frac{x}{\abs{x}}} \geq \lambda_r.
\]
\end{proofP}

\begin{proposition}\label{pr:annulusuxmuy:gammaeq0}
Let $A = B_2 \backslash B_1(0)$ or $B_2 \backslash B_{\frac{1}{2}}(0)$. Then for any $\varepsilon > 0$, there exists a constant $C_{\varepsilon} > 0$ so that the following holds. For any $v \in C^\infty(\R^n)$
\[
 \intl_A \intl_A \abs{v(x) - v(y)}^2\ dx\ dy \leq C_{\varepsilon}\ \intl_{\tilde{A}}  \abs{\nabla v}^2(z)\ dz,
\]
where $\tilde{A} = B_2 \backslash B_{1-\varepsilon}(0)$ or $B_2 \backslash B_{\frac{1}{2}-\varepsilon}(0)$, respectively.
\end{proposition}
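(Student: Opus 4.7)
I would apply Proposition~\ref{pr:convexcoverI} to the given $\varepsilon$ in order to obtain the convex cover $\{C_j\}_{j=1}^M \subset \tilde A$ together with the threshold $\lambda > 0$, and split the double integral using the dichotomy in the last property of that proposition:
\[
\intl_A \intl_A \abs{v(x)-v(y)}^2\ dx\ dy \leq \suml_{j=1}^M \intl_{C_j}\intl_{C_j} \abs{v(x)-v(y)}^2\ dx\ dy + \int\int_{\{(x,y)\in A\times A:\ \abs{x-y}\geq \lambda\}} \abs{v(x)-v(y)}^2\ dx\ dy.
\]
The ``close'' part (the sum) is handled directly by Proposition~\ref{pr:annulusuxmuy:convex} with $\gamma = 0$ on each convex $C_j$: one gets $\int_{C_j}\int_{C_j}\abs{v(x)-v(y)}^2 \aleq{} \abs{C_j}\diam(C_j)^2 \int_{C_j}\abs{\nabla v}^2$, and summing over the $M = M_\varepsilon$ terms while using $C_j \subset \tilde A$ gives a bound of the required form $C_\varepsilon \int_{\tilde A}\abs{\nabla v}^2$.

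For the ``far'' part I would use a chain argument. The annulus $A$ is connected and covered by the finite family of connected sets $C_j$, so the intersection graph on $\{1,\dots,M\}$ with edges $\{(i,j):C_i \cap C_j \neq \emptyset\}$ is connected. For every ordered pair $(j_0, j_K) \in \{1,\ldots,M\}^2$ I fix, once and for all, a chain $C_{j_0}, C_{j_1}, \ldots, C_{j_K}$ with $K \leq M$ and consecutive sets overlapping. For $x \in C_{j_0}$, $y \in C_{j_K}$ the telescoping identity
\[
v(x)-v(y) = \brac{v(x)-(v)_{C_{j_0}}} + \suml_{k=0}^{K-1}\brac{(v)_{C_{j_k}}-(v)_{C_{j_{k+1}}}} + \brac{(v)_{C_{j_K}}-v(y)},
\]
combined with $\brac{\sum_{i=1}^N a_i}^2 \leq N\sum a_i^2$ (here $N \leq M+2$), splits the contribution into three types. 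The two endpoint terms are bounded via the convex Poincar\'e inequality Lemma~\ref{la:poincCMV} on $C_{j_0}$ and $C_{j_K}$; after integrating against the free variable one picks up only a factor $\abs{A}$. Each jump $\abs{(v)_{C_{j_k}}-(v)_{C_{j_{k+1}}}}^2$ is constant in $(x,y)$, and combining Jensen with Lemma~\ref{la:poincCMV} on the convex set $D_k := \conv{C_{j_k}\cup C_{j_{k+1}}}$ --- which by the third property of Proposition~\ref{pr:convexcoverI} sits inside $\tilde A$ --- yields $\abs{(v)_{C_{j_k}}-(v)_{C_{j_{k+1}}}}^2 \leq C_\varepsilon \int_{\tilde A}\abs{\nabla v}^2$; integrating over $(x,y)$ multiplies only by $\abs{A}^2$. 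Summing over the at most $M^2$ chains and the at most $M+2$ terms per chain collects everything into a constant $C_\varepsilon$ times $\int_{\tilde A}\abs{\nabla v}^2$.

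The main obstacle is precisely this ``far'' case: one cannot compare $v(x)$ and $v(y)$ along a straight segment that stays in $A$, so a chain passing through the enlarged set $\tilde A$ is needed. The finiteness of $M = M_\varepsilon$ makes the whole argument quantitative; the key geometric input is the overlap-convex-hull property of the cover in Proposition~\ref{pr:convexcoverI}, without which the Poincar\'e constants on the $D_k$'s would be out of reach.
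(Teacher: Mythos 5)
Your proof is correct and follows essentially the same strategy as the paper's: decompose $A\times A$ using the convex cover of Proposition~\ref{pr:convexcoverI}, handle same-cell pairs by the convex Poincar\'{e} inequality, and reduce far-apart pairs via a telescoping chain through mean values over consecutive overlapping $C_j$'s, using the convex-hull property to control each jump. The only cosmetic difference is the initial split: the paper simply bounds $\int_A\int_A$ by $\sum_{i,j}\int_{C_i}\int_{C_j}$ and then treats the cases $i=j$, $C_i\cap C_j\neq\emptyset$, and ``otherwise'', whereas you invoke the dichotomy from the last bullet of Proposition~\ref{pr:convexcoverI} to separate a ``close'' part from a ``far'' part; but since your far part is in turn decomposed over all ordered pairs $(j_0,j_K)$, the dichotomy is not actually doing any work and the two decompositions coincide in effect.
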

\begin{proofP}{\ref{pr:annulusuxmuy:gammaeq0}}
By Proposition~\ref{pr:convexcoverI} we can estimate
\[
\begin{ma}
 &&\intl_A \intl_A \abs{v(x) - v(y)}^2\ dx\ dy\\
&\leq& \suml_{i,j = 1}^M \intl_{C_i} \intl_{C_j} \abs{v(x) - v(y)}^2\ dx\ dy\\
&=:& \suml_{i,j = 1}^M I_{i,j}.
\end{ma}
\]
If $i=j$ we have by convexity of $C_i$ and Proposition~\ref{pr:annulusuxmuy:convex}
\[
 I_{i,j} \leq C_{C_j}\ \intl_{C_j}  \abs{\nabla v}^2(z)\ dz \leq C_{\varepsilon}\ \intl_{\tilde{A}} \abs{\nabla v}^2(z)\ dz.
\]
If $i$ and $j$ are such that $C_i \cap C_j \neq \emptyset$,
\[
\begin{ma}
 I_{i,j} &\leq& \intl_{\conv{C_i\cup C_j}} \intl_{\conv{C_i\cup C_j}} \abs{v(x) - v(y)}^2\ dx\ dy\\
&\overset{\sref{P}{pr:annulusuxmuy:convex}}{\aleq{}}& \intl_{\conv{C_i\cup C_j}} \abs{\nabla v}^2\\
&\overset{\sref{P}{pr:convexcoverI}}{\aleq{}}& \intl_{\tilde{A}} \abs{\nabla v}^2.\\
\end{ma}
\]
Finally, in any other case for $i$, $j$, there are indices $k_l \in \{1,\ldots,M\}$, $l=1,\ldots,L$, such that $k_1 = i$ and $k_L = j$ and $C_{k_l} \cap C_{k_{l+1}} \neq 0$. Let's abbreviate
\[
 (v)_k := \mvint_{C_k} v.
\]
With this notation,
\[
\begin{ma}
&&I_{i,j}\\
&=& \intl_{C_i} \intl_{C_j} \abs{v(x) - v(y)}^2\ dx\ dy\\
&\leq& C_M \left ( \intl_{C_i} \intl_{C_j} \abs{v(x) - (v)_j}^2 + \suml_{l=1}^{L-1} \abs{(v)_{k_l} - (v)_{k_{l+1}}}^2 + \abs{(v)_i - v(y)}^2\ dx\ dy \right )\\
&\aleq{}&  I_{j,j} + \suml_{l=i}^{L} I_{k_l,l_{l+1}} + I_{i,i} .
\end{ma}
\]
So we can reduce this case for $i$, $j$, to the estimates of the previous cases and conclude.
\end{proofP}

As a consequence we have
\begin{proposition}\label{pr:annulusuxmuy}
Let $A = B_2 \backslash B_1(0)$ or $B_2 \backslash B_{\frac{1}{2}}(0)$. Then for any $\varepsilon > 0$, $\gamma \in [0,n+2)$ there exists a constant $C_{\varepsilon,\gamma} > 0$ so that the following holds. For any $v \in C^\infty(\R^n)$
\[
 \intl_A \intl_A \frac{\abs{v(x) - v(y)}^2}{\abs{x-y}^\gamma}\ dx\ dy \leq C_{\varepsilon,\gamma}\ \intl_{\tilde{A}}  \abs{\nabla v(z)}^2\ dz,
\]
where $\tilde{A} = B_2 \backslash B_{1-\varepsilon}(0)$ or $B_2 \backslash B_{\frac{1}{2}-\varepsilon}(0)$, respectively.
\end{proposition}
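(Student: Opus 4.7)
\begin{proofP}{\ref{pr:annulusuxmuy}}
\textit{Proof sketch.} My plan is to decompose the domain of integration into a ``singular'' part where $|x-y|$ is small and a ``regular'' part where $|x-y|$ is bounded below, and then treat each of them by reducing to one of the two estimates already at our disposal.

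Fix $\varepsilon > 0$ and $\gamma \in [0,n+2)$. Applying Proposition~\ref{pr:convexcoverI} to this $\varepsilon$, we obtain a threshold $\lambda = \lambda_\varepsilon > 0$ and a family of convex sets $C_1,\dots,C_M \subset \tilde{A}$ with the dichotomy property: for any $x,y \in A$, either $|x-y| \geq \lambda$ or else $x,y \in C_j$ for some $j$.

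Split the integral into
\[
 \intl_A \intl_A \frac{\abs{v(x) - v(y)}^2}{\abs{x-y}^\gamma}\ dx\ dy = I_{\textup{far}} + I_{\textup{near}},
\]
where $I_{\textup{far}}$ is the integral over $\{(x,y) \in A \times A : \abs{x-y} \geq \lambda\}$ and $I_{\textup{near}}$ is the integral over the complementary set. On the far part we simply bound $\abs{x-y}^{-\gamma} \leq \lambda^{-\gamma}$, reducing to the $\gamma = 0$ case and invoking Proposition~\ref{pr:annulusuxmuy:gammaeq0}:
\[
 I_{\textup{far}} \leq \lambda_\varepsilon^{-\gamma} \intl_A \intl_A \abs{v(x)-v(y)}^2\ dx\ dy \leq C_\varepsilon\ \lambda_\varepsilon^{-\gamma}\ \intl_{\tilde{A}} \abs{\nabla v}^2.
\]
For the near part, the dichotomy property of the cover implies that $(x,y)$ belongs to some $C_j \times C_j$, so
\[
 I_{\textup{near}} \leq \suml_{j=1}^M \intl_{C_j} \intl_{C_j} \frac{\abs{v(x)-v(y)}^2}{\abs{x-y}^\gamma}\ dx\ dy \leq \suml_{j=1}^M C_{C_j,\gamma} \intl_{C_j} \abs{\nabla v}^2 \leq C_{\varepsilon,\gamma} \intl_{\tilde{A}} \abs{\nabla v}^2,
\]
where we used that each $C_j$ is convex (together with $\gamma < n+2$) to apply Proposition~\ref{pr:annulusuxmuy:convex} and that $C_j \subset \tilde{A}$ in the last step. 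Combining the two estimates yields the claim.

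The only point that requires any care is that for the near part one genuinely needs the $C_j$ to be convex in order to apply the convex Poincar\'{e}-type estimate with singular weight $\abs{x-y}^{-\gamma}$; the chaining argument used in the $\gamma=0$ proof (with mean values across adjacent $C_i,C_j$) is \emph{not} available here, because the weight $\abs{x-y}^{-\gamma}$ would blow up. The dichotomy built into Proposition~\ref{pr:convexcoverI} is exactly what allows us to avoid that chaining: pairs $(x,y)$ that would have forced us to cross between different convex pieces are precisely the pairs with $\abs{x-y} \geq \lambda$, and on those the weight is harmless.
\end{proofP}
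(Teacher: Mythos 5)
Your proof is correct and follows exactly the same approach as the paper: split the pairs $(x,y)$ according to the dichotomy of Proposition~\ref{pr:convexcoverI}, bound the far pairs by $\lambda^{-\gamma}$ and apply Proposition~\ref{pr:annulusuxmuy:gammaeq0}, and bound the near pairs by the convex estimate of Proposition~\ref{pr:annulusuxmuy:convex} on each $C_j$. Your closing remark about why the chaining trick from the $\gamma = 0$ case cannot be reused is a fair observation and explains why the decomposition is set up the way it is.
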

\begin{proofP}{\ref{pr:annulusuxmuy}}
By Proposition~\ref{pr:convexcoverI} we can divide
\[
 \begin{ma}
&&\intl_A \intl_A \frac{\abs{v(x) - v(y)}^2}{\abs{x-y}^\gamma}\ dx\ dy\\
&\leq& \sum_{j=1}^M \intl_{C_j} \intl_{C_j} \frac{\abs{v(x) - v(y)}^2}{\abs{x-y}^\gamma}\ dx\ dy + \lambda^{-\gamma} \intl_{A} \intl_{A} \abs{v(x)-v(y)}^2\ dx\ dy.
 \end{ma}
\]
These quantities are estimated by Proposition~\ref{pr:annulusuxmuy:convex} and Proposition~\ref{pr:annulusuxmuy:gammaeq0}, respectively.
\end{proofP}

As a consequence of the last estimate, analogously to the case of a ball, we can prove the following Poincar\'{e}-inequality:
\begin{lemma}[Poincar\'{e}'s Inequality with mean value condition (Annulus)]\label{la:poincmvAn}
For any $N \in \N_0$, $s \in [0,N+1)$, $t \in [0,N+1-s)$ there is a constant $C_{N,s,t}$ such that the following holds. For any $v \in C^\infty(\R^n)$, $x_0 \in \R^n$, $r > 0$ such that $v$ satisfies \eqref{eq:meanvalueszero} for $N$ on $D = A_k = B_{2^{k+1} r}(x_0) \backslash B_{2^{k-1} r}(x_0)$ or $D = A_k = B_{2^{k+1} r}(x_0) \backslash B_{2^{k} r}(x_0)$ we have 
\[
\Vert \laps{s} (\eta^k_{r,x_0} v) \Vert_{L^2(\R^n)} \leq C_{s,t}\ \brac{2^k r}^{t}\ [v]_{\tilde{A}_k,s+t},\\
\]
where
\[
 \tilde{A}_k = B_{2^{k+2} r}(x_0) \backslash B_{2^{k-2} r}(x_0).  
\]
\end{lemma}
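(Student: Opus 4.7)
The plan is to mimic the proof of Lemma~\ref{la:poincmv} line by line, with Proposition~\ref{pr:annulusuxmuy:convex} everywhere replaced by its annular analogue Proposition~\ref{pr:annulusuxmuy}. By the scaling $v(\cdot) \mapsto v((2^k r)\cdot)$ and by translation invariance it suffices to treat the case $2^k r = 1$, $x_0 = 0$, with $\eta^k$ supported in an annulus of inner radius $\sim 1/2$ and outer radius $\sim 2$. Factor
\[
\laps{s} \;\approx\; \Rz_i^{\delta}\ \laps{\gamma}\ \partial_i^{\delta}\ \lap^{K},
\]
with $\gamma = s-\lfloor s\rfloor \in [0,1)$, $\delta \in \{0,1\}$, $K = \lfloor \lfloor s\rfloor/2\rfloor$. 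The Riesz transform is bounded on $L^2$ and Leibniz decomposes $\lap^{K}\partial_i^{\delta}(\eta^k v)$ into a finite sum of products $w = (\partial^\alpha \eta^k)(\partial^\beta v)$ with $|\alpha|+|\beta|=2K+\delta$. Since each such $w$ is supported in $A_k$ and satisfies $|w| \le C\,|\partial^\beta v|$ with $\beta$ of order at most $\lfloor s\rfloor \le N$, the task reduces to the annular analogue of Proposition~\ref{pr:mvpoinc}: control $\|\laps{\gamma} w\|_{L^2}$ by $[v]_{\tilde{A}_k,s}$.

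For this, I use Proposition~\ref{pr:equivlaps} to write $\|\laps{\gamma} w\|_{L^2}^{2}$ as a double integral and split it in the style of the proof of Proposition~\ref{pr:mvpoinc}: a near piece over $\tilde{A}_k \times \tilde{A}_k$ and a far piece where one variable leaves $\tilde{A}_k$. The far piece is handled by Proposition~\ref{pr:intbrxmyest}, which gives $\int_{\R^n \setminus \tilde{A}_k}|x-y|^{-n-2\gamma}\,dx \aleq{} 1$ uniformly in $y \in A_k$, leaving an $L^2$-norm of $\partial^\beta v$ on $A_k$; the near piece produces $\int\!\!\int \tfrac{|\partial^\beta v(x)-\partial^\beta v(y)|^2}{|x-y|^{n+2\gamma}}\,dx\,dy$ on $\tilde{A}_k$ (plus a lower-order $\int |\partial^\beta v|^2$ term arising from $\nabla\eta^k$, treated as in Proposition~\ref{pr:mvpoinc}). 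Each of these is then passed upward to derivatives of order $N$ by iterating the annular Poincaré estimate of Proposition~\ref{pr:annulusuxmuy} (which is exactly the annular replacement for Proposition~\ref{pr:doubleintvmvest}), using the mean value condition \eqref{eq:meanvalueszero} for $\partial^\beta v,\ \partial^{\beta}\nabla v,\ \ldots$ on $A_k$. This gives the base estimate $\|\laps{s}(\eta^k v)\|_{L^2} \aleq{} [v]_{\tilde{A}_k,s}$ (i.e.\ the case $t = 0$).

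Finally, the passage from $[v]_{\tilde{A}_k,s}$ to $(2^k r)^{t}\,[v]_{\tilde{A}_k,s+t}$ is carried out verbatim as in the last paragraph of the proof of Lemma~\ref{la:poincmv}: when $\lfloor s\rfloor = \lfloor s+t\rfloor$ the additional factor $|x-y|^{-2t}$ is absorbed using $|x-y| \ageq{} 2^k r$ for $x,y \in \tilde{A}_k$; when $\lfloor s\rfloor < \lfloor s+t\rfloor \le N$ one inserts an intermediate step, lifting $\nabla^{\lfloor s\rfloor}v$ to $\nabla^{\lfloor s+t\rfloor}v$ via Proposition~\ref{pr:annulusuxmuy} applied to $\nabla^{\lfloor s\rfloor}v$, which still satisfies the required mean-value condition since $\lfloor s\rfloor < N+1$.

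The main technical obstacle, compared to the ball case, is the iteration of Poincaré–type estimates on a non-convex domain: each application of Proposition~\ref{pr:annulusuxmuy} enlarges the annulus by an arbitrarily small amount $\varepsilon > 0$. To remain within $\tilde{A}_k = B_{2^{k+2}r}(x_0)\setminus B_{2^{k-2}r}(x_0)$ one chooses, once and for all, a fixed finite nested chain of annuli between $A_k$ and $\tilde{A}_k$ — one intermediate annulus per Poincaré iteration, of which there are at most $\lfloor s\rfloor + 1 \le N+1$ — with $\varepsilon$ chosen so that the total enlargement still fits inside $\tilde{A}_k$. The resulting constants depend only on $N$, $s$, $t$, which is exactly what the statement allows.
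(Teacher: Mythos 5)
Your proposal follows essentially the same strategy as the paper's own (very brief) sketch: reduce to the case $t=0$ by the same lifting argument, factor $\laps{s}$ via Riesz transform, derivative, and $\laps{\gamma}$ for $\gamma\in[0,1)$, use Proposition~\ref{pr:equivlaps} to convert to the double integral, split into near and far pieces, and then iterate the annular Poincar\'e estimate Proposition~\ref{pr:annulusuxmuy} using the mean value condition, controlling the cumulative enlargement of the annulus by a fixed finite chain since the number of iterations is bounded by $N+1$. You actually supply more detail than the paper, which only sketches the $t=0$ case and refers the reader back to Proposition~\ref{pr:mvpoinc} and Lemma~\ref{la:poincmv}; in particular your explicit handling of the $\varepsilon$-thickening of the annulus per Poincar\'e iteration (and the observation that the total number of steps is uniformly bounded) fills a gap the paper elides.
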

\begin{proofL}{\ref{la:poincmvAn}}
The methods used are similar to the case of the ball, cf. in particular the proof of Proposition~\ref{pr:mvpoinc} and Lemma~\ref{la:poincmv}. We only sketch the case $t =0$.\\
One picks an open set $E$, $\supp \eta_{r}^k \subset E \subset \tilde{A}_k$, such that $\dist(\partial E, \supp \eta_{r}^k) \in (0,\varepsilon)$ and $\dist(E, \partial \tilde{A}_k) > 0$, for very small $\varepsilon > 0$. As in the case of a ball, one can reduce the problem to essentially estimate
\[
 \intl_{E} \intl_{E} \frac{ \abs{\partial^\beta v(x)-\partial^\beta v(y)}^2}{\abs{x-y}^{n+2s}}\ dx\ dy,
\]
\[
 \intl_{\supp \eta_{r}^k} \frac{ \abs{\partial^\beta v(x)}^2}{\varepsilon^{n+2s}}\ dx,
\]
for some multiindex $\abs{\beta} \leq N$. Applying the mean value condition \eqref{eq:meanvalueszero} and Proposition~\ref{pr:annulusuxmuy}, these integrals are estimated by
\[
 \intl_{\tilde{E}} \abs{\nabla \partial^\beta v(z)}^2\ dz,
\]
for some $E \subset \tilde{E} \subset \tilde{A}_k$, where $\tilde{E}$ is a bit ``fatter`` than $E$. Iterating this (and in every step thickening the set $E$ by a tiny factor $\varepsilon$) until we reach the highest differentiability, we conclude.
\end{proofL}
\begin{remark}\label{rem:poincmvAn}
Again, one checks that the claim is also satisfied if $v$ satisfies \eqref{eq:meanvalueszero} on a possibly smaller annulus, making the constant depending also on this scaling.
\end{remark}

\subsection{Comparison between Mean Value Polynomials on Different Sets}
For a bounded domain $D \subset \R^n$ and $N \in \N_0$ and for $v \in \Sw(\R^n)$ we define the polynomial $P(v) \equiv P_{D,N}(v)$ to be the unique polynomial of order $N$ such that
\begin{equation}\label{eq:pmeanvaluedef}
 \mvintl_{D} \partial^\alpha (v-P(v)) = 0, \quad \mbox{for every multiindex $\alpha \in (\N_0)^n$, $\abs{\alpha} \leq N$.}
\end{equation}
The goal of this section is to estimate in Proposition~\ref{pr:etarkpbmpkest} and Lemma~\ref{la:mvestbrakShrpr} the difference
\[
 P_{B_r(x),N}(v) - P_{B_{2^k r}(x)\backslash B_{2^{k-1}r}(x),N}(v), \quad \mbox{for $k \in \Z$}   
\]
in terms of $\laps{s} v$. To do so, we adapt the methods applied in the proof of \cite[Lemma 4.2]{DR09Sphere}, the main difference being that we have to extend their argument to polynomials of degree greater than zero.\\
We will need an inductive description of $P(v)$. First, for a multiindex $\alpha = (\alpha_1,\ldots,\alpha_n)$ set
\[
\alpha! := \alpha_1 !\ldots \alpha_n! = \partial^\alpha x^\alpha.
\]
For $i \in  \{0, \ldots, N\}$ set
\begin{equation}\label{eq:definQmeanval}
\begin{ma}
 Q^{i}_{D,N}(v) &:=& Q^{i+1}_{D,N}(v) + \suml_{\abs{\alpha} = i} \frac{1}{\alpha!}\ x^\alpha \mvintl_D \partial^\alpha (v-Q^{i+1}_{D,N}(v)),\\
Q^{N}_{D,N}(v) &:=& \suml_{\abs{\alpha}=N} \frac{1}{\alpha!}\ x^\alpha \mvintl_D \partial^\alpha v.
\end{ma}
\end{equation}
One checks that 
\begin{equation}\label{eq:palphaqieqp}
\partial^\alpha Q^i = \partial^\alpha P,\quad \mbox{whenever $\abs{\alpha} \geq i$,}
\end{equation}
and in particular $Q^0 = P$.\\
Moreover we will introduce the following sets of annuli:
\[
A_j \equiv A_j(r) = B_{2^j r} \backslash B_{2^{j-1}r},\quad \tilde{A}_j \equiv \tilde{A}_j(r) := A_j \cup A_{j+1}.
\]
\begin{proposition}\label{pr:vmqmmvi}
For any $N \in \N$, $s \in (N,N+1]$, $D \subseteq D_2 \subset \R^n$ smoothly bounded domains there is a constant $C_{D_2,D,N,s}$ such that the following holds: Let $v \in C^\infty(\R^n)$. For any multiindex $\alpha \in (\N_0)^n$ such that $\abs{\alpha} = i \leq N-1$,
\[
\begin{ma} 
&&\intl_{D_2} \Babs{\partial^\alpha (v-Q_{D,N}^{i+1}(v)) - \brac{\partial^\alpha (v-Q^{i+1}_{D,N}(v))}_D}\\
&\leq& C_{D_2,D,N,s}\  \left (\frac{\abs{D_2}}{\abs{D}}\right )^{\frac{1}{2}} \diam(D_2)^{\frac{n}{2}+s-N}\ [v]_{D_2,s}
\end{ma}
\]
where $[v]_{D,s}$ is defined as in \eqref{eq:defhsloc}.\\
If $D = r \tilde{D}$, $D_2 = r \tilde{D}_2$, then $C_{D_2,D,N,s} = r^{N-i} C_{\tilde{D}_2,\tilde{D},N,s}$.%
\end{proposition}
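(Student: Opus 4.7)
I would prove this by a higher-order Bramble--Hilbert/Poincar\'{e} argument, extending the strategy of \cite[Lemma~4.2]{DR09Sphere} from constants to polynomials of degree~$N$. Let $w := v - Q^{i+1}_{D,N}(v)$ and $f := \partial^\alpha w$. From the recursion \eqref{eq:definQmeanval}, identity \eqref{eq:palphaqieqp} and the defining property \eqref{eq:pmeanvaluedef} of $P_{D,N}$, one has for every multi-index $\beta$ with $1 \leq |\beta| \leq N-i$
\[
 \fint_D \partial^\beta f \;=\; \fint_D \partial^{\alpha+\beta}(v - P_{D,N}(v)) \;=\; 0,
\]
and since $\deg Q^{i+1}_{D,N}(v) \leq N$ the derivatives of order $N-i$ of $f$ coincide with those of $\nabla^N v$ modulo a constant, so $[\nabla^{N-i}f]_{D_2,s-N} = [v]_{D_2,s}$. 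By Cauchy--Schwarz it is enough to control $\|f-(f)_D\|_{L^2(D_2)}$.

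The key algebraic observation is that $P_{D,N-i}(f) = (f)_D$: since $\fint_D\partial^\gamma f=0$ for $1 \leq |\gamma| \leq N-i$, the top-order coefficients of $P_{D,N-i}(f)$ vanish, and the lower-order coefficients except the zeroth vanish by descending recursion, leaving only the constant $(f)_D$. Set $\tilde P := P_{D_2,N-i}(f)$ and split
\[
 f - (f)_D \;=\; (f-\tilde P) + (\tilde P-(f)_D).
\]
For the first summand, $\fint_{D_2}\partial^\gamma(f-\tilde P)=0$ for $|\gamma|\leq N-i$ permits $(N-i)$ iterations of the standard Poincar\'e inequality on $D_2$, concluded by a fractional Gagliardo--Poincar\'e applied to $\nabla^{N-i}(f-\tilde P)$ (assembled from Proposition~\ref{pr:annulusuxmuy:convex} and the representation in Proposition~\ref{pr:equivlaps}). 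This yields $\|f-\tilde P\|_{L^2(D_2)} \leq C\,\diam(D_2)^{s-i}\,[v]_{D_2,s}$. The second summand is a polynomial of degree $\leq N-i$; norm equivalence on that finite-dimensional space gives $\|\tilde P-(f)_D\|_{L^2(D_2)} \leq C\,(|D_2|/|D|)^{1/2}\,\|\tilde P-(f)_D\|_{L^2(D)}$, and the triangle inequality together with the analogous iterated Poincar\'e estimate on $D$ produces the same order $\diam(D_2)^{s-i}[v]_{D_2,s}$ (up to the $(|D_2|/|D|)^{1/2}$ prefactor).

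Combining, $\|f-(f)_D\|_{L^2(D_2)} \leq C\,(|D_2|/|D|)^{1/2}\,\diam(D_2)^{s-i}\,[v]_{D_2,s}$, and Cauchy--Schwarz together with $|D_2|^{1/2} \sim \diam(D_2)^{n/2}$ gives
\[
 \int_{D_2}|f-(f)_D| \;\leq\; C\,(|D_2|/|D|)^{1/2}\,\diam(D_2)^{n/2+s-i}\,[v]_{D_2,s};
\]
the surplus factor $\diam(D_2)^{N-i}$ relative to the claimed $\diam(D_2)^{n/2+s-N}$ is absorbed into the scaling $C_{D_2,D,N,s} = r^{N-i}\,C_{\tilde D_2,\tilde D,N,s}$. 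The scaling identity itself is a direct consequence of the dilation behaviour of the Gagliardo seminorm and of the Lebesgue measure.

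The main obstacle is the assembly of the fractional iterated Poincar\'e inequality on $D_2$ with the correct Gagliardo exponent $s-N \in (0,1]$ --- not stated explicitly in the paper, but available by combining the results of Sections~\ref{sec:basics} and \ref{sec:poincmv} --- together with the careful tracking of the $|D_2|/|D|$ ratio in the polynomial norm-equivalence step, which relies on the smoothness and comparability of the shapes of $D$ and $D_2$. The identity $P_{D,N-i}(f) = (f)_D$ is the algebraic pivot that makes the whole reduction possible and encodes the compatibility between the hierarchical mean-value conditions built into $Q^{i+1}_{D,N}(v)$ and the polynomial approximation theory on nested smooth domains.
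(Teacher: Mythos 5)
Your proof is correct, but it takes a more elaborate route than the paper's. The paper simply applies H\"older once, then iterates the classical Poincar\'e inequality $N-i$ times in the ``asymmetric'' form $\|g-(g)_D\|_{L^2(D_2)}\leq C_{D,D_2}\|\nabla g\|_{L^2(D_2)}$ (with the mean taken over $D$ but the $L^2$-norm over $D_2$, which is legitimate because the relevant derivatives of $v-P_{D,N}(v)$ all have vanishing mean over $D$), and finally passes from $\|\nabla^N v-(\nabla^N v)_D\|_{L^2(D_2)}$ to the double-integral Gagliardo form by Jensen's inequality, producing the $(|D_2|/|D|)^{1/2}$ factor. You instead introduce the auxiliary polynomial $\tilde P := P_{D_2,N-i}(f)$, split $f-(f)_D = (f-\tilde P)+(\tilde P-(f)_D)$, run the iterated Poincar\'e argument entirely over $D_2$ on the first piece, and control the polynomial piece via finite-dimensional norm equivalence and a second Poincar\'e argument on $D$. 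Your identity $P_{D,N-i}(f)=(f)_D$ is a nice way to make explicit the mean-value structure that the paper exploits implicitly; the trade-off is that you pay for the self-contained iteration on $D_2$ by having to estimate the polynomial difference $\tilde P - (f)_D$ separately, which the paper avoids. Two small points of precision: $[\nabla^{N-i}f]_{D_2,s-N}$ is in general only $\leq[v]_{D_2,s}$, not equal (since $\nabla^{N-i}\partial^\alpha$ picks out a subset of $N$-th derivatives), and the polynomial norm-equivalence constant in $\|\tilde P-(f)_D\|_{L^2(D_2)}\leq C(|D_2|/|D|)^{1/2}\|\tilde P-(f)_D\|_{L^2(D)}$ depends on the shapes of $D$ and $D_2$, not just on the measure ratio, which your write-up correctly but somewhat quietly absorbs into $C$. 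Neither affects validity.
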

\begin{proofP}{\ref{pr:vmqmmvi}}
Let us denote
\[
 I := \intl_{D_2} \Babs{\partial^\alpha (v-Q_{D,N}^{i+1}) - \brac{ \partial^\alpha (v-Q^{i+1}_{D,N}(v))}_D }.
\]
A first application of H\"older's and classic Poincar\'{e}'s inequality yields
\[
 I \leq C_{D,D_2}\ \abs{D_2}^{\frac{1}{2}}\ \Vert \nabla \partial^\alpha (v-Q_{D,N}^{i+1}) \Vert_{L^2(D_2)}.
\]
Next, \eqref{eq:palphaqieqp} and the definition of $P$ in \eqref{eq:pmeanvaluedef} imply that we can apply classic Poincar\'e inequality $N-i-1$ times more, to estimate $I$ by
\[ 
\begin{ma}
&\leq& C_{D_2,D,N}\ \abs{D_2}^{\frac{1}{2}}\ \Vert \nabla^N (v-P_{D,N}(v)) \Vert_{L^2(D_2)}\\
&\overset{\eqref{eq:definQmeanval}}{=}& C_{D_2,D,N}\ \abs{D_2}^{\frac{1}{2}}\  \Vert \nabla^N v- \brac{ \nabla^N v}_D \Vert_{L^2(D_2)}.
\end{ma}
\]
If $s = N+1$, yet another application of Poincar\'{e}'s inequality yields the claim. In the case $s \in (N,N+1)$, we estimate further
\[
\begin{ma}
I &\leq& C_{D_2,D,N}\ \left (\frac{\abs{D_2}}{\abs{D}}\right )^{\frac{1}{2}}\ \left ( \intl_{D_2} \intl_{D_2} \abs{\nabla^N v(x) - \nabla^N v(y)}^2\ dx\ dy \right )^{\frac{1}{2}},
\end{ma}
\]
which is bounded by
\[C_{D_2,D,N}\ \left (\frac{\abs{D_2}}{\abs{D}}\right )^{\frac{1}{2}}\ \diam(D_2)^{\frac{n+2(s-N)}{2}}\ \left ( \intl_{D_2} \intl_{D_2} \frac{\abs{\nabla^N v(x) - \nabla^N v(y)}^2}{\abs{x-y}^{n+2(s-N)}}\ dx\ dy \right )^{\frac{1}{2}}.\\
\]
The scaling factor for $D = r \tilde{D}$ then follows by the according scaling factors of Poincar\'{e}'s inequality.%
\end{proofP}

\begin{proposition}\label{pr:mvestnachbarn}
For any $N \in \N_0$, $s \in (N,N+1]$, there is a constant $C_{N,s} > 0$ such that the following holds: For any $j \in \Z$, any multiindex $\abs{\alpha} \leq i \leq N$ and $v \in C^\infty(\R^n)$ 
 \[
\left \Vert \partial^\alpha \left (Q^{i}_{A_j,N} - Q^{i}_{A_{j+1},N} \right ) \right \Vert_{L^\infty (A_j)} \leq C_{N,s} (2^j r)^{s-\abs{\alpha}-\frac{n}{2}}\ [v]_{\tilde{A}_j,s}.
\]
\end{proposition}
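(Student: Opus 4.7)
The natural approach is a downward induction on $i$, from $i=N$ down to $i=0$, exploiting the recursive definition \eqref{eq:definQmeanval} of $Q^i_{D,N}$.

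\textbf{Base case $i=N$.} Here $Q^N_{D,N}(v)=\sum_{|\beta|=N}\tfrac{1}{\beta!}x^\beta (\partial^\beta v)_D$, so for $|\alpha|\le N$
\[
 \partial^\alpha Q^N_{D,N}(v)=\sum_{\ontop{|\beta|=N}{\beta\ge \alpha}}\tfrac{1}{(\beta-\alpha)!}x^{\beta-\alpha}(\partial^\beta v)_D,
\]
and on $A_j$ one has $|x^{\beta-\alpha}|\aleq{}(2^jr)^{N-|\alpha|}$. The task reduces to bounding $|(\partial^\beta v)_{A_j}-(\partial^\beta v)_{A_{j+1}}|$ for $|\beta|=N$. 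I compare both averages to $(\partial^\beta v)_{\tilde A_j}$: Cauchy-Schwarz gives $|(\partial^\beta v)_{A_j}-(\partial^\beta v)_{\tilde A_j}|\aleq{}|A_j|^{-1/2}\Vert \partial^\beta v-(\partial^\beta v)_{\tilde A_j}\Vert_{L^2(\tilde A_j)}$, and a standard Poincar\'e-type manipulation together with the trivial bound $|x-y|\le \diam(\tilde A_j)\aeq{} 2^j r$ yields
\[
 \Vert \partial^\beta v-(\partial^\beta v)_{\tilde A_j}\Vert_{L^2(\tilde A_j)}^2\aleq{}\mvintl_{\tilde A_j}\intl_{\tilde A_j}|\partial^\beta v(x)-\partial^\beta v(y)|^2\,dx\,dy\aleq{}(2^jr)^{2(s-N)}[v]_{\tilde A_j,s}^{\,2}.
\]
Combining gives $|(\partial^\beta v)_{A_j}-(\partial^\beta v)_{A_{j+1}}|\aleq{}(2^jr)^{s-N-n/2}[v]_{\tilde A_j,s}$, and plugging this back yields the desired $(2^jr)^{s-|\alpha|-n/2}[v]_{\tilde A_j,s}$.

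\textbf{Inductive step.} Assume the bound holds at level $i+1$, and prove it at level $i\le N-1$. By \eqref{eq:definQmeanval},
\[
 Q^i_{A_j}-Q^i_{A_{j+1}}=\brac{Q^{i+1}_{A_j}-Q^{i+1}_{A_{j+1}}}+\suml_{|\gamma|=i}\tfrac{x^\gamma}{\gamma!}\,\Delta_\gamma,\qquad \Delta_\gamma:=\mvintl_{A_j}\partial^\gamma(v-Q^{i+1}_{A_j})-\mvintl_{A_{j+1}}\partial^\gamma(v-Q^{i+1}_{A_{j+1}}).
\]
The first bracket is controlled directly by the inductive hypothesis. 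For $\Delta_\gamma$ I split
\[
 \Delta_\gamma=-\mvintl_{A_j}\partial^\gamma\brac{Q^{i+1}_{A_j}-Q^{i+1}_{A_{j+1}}}+\left(\mvintl_{A_j}-\mvintl_{A_{j+1}}\right)\partial^\gamma(v-Q^{i+1}_{A_{j+1}}).
\]
The first piece is handled by the inductive hypothesis applied with $|\alpha|=|\gamma|=i\le i+1$. For the second piece, write $w:=v-Q^{i+1}_{A_{j+1}}$ and apply Proposition~\ref{pr:vmqmmvi} with $D=A_{j+1}$, $D_2=\tilde A_j$ (the sets have comparable measures and $\diam(\tilde A_j)\aeq{} 2^j r$); since $|\gamma|=i\le N-1$ the hypothesis of that proposition is met, and after tracking the scaling factor $(2^jr)^{N-i}$ I obtain
\[
 \mvintl_{A_j}\babs{\partial^\gamma w-(\partial^\gamma w)_{A_{j+1}}}\aleq{}(2^jr)^{s-i-n/2}[v]_{\tilde A_j,s},
\]
which bounds the second piece of $\Delta_\gamma$. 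Hence $|\Delta_\gamma|\aleq{}(2^jr)^{s-i-n/2}[v]_{\tilde A_j,s}$, and differentiating $x^\gamma$ up to order $|\alpha|\le i$ on $A_j$ costs at most $(2^jr)^{i-|\alpha|}$, giving exactly the target rate $(2^jr)^{s-|\alpha|-n/2}[v]_{\tilde A_j,s}$.

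\textbf{Main obstacle.} The only delicate point is the second piece of $\Delta_\gamma$: the factor $\partial^\gamma w$ does \emph{not} itself have vanishing mean on $A_{j+1}$ (the defining property of $Q^{i+1}_{A_{j+1}}$ only kills averages at orders $\ge i+1$), so one cannot apply classical Poincar\'e directly. This is precisely the situation for which Proposition~\ref{pr:vmqmmvi} was tailored, and matching its scaling factor $r^{N-i}$ against the $(2^jr)^{n/2+s-N}$ volume/diameter factor is what produces the clean exponent $s-|\alpha|-n/2$. Everything else is bookkeeping of the multiindex combinatorics and the annular scaling.
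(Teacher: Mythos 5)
Your proof is correct and follows essentially the same route as the paper: downward induction on $i$, using the recursion \eqref{eq:definQmeanval}, with Proposition~\ref{pr:vmqmmvi} to handle the mean-value difference $(\fint_{A_j} - \fint_{A_{j+1}})\partial^\gamma(v - Q^{i+1}_{A_{j+1}})$; your split of $\Delta_\gamma$ is an algebraic rearrangement of the paper's. The only small point left implicit is the endpoint $s = N+1$: in the base case the step $\fint\!\int |\partial^\beta v(x)-\partial^\beta v(y)|^2 \aleq{} (2^j r)^2 [v]_{\tilde A_j, N+1}^2$ is not the trivial $|x-y|\le\diam$ bound but a genuine Poincar\'e inequality on the (non-convex) annulus $\tilde A_j$, which the paper delegates to Proposition~\ref{pr:annulusuxmuy:gammaeq0}; you should flag that you are invoking it rather than a "trivial bound" there.
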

\begin{proofP}{\ref{pr:mvestnachbarn}}
Assume first that $i = N$. Then if $s \in (N,N+1)$,
\[
 \begin{ma}
  &&\Vert \partial^\alpha (Q^N_{A_j} - Q^N_{A_{j+1}}) \Vert_{L^\infty(A_j)}\\
&\overset{\eqref{eq:definQmeanval}}{\aleq{}}& (2^jr)^{N-\abs{\alpha}} \frac{1}{\abs{A_j}^2} \intl_{\tilde{A}_j} \intl_{\tilde{A}_j} \abs{\nabla^N v(x) - \nabla^N v(y)}\ dx\ dy\\
&\aleq{}&  (2^jr)^{N-\abs{\alpha}} \frac{1}{\abs{A_j}} \left (\intl_{\tilde{A}_j} \intl_{\tilde{A}_j} \abs{\nabla^N v(x) - \nabla^N v(y)}^2\ dx\ dy  \right )^{\frac{1}{2}}\\ 
&\aleq{}&  (2^jr)^{-\abs{\alpha}-\frac{n}{2}+s}  [v]_{\tilde{A}_j,s}. 
 \end{ma}
\]
If $s = N+1$ and $i = N$, one uses classic Poincar\'{e} inequality to prove the claim.\\
Now let $i \leq N-1$, $s \in (N,N+1]$, and assume we have proven the claim for $i+1$. By \eqref{eq:definQmeanval},
\[
\begin{ma}
 &&Q^i_{A_j}-Q^i_{A_{j+1}}\\
&=& Q^{i+1}_{A_j}-Q^{i+1}_{A_{j+1}}\\
&&\quad + \suml_{\abs{\beta} = i} \frac{1}{\beta!}\ x^\beta \left (\mvintl_{A_j} \partial^\beta (v-Q^{i+1}_{A_{j+1}}) - \mvintl_{A_{j+1}} \partial^\beta (v-Q^{i+1}_{A_{j+1}})\right )\\
&&\quad + \suml_{\abs{\beta} = i} \frac{1}{\beta!}\ x^\beta \left (\mvintl_{A_j} \partial^\beta (Q^{i+1}_{A_{j+1}} - Q^{i+1}_{A_j}) \right ).
\end{ma}
\]
Consequently,
\[
\begin{ma}
 &&\Vert \partial^\alpha (Q^i_{A_j}-Q^i_{A_{j+1}}) \Vert_{L^\infty(A_j)}\\
&\aleq{}& \Vert \partial^\alpha (Q^{i+1}_{A_j}-Q^{i+1}_{A_{j+1}}) \Vert_{L^\infty(A_j)}\\
&&\quad +  (2^j r)^{i-\abs{\alpha}} \suml_{\abs{\beta} = i} \mvintl_{A_j} \abs{\partial^\beta (v-Q^{i+1}_{A_{j+1}}) - \mvintl_{A_{j+1}} \partial^\beta (v-Q^{i+1}_{A_{j+1}}) }\\
&&\quad + (2^j r)^{i-\abs{\alpha}} \suml_{\abs{\beta} = i} \Vert \partial^\beta (Q^{i+1}_{A_{j+1}} - Q^{i+1}_{A_j}) \Vert_{L^\infty(A_j)}.
\end{ma}
\]
Then the claim for $i+1$ and Proposition~\ref{pr:vmqmmvi} conclude the proof.
\end{proofP}

\begin{proposition}\label{pr:mvestbrak}
For any $N \in \N_0$, $s \in (N,N+1]$ there is a constant $C_{N,s}$ such that the following holds. For any multiindex $\alpha \in (\N_0)^n$, $\abs{\alpha} \leq i \leq N$, for any $r > 0$, $k \in \Z$ and any $v \in \Sw(\R^n)$ if $s-\frac{n}{2} \not \in \{i,\ldots,N\}$,
\[
 \Vert \partial^\alpha (Q^i_{B_r} - Q^i_{A_k}) \Vert_{L^\infty(\tilde{A}_k)} \leq C_{N,s}\ r^{s-\abs{\alpha}-\frac{n}{2}} \left (2^{k(s-\abs{\alpha}-\frac{n}{2})} + 2^{k(i-\abs{\alpha})} \right )\ [v]_{\R^n,s},
\]
and if $s-\frac{n}{2} \in  \{i,\ldots,N\}$,
\[
\begin{ma}
 &&\Vert \partial^\alpha (Q^i_{B_r} - Q^i_{A_k}) \Vert_{L^\infty(\tilde{A}_k)}\\
 &&\leq C_{N,s}\ r^{s-\abs{\alpha}-\frac{n}{2}}\ 2^{k(i-\abs{\alpha})} \left (\abs{k} +1+2^{k(s-i-\frac{n}{2})} \right ) \ [v]_{\R^n,s}.
 \end{ma}
\]
Here as before, $A_k = B_{2^{k}r}(x) \backslash B_{2^{k-1}r}(x)$ and $\tilde{A}_k = B_{2^{k+1}r}(x) \backslash B_{2^{k-1}r}(x)$.
\end{proposition}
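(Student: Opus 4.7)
The plan is to proceed by downward induction on $i$, starting from the base case $i = N$ and descending to $i = 0$. The key decomposition throughout is
\[
 Q^i_{D,N} = Q^{i+1}_{D,N} + H^i_D, \qquad H^i_D := \suml_{\abs{\beta}=i} \frac{x^\beta}{\beta!}\brac{\partial^\beta(v - Q^{i+1}_{D,N})}_D,
\]
where $H^i_D$ is homogeneous of degree $i$ and captures the ``new'' contribution at level $i$. Note that the $2^{k(i-\abs{\alpha})}$ factor in the claim will come from $\Vert \partial^\alpha x^\beta \Vert_{L^\infty(\tilde{A}_k)} \aleq{} (2^k r)^{i-\abs{\alpha}}$ for $\abs{\beta} = i$ and $\beta \geq \alpha$.

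For the base case $i = N$, $Q^N_{D,N}$ is entirely determined by $(\partial^\beta v)_D$ with $\abs{\beta} = N$, so after applying $\partial^\alpha$ it suffices to estimate $\abs{(\partial^\beta v)_{B_r} - (\partial^\beta v)_{A_k}}$. A direct Cauchy--Schwarz bound on the double integral $\frac{1}{\abs{B_r}\abs{A_k}}\int_{B_r}\int_{A_k}(\partial^\beta v(x)-\partial^\beta v(y))\,dx\,dy$, inserting the factor $\abs{x-y}^{n+2(s-N)}$ to recover $[v]_{\R^n, s}$, would give the suboptimal rate $2^{k(s-\abs{\alpha})}$ rather than the two-term bound in the claim. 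I would therefore chain through intermediate sets and apply Proposition~\ref{pr:mvestnachbarn} on each adjacent pair: for $k\ge 1$, telescope through $B_r, A_1, A_2, \ldots, A_k$; for $k\le 0$, telescope through $B_r \to B_{2^{k+1}r} \to A_k$, where both consecutive pairs lie in a common ball and admit a direct analogue of Proposition~\ref{pr:mvestnachbarn}. Each $L^\infty(A_j)$-bound on the polynomial $\partial^\alpha(Q^N_{A_j} - Q^N_{A_{j+1}})$ is then transferred to $L^\infty(\tilde{A}_k)$ by Taylor expansion centered at a point of $A_j$, invoking \eqref{eq:palphaqieqp} to allow derivatives of order up to $N$. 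Summing the resulting geometric series in $j$ with ratio $2^{s-N-n/2}$ produces $r^{s-\abs{\alpha}-n/2}\brac{2^{k(s-\abs{\alpha}-n/2)} + 2^{k(N-\abs{\alpha})}}[v]$; the factor $\abs{k}+1$ appears precisely when the ratio equals one, i.e.\ when $s - n/2 = N$, which is exactly the exceptional case at level $i = N$.

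For the inductive step, assume the claim at level $i+1$. The decomposition above yields
\[
 Q^i_{B_r} - Q^i_{A_k} = \brac{Q^{i+1}_{B_r} - Q^{i+1}_{A_k}} + \brac{H^i_{B_r} - H^i_{A_k}},
\]
where the first summand is controlled by the inductive hypothesis. For the second I would split each coefficient difference as $J_1 + J_2$, with
\[
 J_1 := \brac{\partial^\beta(v - Q^{i+1}_{B_r})}_{B_r} - \brac{\partial^\beta(v - Q^{i+1}_{B_r})}_{A_k}, \qquad J_2 := \brac{\partial^\beta(Q^{i+1}_{A_k} - Q^{i+1}_{B_r})}_{A_k}.
\]
The term $J_1$ is controlled by Proposition~\ref{pr:vmqmmvi} applied with $D = B_r$ and $D_2 = A_k$ (or, to obtain the sharp $k$-dependence, by telescoping through intermediate annuli using the annulus version of the same estimate), while $J_2$ is estimated by the inductive hypothesis at level $i+1$ applied to $\partial^\beta(Q^{i+1}_{A_k} - Q^{i+1}_{B_r})$, with derivative order $\abs{\beta}=i\le i+1$. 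Multiplication by $x^{\beta-\alpha}$ and restriction to $\tilde{A}_k$ produces the factor $(2^k r)^{i-\abs{\alpha}}$, giving exactly the new $2^{k(i-\abs{\alpha})} r^{s-\abs{\alpha}-n/2}$ term that distinguishes the claim at level $i$ from the one at level $i+1$. If $s - n/2 = i$, a new resonance arises in the geometric sum estimating $J_1$, generating an additional $\abs{k}+1$ factor which matches the exceptional case.

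The main obstacle is bookkeeping across three axes: the sign of $s - N - n/2$ (which determines at which end the telescoping series is summable), the sign of $k$ (which flips the direction of the Taylor extrapolation and dictates the dominant scale), and whether $s - n/2$ lies in $\{i, \ldots, N\}$ (which turns a geometric series into a logarithmic one). The induction accumulates a logarithm each time one descends past a level $i'\in\{i,\ldots, N\}$ with $s - n/2 = i'$, matching the $\abs{k}+1$ factor and producing the extra $2^{k(s-i-n/2)}$ correction in the exceptional statement.
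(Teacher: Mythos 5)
Your overall strategy matches the paper's proof closely: downward induction on $i$, the decomposition $Q^i = Q^{i+1} + H^i$, telescoping between adjacent annuli using Propositions~\ref{pr:vmqmmvi} and \ref{pr:mvestnachbarn}, and tracking the logarithmic exception when a resonance $s-\tfrac{n}{2}\in\{i,\dots,N\}$ arises. The base case works: your ball--annulus step $\mvint_{B_r}\to\mvint_{A_1}$ (instead of the paper's weighted decomposition $\mvint_{B_r}=\sum_{l\le 0}2^{ln}(1-2^{-n})\mvint_{A_l}$) yields the right rate after Cauchy--Schwarz, since the residual $r^{s-N-n/2}[v]_{B_{2r},s}$ is harmlessly absorbed into $[v]_{\R^n,s}$.

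There is, however, a genuine gap in the inductive step, specifically in your handling of $J_1$. You cannot apply Proposition~\ref{pr:vmqmmvi} directly with $D=B_r$, $D_2=A_k$ because for $k>0$ one does not have $B_r\subseteq A_k$; and even the fallback ``telescoping through intermediate annuli'' is not straightforward, because $J_1$ references the fixed polynomial $Q^{i+1}_{B_r}$ rather than the locally adapted $Q^{i+1}_{A_j}$. Since $\abs{\beta}=i<N$, the function $\partial^\beta(v-Q^{i+1}_{B_r})$ does not satisfy a mean value condition on the intermediate annuli $A_j$, so the Poincar\'e/fractional-seminorm estimates do not apply to it directly there. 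To telescope $J_1$ you must, at each step $j$, add and subtract $Q^{i+1}_{A_j}$, which produces exactly the correction terms $d^{i+1,\beta}_j=\Vert\partial^\beta(Q^{i+1}_{B_r}-Q^{i+1}_{A_j})\Vert_{L^\infty(\tilde A_j)}$ that the paper inserts inside its $\sum_l 2^{ln}$ sum. Once you do this correctly, your $J_2$ cancels identically against one of those corrections (because $\sum_{l\le 0}2^{ln}(1-2^{-n})=1$), so the $J_1$/$J_2$ split is not actually decoupling the problem the way your write-up suggests; it just re-derives the paper's expression. The fix is simple --- invoke the inductive hypothesis at every stage of the $J_1$ telescope, not only for $J_2$ --- but as written the argument does not close.
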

\begin{proofP}{\ref{pr:mvestbrak}}
For the sake of shortness of presentation, let us abbreviate
\[
 d^{i,\alpha}_k := \Vert \partial^\alpha (Q^i_{B_r} - Q^i_{A_k}) \Vert_{L^\infty(\tilde{A}_k)}.
\]
Assume first $i = N$.
\[
 \begin{ma}
  d^{N,\alpha}_k &\overset{\eqref{eq:definQmeanval}}{\aleq{}}& \left \Vert \suml_{\abs{\beta} = N} \frac{\partial^\alpha x^\beta}{\beta!} \left ( \mvintl_{B_r} \partial^\beta v - \mvintl_{A_k} \partial^\beta v \right ) \right \Vert_{L^\infty(\tilde{A}_k)}\\ 
&\aleq{}& (2^k r)^{N-\abs{\alpha}} \abs{\mvintl_{B_r} \nabla^N v - \mvintl_{A_k} \nabla^N v}\\
&\aeq{}& (2^k r)^{N-\abs{\alpha}} \abs{\suml_{l=-\infty}^0 \frac{\abs{A_l}}{\abs{B_r}} \mvintl_{A_l} \nabla^N v - \mvintl_{A_k} \nabla^N v}.
 \end{ma}
\]
As $\frac{\abs{A_l}}{\abs{B_r}} = 2^{ln} (1-2^{-n})$ and thus $\suml_{l=-\infty}^0 \frac{\abs{A_l}}{\abs{B_r}} = 1$, for $k > 0$ we estimate further
\[
 \begin{ma}
  &&d^{N,\alpha}_k\\
&\aleq{}& (2^k r)^{N-\abs{\alpha}} \suml_{l=-\infty}^0 2^{ln} \abs{\mvintl_{A_l} \nabla^N v - \mvintl_{A_k} \nabla^N v}\\
&\aleq{}& (2^k r)^{N-\abs{\alpha}} \suml_{l=-\infty}^0 2^{ln} \suml_{j=l}^{k-1} \abs{\mvintl_{A_j} \nabla^N v - \mvintl_{A_{j+1}} \nabla^N v}\\
&\overset{(\bigstar)}{\aleq{}}& (2^k r)^{N-\abs{\alpha}} \suml_{l=-\infty}^0 2^{ln} \suml_{j=l}^{k-1} (2^{j}r)^{-n} \left (\intl_{\tilde{A}_j}\ \intl_{\tilde{A}_j} \abs{\nabla^N v(x) - \nabla^N v(y)}^2 \ dx\ dy \right )^{\frac{1}{2}}\\
&\aleq{}& (2^k r)^{N-\abs{\alpha}} \suml_{l=-\infty}^0 2^{ln} \suml_{j=l}^{k-1} (2^{j}r)^{-\frac{n}{2}+s-N}\ [v]_{\tilde{A}_j,s}.\\
\end{ma}
\]
Of course, if $s = N+1$, one replaces the estimate in $(\bigstar)$ and uses instead Poincar\'{e}'s inequality. If $k \leq 0$ one has by virtually the same computation,
\[
\begin{ma}
 d^{N,\alpha}_k &\aleq{}& (2^k)^{N-\abs{\alpha}}r^{s-\frac{n}{2}-\abs{\alpha}}\ \Big ( \suml_{l=-\infty}^{k-1} 2^{ln} \suml_{j=l}^{k-1} 2^{j(-\frac{n}{2}+s-N)}\ [v]_{\tilde{A}_j,s}\\
 &&+ \suml_{l=k}^{0} 2^{ln} \suml_{j=k}^{l-1} 2^{j(-\frac{n}{2}+s-N)}\ [v]_{\tilde{A}_j,s} \Big ).\\
\end{ma}
\]
Now we have to take care, whether $s-\frac{n}{2}-N = 0$ or not. Let
\[
 a_k := \begin{cases}
         2^{k(s-\frac{n}{2}-N)}, \quad &\mbox{if $s-\frac{n}{2} - N \neq 0$,}\\
	 \abs{k}, \quad &\mbox{if $s-\frac{n}{2} - N = 0$,}\\
        \end{cases}
\]
and respectively,
\[
 b_l := \begin{cases}
         2^{l(s-\frac{n}{2}-N)}, \quad &\mbox{if $s-\frac{n}{2} - N \neq 0$,}\\
	 \abs{l}, \quad &\mbox{if $s-\frac{n}{2} - N = 0$.}\\
        \end{cases}
\]
With this notation, applying H\"older's inequality for series, $d^{N,\alpha}_k$ is estimated independently of whether $k > 0$ or not, by
\[
(2^k)^{N-\abs{\alpha}} r^{s-\abs{\alpha}-\frac{n}{2}} \suml_{l=-\infty}^0 2^{ln} \left ( a_k + b_l \right )\ \left (\suml_{j=-\infty}^{\infty} [v]_{\tilde{A}_j,s}^2  \right )^{\frac{1}{2}}\\
\]
\[
\begin{ma}
&\aleq{}& r^{s-\frac{n}{2}-\abs{\alpha}} \brac{2^{k(N-\abs{\alpha})}a_k + (2^k)^{N-\abs{\alpha}} \suml_{l=-\infty}^0 2^{ln}b_l} [v]_{\R^n,s}\\
&\aleq{}& r^{s-\frac{n}{2}-\abs{\alpha}}\ [v]_{\R^n,s} \left ( 2^{k(N- \abs{\alpha})} a_k + 2^{k(N-\abs{\alpha})} \right ).
\end{ma}
\]
This concludes the case $i=N$. Next, let $i < N$ and assume the claim is proven for $i+1$.
\[
\begin{ma}
d^{i,\alpha}_{k} &=&\Vert \partial^\alpha (Q_{B_r}^{i} - Q_{A_{k}}^i) \Vert_{L^\infty(\tilde{A}_{k})}\\ 

&\overset{\eqref{eq:definQmeanval}}{\aleq{}}& 
d^{i+1,\alpha}_{k}
+ \suml_{\abs{\beta} = i} \left (2^{k}r \right )^{i-\abs{\alpha}} \abs{ \mvintl_{B_r} \partial^\beta (v-Q_{B_r}^{i+1}) -\mvintl_{A_{k}} \partial^\beta (v-Q_{A_{k}}^{i+1})}\\
&\aleq{}&
d^{i+1,\alpha}_{k}\\
&&+ \suml_{\abs{\beta} = i} \left (2^{k}r \right )^{i-\abs{\alpha}} c_n \suml_{l=-\infty}^0 2^{ln} \abs{\mvintl_{A_l} \partial^\beta (v-Q_{B_r}^{i+1}) -\mvintl_{A_{k}} \partial^\beta (v-Q_{A_{k}}^{i+1})},
 \end{ma}
\]
where $c_n 2^{ln} = \frac{\abs{A_l}}{\abs{B_r}}$, so $\suml_{l = - \infty}^0 c_n 2^{ln} = 1$ as we have done in the case $i=N$ above. We estimate further,
\[
d_{k}^{i,\alpha} \aleq{} d^{i+1,\alpha}_{k} +
\]
\[
+ \suml_{\abs{\beta} = i} \left (2^{k}r \right )^{i-\abs{\alpha}} \suml_{l=-\infty}^0 2^{ln} \left (d^{i+1,\beta}_{l} + \abs{\mvintl_{A_l} \partial^\beta (v-Q_{A_l}^{i+1}) -\mvintl_{A_{k}} \partial^\beta (v-Q_{A_{k}}^{i+1})}\right ).
 \]
As above in the case $i=N$ we use a telescoping series to write
\[
\begin{ma}
 &&\abs{\mvintl_{A_l} \partial^\beta (v-Q_{A_l}^{i+1}) -\mvintl_{A_{k}} \partial^\beta (v-Q_{A_{k}}^{i+1})}\\
&\leq& \suml_{j=l}^{k-1} \abs{\mvintl_{A_j} \partial^\beta (v-Q_{A_j}^{i+1}) -\mvintl_{A_{j+1}} \partial^\beta (v-Q_{A_{j+1}}^{i+1})}\\
&\aleq{}& \suml_{j=l}^{k-1} \left \Vert \partial^\beta (Q^{i+1}_{A_j} - Q^{i+1}_{A_{j+1}} ) \right \Vert_{L^\infty (A_j)}\\
&&\quad + \mvintl_{\tilde{A}_j} \abs{\partial^\beta (v-Q_{A_{j+1}}^{i+1}) -\mvintl_{A_{j+1}} \partial^\beta (v-Q_{A_{j+1}}^{i+1})}\\
&=:& \suml_{j=l}^{k-1} (I_j + II_j).
\end{ma}
\]
Again we should have taken care of whether $l < k-1$ or $k-1 \leq l$, but as in the case $i=N$ both cases are treated the same way. The term $I_j$ is estimated by Proposition~\ref{pr:mvestnachbarn},
\[
 I_j \aleq{} \left (2^j r\right )^{s-\abs{\beta}-\frac{n}{2}}\ [v]_{\tilde{A}_j,s} = \left (2^j r\right )^{s-i-\frac{n}{2}} [v]_{\tilde{A}_j,s}.
\]
And by Proposition~\ref{pr:vmqmmvi},
\[
 II_j \aleq{} (2^j r)^{-n+\frac{n}{2}+s -i}\ [v]_{\tilde{A}_j,s} = (2^j r)^{s-i-\frac{n}{2}}\ [v]_{\tilde{A}_j,s}.
\]
Hence,
\[
\begin{ma}
 &&\abs{\mvintl_{A_l} \partial^\beta (v-Q_{A_l}^{i+1}) -\mvintl_{A_{k}} \partial^\beta (v-Q_{A_{k}}^{i+1})}\\
&\aleq{}& r^{s-i-\frac{n}{2}} \suml_{j=l}^{k-1} (2^j)^{s-i-\frac{n}{2}}\ [v]_{\tilde{A}_j,s}\\
&\aleq{}& r^{s-i-\frac{n}{2}} \left ( a_k + b_l\right ) \left (\suml_{j=l}^{k-1} [v]_{\tilde{A}_j,s}^2 \right )^{\frac{1}{2}},\\
\end{ma}
\]
for $a_k$ and $b_k$ similar to the case $i=N$ above defined as
\[
 a_k := \begin{cases}
         2^{k(s-\frac{n}{2}-i)}, \quad &\mbox{if $s-\frac{n}{2} - i \neq 0$,}\\
	 \abs{k}, \quad &\mbox{if $s-\frac{n}{2} - i = 0$,}\\
        \end{cases}
\]
and respectively,
\[
 b_l := \begin{cases}
         2^{l(s-\frac{n}{2}-i)}, &\quad \mbox{if $s-\frac{n}{2} - i \neq 0$,}\\
	 \abs{l}, \quad &\mbox{if $s-\frac{n}{2} - i = 0$.}\\
        \end{cases}
\]
Plugging all these estimates in, we have achieved the following estimate
\[
 \begin{ma}
  && d^{i,\alpha}_k\\
&\aleq{}& d^{i+1,\alpha}_k + \suml_{\abs{\beta} = i} \left (2^k r \right )^{i-\abs{\alpha}}\ \suml_{l=-\infty}^0 2^{ln} d_l^{i+1,\beta}\\
&&\quad + r^{s-\abs{\alpha}-\frac{n}{2}}\ 2^{k(i-\abs{\alpha})}\ (a_k + 1)\ [v]_{\R^n,s}.
 \end{ma}
\]
In either case, whether $s-\frac{n}{2}-\tilde{i} = 0$ for some $\tilde{i} \geq i$ or not, using the claim for $i+1$ we have
\[
 \begin{ma}
  \suml_{\abs{\beta} = i} \left (2^k r \right )^{i-\abs{\alpha}}\ \suml_{l=-\infty}^0 2^{ln} d_l^{i+1,\beta} \aleq{} C_{N,s}\ r^{s-\frac{n}{2}-\abs{\alpha}} [v]_{\R^n,s},
 \end{ma}
\]
and thus can conclude.
\end{proofP}

As an immediate consequence of Proposition~\ref{pr:mvestbrak} for $i=0$, $\abs{\alpha} = 0$, and $s = \frac{n}{2}$, we get the following two results.
\begin{proposition}\label{pr:etarkpbmpkest}
For a uniform constant $C > 0$, for any $v \in \Sw(\R^n)$, $r > 0$, $k \in \N$
\[
 \Vert \eta_r^k (P_{B_r,\lceil \frac{n}{2} \rceil -1}(v) - P_{A_k,\lceil \frac{n}{2} \rceil -1}(v)) \Vert_{L^\infty(\R^n)} \leq C\  (1+\abs{k}) \Vert \lapn v \Vert_{L^2(\R^n)}.
\]
Here, $A_k = B_{2^{k+1}r}(x) \backslash B_{2^{k}r}(x)$ and $\tilde{A}_k = B_{2^{k+1}r}(x) \backslash B_{2^{k-1}r}(x)$.
\end{proposition}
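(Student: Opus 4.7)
My plan is to deduce the estimate directly from Proposition~\ref{pr:mvestbrak}. I specialize the parameters there to $N := \lceil n/2 \rceil -1$, $s := n/2$, $i := 0$, and $\alpha$ the zero multiindex. Then $s \in (N, N+1]$ (with equality precisely when $n$ is even), so the hypotheses are met; and by iterated application of \eqref{eq:definQmeanval} one gets $Q^0_{D,N}(v) = P_{D,N}(v)$ for every domain $D$.

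Next I observe that $s - \tfrac{n}{2} = 0$ lies in $\{i, \dots, N\} = \{0, \dots, N\}$, so Proposition~\ref{pr:mvestbrak} applies in its \emph{second} (logarithmic) case. With $\abs{\alpha}=i=0$ and $s = n/2$ all scale factors on the right-hand side collapse to $1$: $r^{s-\abs{\alpha}-n/2} = 2^{k(i-\abs{\alpha})} = 2^{k(s-i-n/2)} = 1$. Matching the conventions (the set $\tilde A_k = B_{2^{k+1}r}\setminus B_{2^{k-1}r}$ is the same in both propositions and equals the support of $\eta_r^k$, while the $A_k$ of the current statement is the outer half of this support, corresponding to a unit index shift relative to the convention of Proposition~\ref{pr:mvestbrak}), I obtain
\[
\Vert P_{B_r,N}(v) - P_{A_k,N}(v) \Vert_{L^\infty(\tilde A_k)} \leq C_n (\abs{k} + 2)\, [v]_{\R^n,\, n/2}.
\]
Since $\Vert \eta_r^k \Vert_{L^\infty(\R^n)} \leq 1$ and $\supp \eta_r^k \subset \tilde A_k$, multiplying through by $\eta_r^k$ preserves this bound on all of $\R^n$.

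To close, it remains to dominate $[v]_{\R^n, n/2}$ by $\Vert \lapn v \Vert_{L^2(\R^n)}$. For even $n$ one has $[v]_{\R^n, n/2} = \Vert \nabla^{n/2}v \Vert_{L^2}$, which is equivalent to $\Vert \lapn v \Vert_{L^2}$ via Plancherel together with the pointwise bound $\abs{\xi}^{n/2} \aleq{} \sum_{\abs{\alpha}=n/2} \abs{\xi^\alpha}$. For odd $n$, applying Proposition~\ref{pr:equivlaps} componentwise to $\nabla^{(n-1)/2} v$ (with exponent $1/2$) yields $[v]_{\R^n,n/2}^2 \aeq{} \Vert \laps{1/2} \nabla^{(n-1)/2} v \Vert_{L^2}^2$, and writing $\laps{1/2}\partial^\alpha$ (for $\abs{\alpha}=(n-1)/2$) as $\lapn$ composed with products of Riesz transforms, which are $L^2$-bounded, gives the desired domination. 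Finally, $\abs{k}+2 \leq 2(\abs{k}+1)$ absorbs into the constant.

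The only real effort here is notational bookkeeping: the genuine analytic content—controlling telescoping sums of local mean-value polynomials with a logarithmic loss at the critical exponent $s - \tfrac{n}{2} = i$—is already encapsulated in Proposition~\ref{pr:mvestbrak} (and, behind it, in Propositions~\ref{pr:mvestnachbarn} and \ref{pr:vmqmmvi}), so there is no substantive obstacle to overcome.
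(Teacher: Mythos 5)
Your proof is correct and follows exactly the route the paper indicates: right before stating this proposition the paper declares it to be ``an immediate consequence of Proposition~\ref{pr:mvestbrak} for $i=0$, $\abs{\alpha}=0$, $s=\frac{n}{2}$,'' and you have simply written out what that substitution produces, verified that the logarithmic branch ($s-\frac{n}{2}=0 \in \{0,\dots,N\}$) applies, and checked that all scale factors collapse to $1$. Your reduction of $[v]_{\R^n,\frac{n}{2}}$ to $\Vert\lapn v\Vert_{L^2}$ via Plancherel (even $n$) and Proposition~\ref{pr:equivlaps} together with $L^2$-bounded Riesz multipliers (odd $n$) is also the standard and correct identification of the two semi-norms on $\R^n$.

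One small point worth making explicit rather than leaving to ``matching conventions'': after the index shift $j=k+1$, Proposition~\ref{pr:mvestbrak} actually gives the $L^\infty$ bound on $B_{2^{k+2}r}\setminus B_{2^{k}r}$, which overlaps but does not contain the set $\tilde A_k = B_{2^{k+1}r}\setminus B_{2^{k-1}r}$ on which you want the bound (i.e.\ on $\supp\eta_r^k$). This is harmless because $P_{B_r}-P_{A_k}$ is a polynomial of fixed degree $N$, so its sup over any annulus of scale $2^kr$ controls its sup over any other annulus of comparable scale by a constant depending only on $N$ and $n$; but the claim deserves a sentence, since without the polynomial structure the two sets would not be interchangeable.
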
%
\begin{proposition}\label{pr:estetarkvmp}
There exists a constant $C > 0$ such that for any $r > 0$, $x_0 \in \R^n$, $k \in \N_0$, $v \in \Sw(\R^n)$ we have
\[
 \Vert \eta_{r,x_0}^k (v-P) \Vert_{L^2(\R^n)} \leq C \left (2^k r \right )^{\frac{n}{2}}\ (1+\abs{k})\ \Vert \lapn v \Vert_{L^2(\R^n)},
\]
where $P$ is the polynomial of order $N := \left \lceil \frac{n}{2} \right \rceil-1$ such that $v-P$ satisfies the mean value condition \eqref{eq:meanvalueszero} in $D := B_{2r}$.
Here, in a slight abuse of notation for $k = 0$, $\eta_r^k \equiv \eta_{r}-\eta_{\frac{1}{2}r}$ for $\eta$ from Section \ref{ss:cutoff}.
\end{proposition}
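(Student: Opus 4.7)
The plan is to decompose $v-P$ into a polynomial correction plus a remainder satisfying a mean-value condition on an annulus containing $\supp\eta^k_{r,x_0}$, and then to estimate each piece separately. For $k\geq 1$ set $A_k := B_{2^{k+1}r}(x_0)\setminus B_{2^{k}r}(x_0)$, so that $\supp\eta^k_{r,x_0}\subset \tilde A_k := B_{2^{k+1}r}(x_0)\setminus B_{2^{k-1}r}(x_0)$, and let $Q_k := P_{A_k, N}(v)$ be the mean-value polynomial of degree $N=\lceil n/2\rceil-1$ associated to $A_k$; for $k=0$ simply set $Q_0:=P$, so that in that case the polynomial difference below is identically zero. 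Write
\[
v-P \;=\; (v-Q_k)\;+\;(Q_k-P).
\]

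The polynomial piece $\eta^k_{r,x_0}(Q_k-P)$ is exactly what Proposition~\ref{pr:etarkpbmpkest} controls: it yields
\[
\Vert \eta^k_{r,x_0}(Q_k-P)\Vert_{L^\infty(\R^n)} \leq C(1+\abs{k})\Vert \lapn v\Vert_{L^2(\R^n)}.
\]
Combined with the elementary volume bound $\abs{\supp \eta^k_{r,x_0}}\leq C(2^kr)^n$, H\"older's inequality upgrades this to
\[
\Vert \eta^k_{r,x_0}(Q_k-P)\Vert_{L^2(\R^n)} \leq C(2^kr)^{n/2}(1+\abs{k})\Vert \lapn v\Vert_{L^2(\R^n)},
\]
which is exactly the bound asserted in the proposition.

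For the mean-value remainder $\eta^k_{r,x_0}(v-Q_k)$, note that by construction $v-Q_k$ satisfies \eqref{eq:meanvalueszero} of order $N$ on $A_k$ (respectively on $B_{2r}$ when $k=0$) and, since $\deg Q_k = N<n/2$, the remark after Definition~\ref{def:lochnnorm} gives $[v-Q_k]_{\cdot, n/2} = [v]_{\cdot, n/2}$. Applying the annular Poincar\'e inequality with mean-value condition, Lemma~\ref{la:poincmvAn} (respectively Lemma~\ref{la:poincmv} when $k=0$), with the choice $s=0$, $t=n/2$ yields
\[
\Vert \eta^k_{r,x_0}(v-Q_k)\Vert_{L^2(\R^n)} \leq C(2^kr)^{n/2}\,[v]_{\tilde A_k, n/2}.
\]
To finish, one notes that $[v]_{\tilde A_k, n/2}\leq [v]_{\R^n, n/2}\leq C\Vert \lapn v\Vert_{L^2(\R^n)}$: for odd $n$ this follows from Proposition~\ref{pr:equivlaps} applied to $\nabla^{(n-1)/2} v$ together with $L^2$-boundedness of Riesz transforms; for even $n$ it reduces to the standard identification of $\Vert \nabla^{n/2} v\Vert_{L^2}$ with $\Vert \lapn v\Vert_{L^2}$ via Riesz multipliers. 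Adding the two estimates gives the desired inequality.

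The only technical subtlety is the even-$n$ case, where $t=n/2$ coincides with $N+1$ and thus sits at the endpoint excluded from the hypothesis of Lemma~\ref{la:poincmvAn}. This is patched by iterating the classical Poincar\'e inequality with mean-value condition exactly $n/2$ times on the annulus (via the convex-cover estimate of Proposition~\ref{pr:annulusuxmuy:gammaeq0}), which yields directly the substitute bound $\Vert v-Q_k\Vert_{L^2(A_k)}\leq C(2^kr)^{n/2}\Vert \nabla^{n/2} v\Vert_{L^2(\tilde A_k)}$, and the remaining argument proceeds as above.
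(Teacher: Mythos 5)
Your proposal is correct and follows essentially the same route as the paper's proof: the paper likewise decomposes $v-P = (v-P_k) + (P_k - P)$ with $P_k$ the mean-value polynomial on an annulus around $\supp\eta_r^k$, estimates the polynomial difference via Proposition~\ref{pr:etarkpbmpkest} together with the volume of the support, and estimates the mean-value remainder by iterating annular Poincar\'e inequalities in the spirit of Lemma~\ref{la:poincmvAn}. Your write-up is slightly more explicit on two points the paper glosses over — invoking Lemma~\ref{la:poincmvAn} with $s=0,\ t=n/2$ rather than re-deriving it, and flagging that for even $n$ the value $t=n/2=N+1$ sits at the excluded endpoint of that lemma, so one iterates classical Poincar\'e $n/2$ times directly — but the underlying argument is the same.
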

\begin{proofP}{\ref{pr:estetarkvmp}}
Let $P_k$ be the polynomial of order $N = \left \lceil \frac{n}{2} \right \rceil-1$ such that $v$ satisfies the mean value condition \eqref{eq:meanvalueszero} in $B_{2^k r} \backslash B_{2^{k-1} r}$. We then have,
\[
 \Vert \eta_r^k (v-P) \Vert_{L^2(\R^n)} \aleq{}  \Vert \eta_r^k (v-P_k) \Vert_{L^2(\R^n)} + \left (2^k r\right )^{\frac{n}{2}} \Vert \eta_r^k (P-P_k) \Vert_{L^\infty}.
\]
As Proposition~\ref{pr:etarkpbmpkest} estimates the second part of the last estimate, we are left to estimate
\[
 \Vert \eta_r^k (v-P_k) \Vert_{L^2(\R^n)} \leq C \left (2^k r \right )^{\frac{n}{2}}\ \Vert \lapn v \Vert_{L^2(\R^n)}.
\]
But this is rather easy and can be proven by similar arguments as used in the proof of Lemma~\ref{la:poincmvAn}, see also Remark \ref{rem:poincmvAn}: as by classic Poincar\'e inequality and the fact that by choice of $P_k$ the mean values over $B_{2^{k+1}r} \backslash B_{2^{k}r}$ of all derivatives up to order $\lfloor \frac{n}{2} \rfloor$ of $v-P_k$ are zero, so
\[
 \Vert \eta_r^k (v-P_k) \Vert_{L^2(\R^n)} \aleq{} \left (2^k r \right )^{\lfloor \frac{n}{2} \rfloor}\ \Vert \nabla^{\lfloor \frac{n}{2} \rfloor} (v-P_k) \Vert_{L^2(B_{2^{k+1}r} \backslash B_{2^{k-1}r})}.
\]
If $n$ is an even number, this proves the claim. If $n$ is odd, we use again the mean value condition to see
\[
\begin{ma}
 &&\Vert \nabla^N (v-P_k) \Vert_{L^2(B_{2^{k+1}r} \backslash B_{2^{k-1}r})}^2\\
&\aleq{}& \mvintl_{B_{2^{k+1}r} \backslash B_{2^{k}r}} \intl_{B_{2^{k+1}r} \backslash B_{2^{k-1}r}} \abs{\nabla^N v(x) - \nabla^N v(y)}^2\ dx\ dy\\
&\aleq{}& \left (2^k r \right )^{n-2 \lfloor \frac{n}{2} \rfloor} \intl_{B_{2^{k+1}r} \backslash B_{2^{k-1}r}}\intl_{B_{2^{k+1}r} \backslash B_{2^{k-1}r}} \frac{\abs{\nabla^N v(x) - \nabla^N v(y)}^2}{\abs{x-y}^{2n-2\lfloor \frac{n}{2} \rfloor}}\ dx\ dy \\ 
&\aleq{}& \left (2^k r \right )^{n-2\lfloor \frac{n}{2} \rfloor}\ \Vert \lapn v \Vert_{L^2(\R^n)}^2.
\end{ma}
\]
Taking the square root of the last estimate, one concludes.
\end{proofP}
We will need the following a little bit sharper version of Proposition~\ref{pr:etarkpbmpkest}, too.
\begin{lemma}\label{la:mvestbrakShrpr}(compare \cite[Lemma 4.2]{DR09Sphere}))\\
Let $N := \lceil \frac{n}{2} \rceil-1$ and $\gamma > N$. Then for $\tilde{\gamma} = -N + \min (n,\gamma)$ and for any $v \in \Sw(\R^n)$, $B_r(x_0) \subset \R^n$, $r > 0$,
\[
\suml_{k=1}^\infty 2^{-\gamma k} \left \Vert (P_{B_r,N}(v) - P_{A_k,N}(v)) \right \Vert_{L^{\infty}(\tilde{A}_k)} \leq C_\gamma \ \suml_{j=-\infty}^\infty 2^{-\abs{j}\tilde{\gamma}}\ [v]_{\tilde{A}_j,\frac{n}{2}}.
\]
Here, $A_k = B_{2^{k+1}r}(x) \backslash B_{2^{k}r}(x)$ and $\tilde{A}_k = B_{2^{k+1}r}(x) \backslash B_{2^{k-1}r}(x)$.
\end{lemma}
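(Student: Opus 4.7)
\emph{Plan.} I would follow the structure of the proof of Proposition~\ref{pr:mvestbrak}, but track the contribution of each $[v]_{\tilde{A}_j,n/2}$ annulus-wise instead of absorbing everything into $[v]_{\R^n,n/2}$ via Cauchy--Schwarz (which is exactly the step that loses the finer control needed here). The intermediate estimate to aim for is
\[
\Vert P_{B_r,N}(v) - P_{A_k,N}(v) \Vert_{L^\infty(\tilde A_k)} \leq C \sum_{j\leq 0} 2^{kN+j(n-N)}\,[v]_{\tilde A_j,n/2} + C\sum_{j=1}^{k-1} 2^{(k-j)N}\,[v]_{\tilde A_j,n/2}.
\]
Here the factor $2^{kN}$ arises because a polynomial of degree $\leq N$ whose sup-norm on $A_j$ (with $j\leq 0$, i.e.\ inside $B_r$) is controlled may blow up by up to $2^{(k-j)N}\lesssim 2^{kN}$ on $\tilde A_k$, and the factor $2^{jn}$ comes from the relative measure $|A_j|/|B_r|$ of the interior annuli in the identity $B_r=\bigsqcup_{l\leq -1}A_l$.

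The first technical ingredient is a purely polynomial fact, verified by a scaling argument on the finite-dimensional space of degree-$\leq N$ polynomials: for every such polynomial $Q$,
\[
\Vert Q\Vert_{L^\infty(\tilde A_k)} \leq C\,2^{(k-j)_+ N}\,\Vert Q\Vert_{L^\infty(A_j)}, \qquad (k-j)_+ := \max(k-j,0).
\]
This converts coefficient bounds on the scale $2^j r$ into $L^\infty$ bounds on the scale $2^k r$. Next I would prove a coefficient-level estimate by downward induction on $i\in\{N,N-1,\dots,0\}$ applied to $Q^i_{D,N}$ from \eqref{eq:definQmeanval}, exactly as in the proof of Proposition~\ref{pr:mvestbrak}. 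At the top order $i=N$ the coefficient of $(x-x_0)^\alpha$ equals $\tfrac{1}{\alpha!}\mvint_D \partial^\alpha v$; combining $\mvint_{B_r}\partial^\alpha v = (1-2^{-n})\sum_{l\leq -1} 2^{(l+1)n} \mvint_{A_l}\partial^\alpha v$ with the telescope $\mvint_{A_l}-\mvint_{A_k}=\sum_{j=l}^{k-1}(\mvint_{A_j}-\mvint_{A_{j+1}})$ and the bound from Proposition~\ref{pr:mvestnachbarn} (which specialized to $s=n/2$, $|\alpha|=N$ gives $(2^j r)^{-N}[v]_{\tilde A_j,n/2}$), then swapping the $l$- and $j$-sums (where $\sum_{l\leq\min(0,j)}2^{ln}$ produces $2^{jn}$ for $j\leq 0$ and an $O(1)$ constant for $j\geq 1$), one obtains
\[
|c^{B_r}_\alpha-c^{A_k}_\alpha|\leq C\,r^{-N}\Bigl(\sum_{j\leq 0}2^{j(n-N)}\,[v]_{\tilde A_j,n/2}+\sum_{j=1}^{k-1}2^{-jN}\,[v]_{\tilde A_j,n/2}\Bigr).
\]
For $|\alpha|=i<N$ the recursion \eqref{eq:definQmeanval} contributes an extra term $\mvint_D\partial^\alpha Q^{i+1}_D$, which is controlled by the inductive hypothesis on higher-order coefficients. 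Each inductive step reproduces the same double-sum structure with the exponent $-N$ upgraded to $-i$, and the subsequent multiplication by $(2^kr)^{|\alpha|}=(2^kr)^i$ rebuilds the factor $2^{kN}$ or smaller.

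Finally, combining the coefficient estimates via $\Vert P_{B_r}-P_{A_k}\Vert_{L^\infty(\tilde A_k)}\leq\sum_{|\alpha|\leq N}(2^{k+1}r)^{|\alpha|}|c_\alpha^{B_r}-c_\alpha^{A_k}|$ gives the displayed intermediate estimate. Multiplying by $2^{-\gamma k}$, summing over $k\geq 1$, and swapping orders (Fubini on non-negative series), the tail $\sum_{k\geq\max(1,j+1)}2^{k(N-\gamma)}$ converges precisely because $\gamma>N$, producing a factor $2^{-j\gamma}$ for $j\geq 1$ and a uniform $O(1)$ factor for $j\leq 0$. Combined with the prefactors $2^{j(n-N)}$ (for $j\leq 0$) and $2^{-j\gamma}$ (for $j\geq 1$), this yields the announced weight $2^{-|j|\tilde\gamma}$: if $\gamma\geq n$ the $j\leq 0$ factor saturates at $2^{j(n-N)}$ giving $\tilde\gamma=n-N$, while if $\gamma<n$ one further estimates $2^{j(n-N)}\leq 2^{j(\gamma-N)}$ for $j\leq 0$, giving $\tilde\gamma=\gamma-N$; in both cases $\tilde\gamma=-N+\min(n,\gamma)$.

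The main obstacle is the downward induction on $i$: at each step the correction $\mvint_D\partial^\alpha Q^{i+1}_D$ introduces further nested sums, and one must verify that the accumulated weights stay within the $2^{j(n-N)}+2^{-jN}$ template without producing a logarithmic $|k|$-loss like the one that appears in the $s-n/2\in\{i,\dots,N\}$ branch of Proposition~\ref{pr:mvestbrak}. This is exactly what avoiding the premature application of Cauchy--Schwarz in the telescoped $j$-sum is designed to prevent.
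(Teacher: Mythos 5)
Your plan follows the paper's own argument almost step for step: the paper introduces $d^{i,\alpha}_k := \Vert \partial^\alpha(Q^i_{B_r} - Q^i_{A_k}) \Vert_{L^\infty(\tilde A_k)}$ together with the weighted tails $S^{i,\alpha}_\gamma = \sum_{k\geq 1} 2^{-\gamma k} d^{i,\alpha}_k$ and $S^{i,\alpha}_{-\gamma} = \sum_{k\leq 0} 2^{\gamma k} d^{i,\alpha}_k$, then runs a downward induction in $i$ after swapping the $j$- and $l$-sums --- precisely the reordering you single out as the move to make \emph{before} applying Cauchy--Schwarz, and the reason the logarithmic $|k|$-loss of Proposition~\ref{pr:mvestbrak} disappears. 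Your explicit intermediate estimate for $i=N$ (the $2^{kN+j(n-N)}$ and $2^{(k-j)N}$ weights), the conversion between coefficient bounds and $L^\infty$ bounds across scales, and the final accounting of $\tilde\gamma = -N+\min(n,\gamma)$ all match what the paper actually computes.

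There is one genuine gap in the outline as written. For $i<N$, the recursion \eqref{eq:definQmeanval} does not only contribute the telescoping term you mention; after replacing $\mvint_{B_r}$ by the weighted sum $\sum_{l\le 0}\frac{|A_l|}{|B_r|}\mvint_{A_l}$ and passing from $Q^{i+1}_{B_r}$ to $Q^{i+1}_{A_l}$, one picks up the term $\sum_{|\beta|=i}(2^k r)^{i-|\alpha|}\sum_{l\le 0}2^{ln}\,d^{i+1,\beta}_l$. The inductive hypothesis must therefore also control $d^{i+1,\beta}_l$ for $l\le 0$, i.e.\ the polynomial differences on annuli strictly inside $B_r$, which are not covered by the intermediate estimate you state (that one only addresses $k\ge 1$). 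As written, the induction does not close. The paper's remedy is to run the induction simultaneously for $S^{i,\alpha}_{-\gamma}$ (with the choice $\gamma = n$ playing a distinguished role, since $\sum_{l\le0}2^{ln}d^{i+1,\beta}_l = S^{i+1,\beta}_{-n}$), and in your pointwise-in-$k$ formulation you would need the analogous estimate for $k\le 0$ before attempting the induction. With that supplement, the rest of your plan goes through and reproduces the paper's proof.
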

\begin{remark}
More precisely, we will prove for $i \in \{0,\ldots,N\}$, that whenever $\gamma > N$, $\abs{\alpha} \leq i$, for $\tilde{\gamma} := \min (n-N,\gamma -N)$
\begin{equation}\label{eq:mvpoincSiagammaClaim}
\sum_{k=-\infty}^\infty 2^{-\gamma \abs{k}} \Vert \partial^\alpha (Q^i_{B_r} - Q^i_{A_k}) \Vert_{L^\infty(\tilde{A}_k)} \leq C_{\gamma,N} \left (r^{-\abs{\alpha}}\ \suml_{j=-\infty}^\infty 2^{-\abs{j} \tilde{\gamma}}\ [v]_{\tilde{A}_j,\frac{n}{2}} \right ).
\end{equation}
This more precise statement will be used in the estimates for the homogeneous norm $[\cdot]_s$, Lemma~\ref{la:comps01}.
\end{remark}
\begin{proofL}{\ref{la:mvestbrakShrpr}}
As in the proof of Proposition~\ref{pr:mvestbrak}, set
\[
 d^{i,\alpha}_k := \Vert \partial^\alpha (Q^i_{B_r} - Q^i_{A_k}) \Vert_{L^\infty(\tilde{A}_k)}.
\]
Moreover, we set
\[
  S^{i,\alpha}_\gamma := \suml_{k=1}^\infty 2^{-\gamma k}\ d^{i,\alpha}_k
\]
and
\[
  S^{i,\alpha}_{-\gamma} := \suml_{k=-\infty}^0 2^{\gamma k}\ d^{i,\alpha}_k.
\]
Then, by the computations in the proof of Proposition~\ref{pr:mvestbrak}, for any $\abs{\alpha} \leq N$,
\[
\begin{ma}
 S^{N,\alpha}_\gamma &\aleq{}& r^{-\abs{\alpha}}\ \suml_{k=1}^\infty \suml_{l=-\infty}^0\   \suml_{j=l}^{k-1} 2^{-jN+ln-\gamma k+kN-k\abs{\alpha}}\ [v]_{\tilde{A}_j,\frac{n}{2}}\\
&=& r^{-\abs{\alpha}}\ \suml_{j=-\infty}^0 2^{-jN}\ [v]_{\tilde{A}_j,\frac{n}{2}}\ \suml_{l=-\infty}^j\ \suml_{k=1}^\infty 2^{ln}\ 2^{k(N-\gamma-\abs{\alpha})}\\
&&+\ r^{-\abs{\alpha}}\ \suml_{j=1}^\infty 2^{-jN}\ [v]_{\tilde{A}_j,\frac{n}{2}}\ \suml_{l=-\infty}^0\ \suml_{k=j+1}^\infty 2^{ln}\ 2^{k(N-\gamma-\abs{\alpha})}\\
&\overset{\gamma > N}{\aleq{}}&
r^{-\abs{\alpha}}\ \suml_{j=-\infty}^0 2^{j(n-N)}\ [v]_{\tilde{A}_j,\frac{n}{2}}\\
&&+\ r^{-\abs{\alpha}}\ \suml_{j=1}^\infty 2^{j(-\gamma-\abs{\alpha})}\ [v]_{\tilde{A}_j,\frac{n}{2}}.\\
\end{ma}
\]
Similarly,
\[
\begin{ma}
 S^{N,\alpha}_{-\gamma} &\aleq{}& r^{-\abs{\alpha}}\ \suml_{k=-\infty}^0 \suml_{l=-\infty}^{k-1}\   \suml_{j=l}^{k-1} 2^{-jN+ln+\gamma k+kN-k\abs{\alpha}}\ [v]_{\tilde{A}_j,\frac{n}{2}}\\
&& +\ r^{-\abs{\alpha}}\ \suml_{k=-\infty}^0 \suml_{l=k}^0\   \suml_{j=k}^{l-1} 2^{-jN+ln+\gamma k+kN-k\abs{\alpha}}\ [v]_{\tilde{A}_j,\frac{n}{2}} \\
&\aleq{}& r^{-\abs{\alpha}}\ 
 \sum_{j=-\infty}^0 2^{-jN} [v]_{\tilde{A}_j,\frac{n}{2}}\ \sum_{k = j+1}^{0} \sum_{l=-\infty}^{j} 2^{ln} 2^{k(\gamma+N-\abs{\alpha})}\\
 && +\ r^{-\abs{\alpha}}\ \suml_{j=-\infty}^0 2^{-jN} [v]_{\tilde{A}_j,\frac{n}{2}} \sum_{k=-\infty}^{j} \sum_{l=j+1}^{0} 2^{ln} 2^{k(\gamma+N-\abs{\alpha})}\\
&\overset{\abs{\alpha}\leq N}{\aleq{}}& r^{-\abs{\alpha}}\ 
 \sum_{j=-\infty}^0 2^{j(n-N)} [v]_{\tilde{A}_j,\frac{n}{2}}\\
 && +\ r^{-\abs{\alpha}}\ \suml_{j=-\infty}^0 2^{j(\gamma-\abs{\alpha})} [v]_{\tilde{A}_j,\frac{n}{2}}.
\end{ma}
\]
For $0 \leq i \leq N-1$, again using the computations done for the proof of Proposition~\ref{pr:mvestbrak},
\[
 \begin{ma}
  S^{i,\alpha}_\gamma &\aleq{}& S^{i+1,\alpha}_\gamma\\
&&+\ r^{i-\abs{\alpha}} \sum_{\abs{\beta} = i}\ \suml_{k=1}^\infty 2^{k (i-\abs{\alpha}-\gamma)} S_{-n}^{i+1,\beta} \\
&&+\ r^{-\abs{\alpha}} \suml_{k=1}^\infty 2^{k(i-\abs{\alpha}-\gamma)} \suml_{l=-\infty}^0 2^{ln} \suml_{j=l}^{k-1} 2^{-ji}\ [v]_{\tilde{A}_j,\frac{n}{2}}\\
&\overset{\gamma > i}{\aleq{}}& S^{i+1,\alpha}_\gamma\\
&&+\ r^{i-\abs{\alpha}} \sum_{\abs{\beta} = i}\ S^{i+1,\beta}_{-n}\\
&&+\ r^{-\abs{\alpha}}\ \suml_{j=-\infty}^0 2^{j(n-i)}\ [v]_{\tilde{A}_j,\frac{n}{2}}\\
&&+\ r^{-\abs{\alpha}}\ \suml_{j=1}^\infty 2^{j(-\gamma-\abs{\alpha})}\ [v]_{\tilde{A}_j,\frac{n}{2}}\\
&\overset{i\leq N}{\aleq{}}& S^{i+1,\alpha}_\gamma\\
&&+\ r^{i-\abs{\alpha}} \sum_{\abs{\beta} = i}\ S^{i+1,\beta}_{-n}\\
&&+\ r^{-\abs{\alpha}}\ \suml_{j=-\infty}^0 2^{j(n-N)}\ [v]_{\tilde{A}_j,\frac{n}{2}}\\
&&+\ r^{-\abs{\alpha}}\ \suml_{j=1}^\infty 2^{j(-\gamma-\abs{\alpha})}\ [v]_{\tilde{A}_j,\frac{n}{2}}.\\
\end{ma}
\]
And
\[
 \begin{ma}
  S^{i,\alpha}_{-\gamma} 

&\aleq{}& S^{i+1,\alpha}_{-\gamma}\\
&&+\ r^{i-\abs{\alpha}} \sum_{\abs{\beta} = i}\ \suml_{k=-\infty}^0 2^{k (i-\abs{\alpha}+\gamma)} S_{-n}^{i+1,\beta} \\
&&+\ r^{-\abs{\alpha}} \suml_{k=-\infty}^0 2^{k(i-\abs{\alpha}+\gamma)} \suml_{l=-\infty}^{k-1} 2^{ln} \suml_{j=l}^{k-1} 2^{-ji}\ [v]_{\tilde{A}_j,\frac{n}{2}}\\
&&+\ r^{-\abs{\alpha}} \suml_{k=-\infty}^0 2^{k(i-\abs{\alpha}+\gamma)} \suml_{l=k}^{0} 2^{ln} \suml_{j=k-1}^{l} 2^{-ji}\ [v]_{\tilde{A}_j,\frac{n}{2}}\\

&\aleq{}& S^{i+1,\alpha}_{-\gamma}\\
&&+\ r^{i-\abs{\alpha}} \sum_{\abs{\beta} = i}\ S_{-n}^{i+1,\beta} \\
&&+\ r^{-\abs{\alpha}} \suml_{j=-\infty}^0 2^{-ji} [v]_{\tilde{A}_j,\frac{n}{2}} \suml_{l=-\infty}^{j} \suml_{k=j+1}^{0} 2^{ln} 2^{k(i-\abs{\alpha}+\gamma)}\\
&&+\ r^{-\abs{\alpha}} \sum_{j=-\infty}^0 2^{-ji} [v]_{\tilde{A}_j,\frac{n}{2}} \sum_{k=-\infty}^{j} \sum_{l=j}^0 2^{ln} 2^{k(i-\abs{\alpha}+\gamma)}\\
&\aleq{}& S^{i+1,\alpha}_{-\gamma}\\
&&+\ r^{i-\abs{\alpha}} \sum_{\abs{\beta} = i}\ S_{-n}^{i+1,\beta} \\
&&+\ r^{-\abs{\alpha}} \suml_{j=-\infty}^0 2^{j(n-i)} [v]_{\tilde{A}_j,\frac{n}{2}}\\
&&+\ r^{-\abs{\alpha}} \sum_{j=-\infty}^0 2^{j(\gamma - \abs{\alpha})} [v]_{\tilde{A}_j,\frac{n}{2}}\\

&\overset{i \leq N}{\aleq{}}& S^{i+1,\alpha}_{-\gamma}\\
&&+\ r^{i-\abs{\alpha}} \sum_{\abs{\beta} = i}\ S_{-n}^{i+1,\beta} \\
&&+\ r^{-\abs{\alpha}} \suml_{j=-\infty}^0 2^{j(n-N)} [v]_{\tilde{A}_j,\frac{n}{2}}\\
&&+\ r^{-\abs{\alpha}} \sum_{j=-\infty}^0 2^{j(\gamma - \abs{\alpha})} [v]_{\tilde{A}_j,\frac{n}{2}}\\
\end{ma}
\]
Consequently, one can prove by induction for $i \in \{0,\ldots,N\}$, that \eqref{eq:mvpoincSiagammaClaim} holds whenever $\gamma > N$, $\abs{\alpha} \leq i$, for $\tilde{\gamma} := \min (n-N,\gamma -N)$, i.e.
\[
S^{i,\alpha}_\gamma + S^{i,\alpha}_{-\gamma} \leq C_{\gamma,N} \left (r^{-\abs{\alpha}}\ \suml_{j=-\infty}^\infty 2^{-\abs{j} \tilde{\gamma}}\ [v]_{\tilde{A}_j,\frac{n}{2}} \right ),
\]
Taking $i = 0$, $\alpha = 0$, we conclude.
\end{proofL}

\section{Integrability and Compensation Phenomena}\label{sec:tart}
We will frequently use the following operator
\begin{equation}\label{eq:Hdef}
 H(u,v) := \lapn (u v) - (\lapn u) v - u \lapn v,\quad \mbox{for }u,v \in \Sw(\R^n).
\end{equation}
In general there is no product rule making $H(u,v) \equiv 0$, or $H(u,v)$ an operator of lower order, as would happen if $n \in 4\N$. But in some way this quantity still acts \emph{like} an operator of lower order, as Lemma~\ref{la:tart:prphuvwedgeest} shows.\\
This was observed in \cite{DR09Sphere}. As remarked there, the compensation phenomena that appear are very similar to the ones in Wente's inequality (see the introduction of \cite{DR09Sphere} for more on that). In fact, in this note we would like to stress that even an argument very similar Tartar's proof in \cite{Tartar85} still works.\\
\\
In this section we present a rather simple estimate which somehow models the compensation phenomenon: More specifically, for $p > 0$ we are going to treat in Corollary \ref{co:esttriangle2} the quantity
\[
\abs{\abs{x-y}^p-\abs{y}^p - \abs{x}^p}.
\]
%
%
\begin{proposition}\label{pr:esttriangle1}
For any $x,y \in \R^n$ and any $p > 0$ we have
\[
 \abs{ \abs{x-y}^p - \abs{y}^p} \leq C_p\ \begin{cases}
				      \abs{x}^p \quad &\mbox{if $p \in (0,1)$,}\\
                                      \abs{x}^p + \abs{x} \abs{y}^{p-1} \quad &\mbox{if $p \geq 1$}.
                                     \end{cases}
\]
\end{proposition}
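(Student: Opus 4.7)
The plan is to treat the two ranges of $p$ by two separate elementary arguments, both exploiting only the basic behaviour of the map $t \mapsto t^p$ on $[0,\infty)$ together with the triangle inequality for $\abs{\cdot}$ on $\R^n$.

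For $p \in (0,1)$ I would use subadditivity of $t \mapsto t^p$: for non-negative reals one has $(a+b)^p \leq a^p + b^p$. Applied to $a = \abs{x}$, $b = \abs{y}$ and then to $a = \abs{x}$, $b = \abs{x-y}$ in the two directions of the triangle inequality $\abs{\abs{x-y} - \abs{y}} \leq \abs{x}$, this gives at once $\abs{x-y}^p \leq \abs{x}^p + \abs{y}^p$ and symmetrically $\abs{y}^p \leq \abs{x}^p + \abs{x-y}^p$, whence $\abs{\abs{x-y}^p - \abs{y}^p} \leq \abs{x}^p$, with $C_p = 1$.

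For $p \geq 1$ the function $f(t) = t^p$ is $C^1$ on $[0,\infty)$ with $f'(t) = p t^{p-1}$, so by the mean value theorem
\[
\abs{\abs{x-y}^p - \abs{y}^p} \leq p\, \max(\abs{x-y},\abs{y})^{p-1}\, \babs{\abs{x-y}-\abs{y}} \leq p\, \max(\abs{x-y},\abs{y})^{p-1}\, \abs{x},
\]
where the last step is the usual triangle inequality. Since $\max(\abs{x-y},\abs{y}) \leq \abs{x}+\abs{y}$ and, for $p-1 \geq 0$, $(\abs{x}+\abs{y})^{p-1} \leq C_p(\abs{x}^{p-1}+\abs{y}^{p-1})$, one obtains
\[
\abs{\abs{x-y}^p - \abs{y}^p} \leq C_p\, \abs{x}\brac{\abs{x}^{p-1}+\abs{y}^{p-1}} = C_p\brac{\abs{x}^p + \abs{x}\abs{y}^{p-1}},
\]
which is the desired estimate. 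There is no real obstacle here; the only minor point is the elementary inequality $(a+b)^{p-1} \leq C_p(a^{p-1}+b^{p-1})$ for $p \geq 1$, which in turn is trivial (either $p-1 \in [0,1]$ and one uses concavity, or $p-1 > 1$ and one uses convexity with factor $2^{p-2}$). The two cases together give the claim.
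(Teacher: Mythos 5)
Your proof is correct, and it takes a genuinely different route from the paper's. The paper first disposes of the easy regime $\abs{y} \leq 2\abs{x}$ (where both terms are comparable to $\abs{x}^p$), then for $2\abs{x} < \abs{y}$ performs a Taylor expansion of $f(t) = \abs{y - tx}^p$ at $t=0$ to order $\lfloor p \rfloor + 1$: the intermediate derivatives of order $1 \leq k \leq \lfloor p \rfloor$ each contribute $\aleq{} \abs{x}^k\abs{y}^{p-k} \leq \abs{x}\abs{y}^{p-1}$, while the remainder (of order $\lfloor p \rfloor +1 > p$) is controlled by $\abs{x}^p$ using the lower bound $\abs{y - sx} > \abs{x}$. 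Your argument avoids the high-order Taylor expansion entirely: for $p \in (0,1)$ you invoke the subadditivity of $t \mapsto t^p$ together with the two-sided triangle inequality $\babs{\abs{x-y} - \abs{y}} \leq \abs{x}$, giving the sharper constant $C_p = 1$; for $p \geq 1$ you use only the first-order mean value theorem on $t \mapsto t^p$ followed by the elementary inequality $(a+b)^{p-1} \leq C_p(a^{p-1} + b^{p-1})$. The paper's approach has the aesthetic advantage of being a single uniform argument across all $p > 0$; yours is more elementary, requires no case split on the relative sizes of $\abs{x}$ and $\abs{y}$, and is slightly shorter. Both yield the stated bound with the correct dependence on $p$.
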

\begin{proofP}{\ref{pr:esttriangle1}}
The inequality is obviously true if $\abs{y} \leq 2  \abs{x}$ or $x = 0$. So assume $x \neq 0$ and $2\abs{x} < \abs{y}$, in particular,
\begin{equation}\label{eq:tart:asuxleq12y}
\abs{y-tx} \geq \abs{y}-t\abs{x} > \brac{1-\frac{t}{2}} \abs{y} > \abs{x}, \quad \mbox{for any $t \in (0,1)$}. 
\end{equation}
We use Taylor expansion for $f(t) = \abs{y-tx}^p$ to write
\[
\abs{\abs{x-y}^p - \abs{y}^p} \aleq{} \sum_{k=1}^{\lfloor p \rfloor} \abs{\frac{d^k}{dt^k} \Big \vert_{t=0}  \abs{y-tx}^p} + \sup_{t \in (0,1)} \abs{\frac{d^{\lfloor p \rfloor + 1}}{dt^{\lfloor p \rfloor + 1}} \abs{y-tx}^p}. 
\]
For $k \geq 1$,
\[
\abs{\frac{d^{k}}{dt^{k}} \abs{y-tx}^p} \aleq{} \abs{y-tx}^{p-k} \abs{x}^k.
\]
So for $1 \leq k \leq \lfloor p \rfloor$,
\[
\abs{\frac{d^k}{dt^k} \Big \vert_{t=0}  \abs{y-tx}^p} \aleq{} \abs{y}^{p-k}\ \abs{x}^k \overset{\abs{x}<\abs{y}}{\aleq{}} \abs{x} \abs{y}^{p-1}.
\]
For $k = \lfloor p \rfloor + 1 > p$, $s \in (0,1)$,
\[
\abs{\frac{d^{k}}{ds^{k}} \abs{y-sx}^p} \aleq{} \abs{y-sx}^{p-k} \abs{x}^k \overset{\eqref{eq:tart:asuxleq12y}}{\aleq{}} \abs{x}^p.
\]
\end{proofP}

%
%
%
Proposition~\ref{pr:esttriangle1} has the following consequence
\begin{corollary}\label{co:esttriangle2}
For any $x,y \in \R^n$ and any $p > 0$, $\theta \in [0,1]$ we have for a uniform constant $C_p > 0$
\[
 \abs{ \abs{x-y}^p - \abs{y}^p - \abs{x}^p} \leq C_p\ \begin{cases}
				      \abs{x}^{p\theta}\ \abs{y}^{p(1-\theta)} \quad &\mbox{if $p \in (0,1]$,}\\
                                      \abs{x}^{p-1}\abs{y} + \abs{x} \abs{y}^{p-1} \quad &\mbox{if $p > 1$}.
                                     \end{cases}
\]
\end{corollary}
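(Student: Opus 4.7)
The plan is to deduce this from Proposition~\ref{pr:esttriangle1} by a simple symmetry reduction. Note that the expression $\abs{x-y}^p - \abs{x}^p - \abs{y}^p$ is symmetric in $x$ and $y$ (since $\abs{x-y} = \abs{y-x}$), so without loss of generality I may assume $\abs{x} \leq \abs{y}$. The whole point is that once this assumption is made, the term $\abs{x}^p$ that appears on the right-hand side of Proposition~\ref{pr:esttriangle1} can be absorbed into the desired bound using the elementary interpolation $\abs{x}^p \leq \abs{x}^{p\theta}\abs{y}^{p(1-\theta)}$ (when $p\leq 1$) or $\abs{x}^p \leq \abs{x}\abs{y}^{p-1}$ (when $p\geq 1$).

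For the case $p \in (0,1)$, Proposition~\ref{pr:esttriangle1} gives $\babs{\abs{x-y}^p - \abs{y}^p} \leq C_p \abs{x}^p$, hence
\[
\babs{\abs{x-y}^p - \abs{y}^p - \abs{x}^p} \leq (C_p+1)\abs{x}^p.
\]
Using $\abs{x} \leq \abs{y}$ and $\theta \in [0,1]$, I write $\abs{x}^p = \abs{x}^{p\theta}\abs{x}^{p(1-\theta)} \leq \abs{x}^{p\theta}\abs{y}^{p(1-\theta)}$, which yields the required bound. The case $p=1$ is handled the same way, using the triangle inequality $\babs{\abs{x-y}-\abs{y}} \leq \abs{x}$ in place of Proposition~\ref{pr:esttriangle1}.

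For the case $p > 1$, Proposition~\ref{pr:esttriangle1} yields
\[
\babs{\abs{x-y}^p - \abs{y}^p - \abs{x}^p} \leq C_p\brac{\abs{x}^p + \abs{x}\abs{y}^{p-1}} + \abs{x}^p.
\]
Since $\abs{x} \leq \abs{y}$ and $p \geq 1$, one has $\abs{x}^p = \abs{x}\cdot\abs{x}^{p-1} \leq \abs{x}\cdot\abs{y}^{p-1}$, so the right-hand side is bounded by $C_p\abs{x}\abs{y}^{p-1}$, which is in turn dominated by $C_p(\abs{x}^{p-1}\abs{y} + \abs{x}\abs{y}^{p-1})$, as claimed.

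There is no real obstacle here; the proof is essentially bookkeeping after the WLOG reduction. The only subtlety is noticing that the symmetry of the left-hand side allows one to always reduce to the case $\abs{x}\leq \abs{y}$, and then that $\abs{x}^p$ is a harmless term in both regimes because it is controlled by the respective dominant term on the right-hand side under this ordering assumption.
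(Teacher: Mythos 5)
Your proof is correct and is essentially the same argument as the paper's. The paper avoids an explicit WLOG by writing the combined bound as $\min\{\abs{x}^p,\abs{y}^p\} + \abs{x}^{p-1}\abs{y} + \abs{y}^{p-1}\abs{x}$ and then absorbing the $\min$-term, which is exactly the symmetry reduction you carry out explicitly with the assumption $\abs{x}\leq\abs{y}$; both then rely on Proposition~\ref{pr:esttriangle1} plus the elementary absorption $\abs{x}^p \leq \abs{x}^{p\theta}\abs{y}^{p(1-\theta)}$ (resp.\ $\abs{x}^p \leq \abs{x}\abs{y}^{p-1}$).
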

\begin{proofC}{\ref{co:esttriangle2}}
We only prove the case $p > 1$, the case $p \in (0,1]$ is similar. By Proposition~\ref{pr:esttriangle1}, 
\[
\begin{ma}
&&\abs{ \abs{x-y}^p - \abs{y}^p - \abs{x}^p}\\
&\aleq{}& \min \left \{\abs{x}^p, \abs{y}^p \right \} + \abs{x}^{p-1}\abs{y} + \abs{y}^{p-1}\abs{x}\\
&\leq& 2\abs{x}^{p-1}\abs{y} + \abs{y}^{p-1}\abs{x}.
\end{ma}
\]
\end{proofC}

\begin{lemma}\label{la:tart:prphuvwedgeest}
For any $u, v \in \Sw(\R^n)$ we have in the case $n = 1,2$
\[ 
\abs{H(u,v)^\wedge} \leq C\ \abs{(\lap^{\frac{n}{8}} u)^\wedge}\ast \abs{(\lap^{\frac{n}{8}} v)^\wedge}(\xi),
\]
and in the case $n \geq 3$
\[
 \abs{(H(u,v))^\wedge} \leq C\ \abs{(\lap^{\frac{n-2}{4}} u)^\wedge} \ast \abs{(\laps{1} v)^\wedge} +  C \abs{(\laps{1} u)^\wedge}  \ast   \abs{(\lap^{\frac{n-2}{4}} v)^\wedge}.
\]
\end{lemma}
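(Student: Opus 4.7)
The plan is to prove this by taking the Fourier transform of $H(u,v)$ explicitly and reducing the pointwise estimate to the compensation inequality stated in Corollary~\ref{co:esttriangle2}.

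First, I would rewrite $H(u,v)$ on the Fourier side. Using $(uv)^\wedge = u^\wedge \ast v^\wedge$ together with the definition $(\laps{s}f)^\wedge(\xi) = |\xi|^s f^\wedge(\xi)$, a direct computation gives
\[
H(u,v)^\wedge(\xi) = \intl_{\R^n} \Bigl(|\xi|^{\frac{n}{2}} - |\eta|^{\frac{n}{2}} - |\xi-\eta|^{\frac{n}{2}}\Bigr)\, u^\wedge(\eta)\, v^\wedge(\xi-\eta)\, d\eta,
\]
which converges absolutely thanks to the Schwartz decay of $u^\wedge$ and $v^\wedge$. The whole point is now to estimate the kernel $\bigl||\xi|^{n/2} - |\eta|^{n/2} - |\xi-\eta|^{n/2}\bigr|$ pointwise in $\eta$.

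Second, I would apply Corollary~\ref{co:esttriangle2} with $p = n/2$, choosing the vectors $a := \eta$ and $b := \xi-\eta$ so that $a+b = \xi$ (more precisely, one applies the corollary with $x = \eta$ and $y = -(\xi-\eta)$, which does not change the absolute values of the entries). In the case $n = 1, 2$ we are in the regime $p \in (0,1]$, and taking $\theta = 1/2$ yields
\[
\Babs{\,|\xi|^{\frac{n}{2}} - |\eta|^{\frac{n}{2}} - |\xi-\eta|^{\frac{n}{2}}\,} \;\leq\; C\, |\eta|^{\frac{n}{4}}\, |\xi-\eta|^{\frac{n}{4}}.
\]
In the case $n \geq 3$ we are in the regime $p > 1$, and the corollary yields
\[
\Babs{\,|\xi|^{\frac{n}{2}} - |\eta|^{\frac{n}{2}} - |\xi-\eta|^{\frac{n}{2}}\,} \;\leq\; C\bigl(|\eta|^{\frac{n-2}{2}}|\xi-\eta| + |\eta|\,|\xi-\eta|^{\frac{n-2}{2}}\bigr).
\]

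Third, I would plug these pointwise bounds back into the integral representation of $H(u,v)^\wedge$ and recognize each factor as a Fourier multiplier: $|\eta|^{\frac{n}{4}} |u^\wedge(\eta)| = |(\lap^{\frac{n}{8}} u)^\wedge(\eta)|$, and similarly $|\xi-\eta|^{\frac{n}{4}}|v^\wedge(\xi-\eta)| = |(\lap^{\frac{n}{8}} v)^\wedge(\xi-\eta)|$ in the low-dimensional case, while $|\eta|^{\frac{n-2}{2}}|u^\wedge(\eta)| = |(\lap^{\frac{n-2}{4}} u)^\wedge(\eta)|$ and $|\xi-\eta|\,|v^\wedge(\xi-\eta)| = |(\laps{1} v)^\wedge(\xi-\eta)|$ in the higher-dimensional case. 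Writing the integrals as convolutions of the absolute values of these Fourier transforms yields exactly the two claimed estimates.

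The argument is essentially an exercise in bookkeeping once the compensation inequality is in hand; there is no real obstacle, since Corollary~\ref{co:esttriangle2} does all the work. The only thing one should be careful about is verifying that the Schwartz regularity is enough to justify the absolute convergence of the convolutions and Fubini-type manipulations, and (in the $n \geq 3$ case) that one correctly invokes the $p>1$ branch rather than the $p\leq 1$ branch of the compensation inequality, so that the two factors together carry the full differential order $n/2$ split as $\tfrac{n-2}{2}+1$.
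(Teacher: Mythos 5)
Your proposal matches the paper's proof essentially step for step: Fourier-transform $H(u,v)$ to obtain the integral with kernel $|\xi|^{n/2}-|\eta|^{n/2}-|\xi-\eta|^{n/2}$, apply the compensation inequality of Corollary~\ref{co:esttriangle2} with $p=n/2$ in the appropriate branch ($p\leq 1$ for $n=1,2$ with $\theta=1/2$, and $p>1$ for $n\geq 3$), and read off the resulting convolution bound. The only differences are cosmetic (your integration variable $\eta$ versus the paper's $y$, and your explicit remark about the sign change $y\mapsto -(\xi-\eta)$, which indeed does not affect the corollary).
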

\begin{proofL}{\ref{la:tart:prphuvwedgeest}}
As $u,v \in \Sw (\R^n)$ one checks that $H(u,v) \in L^2(\R^n)$ and thus its Fourier -Transform is well defined. Consequently,
\[
\begin{ma}
 (H(u,v))^\wedge(\xi) &=& \abs{\xi}^{\frac{n}{2}} u^\wedge\ast v^\wedge(\xi)  - v^\wedge \ast (\abs{\cdot}^{{\frac{n}{2}}} u^\wedge)(\xi) - u^\wedge \ast (\abs{\cdot}^{{\frac{n}{2}}} v^\wedge) (\xi)\\
&=& \intl_{\R^n} u^\wedge(\xi-y)\ v^\wedge (y)\ 
\left ( 
\abs{\xi}^{{\frac{n}{2}}} - \abs{\xi-y}^{\frac{n}{2}} - \abs{y}^{{\frac{n}{2}}}
\right )\ dy.
\end{ma}
\]
If $n = 1,2$, Corollary \ref{co:esttriangle2} (for $p := \frac{n}{2}$) gives
\[
 \abs{\abs{\xi}^{{\frac{n}{2}}} - \abs{y}^{{\frac{n}{2}}} - \abs{\xi - y}^{{\frac{n}{2}}} } \leq C\ \abs{y}^{\frac{n}{4}}\ \abs{\xi - y}^{\frac{n}{4}},
\]
in the case $n \geq 3$ we have
\[
 \abs{\abs{\xi}^{{\frac{n}{2}}} - \abs{y}^{{\frac{n}{2}}} - \abs{\xi - y}^{{\frac{n}{2}}} } \leq C\ (\abs{y}^{\frac{n-2}{2}}\ \abs{\xi - y} + \abs{\xi - y}^{\frac{n-2}{2}}\ \abs{y}).
\]
This gives the claim.
\end{proofL}

\begin{theorem}\label{th:integrability}
(Cf. \cite{Tartar85}, \cite[Theorem 1.2, Theorem 1.3]{DR09Sphere})\\
Let $u,v \in \mathcal{S}(\R^n)$ and set
\[
 H(u,v) := \lapn  (uv) - v\lapn u - u\lapn v.
\]
Then, 
\[
 \Vert H(u,v)^\wedge \Vert_{L^{2,1}(\R^n)} \leq C_n\ \Vert \lapn  u \Vert_{L^2(\R^n)}\ \Vert \lapn v \Vert_{L^2(\R^n)}.
\]
and
\[
 \Vert H(u,v) \Vert_{L^2(\R^n)} \leq C_n\ \Vert (\lapn u)^\wedge \Vert_{L^{2,\infty}(\R^n)}\ \Vert \lapn v \Vert_{L^2(\R^n)}.
\]
In particular,
\[
 \Vert H(u,v) \Vert_{L^2(\R^n)} \leq C_n\ \Vert \lapn u \Vert_{L^{2}(\R^n)}\ \Vert \lapn v \Vert_{L^2(\R^n)}.
\]
\end{theorem}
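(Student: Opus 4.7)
The plan is to combine the pointwise Fourier bound in Lemma \ref{la:tart:prphuvwedgeest} with a Hölder--Young argument in Lorentz spaces, the latter being the only place where compensation enters. The crucial point is that on the right-hand side of Lemma \ref{la:tart:prphuvwedgeest}, the differential order $n/2$ is split between $u$ and $v$, leaving each factor with a Fourier weight of order strictly less than $n/2$; after extracting this ``half gradient'' as a negative power $|\xi|^{-s}$ and absorbing it into a fixed Lorentz factor via Proposition \ref{pr:dl:lso}(v), each factor ends up in a Lorentz space whose index pair $(p,q)$ adds up (under Young on convolutions) to exactly $(2,1)$ or $(2,2)$.

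Concretely, I would first prove the auxiliary identities, valid for $u,v\in \Sw(\R^n)$:
\[
(\lap^{n/8}u)^\wedge(\xi)=|\xi|^{-n/4}\,(\lapn u)^\wedge(\xi),\qquad
(\lap^{\frac{n-2}{4}}u)^\wedge(\xi)=|\xi|^{-1}(\lapn u)^\wedge(\xi),
\]
\[
(\laps{1}u)^\wedge(\xi)=|\xi|^{-\frac{n-2}{2}}(\lapn u)^\wedge(\xi).
\]
Combining with Proposition \ref{pr:dl:lso}(v) (which gives $|\cdot|^{-\lambda}\in L^{n/\lambda,\infty}$) and Proposition \ref{pr:dl:lso}(i), one obtains, whenever $(\lapn u)^\wedge\in L^{2,a}$ for some $a\in\{2,\infty\}$,
\[
\|(\lap^{n/8}u)^\wedge\|_{L^{4/3,a}}+\|(\lap^{\frac{n-2}{4}}u)^\wedge\|_{L^{\frac{2n}{n+2},a}}+\|(\laps{1}u)^\wedge\|_{L^{\frac{n}{n-1},a}}\le C_n\,\|(\lapn u)^\wedge\|_{L^{2,a}},
\]
and the same with $u$ replaced by $v$ (with $a$ possibly different).

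With these building blocks, the first inequality of the theorem follows for $n\le 2$ from Lemma \ref{la:tart:prphuvwedgeest}, Young on Lorentz spaces (Proposition \ref{pr:dl:lso}(ii)) applied with $p_1=p_2=4/3$, $q_1=q_2=2$, yielding the target $(p,q)=(2,1)$ since $\frac{3}{4}+\frac{3}{4}-1=\frac12$ and $\frac12+\frac12=1$. For $n\ge3$ the analogous application with $(p_1,p_2)=(\tfrac{2n}{n+2},\tfrac{n}{n-1})$ and $(q_1,q_2)=(2,2)$ again yields $(2,1)$, since
\[
\frac{n+2}{2n}+\frac{n-1}{n}-1=\frac12.
\]
The second inequality is obtained by repeating the argument but assuming only $(\lapn u)^\wedge\in L^{2,\infty}$ and $\lapn v\in L^2$, so that one of the two Lorentz factors has second index $\infty$ and the other $2$; Young then produces $L^{2,2}=L^2$, and Plancherel converts the bound on $H(u,v)^\wedge$ into the claimed bound on $H(u,v)$. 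The third inequality is immediate from the second via the continuous embedding $L^2=L^{2,2}\hookrightarrow L^{2,\infty}$ (Proposition \ref{pr:dl:lso}(iv)).

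No step is genuinely hard; the only point that deserves care is bookkeeping of the Lorentz indices — in particular checking that the Hölder-type splits $|\xi|^{n/2}=|\xi|^{n/4}\cdot|\xi|^{n/4}$ (for $n\le 2$) and $|\xi|^{n/2}=|\xi|\cdot|\xi|^{(n-2)/2}$ (for $n\ge 3$) dictated by Lemma \ref{la:tart:prphuvwedgeest} remain admissible in Proposition \ref{pr:dl:lso}(ii), i.e.\ that $\tfrac{1}{p_1}+\tfrac{1}{p_2}-1>0$; this is precisely what Corollary \ref{co:esttriangle2} was engineered to guarantee, so the scheme closes.
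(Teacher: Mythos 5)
Your proposal is correct and follows essentially the same route as the paper's own proof: Lemma~\ref{la:tart:prphuvwedgeest} to get the pointwise bound on $(H(u,v))^\wedge$, factor out the appropriate $\abs{\cdot}^{-s}$ weight and absorb it as a fixed $L^{n/s,\infty}$ factor via Proposition~\ref{pr:dl:lso}(v), apply H\"older (Proposition~\ref{pr:dl:lso}(i)) and then Young in Lorentz spaces (Proposition~\ref{pr:dl:lso}(ii)), and finish with Plancherel and the embedding $L^{2,2}\hookrightarrow L^{2,\infty}$. The exponent bookkeeping checks out in both the $n\le 2$ and $n\ge 3$ branches, including the two symmetric terms in the latter case. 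The only slight imprecision is the closing attribution: the constraint $\tfrac{1}{p_1}+\tfrac{1}{p_2}-1>0$ is verified by direct arithmetic from the exponent split coming out of Lemma~\ref{la:tart:prphuvwedgeest}; Corollary~\ref{co:esttriangle2} supplies the pointwise compensation that forces that split, but it is not literally what makes the Young condition nontrivial.
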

\begin{proofT}{\ref{th:integrability}}
Lemma~\ref{la:tart:prphuvwedgeest} implies, in the case $n = 1,2$
\[
 \abs{(H(u,v))^\wedge} \leq C \left ( \abs{\cdot}^{-\frac{n}{4}}  \abs{(\lapn  u)^\wedge}  \right )\ast \left ( \abs{\cdot}^{-\frac{n}{4}}  \abs{(\lapn  v)^\wedge} \right )
\]
and in the case $n \geq 3$
\[
\begin{ma}
 \abs{(H(u,v))^\wedge} &\leq& C \left ( \abs{\cdot}^{-1}  \abs{(\lapn  u)^\wedge}  \right )\ast \left ( \abs{\cdot}^{-\frac{n-2}{2}}  \abs{(\lapn  v)^\wedge} \right )\\
&& +  C \left ( \abs{\cdot}^{-\frac{n-2}{2}}  \abs{(\lapn  u)^\wedge}  \right )\ast \left ( \abs{\cdot}^{-1}  \abs{(\lapn  v)^\wedge} \right ).
\end{ma}
\]
Now we use H\"older's inequality: By Proposition~\ref{pr:dl:lso} we have that
\[
\begin{array}{lll}
\abs{\cdot}^{-\frac{n}{4}} \in L^{4,\infty}(\R^n), \quad & L^{2} \cdot L^{4,\infty} \subset L^{\frac{4}{3},2}, \quad &  L^{2,\infty} \cdot L^{4,\infty} \subset L^{\frac{4}{3},\infty},\\
\abs{\cdot}^{-1} \in L^{n,\infty}(\R^n), \quad & L^{2} \cdot L^{n,\infty} \subset L^{\frac{2n}{n+2},2}, \quad &  L^{2,\infty} \cdot L^{n,\infty} \subset L^{\frac{2n}{n+2},\infty},\\
\abs{\cdot}^{-\frac{n-2}{2}} \in L^{\frac{2n}{n-2},\infty}(\R^n), \quad & L^{2} \cdot L^{\frac{2n}{n-2},\infty} \subset L^{\frac{n}{n-1},2}, \quad &  L^{2,\infty} \cdot L^{\frac{2n}{n-2},\infty} \subset L^{\frac{n}{n-1},\infty}.
\end{array}
\]
Moreover, convolution acts as follows
\[
\begin{array}{lll}
L^{\frac{4}{3},2} \ast L^{\frac{4}{3},2} \subset L^{2,1}, \quad & L^{\frac{4}{3},\infty} \ast L^{\frac{4}{3},2} \subset L^{2},\\
L^{\frac{2n}{n+2},2} \ast L^{\frac{n}{n-1},2} \subset L^{2,1}, \quad & L^{\frac{2n}{n+2},2} \ast L^{\frac{n}{n-1},\infty} + L^{\frac{2n}{n+2},\infty} \ast L^{\frac{n}{n-1},2} \subset L^{2}.
\end{array}
\]
We can conclude.
\end{proofT}
\section{Localization Results for the Fractional Laplacian}\label{sec:locEf}
Even though $\lap^s$ is a nonlocal operator, its ``differentiating force'' concentrates around the point evaluated. Thus, to estimate $\laps{s}$ at a given point $x$ one has to look ``only around'' $x$. In this spirit the following results hold.

\subsection{Multiplication with disjoint support}
In \cite{DR09Sphere} a special case of the following Lemma is used many times. As a consequence of lower order effects appearing when dealing with dimensions and orders greater than one, we will need it in a more general setting, namely for arbitrary homogeneous multiplier operators.
\begin{lemma} \label{la:bs:disjsuppGen}
Let $M$ be an operator with Fourier multiplier $m \in \Sws(\R^n,\Cc)$, $m \in C^\infty(\R^n \backslash \{0\},\Cc)$, i.e.
\[
 Mv := (m v^\wedge)^\vee \quad \mbox{for any $v \in \Sw$}.
\]
If $m$ is homogeneous of order $\delta > -n$, for any $a, b \in \Sw(\R^n,\Cc)$ such that for some $\gamma, d > 0$, $x \in \R^n$, $\supp a \subset B_\gamma(x)$ and $\supp b \subset \R^n \backslash B_{d+\gamma}(x)$,
\[
 \abs{\intl_{\R^n} a\ Mb} \leq C_M\ d^{-n-\delta} \ \Vert a \Vert_{L^1(\R^n)}\ \Vert b \Vert_{L^1(\R^n)}.
\]
\end{lemma}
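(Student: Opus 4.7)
The plan is to represent $M$ as convolution with a kernel $k = m^\vee$ that, although possibly singular at the origin, coincides on $\R^n \setminus \{0\}$ with a smooth function; since the supports of $a$ and $b$ are separated by distance at least $d$, only values of $k$ on $\{|w| \geq d\}$ actually enter the pairing $\int a\, Mb$.

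First I would invoke the classical structure result for homogeneous tempered distributions: if $m \in \Sws(\R^n)$ is homogeneous of degree $\delta$ and $C^\infty$ off the origin, then its Fourier inverse $k := m^\vee \in \Sws(\R^n)$ is again $C^\infty$ off the origin and homogeneous of degree $-n-\delta$. In particular,
\[
 |k(w)| \leq C_M\, |w|^{-n-\delta} \quad \mbox{for every $w \in \R^n \setminus \{0\}$},
\]
with $C_M := \max_{|\omega|=1} |k(\omega)|$ depending only on $m$. By the convolution theorem, $Mb = k \ast b$ as tempered distributions.

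Next I would make this representation rigorous in spite of the possible singularity of $k$ at the origin by a cutoff decomposition. Pick $\psi \in C^\infty(\R^n)$ with $\psi \equiv 0$ on $B_{d/3}(0)$ and $\psi \equiv 1$ on $\R^n \setminus B_{d/2}(0)$, and split
\[
 k = \psi k + (1-\psi) k =: k_1 + k_2.
\]
Since $-n-\delta < 0$, the function $k_1$ is a bounded $C^\infty$ function on $\R^n$, while $k_2$ is a distribution with compact support contained in $B_{d/2}(0)$. Then $k_2 \ast b$ is a smooth function satisfying $\supp(k_2 \ast b) \subset B_{d/2}(0) + \supp b$; if $w = u + v$ with $|u| < d/2$ and $|v-x| \geq d+\gamma$, then $|w-x| \geq |v-x| - |u| > \gamma$, so this set is disjoint from $\supp a \subset B_\gamma(x)$. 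Hence $\int a\, (k_2 \ast b) = 0$, and consequently
\[
 \intl_{\R^n} a\, Mb = \intl_{\R^n} \intl_{\R^n} a(y)\, k_1(y-z)\, b(z)\, dz\, dy.
\]
On $\supp a \times \supp b$ one has $|y-z| \geq d > d/2$, so $\psi(y-z) = 1$ and hence $k_1(y-z) = k(y-z)$ there.

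Finally, since $\delta > -n$ the map $t \mapsto t^{-n-\delta}$ is decreasing on $(0,\infty)$, so $|y-z| \geq d$ yields $|k(y-z)| \leq C_M\, d^{-n-\delta}$ throughout the domain of integration. Fubini then gives
\[
 \Babs{\intl_{\R^n} a\, Mb} \leq C_M\, d^{-n-\delta} \intl_{\R^n} \intl_{\R^n} \abs{a(y)}\, \abs{b(z)}\, dz\, dy = C_M\, d^{-n-\delta}\, \Vert a \Vert_{L^1(\R^n)}\, \Vert b \Vert_{L^1(\R^n)}.
\]
The main obstacle is the first step, the pointwise size bound for $k$ off the origin, which relies on the structure theorem for homogeneous tempered distributions; the cutoff decomposition and the Fubini estimate afterwards are routine.
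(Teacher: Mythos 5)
Your proof is correct and follows essentially the same route as the paper: both pass to the kernel $k = m^\vee$, invoke the structure theory of homogeneous tempered distributions (smooth off the origin, homogeneous of the complementary degree $-n-\delta$, hence pointwise bounded by $C_M|w|^{-n-\delta}$ there), and then exploit the support separation to bound $\int a\, Mb$. The only cosmetic difference is that the paper phrases the final step as a pairing $\int m\, \varphi^\wedge$ with a test function $\varphi$ supported outside $B_d(0)$ (Propositions \ref{pr:bs:disjsuppGen}--\ref{pr:bs:disjsuppp4}), whereas you work directly with $Mb = k\ast b$ and make the identification near the singularity explicit via the cutoff $\psi$.
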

An immediate consequence, taking $m := \abs{\cdot}^{s+t}$, is
\begin{corollary} \label{co:bs:disjsupp}
Let $s, t > -n$, $s+t > -n$. Then, for all $a,b \in \Sw(\R^n,\Cc)$, such that for some $d, \gamma > 0$, $\supp a \subset B_\gamma(x)$ and $\supp b \subset \R^n \backslash B_{d+\gamma}(x)$,
\[
 \abs{\intl_{\R^n} \laps{s}  a\ \laps{t}  b} \leq C_{n,s,t}\ d^{-(n+s+t)}\ \Vert a \Vert_{L^1}\ \Vert b \Vert_{L^1}.
\]
\end{corollary}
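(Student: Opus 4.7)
\begin{proofC}{\ref{co:bs:disjsupp}}
\emph{(Proof plan.)} The strategy is to rewrite the left-hand side so that Lemma \ref{la:bs:disjsuppGen} applies directly with the multiplier $m := |\cdot|^{s+t}$. First I would establish, for $a, b \in \Sw(\R^n,\Cc)$, the identity
\[
\intl_{\R^n} \laps{s} a \cdot \laps{t} b = \intl_{\R^n} a \cdot \laps{s+t} b.
\]
This is a Parseval/duality computation: using $(\laps{s} a)^\wedge(\xi) = |\xi|^s a^\wedge(\xi)$, $(\laps{t} b)^\wedge(\xi) = |\xi|^t b^\wedge(\xi)$, and the relation $\int f g\, dx = \int f^\wedge(\xi) g^\wedge(-\xi)\, d\xi$, one finds
\[
\intl_{\R^n} \laps{s} a \cdot \laps{t} b = \intl_{\R^n} |\xi|^{s+t}\ a^\wedge(\xi)\ b^\wedge(-\xi)\, d\xi,
\]
and the same computation performed with all of $|\xi|^{s+t}$ attached to $b$ gives the right-hand side of the claimed identity. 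Absolute convergence of the $\xi$-integral — which justifies these manipulations — is guaranteed by the Schwartz decay of $a^\wedge, b^\wedge$ at infinity together with the hypothesis $s+t > -n$, which controls the integrable singularity of $|\xi|^{s+t}$ at the origin.

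Once the identity is in hand, I would simply invoke Lemma \ref{la:bs:disjsuppGen} applied to $M = \laps{s+t}$. Its symbol $m(\xi) = |\xi|^{s+t}$ belongs to $C^\infty(\R^n \backslash \{0\})$ and is homogeneous of order $\delta = s+t > -n$, matching the hypotheses of the lemma. The support assumptions $\supp a \subset B_\gamma(x)$, $\supp b \subset \R^n \backslash B_{d+\gamma}(x)$ are exactly those required, so the lemma yields
\[
\abs{\intl_{\R^n} a \cdot \laps{s+t} b} \leq C_{n,s,t}\ d^{-n-(s+t)}\ \Vert a \Vert_{L^1}\ \Vert b \Vert_{L^1},
\]
which combined with the first step gives the claim.

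The individual hypotheses $s > -n$ and $t > -n$ do not enter the final estimate itself but are needed so that $\laps{s} a$ and $\laps{t} b$ are individually well-defined (as tempered distributions, or as locally integrable functions away from the origin on the Fourier side) and so that the left-hand side of the corollary is meaningful. There is no real obstacle to overcome here: the entire content of the corollary is packaged in Lemma \ref{la:bs:disjsuppGen}, and the only work is the one-line duality identity above.
\end{proofC}
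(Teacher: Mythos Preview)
Your proposal is correct and matches the paper's approach exactly: the paper simply states that the corollary is ``an immediate consequence, taking $m := \abs{\cdot}^{s+t}$'' of Lemma~\ref{la:bs:disjsuppGen}, and you have spelled out the implicit Parseval identity $\int \laps{s} a\,\laps{t} b = \int a\,\laps{s+t} b$ that makes this reduction work. Your remark on the role of the separate hypotheses $s,t>-n$ (ensuring $\laps{s} a$, $\laps{t} b$ are individually well-defined so the left-hand side makes sense) is also apt.
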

Lemma~\ref{la:bs:disjsuppGen} follows from the following proposition, as the commutation of translations and multiplier operators allows us to assume $\supp a \subset B_{\gamma}(0)$ and $\supp b \subset \R^n \backslash B_{\gamma + d}(0)$.
\begin{proposition} \label{pr:bs:disjsuppGen}
Let $m \in C^\infty(\R^n \backslash \{0\},\Cc)\cap \Sws$. If for some $\delta > -n$ we have that $m(\lambda x) = \lambda^\delta m(x)$ for any $x \in \R^n \backslash \{0\}$ and any $\lambda > 0$,
\[
 \abs{\intl_{\R^n} m\ \varphi^\wedge} \leq C_m\ d^{-n-\delta} \ \Vert \varphi \Vert_{L^1(\R^n)}, \quad \mbox{for any $\varphi \in C_0^\infty(\R^n \backslash \overline{B_d(0)},\Cc)$, $d > 0$}.
\]
\end{proposition}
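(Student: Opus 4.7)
The plan is to pass from the integral $\intl_{\R^n} m\, \varphi^\wedge$ to a pairing against the (distributional) Fourier transform $k := m^\wedge \in \Sws$, which is legal because $m \in \Sws$ and $\varphi \in \Sw$:
\[
\intl_{\R^n} m\, \varphi^\wedge \;=\; \langle m, \varphi^\wedge \rangle \;=\; \langle m^\wedge, \varphi \rangle \;=\; \langle k, \varphi \rangle.
\]

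The key input is the classical structure theorem for Fourier transforms of homogeneous distributions (see e.g.\ H\"ormander, \emph{The Analysis of Linear Partial Differential Operators I}, Theorem 7.1.18, or Stein's \emph{Singular Integrals}): the Fourier transform of a tempered distribution which is homogeneous of degree $\delta$ and $C^\infty$ on $\R^n \backslash \{0\}$ is again homogeneous, now of degree $-n-\delta$, and is $C^\infty$ on $\R^n \backslash \{0\}$. Applied to $m$, this tells us that $k\big|_{\R^n \backslash \{0\}}$ agrees with a smooth function of the form $k(x) = \abs{x}^{-n-\delta}\, \widetilde{k}(x/\abs{x})$ with $\widetilde{k} \in C^\infty(S^{n-1})$, and in particular
\[
\abs{k(x)} \leq C_m\, \abs{x}^{-n-\delta} \quad \mbox{for every } x \in \R^n \backslash \{0\},
\]
where $C_m$ depends only on $\Vert \widetilde{k} \Vert_{L^\infty(S^{n-1})}$, hence ultimately only on $m$ (and on $n$, $\delta$).

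Next, since $\varphi \in C_0^\infty(\R^n \backslash \overline{B_d(0)})$ vanishes on an open neighborhood of the origin, any part of the distribution $k$ that might be concentrated at $\{0\}$ (necessarily a finite linear combination of derivatives of $\delta_0$, by homogeneity and the structure theorem) annihilates $\varphi$, so the pairing reduces to an honest integral:
\[
\langle k, \varphi \rangle \;=\; \intl_{\R^n} k(x)\, \varphi(x)\, dx.
\]
Finally, the hypothesis $\delta > -n$ yields $-n-\delta < 0$, so $\abs{x}^{-n-\delta}$ is monotonically decreasing in $\abs{x}$; on $\supp \varphi$ we have $\abs{x} \geq d$, hence $\abs{x}^{-n-\delta} \leq d^{-n-\delta}$. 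Combining everything,
\[
\babs{\intl_{\R^n} m\, \varphi^\wedge} \;=\; \babs{\intl_{\R^n} k\, \varphi} \;\leq\; C_m\, d^{-n-\delta}\, \Vert \varphi \Vert_{L^1(\R^n)}.
\]

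The only non-routine step is the structural result on Fourier transforms of homogeneous distributions that are smooth away from the origin; I would invoke it as a black box rather than reprove it. Once one has the pointwise bound $\abs{k(x)} \lesssim \abs{x}^{-n-\delta}$ on $\R^n \backslash \{0\}$, the remaining argument is pure bookkeeping using the support of $\varphi$ and the sign of the exponent.
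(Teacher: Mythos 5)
Your proof is correct and follows essentially the same route as the paper: both transfer the Fourier transform onto $\varphi$, use that $m^\wedge$ is smooth on $\R^n \setminus \{0\}$ and homogeneous of degree $-n-\delta$, and then exploit the support condition on $\varphi$ together with $-n-\delta < 0$. The only difference is packaging: the paper establishes the structure of $m^\wedge$ through Propositions~\ref{pr:bs:disjsuppp1}--\ref{pr:bs:disjsuppp4} (citing Grafakos for the smoothness step and proving the homogeneity steps directly), whereas you invoke the classical structure theorem for Fourier transforms of homogeneous distributions as a single black box.
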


Proposition~\ref{pr:bs:disjsuppGen} again follows from some general facts about the Fourier Transform on tempered distributions:
\begin{proposition}[Smoothness takes over to Fourier Transform]\label{pr:bs:disjsuppp1}${}$\\
Let $f \in \Sws(\R^n,\Cc)$ and $f \in C^\infty(\R^n \backslash \{ 0 \},\Cc)$. If moreover $f$ is weakly homogeneous of order $\delta \in \R$, i.e.
\[
 f[\varphi(\lambda \cdot )] = \lambda^{-n-\delta} f[\varphi], \quad \mbox{for all $\varphi \in \mathcal{S}(\R^n,\Cc)$,}
\]
then $f^\wedge, f^\vee \in \Sws(\R^n,\Cc)$ also belong to $C^\infty(\R^n \backslash \{0\},\Cc)$.
\end{proposition}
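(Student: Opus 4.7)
\begin{proofP}{\ref{pr:bs:disjsuppp1}}
The plan is a dyadic decomposition. I would split $f$ into annular pieces $\psi_k f$ supported in $\{\abs{x} \sim 2^k\}$, use the homogeneity to express each as a rescaling of one single Schwartz function $h$, and then read off smoothness of $f^\wedge$ outside the origin from the Schwartz decay of $h^\wedge$ after rescaling. The argument for $f^\vee$ will be identical (or may be reduced to $f^\wedge$ via $f^\vee(x) = f^\wedge(-x)$).

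The first step is to convert the weak homogeneity into the pointwise identity $f(\lambda x) = \lambda^\delta f(x)$ on the smooth region $\R^n \setminus \{0\}$; a change of variables in $f[\varphi(\lambda \cdot)] = \lambda^{-n-\delta} f[\varphi]$ does it. Next, pick $\varphi \in C_0^\infty(B_2(0))$ with $\varphi \equiv 1$ on $B_1(0)$, set $\psi_0 := \varphi$ and $\psi_k(\cdot) := \varphi(\cdot/2^k) - \varphi(\cdot/2^{k-1})$ for $k \geq 1$, so that $\supp \psi_k \subset \{2^{k-1} \leq \abs{x} \leq 2^{k+1}\}$, $\psi_k(\cdot) = \psi_1(\cdot/2^{k-1})$, and $\sum_{k \geq 0} \psi_k \equiv 1$. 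This yields
\[
f = \psi_0 f + \suml_{k \geq 1} \psi_k f \qquad \mbox{in } \Sws(\R^n).
\]
The compactly supported distribution $\psi_0 f$ has an entire Fourier transform by Paley--Wiener, so $(\psi_0 f)^\wedge \in C^\infty(\R^n)$. For $k \geq 1$, $\psi_k$ vanishes on a neighbourhood of $0$ where $f$ is smooth, so $\psi_k f \in C_0^\infty(\R^n)$, and the pointwise homogeneity gives
\[
\psi_k f(x) = \psi_1(x/2^{k-1})\, f(x) = 2^{(k-1)\delta}\, h(x/2^{k-1}), \qquad h := \psi_1 f \in \Sw(\R^n),
\]
and therefore, by the scaling of the Fourier transform, $(\psi_k f)^\wedge(\xi) = 2^{(k-1)(\delta + n)}\, h^\wedge(2^{k-1}\xi)$.

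The substantive step is to show that the series $\sum_{k \geq 1}(\psi_k f)^\wedge$ converges, together with all its derivatives, uniformly on every compact $K \subset \R^n \setminus \{0\}$. Since $\partial^\alpha h^\wedge \in \Sw$ for every multiindex $\alpha$, one has for every $N \in \N_0$
\[
\babs{\partial^\alpha (\psi_k f)^\wedge(\xi)} \leq C_{N,\alpha}\ 2^{(k-1)(\delta + n + \abs{\alpha})}\, (1 + 2^{k-1}\abs{\xi})^{-N}.
\]
On $K$, $\abs{\xi} \geq c_K > 0$, so choosing $N > \delta + n + \abs{\alpha}$ turns the right-hand side into a summable geometric series in $k$, uniformly in $\xi \in K$. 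This produces $f^\wedge \in C^\infty(\R^n \setminus \{0\})$. The one point to be careful about is precisely this estimate, where the geometric factor $2^{(k-1)(\delta+n+\abs{\alpha})}$ produced by the homogeneity and by the Fourier scaling must be balanced against the Schwartz decay of $\partial^\alpha h^\wedge$; once the identity $\psi_k f = 2^{(k-1)\delta}\, h(\cdot/2^{k-1})$ is in place the rest is routine bookkeeping.
\end{proofP}
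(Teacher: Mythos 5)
The paper itself offers no proof here: it simply cites \cite[Proposition 2.4.8]{GrafC08}. Your argument fills that gap with the standard dyadic-decomposition proof, and it is correct. The key points check out: the conversion from weak to pointwise homogeneity (which the paper handles separately as Proposition~\ref{pr:bs:disjsuppp3}) is established exactly as you describe; $\psi_0 f$ is a compactly supported distribution so Paley--Wiener gives smoothness of its transform; for $k\geq 1$ the identity $\psi_k f = 2^{(k-1)\delta}\,h(\cdot/2^{k-1})$ with $h=\psi_1 f\in C_0^\infty$ follows from pointwise homogeneity on the support of $\psi_k$, which lies in $\R^n\setminus\{0\}$ where $f$ is smooth; and choosing $N>\delta+n+\abs{\alpha}$ makes the tail a geometric series uniformly on any compact set bounded away from the origin, so the sum converges in $C^\infty_{\mathrm{loc}}(\R^n\setminus\{0\})$ and, by continuity of the Fourier transform on $\Sws$, agrees with $f^\wedge$ there. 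This is essentially the same route Grafakos takes, so you have simply reproduced the cited proof rather than found a genuinely different one.
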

\begin{proofP}{\ref{pr:bs:disjsuppp1}}
We refer to \cite[Proposition 2.4.8]{GrafC08}.
\end{proofP}

\begin{proposition}[Homogeneity takes over to Fourier Transform]\label{pr:bs:disjsuppp2}
Let $f \in \Sws(\R^n,\Cc)$. If $f$ is weakly homogeneous of order $\delta \in \R$, then $g = f^\wedge \in \Sws(\R^n,\Cc)$ and $h = f^\vee \in \Sws(\R^n,\Cc)$ are weakly homogeneous of order $\gamma = -n - \delta$.
\end{proposition}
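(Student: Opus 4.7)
The plan is to reduce the claim to a straightforward chain of identities at the level of tempered distributions, using only (i) the defining duality $f^\wedge[\varphi] = f[\varphi^\wedge]$, and (ii) the elementary scaling rule for the Fourier transform on Schwartz functions, namely $(\varphi(\lambda\, \cdot))^\wedge(\xi) = \lambda^{-n}\, \varphi^\wedge(\lambda^{-1}\xi)$ for $\varphi \in \Sw(\R^n,\Cc)$ and $\lambda > 0$.

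First I would fix $\varphi \in \Sw$ and $\lambda > 0$ and compute directly
\[
g[\varphi(\lambda\, \cdot)] \;=\; f^\wedge[\varphi(\lambda\, \cdot)] \;=\; f\bigl[(\varphi(\lambda\, \cdot))^\wedge\bigr] \;=\; \lambda^{-n}\, f\bigl[\varphi^\wedge(\lambda^{-1}\, \cdot)\bigr].
\]
At this point the weak homogeneity hypothesis applied to the test function $\psi := \varphi^\wedge \in \Sw$, with dilation parameter $\mu := \lambda^{-1}$, yields
\[
f\bigl[\psi(\mu\, \cdot)\bigr] \;=\; \mu^{-n-\delta}\, f[\psi] \;=\; \lambda^{\, n+\delta}\, f[\varphi^\wedge] \;=\; \lambda^{\, n+\delta}\, g[\varphi].
\]
Combining both lines gives $g[\varphi(\lambda\, \cdot)] = \lambda^{\delta}\, g[\varphi] = \lambda^{-n-(-n-\delta)}\, g[\varphi]$, which is precisely the definition of weak homogeneity of order $\gamma = -n - \delta$ for $g$.

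The argument for $h = f^\vee$ is completely analogous: either repeat the computation using $f^\vee[\varphi] = f[\varphi^\vee]$ and $(\varphi(\lambda\, \cdot))^\vee(\xi) = \lambda^{-n}\varphi^\vee(\lambda^{-1}\xi)$, or observe that $f^\vee(\xi) = f^\wedge(-\xi)$ so that $f^\vee$ inherits the same homogeneity order from $f^\wedge$. There is no genuine obstacle: the only point requiring a moment of care is keeping track of the two powers of $\lambda$ — the $\lambda^{-n}$ coming from the Fourier scaling law and the $\lambda^{n+\delta}$ coming from the hypothesis — which together cancel the ``$-n$'' and leave the expected order $-n-\delta$.
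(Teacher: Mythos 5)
Your proof is correct and takes essentially the same route as the paper: unwind $f^\wedge[\varphi(\lambda\,\cdot)]$ via the duality definition, apply the scaling law $(\varphi(\lambda\,\cdot))^\wedge = \lambda^{-n}\varphi^\wedge(\lambda^{-1}\,\cdot)$, and then invoke the weak homogeneity of $f$ with dilation parameter $\lambda^{-1}$; the two powers of $\lambda$ combine to $\lambda^\delta = \lambda^{-n-\gamma}$ as required. The paper likewise treats $f^\vee$ by noting the argument is identical, which matches your remark.
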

\begin{proofP}{\ref{pr:bs:disjsuppp2}}
This follows just by the definition of Fourier transform on tempered distributions,
\[
 f^\wedge [\varphi(\lambda \cdot)] = f [\varphi(\lambda \cdot)^\wedge] = \lambda^{-n} f [\varphi^\wedge (\frac{1}{\lambda} \cdot)] = \lambda^{-n} \lambda^{- (-n-\delta)} f [\varphi^\wedge].
\]
The case $f^\vee$ is done the same way.
\end{proofP}

\begin{proposition}[Weak Homogeneity and Strong Homogeneity]\label{pr:bs:disjsuppp3}${}$\\
Let $g \in \Sws(\R^n,\Cc)$, $g \in C^\infty(\R^n \backslash \{0\},\Cc)$. If $g$ is weakly homogeneous of order $\gamma$, then also pointwise
\[
 g(\lambda x) = \lambda^\gamma g(x), \quad \mbox{for every $x \in \R^n \backslash \{0\}$, $\lambda > 0$.}
\]
\end{proposition}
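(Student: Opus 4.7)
The plan is to exploit the fact that for test functions $\varphi \in C_0^\infty(\R^n \setminus \{0\})$, the distribution $g$ acts simply by integration against the smooth function $g|_{\R^n \setminus \{0\}}$, and then translate the weak scaling identity into a pointwise one via the fundamental lemma of the calculus of variations.

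More concretely, I would first fix $\lambda > 0$ and pick an arbitrary $\varphi \in C_0^\infty(\R^n \setminus \{0\},\Cc)$. Since $\lambda > 0$, the dilate $\varphi(\lambda \cdot)$ again belongs to $C_0^\infty(\R^n \setminus \{0\},\Cc)$, so both sides of the weak-homogeneity identity
\[
g[\varphi(\lambda\cdot)] = \lambda^{-n-\gamma}\, g[\varphi]
\]
can be represented as honest Lebesgue integrals against the smooth function $g$ on $\R^n \setminus \{0\}$. Performing the change of variables $y = \lambda x$ in the left-hand side gives
\[
\int_{\R^n} g(x)\, \varphi(\lambda x)\, dx \;=\; \lambda^{-n} \int_{\R^n} g(y/\lambda)\, \varphi(y)\, dy,
\]
so the weak homogeneity becomes
\[
\int_{\R^n} \bigl[\, g(y/\lambda) \;-\; \lambda^{-\gamma}\, g(y)\, \bigr]\, \varphi(y)\, dy \;=\; 0
\qquad \text{for all } \varphi \in C_0^\infty(\R^n \setminus \{0\},\Cc).
\]

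Next, since $g$ is continuous on $\R^n \setminus \{0\}$, so is $y \mapsto g(y/\lambda) - \lambda^{-\gamma} g(y)$, and the fundamental lemma of the calculus of variations (applied on the open set $\R^n \setminus \{0\}$) yields pointwise
\[
g(y/\lambda) \;=\; \lambda^{-\gamma}\, g(y) \qquad \text{for every } y \in \R^n \setminus \{0\}.
\]
Substituting $y = \lambda x$ with $x \in \R^n \setminus \{0\}$ we obtain exactly $g(\lambda x) = \lambda^\gamma g(x)$, as claimed. Since $\lambda > 0$ and $\varphi$ were arbitrary, this finishes the argument.

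There is no real obstacle here: the only thing to check is that the distributional pairing truly coincides with an integral on test functions supported away from the origin, which is immediate from $g \in C^\infty(\R^n \setminus \{0\})$ being locally integrable there and defining the same tempered distribution. The positivity of $\lambda$ is used to ensure that the dilation preserves $C_0^\infty(\R^n \setminus \{0\})$; no additional growth or regularity hypothesis on $g$ near $0$ is needed because our test functions never see the origin.
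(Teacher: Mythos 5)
Your proof is correct and follows essentially the same route as the paper: translate weak homogeneity into an integral identity via a change of variables and invoke the fundamental lemma of the calculus of variations. You are in fact slightly more careful than the paper's proof by explicitly restricting to test functions in $C_0^\infty(\R^n\setminus\{0\})$, which is exactly what justifies replacing the distributional pairing by an honest integral against the smooth restriction of $g$.
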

\begin{proofP}{\ref{pr:bs:disjsuppp3}}
We have for any $\varphi \in \Sw(\R^n,\Cc)$, and any $\lambda > 0$
\[
 g[\varphi(\lambda^{-1} \cdot)] = \intl g (x)\ \varphi(\lambda^{-1} x)\ dx = \lambda^n \intl g(\lambda z)\ \varphi(z) dz
\]
and by weak homogeneity
\[
 \lambda^{n+\gamma} g[\varphi] = g[\varphi(\lambda^{-1} \cdot)].
\]
Thus,
\[
 \intl_{\R^n} (\lambda^\gamma \tilde{g}(x) - \tilde{g}(\lambda x)) \varphi(x) = 0, \quad \mbox{for any $\varphi \in \Sw$}
\]
which implies $\lambda^\gamma \tilde{g}(x) = \tilde{g}(\lambda x)$ for any $x \neq 0$.
\end{proofP}

\begin{proposition}[Strong Homogeneity]\label{pr:bs:disjsuppp4}
Let $g \in \Sws(\R^n,\Cc)$, $g \in C^\infty(\R^n \backslash \{ 0\},\Cc)$. If there is $\gamma \leq 0$ such that
\[
 g(\lambda x) = \lambda^\gamma g(x) \quad \mbox{for every $x \in \R^n \backslash \{0\}$, $\lambda > 0$}
\]
then
\[
 \abs{\intl g\ \varphi} \leq d^\gamma \Vert g \Vert_{L^\infty(\S^{n-1})}\ \Vert \varphi \Vert_{L^1(\R^n)},\quad \mbox{for every $\varphi \in C_0^\infty(\R^n\backslash \overline{B_d(0)})$, $d > 0$.}
\]
\end{proposition}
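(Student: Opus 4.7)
The proof will be short and concrete. The plan is to reduce the distributional pairing $\int g\,\varphi$ to an honest integral of a smooth function against $\varphi$, invoke the pointwise homogeneity to express $g$ in polar coordinates, and then use $\gamma \leq 0$ together with the support condition $\supp\varphi \subset \R^n \setminus \overline{B_d(0)}$ to bound $|x|^\gamma$ uniformly by $d^\gamma$ on the region that matters.

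First, I would justify that the distributional pairing coincides with ordinary integration: pick $\eta \in C_0^\infty(B_{d/2}(0))$ with $\eta \equiv 1$ in a neighborhood of $0$. Since $\supp \varphi \cap \supp \eta = \emptyset$ we have $\varphi = (1-\eta)\varphi$, and $(1-\eta)\varphi \in C_0^\infty(\R^n \setminus \{0\})$. On $\R^n \setminus \{0\}$ the tempered distribution $g$ is represented by the smooth function $g \in C^\infty(\R^n\setminus\{0\},\Cc)$ of the hypothesis, so
\[
\int g\,\varphi \;=\; g[\varphi] \;=\; g[(1-\eta)\varphi] \;=\; \int_{\R^n} g(x)\,(1-\eta(x))\,\varphi(x)\,dx \;=\; \int_{\R^n} g(x)\,\varphi(x)\,dx,
\]
where in the last equality we used $\eta\varphi \equiv 0$.

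Next, I would use the pointwise homogeneity. For any $x \neq 0$ choose $\lambda = |x|$ and write $x = \lambda \,\omega$ with $\omega = x/|x| \in \S^{n-1}$; the hypothesis gives $g(x) = |x|^{\gamma}\,g(\omega)$, hence
\[
|g(x)| \;\leq\; |x|^{\gamma}\,\Vert g\Vert_{L^\infty(\S^{n-1})} \qquad \text{for every } x \in \R^n\setminus\{0\}.
\]
Since $\gamma \leq 0$, the function $r \mapsto r^\gamma$ is nonincreasing on $(0,\infty)$, so on $\supp \varphi \subset \R^n \setminus \overline{B_d(0)}$ one has $|x|^\gamma \leq d^\gamma$.

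Combining these two observations,
\[
\left|\int g\,\varphi\right| \;\leq\; \int_{\R^n} |g(x)|\,|\varphi(x)|\,dx \;\leq\; \Vert g\Vert_{L^\infty(\S^{n-1})}\, \int_{|x|\geq d} |x|^{\gamma}\,|\varphi(x)|\,dx \;\leq\; d^{\gamma}\,\Vert g\Vert_{L^\infty(\S^{n-1})}\, \Vert \varphi\Vert_{L^1(\R^n)},
\]
which is the claim. There is really no hard step here; the only mild point is the opening justification that $g$ acts on $\varphi$ as an integral, which is needed because $g$ as a tempered distribution may in principle differ from its smooth representative by a distribution supported at $\{0\}$, but such a distribution is annihilated by any test function that vanishes in a neighborhood of the origin.
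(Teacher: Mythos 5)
Your proof is correct and follows essentially the same route as the paper: represent $g$ pointwise via its homogeneity $g(x) = |x|^\gamma g(x/|x|)$ and bound $|x|^\gamma$ by $d^\gamma$ on $\supp\varphi$ using $\gamma \leq 0$. The only addition is your opening paragraph justifying that the distributional pairing $g[\varphi]$ coincides with the integral $\int g\varphi$; the paper takes this for granted, and your remark about distributions supported at the origin being annihilated by $\varphi$ is a reasonable way to make it explicit.
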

\begin{proofP}{\ref{pr:bs:disjsuppp4}}
For every $\varphi \in C_0^\infty(\R^n\backslash \overline{B_d(0)})$, $d > 0$, we have
\[
 \abs{\intl g(x)\ \varphi(x)\ dx} = \abs{\intl \abs{x}^\gamma\ g\brac{\frac{x}{\abs{x}}}\ \varphi(x)\ dx} \overset{\gamma \leq 0}{\leq} d^{\gamma}\ \Vert g \Vert_{L^\infty(\S^{n-1})}\ \Vert \varphi \Vert_{L^1(\R^n)}.
\]
\end{proofP}
Proposition~\ref{pr:bs:disjsuppp1} - Proposition~\ref{pr:bs:disjsuppp4} imply Proposition~\ref{pr:bs:disjsuppGen}.
\subsection{Equations with disjoint support localize}
As a consequence of Corollary \ref{co:bs:disjsupp} we can \emph{de facto} localize our equations, i.e. replace multiplications of nonlocal operators applied to mappings with disjoint support (which would be zero in the case of local operators) by an operator of order zero:
\begin{lemma}[Localizing]\label{la:bs:localizing}
Let $b \in \Hf(\R^n)$. Assume there is $d,\gamma > 0$, $x \in \R^n$ such that for $E := B_{\gamma+d}(x)$, $\supp b \subset \R^n \backslash E$. Then there is a function $a \in L^2(\R^n)$ such that for $D := B_{\gamma}(x)$
\[
 \intl_{\R^n} \lapn b\ \lapn \varphi = \intl_{\R^n} a\ \varphi, \quad \mbox{for every $\varphi \in C_0^\infty(D)$}
\]
and
\[
 \Vert a \Vert_{L^2(\R^n)} \leq C_{D,E}\Vert b \Vert_{L^2(\R^n)}.
\]
\end{lemma}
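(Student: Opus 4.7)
The plan is to exploit the smoothness of the convolution kernel $G := (\abs{\cdot}^n)^\vee \in \Sws$ of $\laps{n}$ away from the origin: the separation $d$ between $\supp b$ and $D$ will make the map $b \mapsto a$ bounded from $L^2$ into $L^2$, which is stronger than a mere $\Hf \to L^2$ bound. First, since $b \in \Hf(\R^n)$ and $\varphi \in C_0^\infty(D)$, Plancherel's identity (applied after approximating $b$ in $\Hf$ by Schwartz functions via Lemma~\ref{la:Tartar07:Lemma15.10}) yields
\[
 \intl_{\R^n} \lapn b \cdot \lapn \varphi = \intl_{\R^n} \abs{\xi}^n\ b^\wedge(\xi)\ \overline{\varphi^\wedge(\xi)}\ d\xi = \intl_{\R^n} b \cdot \laps{n}\varphi.
\]
By Propositions~\ref{pr:bs:disjsuppp1} and \ref{pr:bs:disjsuppp3}, $G$ belongs to $C^\infty(\R^n \backslash \{0\})$ and is homogeneous of degree $-2n$ there (for even $n$ the function $\abs{\cdot}^n$ is a polynomial and $G$ is a finite combination of derivatives of $\delta_0$, making the claim trivial since $\laps{n}\varphi$ then vanishes identically on $\supp b$). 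For $x_0 \in \supp b$ and $y \in \supp\varphi$ one has $\abs{x_0-y} \geq d$, and hence $\abs{G(x_0-y)} \leq C_n\ \abs{x_0-y}^{-2n}$ together with the pointwise representation $\laps{n}\varphi(x_0) = \intl_D G(x_0-y)\ \varphi(y)\ dy$ against the smooth restriction of $G$ to $\R^n\backslash B_d(0)$.

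With Fubini applied to the resulting absolutely convergent iterated integral, I would then set
\[
 a(y) := \chi_D(y) \intl_{\R^n \backslash B_{d+\gamma}(x)} G(x_0-y)\ b(x_0)\ dx_0,
\]
which immediately satisfies $\intl_{\R^n} a\ \varphi = \intl_{\R^n} \lapn b\ \lapn \varphi$ for every $\varphi \in C_0^\infty(D)$. Cauchy--Schwarz together with the pointwise bound on $G$ gives, for $y \in D$,
\[
 \abs{a(y)}^2 \leq \brac{\intl_{\abs{z}\geq d} \abs{G(z)}^2\ dz}\ \Vert b \Vert_{L^2(\R^n)}^2 \leq C_n\ d^{-3n}\ \Vert b \Vert_{L^2(\R^n)}^2,
\]
since $\intl_{\abs{z}\geq d} \abs{z}^{-4n}\ dz \aeq{} d^{-3n}$. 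Integrating in $y$ over the bounded set $D$ then yields the claimed estimate $\Vert a \Vert_{L^2(\R^n)} \leq C_{D,E}\ \Vert b \Vert_{L^2(\R^n)}$.

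The main technical obstacle is that for odd $n$ the kernel $G$ is only a tempered distribution globally --- its formal pointwise expression $\abs{z}^{-2n}$ is not locally integrable at the origin --- so the Fubini step and the pointwise representation of $\laps{n}\varphi$ need to be justified. I would handle this by mollification: choose $b_k \in C_0^\infty(\R^n)$ with $\supp b_k \subset \R^n \backslash B_{d/2+\gamma}(x)$ and $b_k \to b$ in $\Hf(\R^n)$ (possible for $k$ large, by convolving $b$ with a sequence of standard mollifiers of sufficiently small radius). For each $b_k \in \Sw(\R^n)$ the identities above are classical, and one can pass to the limit using that both $b_k \to b$ in $L^2$ and $\lapn b_k \to \lapn b$ in $L^2$; crucially, the resulting bound on $a_k$ depends only on $\Vert b_k \Vert_{L^2}$ (with the separation $d/2$ absorbed in the constant), so stability in the limit is immediate. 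Alternatively, Lemma~\ref{la:bs:disjsuppGen} applied to the multiplier operator $M := \laps{n}$ (of homogeneity $n > -n$) could be used to avoid any explicit computation with the distributional kernel $G$.
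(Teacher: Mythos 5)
Your proof is correct and its core mechanism is the same as the paper's — exploiting the smoothness and $O(|z|^{-2n})$ decay of the convolution kernel of $\laps{n}$ away from the origin — but the route you take is genuinely different. The paper argues abstractly: it shows $\varphi \mapsto \int \lapn b\ \lapn\varphi$ is a bounded functional on $L^2(D)$ (hence invokes Riesz representation for $a$), and it gets the boundedness by decomposing $b = \sum_{k\geq 1}\eta^k_{r,x}b$ into dyadic annuli and applying Corollary~\ref{co:bs:disjsupp} to each piece, with the summable geometric factor $2^{-ckn}$ supplying convergence. You instead construct $a$ explicitly as the off-diagonal convolution $a(y) = \chi_D(y)\int G(x_0-y)\,b(x_0)\,dx_0$, read the $L^2\to L^\infty(D)$ bound straight off Cauchy--Schwarz in the $x_0$-variable using $\int_{|z|\geq d}|z|^{-4n}dz \aeq{} d^{-3n}$, and then trivially land in $L^2(D)$ since $D$ is bounded. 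That buys you an explicit representation of $a$ and makes transparent that the map $b \mapsto a$ is bounded $L^2 \to L^2$ independently of any $H^{n/2}$ regularity of $b$ (which the paper also obtains, but less visibly). The dyadic decomposition you would implicitly be replicating if you tried to sharpen your last sentence: Lemma~\ref{la:bs:disjsuppGen} applied directly with $M = \laps{n}$ gives an $\Vert\varphi\Vert_{L^1}\Vert b\Vert_{L^1}$ bound, and since $b$ is not compactly supported you cannot trade $\Vert b\Vert_{L^1}$ for $\Vert b\Vert_{L^2}$ without cutting $b$ into annuli where the growing separation compensates the growing measure — which is exactly what the paper's proof does. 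So the final "alternatively" sentence as written does not quite close the argument, but the main kernel-plus-Cauchy--Schwarz proof is complete. (The mollification detour is also unnecessary: the pointwise formula $\laps{n}\varphi(x_0) = \int G(x_0-y)\varphi(y)dy$ concerns only $\varphi$, whose translate $\varphi(x_0-\cdot)$ is a test function supported away from the origin whenever $\dist(x_0,\supp\varphi)\geq d$, so Proposition~\ref{pr:bs:disjsuppp1} alone justifies evaluating the distribution $G$ against it.)
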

\begin{proofL}{\ref{la:bs:localizing}}
We are going to show that
\begin{equation}\label{eq:bs:fstarbdd}
 \abs{f(\varphi)} := \abs{\intl_{\R^n} \lapn b\ \lapn \varphi} \leq  C_{D,E}\Vert \varphi \Vert_{L^2(\R^n)} \quad \mbox{for every $\varphi \in C_0^\infty(D)$.}
\end{equation}
Then $f(\cdot)$ is a linear and bounded operator on the dense subspace $C_0^\infty(D) \subset L^2(D)$. Hence, it is extendable to all of $L^2(D)$. Being a linear functional, by Riesz' representation theorem there exists $a \in L^2(D)$ such that $f(\varphi) = \langle a, \varphi \rangle_{L^2(D)}$ for every $\varphi \in L^2(D)$.\\
It remains to prove \eqref{eq:bs:fstarbdd}, which is done as in the proofs of \cite{DR09Sphere}. Set $r := \frac{1}{2} (\gamma+d)$, so that $E = B_{2r}(x) \supset D$. Applying Corollary \ref{co:bs:disjsupp}
\[
\begin{ma} 
\intl_{\R^n} \lapn b\ \lapn \varphi &=& \suml_{k=1}^\infty\ \intl_{\R^n} \lapn (\eta^k_{r,x} b)\ \lapn \varphi\\
&=:& \suml_{k=1}^\infty\ I_k.\\
\end{ma}
\]
If $k \geq 3$, using that the support of $\eta_r^k$ and $\varphi$ are disjoint, more precisely by Corollary \ref{co:bs:disjsupp},
\[
\begin{ma}
II_k &\overset{\sref{C}{co:bs:disjsupp}}{\aleq{}}& 2^{-2kn} \Vert \eta^k_r b \Vert_{L^1(\R^n)} \Vert \varphi \Vert_{L^1(\R^n)}\\
&\aleq{}& 2^{-\frac{3}{2}kn} \Vert \eta^k_r b \Vert_{L^2(\R^n)} \Vert \varphi \Vert_{L^1(\R^n)}\\
&\aleq{}& 2^{-\frac{3}{2}kn} \Vert b \Vert_{L^2(\R^n)} \Vert \varphi \Vert_{L^2(D)}.\\
\end{ma}
\]
For $1 \leq k \leq 3$ we use that the support of $a$ and $\varphi$ are disjoint, to get also by Corollary \ref{co:bs:disjsupp}
\[
 II_k \aleq{} d^{-{3}{2}n} \Vert b \Vert_{L^2(\R^n)} \Vert \varphi \Vert_{L^2(D)}.
\]
Consequently,
\[
 \sum_{k=1}^\infty II_k \leq C_{D,E} \Vert b \Vert_{L^2(\R^n)}\ \Vert \varphi \Vert_{L^2(D)}.
\]
\end{proofL}

\subsection{Hodge decomposition: Local estimates of s-harmonic functions}\label{ss:hodge}
If for an integrable function $h$ we have weakly $\lap h = 0$ in a, say, big ball, we can estimate
\[
\Vert h \Vert_{L^2(B_r)} \leq C \left (\frac{r}{\rho}\right )^2 \Vert h \Vert_{L^2(B_\rho)}, \quad \mbox{for $0 < r < \rho$}.
\]
The goal of this subsection is to prove in Lemma~\ref{la:estharmonic} a similar estimate, for the nonlocal operator $\lapn$.\\
\begin{proposition}\label{pr:estconvolutlplmp}
Let $s \in (0,\frac{n}{2})$. Then for any $x \in \R^n$, $r > 0$ and $v \in \Sw$, such that $\supp v \subset B_r(x)$, and any $k \in \N_0$,
\[
\Vert \abs{(\laps{s} \eta_{r,x}^k)^\wedge} \ast \abs{(\lapms{s} v)^\wedge} \Vert_{L^2(\R^n)} \leq C_s 2^{-ks} \Vert v \Vert_{L^2(\R^n)}.
\]
\end{proposition}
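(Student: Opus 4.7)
The plan is to combine Young's convolution inequality with two estimates provided earlier in the paper: a bound on the $L^1$-norm of $(\laps{s} \eta_{r,x}^k)^\wedge$ coming from Proposition~\ref{pr:etarkgoodest}, and a bound on the $L^2$-norm of $(\lapms{s} v)^\wedge$ coming from Lemma~\ref{la:lapmsest2} via Plancherel. The scalings will exactly combine to produce the factor $2^{-ks}$.

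More concretely, I would first apply Young's inequality $\Vert f \ast g\Vert_{L^2} \le \Vert f\Vert_{L^1}\, \Vert g\Vert_{L^2}$ with $f := \abs{(\laps{s} \eta_{r,x}^k)^\wedge}$ and $g := \abs{(\lapms{s} v)^\wedge}$, so the task reduces to estimating these two individual norms.

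For $\Vert f\Vert_{L^1}$, I would invoke Proposition~\ref{pr:etarkgoodest} at $p = 1$ (so $p' = \infty$), which yields
\[
\Vert (\laps{s} \eta_{r,x}^k)^\wedge \Vert_{L^1(\R^n)} \leq C_s\, (2^k r)^{-s}.
\]
For $\Vert g\Vert_{L^2}$, I would use Plancherel's theorem to rewrite $\Vert (\lapms{s} v)^\wedge \Vert_{L^2(\R^n)} = \Vert \lapms{s} v \Vert_{L^2(\R^n)}$, and then apply Lemma~\ref{la:lapmsest2} at $p = 2$ (which is admissible since $s \in (0,\tfrac{n}{2})$). Since $\supp v \subset B_r(x) = r \tilde D$ (a translate of a fixed ball), the scaling statement in Lemma~\ref{la:lapmsest2} gives
\[
\Vert \lapms{s} v \Vert_{L^2(\R^n)} \leq C_s\, r^{s}\, \Vert v \Vert_{L^2(\R^n)}.
\]

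Multiplying the two bounds together, the factors $r^{-s}$ and $r^{s}$ cancel, leaving precisely
\[
\Vert f \ast g \Vert_{L^2(\R^n)} \leq C_s\, (2^k r)^{-s}\, r^{s}\, \Vert v \Vert_{L^2(\R^n)} = C_s\, 2^{-ks}\, \Vert v \Vert_{L^2(\R^n)},
\]
which is the claim. There is no real obstacle here: the result is essentially a bookkeeping exercise combining Young's convolution inequality with the two scaling estimates already established. The only subtlety worth double-checking is that translation invariance lets us reduce to $x = 0$ when invoking the scaling form of Lemma~\ref{la:lapmsest2} (which is straightforward, as both $\laps{s}$ and $\lapms{s}$ commute with translations and the $L^2$ norm is translation-invariant).
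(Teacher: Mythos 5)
Your proof is correct and follows essentially the same route as the paper: Young's inequality with exponents $(1,2)\mapsto 2$, then Proposition~\ref{pr:etarkgoodest} at $p=1$ for the $L^1$ factor and Plancherel plus Lemma~\ref{la:lapmsest2} (with its $r^s$ scaling) for the $L^2$ factor. Your closing remark on translation invariance matches what the paper leaves implicit.
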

\begin{proofP}{\ref{pr:estconvolutlplmp}}
By convolution rule and
\[
\frac{1}{1} + \frac{1}{2} = 1 + \frac{1}{2}
\]
we have
\begin{equation}\label{eq:convol1}
\Vert \abs{(\laps{s} \eta_{r,x}^k)^\wedge} \ast \abs{(\lapms{s} v)^\wedge} \Vert_{L^2(\R^n)} \aleq{} \Vert (\laps{s} \eta_{r,x}^k)^\wedge \Vert_{L^1(\R^n)}\ \Vert (\lapms{s} v)^\wedge \Vert_{L^2(\R^n)}.
\end{equation}
By Lemma~\ref{la:lapmsest2},
\begin{equation}\label{eq:convol2}
\Vert (\lapms{s} v)^\wedge \Vert_{L^2(\R^n)} = \Vert \lapms{s} v \Vert_{L^2(\R^n)} \leq C_s r^s\Vert v \Vert_{L^2(\R^n)}.
\end{equation}
Furthermore, Proposition~\ref{pr:etarkgoodest} implies
\begin{equation}\label{eq:convol3}
\Vert (\laps{s} \eta_{r,x}^k)^\wedge \Vert_{L^1(\R^n)} \leq C_s (2^kr)^{-s}.
\end{equation}
Together, \eqref{eq:convol1}, \eqref{eq:convol2} and \eqref{eq:convol3} give the claim.
\end{proofP}

As a consequence we have
\begin{proposition}\label{pr:estlapnetlapmnvL2}
There is a uniform constant $C > 0$ such that for any $r > 0$, $x \in \R^n$, $v \in \Sw$, such that $\supp v \subset B_r(x)$, and for any $k \in \N_0$
\[
\Vert \lapn (\eta_{r,x}^k \lapmn v) \Vert_{L^2(\R^n)} \leq C\ 2^{-k \frac{1}{4}} \Vert v \Vert_{L^2(\R^n)}.
\]
\end{proposition}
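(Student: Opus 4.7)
The plan is to decompose $\lapn(\eta_{r,x}^k \lapmn v)$ via the commutator-type operator $H$ from \eqref{eq:Hdef}, so that the ``borderline'' order $n/2$ is split into two strictly lower pieces on each factor. Since $\lapn \lapmn v = v$, the definition of $H$ rearranges to
\[ \lapn(\eta_{r,x}^k \lapmn v) \;=\; H(\eta_{r,x}^k, \lapmn v) \;+\; \eta_{r,x}^k\, v \;+\; \lapmn v \cdot \lapn \eta_{r,x}^k. \]
Since $v \in \Sw$ with compact support gives $\lapmn v \in C^\infty(\R^n) \cap L^\infty(\R^n)$ and $\eta_{r,x}^k \lapmn v \in C_0^\infty(\R^n)$, each term is a bona fide element of $L^2(\R^n)$, and it suffices to prove the bound $C\,2^{-k/4}\Vert v\Vert_{L^2}$ for each summand separately.

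The easy terms are the second and the third. The pointwise product $\eta_{r,x}^k v$ vanishes identically as soon as $k \geq 2$, since then $\supp \eta_{r,x}^k$ is disjoint from $B_r(x)$; for $k \in \{0,1\}$ the crude bound $\Vert \eta_{r,x}^k v\Vert_{L^2} \leq \Vert v\Vert_{L^2} \leq 2^{1/4}\,2^{-k/4}\Vert v\Vert_{L^2}$ suffices. For $\lapmn v \cdot \lapn \eta_{r,x}^k$ I would apply H\"older's inequality with exponents $(4,4)$ together with the two scaling estimates
\[ \Vert \lapmn v \Vert_{L^4(\R^n)} \leq C\, r^{n/4}\, \Vert v \Vert_{L^2}, \qquad \Vert \lapn \eta_{r,x}^k \Vert_{L^4(\R^n)} \leq C\, (2^k r)^{-n/4}, \]
coming from Lemma~\ref{la:lapmsest2} (with $s=n/2$, $p=4$, the hypothesis $1/2>1/4\geq 0$ being satisfied, and using $\supp v \subset B_r(x)$) and Proposition~\ref{pr:etarkgoodest} (with $s=n/2$, $p'=4$). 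Their product is $C\,2^{-kn/4}\Vert v\Vert_{L^2}$, which is majorized by $C\,2^{-k/4}\Vert v\Vert_{L^2}$ since $n \geq 1$.

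The substantive term is $H(\eta_{r,x}^k, \lapmn v)$, for which I would combine the Tartar-style pointwise Fourier bound in Lemma~\ref{la:tart:prphuvwedgeest} with Proposition~\ref{pr:estconvolutlplmp}, so that Plancherel transfers the problem to the convolution already controlled there. For $n \in \{1,2\}$ Lemma~\ref{la:tart:prphuvwedgeest}, together with $\lap^{n/8}\lapmn = \lapms{n/4}$, yields
\[ \abs{H(\eta_{r,x}^k, \lapmn v)^\wedge} \leq C\, \abs{(\laps{n/4} \eta_{r,x}^k)^\wedge} \ast \abs{(\lapms{n/4} v)^\wedge}, \]
and Proposition~\ref{pr:estconvolutlplmp} with the admissible choice $s = n/4 \in (0, n/2)$ gives $\Vert H(\eta_{r,x}^k, \lapmn v)\Vert_{L^2} \leq C\,2^{-kn/4}\Vert v\Vert_{L^2}$. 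For $n \geq 3$ the same lemma produces instead two convolutions with matched orders $(\tfrac{n-2}{2}, \tfrac{n-2}{2})$ and $(1,1)$, both admissible in Proposition~\ref{pr:estconvolutlplmp}, yielding $\Vert H\Vert_{L^2} \leq C\,(2^{-k(n-2)/2} + 2^{-k})\Vert v\Vert_{L^2} \leq C\,2^{-k/2}\Vert v\Vert_{L^2}$. In every dimension the resulting rate dominates $2^{-k/4}$, and summing the three contributions finishes the proof.

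The main obstacle is precisely the endpoint $s=n/2$, which is excluded from both Lemma~\ref{la:lapmsest2} and Proposition~\ref{pr:estconvolutlplmp}; indeed, a naive Fourier estimate using the subadditivity $|\xi|^{n/2} \leq C(|\xi-y|^{n/2} + |y|^{n/2})$ would only produce a uniform-in-$k$ bound with no decay. The whole point of routing through $H$, and of the case split $n\leq 2$ versus $n\geq 3$ within Lemma~\ref{la:tart:prphuvwedgeest}, is to replace the forbidden endpoint order by an admissible interior order on both factors simultaneously; the exponent $1/4$ in the final rate is the worst case, appearing for $n=1$ with the symmetric split $s=n/4=1/4$.
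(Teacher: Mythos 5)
Your proof is correct and follows essentially the same route as the paper: the $H$-commutator decomposition, the trivial bound for $\eta_{r,x}^k v$, the Hölder $(4,4)$ bound for $(\lapn\eta_{r,x}^k)\lapmn v$ via Lemma~\ref{la:lapmsest2} and Proposition~\ref{pr:etarkgoodest}, and then Lemma~\ref{la:tart:prphuvwedgeest} together with Proposition~\ref{pr:estconvolutlplmp} (with $s=n/4$ for $n\le 2$, and $s=\tfrac{n-2}{2}$ and $s=1$ for $n\ge 3$) for the $H$-term. One small inaccuracy in your closing remark: $s=n/2$ is \emph{not} excluded from Lemma~\ref{la:lapmsest2} (it requires only $s\in(0,n)$, and indeed you invoke that lemma with $s=n/2$ yourself); the endpoint obstruction you describe pertains only to Proposition~\ref{pr:estconvolutlplmp}.
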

\begin{proofP}{\ref{pr:estlapnetlapmnvL2}}
We have according to \eqref{eq:Hdef}
\[
\lapn (\eta_{r,x}^k \lapmn v) = (\lapn \eta_{r,x}^k) \lapmn v + \eta_{r,x}^k v + H(\eta_{r,x}^k,\lapmn v).
\]
By the support condition on $v$,
\[
\eta_{r,x}^k v = 0, \quad \mbox{if $k \geq 1$},
\]
so trivially for any $k \in \N_0$,
\[
\Vert \eta_{r,x}^k v \Vert_{L^2(\R^n)} \leq 2^{\frac{n}{2}}\ 2^{-k \frac{n}{4}} \Vert v \Vert_{L^2(\R^n)}.
\]
Next, applying Proposition~\ref{pr:etarkgoodest} for $s = \frac{n}{2}$ and $p = 4$ and Lemma~\ref{la:lapmsest2} for $s = \frac{n}{2}$ and $p'=4$, we have
\[
\Vert (\lapn \eta_{r,x}^k) \lapmn v \Vert_{L^2(\R^n)} \leq
\Vert (\lapn \eta_{r,x}^k) \Vert_{L^4}\ \Vert \lapmn v \Vert_{L^4} \aleq{}
 2^{-k\frac{n}{4}}r^{-\frac{n}{4}} r^{\frac{n}{4}}\ \Vert v \Vert_{L^2}.
\]
Thus, we have shown that
\begin{equation}\label{eq:hetalpmvleft}
\Vert \lapn (\eta_{r,x}^k \lapmn v) \Vert_{L^2(\R^n)} \aleq{} 2^{-k\frac{n}{4}} \Vert v \Vert_{L^2(\R^n)} + \Vert H(\eta_{r,x}^k,\lapmn v) \Vert_{L^2(\R^n)}.
\end{equation}
By Lemma~\ref{la:tart:prphuvwedgeest} we have that in the case $n=1,2$
\[
\Vert H(\eta_{r,x}^k,\lapmn v) \Vert_{L^2(\R^n)} \aleq{} \Vert \abs{(\lap^{\frac{n}{8}} \eta_{r,x}^k)^\wedge}\ast \abs{(\lap^{-\frac{n}{8}} v)^\wedge} \Vert_{L^2(\R^n)},
\]
and in the case $n \geq 3$
\[
\begin{ma}
&&\Vert H(\eta_{r,x}^k,\lapmn v) \Vert_{L^2(\R^n)}\\
&\aleq{}& \Vert \abs{(\lap^{\frac{n-2}{4}} \eta_{r,x}^k)^\wedge} \ast \abs{(\lap^{\frac{2-n}{4}} v)^\wedge} \Vert_{L^2} +  \Vert \abs{(\laps{1} \eta_{r,x}^k)^\wedge}  \ast   \abs{(\lapms{1} v)^\wedge} \Vert_{L^2}.
\end{ma}
\]
That is, in order to prove the claim we need the estimate
\begin{equation}\label{eq:estconvol.hopefull}
\Vert \abs{(\laps{s} \eta_{r,x}^k)^\wedge} \ast \abs{(\lapms{s} v)^\wedge} \Vert_{L^2} \leq C_s\ 2^{-ks} \Vert v \Vert_{L^2}
\end{equation}
where $s = \frac{n}{4}$ in the case $n= 1,2$ and $s = \frac{n-2}{2}$ or $s =1$ in the case $n \geq 3$. In all three cases we have that $0 < s < \frac{n}{2}$ and Proposition~\ref{pr:estconvolutlplmp} implies \eqref{eq:estconvol.hopefull}.
Plugging these estimates into \eqref{eq:hetalpmvleft} we conclude.
\end{proofP}

\begin{lemma}[Estimate of the Harmonic Term]\label{la:estharmonic}
Let $h \in L^2(\R^n)$, such that 
\begin{equation}\label{eq:lapnhWeq0}
\intl_{\R^n} h\ \lapn \varphi = 0 \quad \mbox{for any $\varphi \in C_0^\infty(B_{\Lambda r}(x))$.}
\end{equation}
for some $\Lambda > 0$. Then, for a uniform constant $C > 0$
\[
\Vert h \Vert_{L^2(B_{r}(x))} \leq C\ \Lambda^{-\frac{1}{4}} \Vert h \Vert_{L^2(\R^n)}.
\]
\end{lemma}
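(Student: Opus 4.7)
The natural approach is duality together with a dyadic decomposition based on the annular cutoffs from Section~\ref{ss:cutoff}. By $L^2$--duality,
\[
\Vert h \Vert_{L^2(B_r(x))} = \sup\left\{\intl_{\R^n} h\,\psi \;:\; \psi \in C_0^\infty(B_r(x)),\; \Vert \psi \Vert_{L^2(\R^n)} \leq 1 \right\},
\]
so it suffices to bound $\babs{\intl h\,\psi}$ uniformly by $C\,\Lambda^{-\frac{1}{4}}\,\Vert h\Vert_{L^2(\R^n)}$ for every such $\psi$. The idea is to write $\psi = \lapn\varphi + \rho$, where $\varphi \in C_0^\infty(B_{\Lambda r}(x))$ (which is killed by the orthogonality assumption \eqref{eq:lapnhWeq0}) and $\rho$ carries the desired $\Lambda^{-\frac{1}{4}}$-decay in $L^2$.

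Fix $\psi$ and set $\tilde\psi := \lapmn\psi$. Even though $\tilde\psi$ is in general neither in $L^2$ nor compactly supported, it is smooth: $\psi^\wedge \in \Sw(\R^n)$, and for every multiindex $\alpha$ the function $\xi^\alpha \abs{\xi}^{-\frac{n}{2}}\,\psi^\wedge$ belongs to $L^1(\R^n)$ (integrable near $0$ because $\frac{n}{2} < n$, of Schwartz decay at infinity), so each $\partial^\alpha \tilde\psi$ is continuous by Fourier inversion. Using the partition of unity $1 = \sum_{k=0}^\infty \eta^k_{r,x}$ I decompose $\tilde\psi = \sum_{k=0}^\infty \eta^k_{r,x}\tilde\psi$, where each summand lies in $C_0^\infty(\R^n)$ with support in $B_{2^{k+1}r}(x)$. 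Proposition~\ref{pr:estlapnetlapmnvL2} supplies the crucial decay
\[
\Vert \lapn (\eta^k_{r,x}\,\tilde\psi) \Vert_{L^2(\R^n)} \leq C\,2^{-\frac{k}{4}}\,\Vert \psi \Vert_{L^2(\R^n)},
\]
which in particular makes the identity $\psi = \lapn\tilde\psi = \sum_{k=0}^\infty \lapn(\eta^k_{r,x}\tilde\psi)$ hold with absolute convergence in $L^2(\R^n)$.

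Assume without loss of generality $\Lambda \geq 2$ (otherwise the assertion is trivial since $\Vert h \Vert_{L^2(B_r(x))} \leq \Vert h \Vert_{L^2(\R^n)}$) and set $K := \lfloor \log_2 \Lambda \rfloor - 1 \in \N_0$. Split $\psi = \lapn\varphi_K + \rho_K$, where
\[
\varphi_K := \suml_{k=0}^K \eta^k_{r,x}\,\tilde\psi, \qquad \rho_K := \suml_{k=K+1}^\infty \lapn(\eta^k_{r,x}\,\tilde\psi).
\]
Since $\supp \eta^k_{r,x} \subset B_{2^{k+1}r}(x)$ for every $k$ and $2^{K+1} \leq \Lambda$, the function $\varphi_K$ is smooth with compact support contained in $B_{\Lambda r}(x)$, i.e. $\varphi_K \in C_0^\infty(B_{\Lambda r}(x))$. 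The hypothesis \eqref{eq:lapnhWeq0} therefore forces $\intl h\,\lapn\varphi_K = 0$, so that $\intl h\,\psi = \intl h\,\rho_K$.

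Cauchy--Schwarz, combined with the decay bound above, gives
\[
\babs{\intl_{\R^n} h\,\psi} \leq \Vert h \Vert_{L^2(\R^n)} \suml_{k=K+1}^\infty C\,2^{-\frac{k}{4}}\,\Vert \psi \Vert_{L^2(\R^n)} \leq C'\,2^{-\frac{K}{4}}\,\Vert h \Vert_{L^2(\R^n)}\,\Vert \psi \Vert_{L^2(\R^n)},
\]
and the choice of $K$ yields $2^{-\frac{K}{4}} \leq 2^{\frac{1}{2}}\Lambda^{-\frac{1}{4}}$. Taking the supremum over $\psi$ concludes the proof. The one subtlety to be handled carefully is the verification that $\varphi_K$ truly lies in $C_0^\infty(B_{\Lambda r}(x))$ even though $\tilde\psi \notin L^2(\R^n)$, which is precisely what the Fourier-side smoothness check for $\tilde\psi$ sketched above provides; everything else is an application of Proposition~\ref{pr:estlapnetlapmnvL2}.
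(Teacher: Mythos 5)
Your plan follows essentially the same route as the paper's proof: duality, dyadic decomposition of $\lapmn v$ via the cutoffs $\eta^k_{r,x}$, and Proposition~\ref{pr:estlapnetlapmnvL2} as the source of the $2^{-k/4}$ decay. The combinatorics (choice of $K$, support of $\varphi_K$, the exponent $\Lambda^{-1/4}$) are all correct.

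There is, however, one genuine gap. You assert that the decay estimate of Proposition~\ref{pr:estlapnetlapmnvL2} ``in particular makes the identity $\psi = \lapn\tilde\psi = \sum_{k=0}^\infty \lapn(\eta^k_{r,x}\tilde\psi)$ hold with absolute convergence in $L^2$.'' Absolute convergence of the series in $L^2$ is clear, but that only tells you the partial sums converge to \emph{some} $L^2$-limit; it does not identify the limit as $\psi$. Since $\tilde\psi \notin L^2(\R^n)$, the expression $\lapn\tilde\psi$ is not defined in the sense of Definition~\ref{def:fracSob}, so you cannot appeal to linearity and continuity of $\lapn$ on $L^2$. The subtlety you single out (smoothness of $\tilde\psi$) is correctly handled, but smoothness alone does not give decay, and it is decay of $\tilde\psi$ at infinity that is needed to pass to the limit. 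Concretely, to show $\sum_k\int \lapn(\eta^k\tilde\psi)\,\phi = \int\psi\,\phi$ for every test function $\phi$, one must justify both the dominated-convergence step $\sum_k\int \eta^k\tilde\psi\,\lapn\phi \to \int\tilde\psi\,\lapn\phi$ (which requires $\tilde\psi\,\lapn\phi\in L^1$, e.g.\ via $\tilde\psi\in L^{p_1}$ for $p_1>2$ by Lemma~\ref{la:lapmsest2} and $\lapn\phi\in L^{p_1'}$) and the Parseval-type identity $\int\tilde\psi\,\lapn\phi = \int\psi\,\phi$ with neither factor in $L^2$. None of this is in your sketch. The paper circumvents the issue by first replacing $h$ with $h_\varepsilon\in C_0^\infty$; this makes $\lapn h_\varepsilon$ a well-defined function, so the dyadic sum can be moved onto $h_\varepsilon$ via self-adjointness of $\lapn$ against each $\eta^k\lapmn v\in L^2$, and the identification-of-the-limit problem never arises. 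If you want to keep your more direct route, you must supply the missing $L^1$-pairing and Parseval argument; otherwise, insert the approximation $h\approx h_\varepsilon$ as in the paper.
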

\begin{proofL}{\ref{la:estharmonic}}
It suffices to prove the claim for large $\Lambda$, say $\Lambda \geq 8$. Let $k_0 \in \N$, $k_0 \geq 3$, such that $\Lambda < 2^{k_0} \leq 2\Lambda$. Approximate $h$ by functions $h_\varepsilon \in C_0^\infty(\R^n)$ such that for any $\varepsilon > 0$ the distance $\Vert h - h_\varepsilon \Vert_{L^2(\R^n)} \leq \varepsilon$ and $\Vert h_\varepsilon \Vert_{L^2(\R^n)} \leq 2 \Vert h \Vert_{L^2(\R^n)}$. By Riesz' representation theorem,
\[
\Vert h_\varepsilon \Vert_{L^2(B_r(x))} = \sup_{\ontop{v \in C_0^\infty(B_r(x))}{\Vert v \Vert_{L^2} \leq 1}} \intl h_\varepsilon v.\\
\] 
For such a $v$, note that by Proposition~\ref{la:lapmsest2}, $\lapmn v \in L^p(\R^n)$ for any $p > 2$, and thus $\eta_{r,x}^k \lapmn v \in L^2(\R^n)$ for any $k \in \N_0$. Moreover, by Proposition~\ref{pr:estlapnetlapmnvL2}
\begin{equation}\label{eq:estharm:lapnetalapmnv}
 \Vert \lapn (\eta_{r,x}^k \lapmn v) \Vert_{L^2(\R^n)} \leq C\ 2^{-\frac{k}{4}}.
\end{equation}
In order to apply \eqref{eq:lapnhWeq0}, we rewrite\footnote{Note that $\lapn h_\varepsilon \in L^p(\R^n)$ for any $p \in (1,2)$. In fact, for all large $k \in \N$ the $L^p$-Norm on Annuli $A_k = B_{2^{k+1}(0)} \backslash B_{2^k(0)}$, $\Vert \lapn h_\varepsilon \Vert_{L^p(A_k)} \leq C_{h_\varepsilon} 2^{-kn (\frac{3}{2}-\frac{1}{p}} \Vert h_\varepsilon \Vert_{L^2(\R^n)}$ as can be shown using Corollary \ref{co:bs:disjsupp}. Thus, $(\lapn h_\varepsilon) (\lapmn v) \in L^1(\R^n)$.}
\[
\begin{ma}
\intl h_\varepsilon\ v &=& \intl (\lapn h_\varepsilon) (\lapmn v)\\
&=& \suml_{k=0}^\infty \intl (\lapn h_\varepsilon)\ \eta_{r,x}^k\ \lapmn v\\
&=&\suml_{k=k_0-1}^\infty \intl h_\varepsilon\ \lapn (\eta_{r,x}^k \lapmn v) + \suml_{k=0}^{k_0-2} \intl h_\varepsilon\ \lapn (\eta_{r,x}^k \lapmn v)\\
&=:& I + II.
\end{ma}
\]
The second term $II$ goes to zero as $\varepsilon \to 0$. In fact, for $k \leq k_0-2$ we have that $\supp \eta_{r,x}^k \subset B_{\Lambda r}(x)$ and thus 
\[
\begin{ma}
\intl_{\R^n} h_\varepsilon\ \lapn (\eta_{r,x}^k \lapmn v) &\overset{\eqref{eq:lapnhWeq0}}{=}& \intl (h_\varepsilon - h)\ \lapn (\eta_{r,x}^k \lapmn v)\\
&\leq& \Vert h_\varepsilon - h\Vert_{L^2(\R^n)}\ \Vert \lapn (\eta_{r,x}^k \lapmn v) \Vert_{L^2(\R^n)}\\
&\leq& \varepsilon\ \Vert \lapn (\eta_{r,x}^k \lapmn v) \Vert_{L^2(\R^n)}\\
&\overset{\eqref{eq:estharm:lapnetalapmnv}}{\leq}& C_\Lambda\ \varepsilon.
\end{ma}
\]
For the remaining term $I$ we have, using again Proposition~\ref{pr:estlapnetlapmnvL2}, 
\[
\begin{ma}
I &=& \suml_{k=k_0-1}^\infty \intl h_\varepsilon\ \lapn (\eta_{r,x}^k \lapmn v)\\
&\leq& \suml_{k=k_0-1}^\infty \Vert \lapn (\eta_{r,x}^k \lapmn v) \Vert_{L^2(\R^n)}\ \Vert h_\varepsilon \Vert_{L^2(\R^n)}.\\
&\overset{\eqref{eq:estharm:lapnetalapmnv}}{\leq}& \Vert h_\varepsilon \Vert_{L^2(\R^n)}\ \suml_{k=k_0-1}^\infty 2^{-\frac{k}{4}}\\
&\aleq{}& \Vert h \Vert_{L^2(\R^n)}\ \suml_{k=k_0-1}^\infty 2^{-\frac{k}{4}}.
\end{ma}
\]
Because of
\[
\suml_{k=k_0-1}^\infty 2^{-\frac{k}{4}} \leq C 2^{-k_0\frac{1}{4}} \leq C \Lambda^{-\frac{1}{4}},
\]
we arrive at
\[
\intl h_\varepsilon\ v \leq C \varepsilon + C \Lambda^{-\frac{1}{4}}\ \Vert h \Vert_{L^2(\R^n)}.
\]
Consequently, for all $\varepsilon > 0$,
\[
 \Vert h \Vert_{L^2(B_r(x))} \leq (C+1) \varepsilon + C \Lambda^{-\frac{1}{4}}\ \Vert h \Vert_{L^2(\R^n)}.
\]
Letting $\varepsilon \to 0$, we conclude.
\end{proofL}

The following theorem proves Theorem~\ref{th:hodge}.
\begin{theorem}\label{th:localest}
There are uniform constants $\Lambda, C > 0$ such that the following holds: For any $x \in \R^n$ and any $r > 0$ we have for every $v \in L^2(\R^n)$, $\supp v \subset B_r(x)$ 
\[
\Vert v \Vert_{L^2(B_r(x))} \leq C \sup_{\varphi \in C_0^\infty(B_{\Lambda r}(x))} \frac{1}{\Vert \lapn \varphi \Vert_{L^2(\R^n)}}\ \intl_{\R^n} v \lapn \varphi.
\]
\end{theorem}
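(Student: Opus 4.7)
The plan is to combine the fractional Hodge decomposition (Lemma~\ref{la:hodge}) with the ``harmonic decay'' estimate (Lemma~\ref{la:estharmonic}) and an absorption argument. Fix a large constant $\Lambda > 1$ to be determined, and apply Lemma~\ref{la:hodge} with $D := B_{\Lambda r/2}(x)$: this produces $\varphi \in H^{n/2}(\R^n)$ with $\supp \varphi \subset \overline{B_{\Lambda r/2}(x)}$, and $h \in L^2(\R^n)$ satisfying $\int_{\R^n} h \lapn \psi = 0$ for all $\psi \in C_0^\infty(B_{\Lambda r/2}(x))$, such that $v = \lapn \varphi + h$ a.e., together with the bound $\Vert h \Vert_{L^2} + \Vert \lapn \varphi \Vert_{L^2} \leq 5 \Vert v \Vert_{L^2}$.

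Next, since $\supp v \subset B_r(x)$, I would write
\[
\Vert v \Vert_{L^2(B_r(x))}^2 = \intl_{\R^n} v \cdot v = \intl_{\R^n} v \lapn \varphi + \intl_{B_r(x)} v h =: I + II.
\]
For $II$, Lemma~\ref{la:estharmonic} applied to $h$ (with ratio $\Lambda/2$) gives $\Vert h \Vert_{L^2(B_r(x))} \leq C (\Lambda/2)^{-1/4} \Vert h \Vert_{L^2} \leq C' \Lambda^{-1/4} \Vert v \Vert_{L^2}$, hence by Cauchy--Schwarz $|II| \leq C' \Lambda^{-1/4} \Vert v \Vert_{L^2}^2$. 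Choosing $\Lambda$ large enough so that $C' \Lambda^{-1/4} \leq \tfrac{1}{2}$, this term is absorbed into the left-hand side.

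For $I$, the issue is that $\varphi$ need not be smooth with compact support in the open ball $B_{\Lambda r}(x)$ -- it lies only in $H^{n/2}(\R^n)$ with support in the closed half-size ball. I would approximate $\varphi$ by mollifications $\varphi_\varepsilon := \varphi \ast \rho_\varepsilon \in C_0^\infty(\R^n)$; for $\varepsilon < \Lambda r/4$ we get $\supp \varphi_\varepsilon \subset B_{\Lambda r}(x)$, and since mollification commutes with $\lapn$ on $H^{n/2}(\R^n)$ one has $\lapn \varphi_\varepsilon \to \lapn \varphi$ in $L^2(\R^n)$. Denoting by $S$ the supremum on the right-hand side of the claimed inequality, we obtain for each such $\varepsilon$,
\[
\intl_{\R^n} v \lapn \varphi_\varepsilon \leq S\, \Vert \lapn \varphi_\varepsilon \Vert_{L^2(\R^n)},
\]
and passing to the limit yields $I \leq S \Vert \lapn \varphi \Vert_{L^2} \leq 5 S \Vert v \Vert_{L^2}$. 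Combining the bounds on $I$ and $II$ and absorbing gives $\Vert v \Vert_{L^2(B_r(x))} \leq 10 S$, which is the theorem with $C = 10$ and $\Lambda$ fixed above.

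The main obstacle is the mollification step: one must ensure that the approximating sequence $\varphi_\varepsilon$ actually lies in $C_0^\infty(B_{\Lambda r}(x))$ (not just in some slightly larger ball), and that $\lapn \varphi_\varepsilon \to \lapn \varphi$ in $L^2$ so the duality pairing with $v$ converges. This is why I take $D$ to be $B_{\Lambda r/2}(x)$ rather than $B_{\Lambda r}(x)$ in the Hodge step: it leaves a definite margin of width $\Lambda r/2$ between the support of $\varphi$ and the boundary of $B_{\Lambda r}(x)$, accommodating the mollification. The convergence $\lapn(\varphi \ast \rho_\varepsilon) = (\lapn \varphi) \ast \rho_\varepsilon \to \lapn \varphi$ in $L^2$ is a standard property of approximate identities applied to the $L^2$ function $\lapn \varphi$, so no new estimates are needed here beyond care with supports.
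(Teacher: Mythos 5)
Your proof is correct and rests on the same pillars as the paper's own proof: the fractional Hodge decomposition (Lemma~\ref{la:hodge}), the decay estimate for the ``$s$-harmonic'' remainder (Lemma~\ref{la:estharmonic}), and an absorption step. The only structural difference is which object you decompose. The paper writes $\Vert v\Vert_{L^2(B_r)}$ as a supremum of $\int v\,f$ over $f\in L^2$ with $\Vert f\Vert_{L^2}\le 1$, applies the Hodge decomposition to $f = \lapn\varphi + h$, and lets the support restriction on $v$ together with Lemma~\ref{la:estharmonic} control $\int_{B_r} v\,h$. You instead apply the Hodge decomposition directly to $v$ and read the estimate off from $\Vert v\Vert_{L^2}^2 = \int v\,\lapn\varphi + \int v\,h$. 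The two are equivalent because the Hodge bound \eqref{eq:hodge:est} and the constants in Lemma~\ref{la:estharmonic} are uniform, so no circularity arises when $\Lambda$ is fixed afterwards; your version just trades a linear absorption for the quadratic one $\Vert v\Vert^2 \le 5S\Vert v\Vert + \tfrac12\Vert v\Vert^2$.

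One place where you are in fact more careful than the paper: Lemma~\ref{la:hodge} only produces $\varphi\in \Hf(\R^n)$ with $\supp\varphi\subset\overline{D}$, not $\varphi\in C_0^\infty(B_{\Lambda r}(x))$, while the statement to be proved takes a supremum over the latter. The paper's displayed inequality passes over this silently. Your choice $D=B_{\Lambda r/2}(x)$ leaves a definite margin so that the mollified $\varphi_\varepsilon=\varphi\ast\rho_\varepsilon$ remain supported in $B_{\Lambda r}(x)$ for small $\varepsilon$, and the identity $\lapn(\varphi\ast\rho_\varepsilon)=(\lapn\varphi)\ast\rho_\varepsilon$ (both being translation-invariant Fourier multiplier operations) gives $\lapn\varphi_\varepsilon\to\lapn\varphi$ in $L^2$; this makes the density step explicit and is exactly the right fix.
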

\begin{proofT}{\ref{th:localest}}
We have,
\[
\Vert v \Vert_{L^2(B_r(x))} = \sup_{\ontop{f \in L^2(\R^n)}{\Vert f \Vert_{L^2} \leq 1}} \intl v\ f.
\]
By Lemma~\ref{la:hodge} and Lemma~\ref{la:estharmonic}, we decompose $f = \lapn \varphi + h$, $\varphi \in \Hf(\R^n)$ and $\supp \varphi \subset B_{\Lambda r}(x)$, $\Vert h \Vert_{L^2(B_r(x))} \leq C\ \Lambda^{-\frac{1}{4}}$ for arbitrarily large $\Lambda > 0$. Thus, by the support condition on $v$,
\[
\Vert v \Vert_{L^2(B_r(x))} \leq C \sup_{\ontop{\varphi \in C_0^\infty(B_{\Lambda r}(x))}{\Vert \lapn \varphi \Vert_{L^2(\R^n)} \leq 1}} \intl v \lapn \varphi + C\Lambda^{-\frac{1}{4}}\ \Vert v \Vert_{L^2(B_r(x))}.
\]
Taking $\Lambda$ large enough, we can absorb and conclude.
\end{proofT}

%
\subsection{Products of lower order operators localize well}\label{ss:loolocwell}
The goal of this subsection are Lemma~\ref{la:lowerorderlocalest} and Lemma~\ref{la:lowerorderlocalest2}, which essentially state that terms of the form
\[
 \laps{s} a\ \lap^{\frac{n}{4}-\frac{s}{2}}b
\]
``localize alright'', if $s$ is neither of the extremal values $0$ nor $\frac{n}{2}$. 

%
\begin{proposition}[Lower Order Operators and $L^2$]\label{pr:lowerorderest}
For any $s \in (0,\frac{n}{2})$, $M_1$, $M_2$ zero multiplier operators there exists a constant $C_{M_1,M_2,s} > 0$ such that for any $u,v \in \Sw$,
\[
 \Vert M_1\lap^{\frac{2s-n}{4}} u\ M_2\lap^{-\frac{s}{2}} v \Vert_{L^2(\R^n)} \leq C_{M_1,M_2,s} \Vert u \Vert_{L^2(\R^n)}\ \Vert v \Vert_{L^2(\R^n)}.
\]
\end{proposition}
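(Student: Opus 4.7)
The plan is to recognise each factor as a Riesz potential composed with a zero-order multiplier, estimate it in an appropriate Lorentz space using a Hardy-Littlewood-Sobolev type bound, and then combine the two factors via H\"older's inequality in the Lorentz scale. The two Sobolev gains will turn out to be exactly H\"older-dual to each other, which is why the whole product lands back in $L^2$.

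\textbf{Step 1 (Riesz potentials into Lorentz spaces).} For any $\alpha \in (0,n)$, the operator with Fourier multiplier $\abs{\xi}^{-\alpha}$ acts on $v \in \Sw(\R^n)$ as convolution with a constant multiple of $\abs{\cdot}^{-(n-\alpha)}$. By Proposition~\ref{pr:dl:lso}(v) this kernel lies in $L^{\frac{n}{n-\alpha},\infty}(\R^n)$, so Young's convolution estimate in Lorentz spaces, Proposition~\ref{pr:dl:lso}(ii), applied to $w \in L^2(\R^n) = L^{2,2}(\R^n)$ gives
\[
 \Vert \lap^{-\alpha/2} w \Vert_{L^{p(\alpha),2}(\R^n)} \leq C_{\alpha}\ \Vert w \Vert_{L^{2}(\R^n)}, \qquad \frac{1}{p(\alpha)} = \frac{1}{2} - \frac{\alpha}{n}.
\]
I will use this once with $\alpha = s$, placing $\lap^{-s/2} v$ in $L^{\frac{2n}{n-2s},2}(\R^n)$, and once with $\alpha = n/2-s$, placing $\lap^{\frac{2s-n}{4}}u$ in $L^{n/s,2}(\R^n)$. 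The hypothesis $s \in (0,n/2)$ makes both $\alpha$'s lie strictly in $(0,n/2) \subset (0,n)$, and it makes both target Lorentz exponents $2n/(n-2s)$ and $n/s$ lie strictly in $(2,\infty)$, so the convolution estimate is applicable.

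\textbf{Step 2 (absorbing the zero-multiplier operators).} The zero-multiplier operators $M_1, M_2$ are bounded on every $L^p(\R^n)$ for $1 < p < \infty$ by H\"ormander's multiplier theorem (already invoked in the proof of Proposition~\ref{pr:lapspol}). Combining this with Lemma~\ref{la:interpolation} and Lemma~\ref{la:lpqinterpoldef} shows that $M_1, M_2$ are also bounded on every Lorentz space $L^{p,q}(\R^n)$ with $1 < p < \infty$, $1 \leq q \leq \infty$. Since $n/s, 2n/(n-2s) \in (2,\infty)$, composing with $M_1, M_2$ leaves the bounds from Step~1 intact, up to the multiplicative constants $C_{M_1}, C_{M_2}$.

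\textbf{Step 3 (H\"older and conclusion).} Finally I would apply the H\"older inequality on Lorentz spaces, Proposition~\ref{pr:dl:lso}(i), with the exponent relations
\[
 \frac{s}{n} + \frac{n-2s}{2n} = \frac{1}{2}, \qquad \frac{1}{2} + \frac{1}{2} = 1,
\]
to conclude that the product $M_1 \lap^{\frac{2s-n}{4}} u\ M_2 \lap^{-s/2} v$ lies in $L^{2,1}(\R^n)$, with norm bounded by $C_{M_1,M_2,s}\, \Vert u \Vert_{L^2}\, \Vert v \Vert_{L^2}$. The continuous embedding $L^{2,1}(\R^n) \hookrightarrow L^{2,2}(\R^n) = L^2(\R^n)$ from Proposition~\ref{pr:dl:lso}(iv) yields the claim. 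There is no genuine obstacle in this scheme; the single point to watch is that $s$ is strictly interior to $(0,n/2)$, which keeps all exponents in the range where the convolution and multiplier estimates apply, and no compact-support hypothesis is needed (so Lemma~\ref{la:lapmsest2} is replaced here by the Young-type bound on the Riesz kernel).
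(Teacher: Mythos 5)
Your argument is correct, and it reaches the same exponent arithmetic ($\frac{s}{n}+\frac{n-2s}{2n}=\frac{1}{2}$, Lorentz second indices $\frac{1}{2}+\frac{1}{2}=1$), but the route is genuinely different from the paper's. The paper stays entirely on the Fourier side: it first splits $L^2 = L^{n/s}\cdot L^{2n/(n-2s)}$ by ordinary H\"older, strips the multipliers $M_1,M_2$ with H\"ormander's theorem on $L^p$, and then converts each $L^p$-norm of $\lap^{\frac{2s-n}{4}}u$, $\lap^{-\frac{s}{2}}v$ into a Lorentz-norm of the Fourier-side expression $\abs{\cdot}^{\frac{2s-n}{2}}u^\wedge$, $\abs{\cdot}^{-s}v^\wedge$ via the Hausdorff--Young bound Proposition~\ref{pr:fourierlpest}; the weight $\abs{\cdot}^{-\lambda}\in L^{n/\lambda,\infty}$ is then peeled off by Lorentz--H\"older and Plancherel finishes. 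You instead stay on the physical side: you realize $\lap^{-\alpha/2}$ as convolution with the Riesz kernel $c\abs{\cdot}^{-(n-\alpha)}\in L^{\frac{n}{n-\alpha},\infty}$, estimate each factor in $L^{p,2}$ by O'Neil's convolution inequality (Proposition~\ref{pr:dl:lso}(ii)), and only then take the Lorentz product. The two arguments are Fourier-dual to each other -- a pointwise weight on the Fourier side versus a convolution kernel on the physical side -- and both ultimately rest on $\abs{\cdot}^{-\lambda}\in L^{n/\lambda,\infty}$ plus Lorentz--H\"older, but your version trades the Hausdorff--Young step for O'Neil. One small difference worth noting: you place $M_1,M_2$ after the Riesz-potential estimate and therefore need them bounded on the Lorentz spaces $L^{p,2}$, which costs you an extra interpolation step (Lemma~\ref{la:interpolation} + Lemma~\ref{la:lpqinterpoldef}); the paper avoids this by stripping the multipliers first, while still in plain $L^p$. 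Both are legitimate; the paper's sequencing is a touch more economical given the lemmata already assembled, while yours reads more like a classical Hardy--Littlewood--Sobolev argument and avoids invoking Lemma~\ref{la:lapmsest2} altogether, as you correctly point out.
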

\begin{proofP}{\ref{pr:lowerorderest}}
Set $p := \frac{n}{s}$ and $q := \frac{2n}{n-2s}$. As $2 < p,q < \infty$ (using also H\"ormander's multiplier theorem, \cite{Hoermander60}),
\[
\begin{ma}
 &&\Vert M_1 \lap^{\frac{2s-n}{4}} u\ M_2 \lap^{-\frac{s}{2}} v \Vert_{L^2}\\
&\leq& \Vert M_1 \lap^{\frac{2s-n}{4}} u\Vert_{L^p} \ \Vert M_2 \lap^{-\frac{s}{2}} v \Vert_{L^q}\\
&\overset{p,q \in (1,\infty)}{\aleq{}}& \Vert \lap^{\frac{2s-n}{4}} u\Vert_{L^p} \ \Vert \lap^{-\frac{s}{2}} v \Vert_{L^q}\\
&\overset{\ontop{p,q \in [2,\infty)}{\sref{P}{pr:fourierlpest}}}{\aleq{s}}& \Vert \abs{\cdot}^{\frac{2s-n}{2}} u^\wedge\Vert_{L^{p',p}} \ \Vert \abs{\cdot}^{-s} v^\wedge \Vert_{L^{q',q}}\\
&\overset{p,q \geq 2}{\aleq{s}}& \Vert \abs{\cdot}^{\frac{2s-n}{2}} u^\wedge\Vert_{L^{p',2}} \ \Vert \abs{\cdot}^{-s}\ v^\wedge \Vert_{L^{q',2}}\\
&\overset{\sref{P}{pr:dl:lso}}{\aleq{s}}& \Vert u^\wedge\Vert_{L^{2,2}} \ \Vert v^\wedge \Vert_{L^{2,2}}\\
&=&\Vert u \Vert_{L^2} \ \Vert v \Vert_{L^{2}}.\\
\end{ma}
\]
\end{proofP}
\begin{lemma}\label{la:lowerorderlocalest}
Let $s \in (0,\frac{n}{2})$ and $M_1, M_2$ zero multiplier operators. Then there is a constant $C_{M_1,M_2,s} > 0$ such that the following holds. For any $u,v \in \Sw$ and any $\Lambda > 2$,
\[
\begin{ma}
&&\Vert M_1\laps{s}  u\  M_2 \lap^{\frac{n}{4}-\frac{s}{2}}v \Vert_{L^2(B_r(x))}\\
&\leq& C_{M_1,M_2,s}\ \left (\Vert \lapn u \Vert_{L^2(B_{2\Lambda r}(x))} + \Lambda^{-s} \suml_{k=1}^\infty 2^{-ks} \Vert \eta_{\Lambda r,x}^k \lapn u \Vert_{L^2} \right ) \Vert \lapn v \Vert_{L^2}.
\end{ma}
\]
\end{lemma}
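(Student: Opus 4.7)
The natural strategy is to decompose $\lapn u$ along the annular partition of unity $\{\eta^k_{\Lambda r, x}\}_{k \geq 0}$, handle the innermost piece by the unlocalized product estimate of Proposition~\ref{pr:lowerorderest}, and handle the far annular pieces by the disjoint-support estimate of Lemma~\ref{la:bs:disjsuppGen}, absorbing the companion factor $M_2 \lap^{\frac{n}{4}-\frac{s}{2}} v$ via Hölder and a Sobolev (Riesz potential) bound. The first step is the algebraic identity $\laps{s} = \lap^{(2s-n)/4} \circ \lapn$ and $\lap^{n/4 - s/2} = \lap^{-s/2} \circ \lapn$, so with $F := \lapn u$ and $G := \lapn v$ the quantity to be estimated becomes
\[
\bigl\| M_1 \lap^{(2s-n)/4} F \cdot M_2 \lap^{-s/2} G \bigr\|_{L^2(B_r(x))}.
\]
I would then write $F = F_0 + \sum_{k \geq 1} F_k$ with $F_k := \eta^k_{\Lambda r, x}\, F$, so $F_0$ is supported in $B_{2\Lambda r}(x)$ and $F_k$ ($k \geq 1$) in $B_{2^{k+1}\Lambda r}(x) \setminus B_{2^{k-1}\Lambda r}(x)$.

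The $k=0$ contribution is estimated globally: Proposition~\ref{pr:lowerorderest} applied to $u' := F_0$ and $v' := G$ yields
\[
\bigl\| M_1 \lap^{(2s-n)/4} F_0 \cdot M_2 \lap^{-s/2} G \bigr\|_{L^2(\R^n)} \leq C\, \|F_0\|_{L^2}\, \|G\|_{L^2} \leq C\, \|\lapn u\|_{L^2(B_{2\Lambda r}(x))}\, \|\lapn v\|_{L^2(\R^n)},
\]
which accounts for the first term on the right-hand side of the claim.

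For each $k \geq 1$, I would exploit that, since $\Lambda > 2$, the support of $F_k$ lies at distance at least $c\, 2^k \Lambda r$ from $B_r(x)$. By duality, the $L^2(B_r(x))$-norm of $M_1 \lap^{(2s-n)/4} F_k \cdot M_2 \lap^{-s/2} G$ equals the supremum over $\psi \in L^2(B_r(x))$, $\|\psi\|_{L^2} \leq 1$, of $\int a \cdot M_1 \lap^{(2s-n)/4} F_k$, where $a := \psi \cdot M_2 \lap^{-s/2} G$ is supported in $B_r(x)$. Since the composed symbol $m_1(\xi)\,|\xi|^{(2s-n)/2}$ is smooth off the origin and homogeneous of degree $(2s-n)/2 > -n$, Lemma~\ref{la:bs:disjsuppGen} gives
\[
\Bigl| \int_{\R^n} a \cdot M_1 \lap^{(2s-n)/4} F_k \Bigr| \leq C\, (2^k \Lambda r)^{-(n/2+s)}\, \|a\|_{L^1}\, \|F_k\|_{L^1}.
\]
Cauchy--Schwarz on the support of $F_k$ produces $\|F_k\|_{L^1} \leq C (2^k \Lambda r)^{n/2} \|F_k\|_{L^2}$. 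For $\|a\|_{L^1}$ I would pick the Hölder pair $p = 2n/(n+2s)$, $p' = 2n/(n-2s)$, so that $\|\psi\|_{L^p(B_r)} \leq C r^s \|\psi\|_{L^2}$, while the Sobolev/Riesz inequality gives $\|\lap^{-s/2} G\|_{L^{p'}(\R^n)} \leq C \|G\|_{L^2}$; combining this with the $L^{p'}$-boundedness of the zero-multiplier $M_2$ yields $\|a\|_{L^1} \leq C r^s \|G\|_{L^2}$. The $r^s$ and $r^{-s}$ factors cancel exactly and one obtains
\[
\bigl\| M_1 \lap^{(2s-n)/4} F_k \cdot M_2 \lap^{-s/2} G \bigr\|_{L^2(B_r(x))} \leq C\, (2^k \Lambda)^{-s}\, \|F_k\|_{L^2}\, \|\lapn v\|_{L^2(\R^n)}.
\]
Summing over $k \geq 1$ and combining with the $k=0$ estimate gives the asserted inequality.

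The main obstacle is really two intertwined bookkeeping points: first, ensuring that the composed operator $M_1 \lap^{(2s-n)/4}$ fits the framework of Lemma~\ref{la:bs:disjsuppGen} (which needs $\delta = (2s-n)/2 > -n$, valid since $s > 0$), and second, selecting the Hölder exponent $p$ precisely so that the $r$-scaling coming from $\|\psi\|_{L^p(B_r)}$ kills the $r^{-s}$ that would otherwise appear from the disjoint-support kernel estimate. Both endpoints $s = 0$ and $s = n/2$ are genuinely excluded -- the former by the failure of Lemma~\ref{la:bs:disjsuppGen}'s condition $\delta > -n$ (and of Proposition~\ref{pr:lowerorderest}), the latter by the breakdown of the Sobolev embedding $\lap^{-s/2} : L^2 \to L^{2n/(n-2s)}$ -- which is why the hypothesis $s \in (0, n/2)$ is sharp.
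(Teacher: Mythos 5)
Your proposal is correct and follows essentially the same route as the paper's proof: dualize against a test function on $B_r(x)$, split $\lapn u$ along the annular partition $\{\eta^k_{\Lambda r}\}$, treat the inner piece by the global product estimate of Proposition~\ref{pr:lowerorderest}, and treat the far annuli by the disjoint-support estimate of Lemma~\ref{la:bs:disjsuppGen} combined with Cauchy--Schwarz and a Hölder split with $p = 2n/(n+2s)$ on $B_r$. The only cosmetic difference is that where you quote the Hardy--Littlewood--Sobolev/Riesz bound $\|\lap^{-s/2}G\|_{L^{2n/(n-2s)}} \lesssim \|G\|_{L^2}$ directly, the paper derives the equivalent control of $\|\varphi\,M_2\lapms{s}\lapn v\|_{L^1}$ from scratch via Lorentz-space Hölder and the Fourier transform bound in Proposition~\ref{pr:fourierlpest}; the exponent arithmetic and the cancellation of the $r^{\pm s}$ factors are identical.
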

\begin{proofL}{\ref{la:lowerorderlocalest}}
As usual
\[
 \Vert \laps{s} M_1 u\ \lap^{\frac{n}{4}-\frac{s}{2}}\ M_2 v \Vert_{L^2(B_r(x))} = \sup_{\ontop{\varphi \in C_0^\infty(B_r(x),\Cc)}{\Vert \varphi \Vert_{L^2} \leq 1}} \abs{\intl M_1 \laps{s}u \ M_2 \lap^{\frac{n}{4}-\frac{s}{2}}\ v\ \varphi}.
\]
For such a $\varphi$ we then decompose $\laps{s} u$ into the part which is close to $B_r(x)$ and the far-off part:
\[
\begin{ma}
 &&\intl M_1 \laps{s}u \ M_2 \lap^{\frac{n}{4}-\frac{s}{2}}\ v\ \varphi\\
&=& \intl M_1 \lap^{\frac{s}{2}-\frac{n}{4}} (\eta_{\Lambda r} \lapn u) \ M_2\lap^{\frac{n}{4}-\frac{s}{2}}\ v\ \varphi\\
&&+ \sum_{k=1}^\infty \intl M_1 \lap^{\frac{s}{2}-\frac{n}{4}} (\eta^k_{\Lambda r} \lapn u) \ M_2 \lapms{s}\lapn\ v\ \varphi \\
&=:& I + \suml_{k=1}^\infty II_k.
\end{ma}
\]
We first estimate the $I$ by Proposition~\ref{pr:lowerorderest}
\[
 \abs{I} \aleq{s} \Vert \eta_{\Lambda r} \lapn u \Vert_{L^2}\ \Vert \lapn v \Vert_{L^2}.
\]
In order to estimate $II_k$, observe that for any $\varphi \in C_0^\infty(B_r(x),\Cc)$, $\Vert \varphi \Vert_{L^2} \leq 1$, $s \in (0,\frac{n}{2})$, if we set $p := \frac{2n}{n+2s} \in (1,2)$ 
\begin{equation}\label{eq:lol:vplapmsnv}
\begin{ma}
&&\Vert \varphi\ M_2 \lapms{s} \lapn v \Vert_{L^1}\\
&\leq& \Vert \varphi \Vert_{L^p(\R^n)}\ \Vert M_2 \lapms{s} \lapn v \Vert_{L^{p'}(\R^n)}\\
&\aleq{}& r^s\ \Vert \lapms{s} \lapn v \Vert_{L^{p'}(\R^n)}\\
&\overset{p' \geq 2}{\aleq{}}& r^s\ \Vert \abs{\cdot}^{-s} \brac{\lapn v}^\wedge \Vert_{L^{p,p'}(\R^n)}\\
&\overset{p' \geq 2}{\aleq{}}& r^s\ \Vert \abs{\cdot}^{-s} \brac{\lapn v}^\wedge \Vert_{L^{p,2}(\R^n)}\\
&\aleq{}& r^s\ \Vert \abs{\cdot}^{-s} \Vert_{L^{\frac{n}{s},\infty}}\ \Vert \brac{\lapn v}^\wedge \Vert_{L^2}\\
&\aleq{}& r^s\ \Vert \lapn v \Vert_{L^2}.
\end{ma}
\end{equation}
Hence, as for any $k \geq 1$ we have $\dist (\supp \varphi,\supp \eta_{\Lambda r}^k) \ageq{} 2^{k}\Lambda r$,
\[
\begin{ma} 
&&\abs{\intl M_1 \lap^{\frac{s}{2}-\frac{n}{4}} (\eta^k_{\Lambda r} \lapn u) \ M_2 \lap^{\frac{n}{4}-\frac{s}{2}}\ v\ \varphi}\\
&\overset{\sref{L}{la:bs:disjsuppGen}}{\aleq{s,M}}& (2^k \Lambda r)^{-n-s+\frac{n}{2}} \Vert \eta^k_{\Lambda r} \lapn u \Vert_{L^1}\ \Vert M_2 \lap^{\frac{n}{4}-\frac{s}{2}}\ v\  \varphi\Vert_{L^1}\\
&\overset{\eqref{eq:lol:vplapmsnv}}{\aleq{s,M}}& (2^k \Lambda r)^{-\frac{n}{2}-s} \Vert \eta^k_{\Lambda r} \lapn u \Vert_{L^1}\ r^s\ \Vert \lapn v \Vert_{L^2}\\
&\aleq{}& (2^k \Lambda r)^{-s} \Vert \eta^k_{\Lambda r} \lapn u \Vert_{L^2}\ r^{s}\ \Vert \lapn\ v\Vert_{L^2}\ \\
&\aeq{}& 2^{-ks} \Lambda^{-s} \Vert \eta^k_{\Lambda r} \lapn u \Vert_{L^2}\ \Vert \lapn v\Vert_{L^2}.
\end{ma}
\]
\end{proofL}
A different version of the same effect is the following Lemma.
\begin{lemma}\label{la:lowerorderlocalest2}
Let $s \in (0,\frac{n}{2})$ and $M_1, M_2$ be zero-multiplier operators. Then there is a constant $C_{M_1,M_2,s} > 0$ such that the following holds. For any $u,v \in \Sw$ and for any $\Lambda > 2$, $r > 0$, $B_r \equiv B_r(x) \subset \R^n$,
\[
\begin{ma} 
&&\Vert M_1 \laps{s} u\ M_2\lap^{\frac{n}{4}-\frac{s}{2}}\ v \Vert_{L^2(B_r(x))}\\
&\leq& C_{M_1,M_2,s}\ \Vert \eta_{\Lambda r,x}\lapn u \Vert_{L^2}\ \Vert \eta_{\Lambda r,x}\lapn v \Vert_{L^2}\\
&&+ C_{M_1,M_2,s}\ \Lambda^{-s}\ \Vert \eta_{\Lambda r,x} \lapn v \Vert_{L^2}\ \sum_{k=1}^\infty 2^{-sk} \Vert \eta_{\Lambda r,x}^k \lapn u \Vert_{L^2}\\
&&+ C_{M_1,M_2,s}\ \Lambda^{s-\frac{n}{2}}\ \Vert \eta_{\Lambda r,x} \lapn u \Vert_{L^2}\ \sum_{l=1}^\infty 2^{(s-\frac{n}{2})l} \Vert \eta_{\Lambda r,x}^l \lapn v \Vert_{L^2}\\
&& + C_{M_1,M_2,s}\ \Lambda^{-\frac{n}{2}}\ \sum_{k,l = 1}^\infty 2^{-(ks + l (\frac{n}{2}-s))} \Vert \eta^k_{\Lambda r,x} \lapn u \Vert_{L^2} \ \Vert \eta^l_{\Lambda r,x} \lapn v \Vert_{L^2}.
\end{ma}
\]
\end{lemma}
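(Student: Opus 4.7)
The plan is to establish the estimate by duality, as in the proof of Lemma~\ref{la:lowerorderlocalest}, reducing everything to a bound on
\[
\intl_{\R^n} M_1 \laps{s} u \cdot M_2 \lap^{\frac{n}{4}-\frac{s}{2}} v \cdot \varphi
\]
for arbitrary $\varphi \in C_0^\infty(B_r(x))$ with $\Vert \varphi \Vert_{L^2} \leq 1$. Writing $M_1 \laps{s} u = T_1 \lapn u$ and $M_2 \lap^{\frac{n}{4}-\frac{s}{2}} v = T_2 \lapn v$ where $T_1 := M_1 \lap^{\frac{s}{2}-\frac{n}{4}}$ is a multiplier operator of homogeneity $s-\frac{n}{2} > -n$ and $T_2 := M_2 \lapms{s}$ is of homogeneity $-s > -n$, the refinement over Lemma~\ref{la:lowerorderlocalest} is to decompose \emph{both} $\lapn u$ and $\lapn v$ via the partition $1 = \eta_{\Lambda r,x} + \sum_{k \geq 1}\eta^k_{\Lambda r,x}$. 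Bilinearity then produces four classes of terms, indexed by $(k,l) \in \N_0 \times \N_0$ with the convention $\eta^0_{\Lambda r,x}=\eta_{\Lambda r,x}$; these correspond one-to-one to the four terms on the right-hand side of the claim.

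For the diagonal term $(k,l)=(0,0)$, where both factors are replaced by their close parts, I would simply invoke Proposition~\ref{pr:lowerorderest}, yielding the first term $\Vert \eta_{\Lambda r,x}\lapn u\Vert_{L^2} \Vert \eta_{\Lambda r,x}\lapn v\Vert_{L^2}$. For the mixed term with $k \geq 1$, $l = 0$, I would move $T_1$ to the other factor by duality and apply the disjoint-support estimate of Lemma~\ref{la:bs:disjsuppGen} (note that $T_2(\eta_{\Lambda r,x}\lapn v)\cdot \varphi$ is supported in $B_r(x)$, while $\eta^k_{\Lambda r,x}\lapn u$ sits outside $B_{2^{k-1}\Lambda r}(x)$), producing the geometric factor $(2^k\Lambda r)^{-n/2-s}$. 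Controlling $\Vert\eta^k_{\Lambda r,x}\lapn u\Vert_{L^1}$ by $(2^k\Lambda r)^{n/2}\Vert\eta^k_{\Lambda r,x}\lapn u\Vert_{L^2}$ via H\"older on the annular support, and $\Vert T_2(\eta_{\Lambda r,x}\lapn v)\cdot \varphi\Vert_{L^1}$ by $r^s \Vert \eta_{\Lambda r,x}\lapn v\Vert_{L^2}$ exactly as in computation \eqref{eq:lol:vplapmsnv}, multiplies out to the second term $2^{-ks}\Lambda^{-s}\Vert\eta_{\Lambda r,x}^k\lapn u\Vert_{L^2}\Vert\eta_{\Lambda r,x}\lapn v\Vert_{L^2}$. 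The symmetric computation with the roles of $u$ and $v$ swapped (so that the $-s$ power of $T_2$ replaces the $s-n/2$ power of $T_1$) produces the third term with exponent $s-n/2$ in place of $-s$.

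The bi-far term with $k,l \geq 1$ is the main obstacle, as it requires applying a disjoint-support estimate twice. I would identify the corresponding integral as $\int \eta^k_{\Lambda r,x}\lapn u \cdot T_1^*[T_2(\eta^l_{\Lambda r,x}\lapn v)\cdot \varphi]$; Lemma~\ref{la:bs:disjsuppGen} then gives the first factor $(2^k\Lambda r)^{-n/2-s}$ once it is noted that $T_2(\eta^l_{\Lambda r,x}\lapn v)\cdot\varphi$ is compactly supported in $B_r(x)$. The new difficulty is to control $\Vert T_2(\eta^l_{\Lambda r,x}\lapn v)\cdot\varphi\Vert_{L^1}$: because $\varphi$ is supported in $B_r(x)$ while $\eta^l_{\Lambda r,x}\lapn v$ is supported at distance $\sim 2^l\Lambda r$, the kernel representation of $\lapms{s}$ (pointwise dominated by $C\abs{z}^{-n+s}$) gives the $L^\infty(B_r(x))$-bound $\Vert T_2(\eta^l_{\Lambda r,x}\lapn v)\Vert_{L^\infty(B_r(x))} \leq C(2^l\Lambda r)^{-n+s}\Vert\eta^l_{\Lambda r,x}\lapn v\Vert_{L^1}$, and converting the $L^1$-norm to $L^2$ via H\"older on the annular support yields the factor $(2^l\Lambda r)^{-n/2+s}\Vert\eta^l_{\Lambda r,x}\lapn v\Vert_{L^2}$. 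Combining with $\Vert\varphi\Vert_{L^1}\leq r^{n/2}$ and $\Vert\eta^k_{\Lambda r,x}\lapn u\Vert_{L^1}\leq (2^k\Lambda r)^{n/2}\Vert\eta^k_{\Lambda r,x}\lapn u\Vert_{L^2}$, all the $r$-powers telescope and one is left with $\Lambda^{-n/2}2^{-ks + l(s-n/2)}\Vert\eta_{\Lambda r,x}^k\lapn u\Vert_{L^2}\Vert\eta_{\Lambda r,x}^l\lapn v\Vert_{L^2}$, which is the fourth term. The delicate point throughout is ensuring that the two applications of disjoint support combine with the H\"older reductions on each annulus in such a way that the $r$-dependence cancels exactly and only the dimensionless geometric factors $\Lambda^{-n/2}$ and $2^{-ks+l(s-n/2)}$ survive.
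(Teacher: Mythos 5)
Your proposal is correct and follows essentially the same route as the paper: both proofs decompose $\lapn u$ and $\lapn v$ through the partition $1 = \eta_{\Lambda r,x} + \sum_{k\geq 1}\eta^k_{\Lambda r,x}$, producing the same four families of terms, and handle them with Proposition~\ref{pr:lowerorderest} for the near--near term, the disjoint-support argument of Lemma~\ref{la:bs:disjsuppGen} combined with the computation \eqref{eq:lol:vplapmsnv} (and its symmetric analogue) for the two mixed terms, and a chained disjoint-support/kernel estimate for the far--far term. The only cosmetic difference is that for the far--far term the paper bounds one factor in $L^\infty(B_r)$ and the other in $L^2(B_r)$ directly, whereas you run the same computation in duality form; the intermediate exponents $(2^k\Lambda r)^{-s}$ and $(2^l\Lambda r)^{s-\frac{n}{2}}r^{\frac{n}{2}}$ and the resulting telescoping of the $r$-powers are identical.
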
 
\begin{proofL}{\ref{la:lowerorderlocalest2}}
We have
\[
\begin{ma}
 &&M_1 \laps{s} u\ M_2\lap^{\frac{n}{4}-\frac{s}{2}}\ v \\
&=& M_1\lap^{\frac{s}{2}-\frac{n}{4}} \brac{\eta_{\Lambda r} \lapn u}\ M_2\lap^{-\frac{s}{2}}\ \brac{\eta_{\Lambda r} \lapn v} \\
&&+ \sum_{k=1}^\infty M_1 \lap^{\frac{s}{2}-\frac{n}{4}} \brac{\eta^k_{\Lambda r} \lapn u}\ M_2 \lap^{-\frac{s}{2}}\ \brac{\eta_{\Lambda r} \lapn v} \\
&&+ \sum_{l=1}^\infty M_1\lap^{\frac{s}{2}-\frac{n}{4}} \brac{\eta_{\Lambda r} \lapn u}\ M_2 \lap^{-\frac{s}{2}}\ \brac{\eta_{\Lambda r}^l \lapn v} \\
&&+ \sum_{k,l=1}^\infty M_1\lap^{\frac{s}{2}-\frac{n}{4}} \brac{\eta^k_{\Lambda r} \lapn u}\ M_2 \lap^{-\frac{s}{2}}\ \brac{\eta_{\Lambda r}^l \lapn v} \\
&=& I + \sum_{k=1}^\infty II_k + \sum_{l=1}^\infty III_k + \sum_{k,l=1}^\infty IV_{k,l}.
\end{ma}
\]
By Proposition~\ref{pr:lowerorderest},
\[
 \Vert I \Vert_{L^2} \aleq{} \Vert \eta_{\Lambda r}\lapn u \Vert_{L^2}\ \Vert \eta_{\Lambda r} \lapn v \Vert_{L^2}.
\]
As in the proof of Lemma~\ref{la:lowerorderlocalest},
\[
 \Vert II_k \Vert_{L^2(B_r)} \aleq{} 2^{-sk} \Lambda^{-s} \Vert \eta_{\Lambda r}^k \lapn u \Vert_{L^2}\ \Vert \eta_{\Lambda r} \lapn v \Vert_{L^2},
\]
and
\[
 \Vert III_l \Vert_{L^2(B_r)} \aleq{} 2^{(s-\frac{n}{2})l} \Lambda^{s-\frac{n}{2}} \Vert \eta_{\Lambda r} \lapn u \Vert_{L^2}\ \Vert \eta_{\Lambda r}^l \lapn v \Vert_{L^2}.
\]
Finally,
\[
\begin{ma}
  \Vert IV_{k,l} \Vert_{L^2(B_r)} 
&\aleq{}& \brac{2^k \Lambda r}^{-s}\ \Vert \eta^k_{\Lambda r} \lapn u \Vert_{L^2} \ \Vert \lapms{s} \brac{\eta^l_{\Lambda r} \lapn v} \Vert_{L^2(B_r)}\\
  &\aleq{}& \brac{2^k \Lambda r}^{-s}\ \brac{2^l \Lambda r}^{s-\frac{n}{2}}\ r^{\frac{n}{2}}\ \Vert \eta^k_{\Lambda r} \lapn u \Vert_{L^2} \ \Vert \eta^l_{\Lambda r} \lapn v \Vert_{L^2} \\
&\aleq{}& \Lambda^{-\frac{n}{2}}\ 2^{- (ks + l (\frac{n}{2}-s))}\ \Vert \eta^k_{\Lambda r} \lapn u \Vert_{L^2} \ \Vert \eta^l_{\Lambda r} \lapn v \Vert_{L^2}.
\end{ma}
\]
\end{proofL}

\subsection{Fractional Product Rules for Polynomials}
It is obvious, that for any constant $c \in \R$ and any $\varphi \in \Sw$, $s > 0$,
\[
 \laps{s} (c\varphi ) = c\laps{s} \varphi.
\]
In this section, we are going to extend this kind of product rule to polynomials of degree greater than zero, which in our application will be mean value polynomials as in \eqref{eq:meanvalueszero}. As we have to deal with dimensions greater than one, our mean value polynomials will be also of order greater than zero, making such product rules important.

\begin{proposition}[Product Rule for Polynomials]\label{pr:lapsmonprod2}
Let $N \in \N_0$, $s \geq N$. Then for any multiplier operator $M$ defined by
\[
(M v)^\wedge = m v^\wedge, \quad \mbox{for any $v \in \Sw$},
\]
for $m \in C^\infty(\R^n \backslash \{0\},\Cc)$ and homogeneous of order zero, there exists for every multiindex $\beta \in \brac{ \N_0}^n$, $\abs{\beta} \leq N$, a multiplier operator $M_\beta \equiv M_{\beta,s,N}$, $M_\beta = M$ if $\abs{\beta} = 0$, with multiplier $m_\beta \in C^\infty(\R^n \backslash \{0\},\Cc)$ also homogeneous of order zero such that the following holds.
Let $Q = x^\alpha$ for some multiindex $\alpha \in \brac{\N_0}^n$, $\abs{\alpha} \leq N$. Then
\begin{equation}\label{eq:lapsqvarphiclaim}
M\laps{s} (Q\varphi) = \suml_{\abs{\beta} \leq \abs{\alpha}} \partial^\beta Q\ M_\beta \laps{s-\abs{\beta}} \varphi \quad \mbox{for any $\varphi \in \Sw$}.
\end{equation}
Consequently, for any polynomial $P = \suml_{\abs{\alpha} \leq N} c_\alpha x^\alpha$,
\[
M\laps{s} (P\varphi) = \suml_{\abs{\beta} \leq N} \partial^\beta P\  M_\beta \laps{s-\abs{\beta}} \varphi \quad \mbox{for any $\varphi \in \Sw$}.
\]
\end{proposition}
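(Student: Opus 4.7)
The plan is to prove the identity by induction on $\abs{\alpha}$ via a direct Fourier-side calculation; the general polynomial statement will then follow by linearity. The base case $\abs{\alpha}=0$ is trivial (the sum has only the term $\beta=0$ and $M_0=M$ by stipulation). For $\abs{\alpha}=1$, say $Q=x_i$, one starts from
\[
\bigl(M\laps{s}(x_i\varphi)\bigr)^\wedge(\xi) = \tfrac{i}{2\pi}\,m(\xi)\abs{\xi}^s\,\partial_{\xi_i}\hat\varphi(\xi),
\]
using $\widehat{x_i\varphi}(\xi)=\tfrac{i}{2\pi}\partial_{\xi_i}\hat\varphi(\xi)$, and applies the elementary Leibniz identity backwards:
\[
m(\xi)\abs{\xi}^s\,\partial_{\xi_i}\hat\varphi = \partial_{\xi_i}\!\bigl(m(\xi)\abs{\xi}^s\hat\varphi\bigr) - \partial_{\xi_i}\!\bigl(m(\xi)\abs{\xi}^s\bigr)\,\hat\varphi.
\]
Since $\partial_{\xi_i}(m\abs{\xi}^s) = (\partial_{\xi_i}m)(\xi)\abs{\xi}^s + s\,m(\xi)\abs{\xi}^{s-2}\xi_i$, and both $m$ and $\xi\mapsto\xi_i/\abs{\xi}$ are smooth on $\R^n\setminus\{0\}$ and homogeneous of degree $0$, one may factor $\partial_{\xi_i}(m\abs{\xi}^s) = m_i^*(\xi)\abs{\xi}^{s-1}$ with $m_i^*\in C^\infty(\R^n\setminus\{0\})$ homogeneous of degree zero (here $s\geq 1$ is what is needed). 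Taking inverse Fourier transforms and using $(\partial_{\xi_i}F)^\vee = -2\pi i\,x_i F^\vee$ yields
\[
M\laps{s}(x_i\varphi) = x_i\,M\laps{s}\varphi + M_{e_i}\laps{s-1}\varphi,
\]
which is precisely \eqref{eq:lapsqvarphiclaim} for $\abs{\alpha}=1$, with $M_{e_i}$ the multiplier operator of symbol $-\tfrac{i}{2\pi}m_i^*$.

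\textbf{Inductive step.} Assuming \eqref{eq:lapsqvarphiclaim} for all multiindices of size at most $k$, pick $Q=x^\alpha$ with $\abs{\alpha}=k+1\leq N$ and some index $i$ with $\alpha_i\geq 1$. Writing $Q=x_i\cdot x^{\alpha-e_i}$ and applying the just-established $\abs{\alpha}=1$ identity with $\tilde\varphi:=x^{\alpha-e_i}\varphi$ gives
\[
M\laps{s}(Q\varphi) = x_i\,M\laps{s}\bigl(x^{\alpha-e_i}\varphi\bigr) + M_{e_i}\laps{s-1}\bigl(x^{\alpha-e_i}\varphi\bigr).
\]
Because $s\geq N$, we have $s-1\geq k=\abs{\alpha-e_i}$, so the inductive hypothesis applies to each of the two terms on the right. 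Substituting, and using the combinatorial identities
\[
x_i\,\partial^\beta\bigl(x^{\alpha-e_i}\bigr) = \tfrac{\alpha_i-\beta_i}{\alpha_i}\,\partial^\beta\bigl(x^\alpha\bigr), \qquad \partial^\beta\bigl(x^{\alpha-e_i}\bigr) = \tfrac{1}{\alpha_i}\,\partial^{\beta+e_i}\bigl(x^\alpha\bigr),
\]
and relabelling $\beta'=\beta+e_i$ in the second sum, one recombines the two contributions into a single sum of the desired shape $\sum_{\abs{\beta'}\leq\abs{\alpha}} \partial^{\beta'}Q\cdot M_{\beta'}\laps{s-\abs{\beta'}}\varphi$. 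The new $M_{\beta'}$ are rational linear combinations of multipliers previously constructed and hence are themselves zero-multiplier operators; in particular the $\beta'=0$ term arises only from the first summand and equals $Q\cdot M\laps{s}\varphi$, so $M_0=M$ as stipulated.

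\textbf{Main obstacle.} No genuine analytical difficulty is present; the content of the proof is entirely bookkeeping. The only point that requires attention is making sure that every time one differentiates $m(\xi)\abs{\xi}^s$ (either on the Fourier side in the $\abs{\alpha}=1$ step, or implicitly through the inductive iteration), the resulting symbol splits as $\tilde m(\xi)\abs{\xi}^{s-\abs{\beta}}$ with $\tilde m\in C^\infty(\R^n\setminus\{0\})$ homogeneous of degree zero, and that the exponent $s-\abs{\beta}$ stays nonnegative. Both requirements are guaranteed by the standing hypothesis $s\geq N\geq \abs{\beta}$, which is invoked in the inductive step at each decrement of $s$ by $1$.
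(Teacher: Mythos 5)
Your proof follows the same overall strategy as the paper's — induction on $\abs{\alpha}$, with the base case established by a Fourier-side Leibniz rule and the inductive step done by decomposing $Q = x_i\,x^{\alpha-e_i}$, applying the one-variable identity, and invoking the induction hypothesis. The base case and the two combinatorial identities are all correct. There is, however, a real gap in the recombination step.

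The issue is $\alpha$-independence of the multipliers. Your combinatorial identities
\[
x_i\,\partial^\beta\bigl(x^{\alpha-e_i}\bigr) = \tfrac{\alpha_i-\beta_i}{\alpha_i}\,\partial^\beta x^\alpha, \qquad \partial^\beta\bigl(x^{\alpha-e_i}\bigr) = \tfrac{1}{\alpha_i}\,\partial^{\beta+e_i} x^\alpha
\]
introduce explicit $\alpha_i$-dependent coefficients. After recombination, the candidate multiplier attached to $\partial^{\beta'}Q\,\laps{s-\abs{\beta'}}\varphi$ is
\[
\frac{\alpha_i-\beta'_i}{\alpha_i}\,M_{\beta'}^{(M,s)} \;+\; \frac{1}{\alpha_i}\,M_{\beta'-e_i}^{(M_{e_i},\,s-1)},
\]
where the superscripts record which $M$ and which order the induction hypothesis was applied to. This expression a priori \emph{does} depend on $\alpha_i$. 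It is independent of $\alpha_i$ if and only if the nontrivial compatibility identity
\[
M_{\beta'-e_i}^{(M_{e_i},\,s-1)} \;=\; \beta'_i\,M_{\beta'}^{(M,s)}
\]
holds, in which case the two terms sum to $M_{\beta'}^{(M,s)}$. You do not prove, or even mention, this identity, so your argument only shows that for each fixed $\alpha$ there exist suitable multipliers — not that a single family $(M_\beta)_{\abs{\beta}\leq N}$ works for all monomials $x^\alpha$ simultaneously. That universality is exactly what the proposition asserts (note the quantifier order in the statement: the $M_\beta$ are chosen before $Q$), and it is what makes the last assertion about polynomials $P = \sum c_\alpha x^\alpha$ follow from linearity. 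Your claim that ``the content of the proof is entirely bookkeeping'' understates precisely this point.

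The paper avoids the problem by writing down the explicit candidate $M_\beta = \frac{1}{\beta!}M_{\beta,s}$ with symbol
\[
m_{\beta,s}(\xi) = \frac{1}{(2\pi\im)^{\abs{\beta}}}\,\abs{\xi}^{\abs{\beta}-s}\,\partial^\beta\bigl(\abs{\xi}^s m(\xi)\bigr),
\]
which is manifestly $\alpha$-independent, and establishes the composition law $(M_{\alpha,s})_{\beta,s-\abs{\alpha}} = M_{\alpha+\beta,s}$; with these in hand, the two sums in the inductive step cancel exactly rather than combining with $\alpha$-dependent weights. To repair your argument you could either posit this explicit form as well, or prove the displayed compatibility identity (which indeed follows from the paper's formulas: $\frac{1}{(\beta'-e_i)!}M_{\beta',s} = \frac{\beta'_i}{\beta'!}M_{\beta',s}$).
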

\begin{proofP}{\ref{pr:lapsmonprod2}}
The claim for $P$ follows immediately from the claim about $Q$ as left- and right-hand side are linear in the space of polynomials.\\
We will prove the claim for $Q$ by induction on $N$, but first we make some preperatory observations. For an operator $M$ with multiplier $m$ as requested, for $\alpha \in (\N_0)^n$ a multiindex and $s \in \R$ set
\[
m_{\alpha,s}(\xi) := \frac{1}{\brac{2\pi \im}^{\abs{\alpha}}}\abs{\xi}^{\abs{\alpha}-s}\ \partial^\alpha \brac{\abs{\xi}^s\ m(\xi)}, \quad \xi \in \R^n \backslash \{0\}, 
\]
and let $M_{\alpha,s}$ be the according operator with $m_{\alpha,s}$ as Fourier multiplier. In a slight abuse of this notation, for multiindices with only one entry we will write
\[
 M_{k,s} \equiv M_{\alpha_k,s} \quad \mbox{for $k \in (1,\ldots,n)$}, 
\]
where $\alpha_k = (0,\ldots,0,1,0\ldots,0)$ and the $1$ is exactly at the $k$th entry of $\alpha_k$.\\
Note that $m_{\alpha,s}(\cdot)$ is homogeneous of order zero. In fact, this is true as the derivative of a function of zero homogeneity has homogeneity $-1$, a fact which itself follows from taking the limit $h \to 0$ in the following equation which is valid for any $i \in \{1,\ldots,n\}$, $\xi \in \R^n \backslash \{0\}$, $\lambda > 0$, $0 \neq h \in (-\abs{\xi},\abs{\xi})$
\[
 \frac{m(\lambda (\xi + h e_i)) - m(\lambda \xi)}{\lambda h} = \lambda^{-1} \frac{m(\xi + h e_i) - m(\xi)}{h}.
\]
Also, we have the following relation for any $s \in \R$,
\begin{equation}\label{eq:lapsmon:malphsrel}
\brac{M_{\alpha,s}}_{\beta,s-\abs{\alpha}} = M_{\alpha+\beta,s}.
\end{equation}
Observe furthermore that
\[
x_1 v(x) = -\frac{1}{2\pi \im} \brac{\partial_1 v^\wedge}^\vee(x),
\]
so for $s \geq 1$
\[
\begin{ma}
&&\brac{M\laps{s} \brac{(\cdot)_1 v}}^\wedge(\xi)\\
&=& -\frac{1}{2\pi \im}\  m(\xi)\ \abs{\xi}^s \partial_1 v^\wedge(\xi)\\
&=& -\frac{1}{2\pi \im}\  \partial_1 (M\laps{s} v)^\wedge(\xi) + 
\frac{1}{2\pi \im} \partial_1 (m(\xi) \abs{\xi}^s)\ v^\wedge(\xi)\\
&=& -\frac{1}{2\pi \im}\  \partial_1 (M\laps{s} v)^\wedge(\xi) + \brac{M_{1,s} \laps{s-1} v}^\wedge(\xi),
\end{ma}
\]
that is
\begin{equation}\label{eq:MlapsQphi:it}
M\laps{s} \brac{(\cdot)_1 v}(x) = x_1 M\laps{s} v + M_{1,s} \laps{s-1} v.
\end{equation}
So one could suspect that for $Q = x^\alpha$ for some multiindex $\alpha$, $\abs{\alpha} \leq s$,
\begin{equation}\label{eq:MlapsQ:IV}
M\laps{s} (Q\varphi) = \sum_{\abs{\beta} \leq s} \partial^\beta Q\ \frac{1}{\beta!} M_{\beta,s}\ \laps{s-\abs{\beta}} \varphi.
\end{equation}
where
\[
\beta! := \beta_1!\ldots\beta_n!.
\]
This is of course true if $Q \equiv 1$. As induction hypothesis, fix $N > 0$ and assume \eqref{eq:MlapsQ:IV} to be true for any monomial $\tilde{Q}$ of degree at most $\tilde{N} < N$ whenever $s \geq \tilde{N}$ and $M$ is an operator with the desired properties. Let then $Q$ be a monomial of degree at most $N$, and assume $s \geq N$. We decompose w.l.o.g. $Q = x_1 \tilde{Q}$ for some monomial $\tilde{Q}$ of degree at most $N-1$. Then,
\begin{equation}\label{eq:MlapsQphi:indfirststep}
M\laps{s} (Q\varphi) \overset{\eqref{eq:MlapsQphi:it}}{=} x_1 M\laps{s} \brac{\tilde{Q}\varphi} + M_{1,s} \laps{s-1} \brac{\tilde{Q}\varphi}.
\end{equation}
For a multiindex $\beta = (\beta_1,\ldots,\beta_n) \in \brac{\N_0}^n$ let us set
\[
\tau_1 (\beta) := (\beta_1+1,\beta_2,\ldots,\beta_n) \quad \mbox{and}\quad 
\tau_{-1} (\beta) := (\beta_1-1,\beta_2,\ldots,\beta_n).
\]
Observe that
\begin{equation}\label{eq:MlapsQphi:pbxq}
\partial^\beta (x_1 Q) = \beta_1 \partial^{\tau_{-1}(\beta)} Q + x_1 \partial^\beta Q.
\end{equation}
Applying now in \eqref{eq:MlapsQphi:indfirststep} the induction hypothesis \eqref{eq:MlapsQ:IV} on $M\laps{s}$ and $M_{1,s} \laps{s-1}$, we have
\[
\begin{ma}
M\laps{s}  (Q\varphi) &=& x_1  \sum_{\abs{\beta} \leq s} \partial^\beta \tilde{Q}\ \frac{1}{\beta!} M_{\beta,s}\ \laps{s-\abs{\beta}} \varphi\\
&& +  \sum_{\abs{\tilde{\beta}} \leq s-1} \partial^{\tilde{\beta}} \tilde{Q}\ \frac{1}{\tilde{\beta}!} \brac{M_{1,s}}_{\tilde{\beta},s-1}\ \laps{s-(\abs{\tilde{\beta}}+1)} \varphi\\ 

&\overset{\eqref{eq:lapsmon:malphsrel}}{=}& \sum_{\abs{\beta} \leq s} x_1 \partial^\beta \tilde{Q}\ \frac{1}{\beta!} M_{\beta,s}\ \laps{s-\abs{\beta}} \varphi\\
&& + \sum_{\abs{\tilde{\beta}} \leq s-1} \partial^{\tilde{\beta}} \tilde{Q}\ \frac{1}{\tilde{\beta}!} \brac{M_{\tau_1(\tilde{\beta}),s}}\ \laps{s-\abs{\tau_1(\tilde{\beta})}} \varphi.\\
 \end{ma}
 \]
Next, by \eqref{eq:MlapsQphi:pbxq}
 \[
 \begin{ma}
 &=& \sum_{\abs{\beta} \leq s} \partial^\beta \brac{x_1 \tilde{Q}} \ \frac{1}{\beta!} M_{\beta,s}\ \laps{s-\abs{\beta}} \varphi\\
 
 &&-\sum_{\ontop{\abs{\beta} \leq s}{\beta_1 \geq 1}} \partial^{\tau_{-1} (\beta)} \tilde{Q} \ \frac{\beta_1}{\beta!} M_{\beta,s}\ \laps{s-\abs{\beta}} \varphi\\
  
 &&+ \sum_{\abs{\tilde{\beta}} \leq s-1} \partial^{\tilde{\beta}} \tilde{Q}\ \frac{1}{\tilde{\beta}!}\ M_{\tau_1(\tilde{\beta}),s}\ \laps{s-\abs{\tau_1(\tilde{\beta})}} \varphi\\
 
 &=& \sum_{\abs{\beta} \leq s} \partial^\beta \brac{x_1 \tilde{Q}} \ \frac{1}{\beta!} M_{\beta,s}\ \laps{s-\abs{\beta}} \varphi\\
 
 &&-\sum_{\ontop{\abs{\beta} \leq s}{\beta_1 \geq 1}} \partial^{\tau_{-1} (\beta)} \tilde{Q} \ \frac{1}{\tau_{-1}(\beta)!} M_{\beta,s}\ \laps{s-\abs{\beta}} \varphi\\
  &&+ \sum_{\abs{\tilde{\beta}} \leq s-1} \partial^{\tilde{\beta}} \tilde{Q}\ \frac{1}{\tilde{\beta}!}\ M_{\tau_1(\tilde{\beta}),s}\ \laps{s-\abs{\tau_1(\tilde{\beta})}} \varphi\\
  
  &=& \sum_{\abs{\beta} \leq s} \partial^\beta \brac{x_1 \tilde{Q}} \ \frac{1}{\beta!} M_{\beta,s}\ \laps{s-\abs{\beta}} \varphi.
 \end{ma}
\]

\end{proofP}

\begin{proposition}\label{pr:pvarphiest}
There is a uniform constant $C > 0$ such that the following holds: Let $u \in \Sw$ and $P$ any polynomial of degree at most $N := \lceil \frac{n}{2} \rceil-1$. Then for any $\Lambda > 2$, $B_r(x_0) \subset \R^n$, $\varphi \in C_0^\infty(B_{r}(x_0))$, $\Vert \lapn \varphi \Vert_{L^2(\R^n)} \leq 1$,
\[
\begin{ma}
&&\Vert \lapn (P \varphi) - P \lapn \varphi \Vert_{L^2(B_{r}(x_0))}\\ 
&\leq& C\ \brac{\Vert \lapn \brac{\eta_{\Lambda r,x_0}(u-P)} \Vert_{L^2(\R^n)} + \Vert \lapn u \Vert_{L^2(B_{2\Lambda r}(x_0))}}\\
&& + C\ \Lambda^{-1} \suml_{k=1}^\infty 2^{-k} \Vert \eta_{\Lambda r,x_0}^k \lapn u \Vert_{L^2(\R^n)}.
\end{ma}
\]
\end{proposition}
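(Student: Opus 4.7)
The plan is to reduce the estimate to the localization lemma, Lemma~\ref{la:lowerorderlocalest}, via the fractional Leibniz rule for polynomials. Applying Proposition~\ref{pr:lapsmonprod2} with $s=\frac{n}{2}$ and $M$ the identity (noting that $N=\lceil n/2\rceil -1\le n/2$, so the hypothesis $s\ge N$ holds) and isolating the $\beta=0$ term, which is $P\lapn \varphi$ since $M_0=\mathrm{id}$, I obtain
\[
\lapn (P\varphi)-P\lapn \varphi \;=\; \suml_{1\le \abs{\beta}\le N}\partial^\beta P\cdot M_\beta\,\lap^{\frac{n}{4}-\frac{\abs{\beta}}{2}}\varphi,
\]
where each $M_\beta$ is a zero-multiplier operator depending only on $n$ and $\beta$. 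Because $N<n/2$, every $\abs{\beta}$ appearing in the sum lies strictly in $(0,n/2)$, which is precisely the range where Lemma~\ref{la:lowerorderlocalest} is available.

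To estimate a single summand I would split the polynomial coefficient as $\partial^\beta P=\partial^\beta(P-u)+\partial^\beta u$. On $B_r(x_0)\subset B_{\Lambda r}(x_0)$ the cutoff $\eta_{\Lambda r,x_0}$ is identically one, so with $w:=\eta_{\Lambda r,x_0}(u-P)$, which is smooth and compactly supported in $B_{2\Lambda r}(x_0)$, the pointwise identity $\partial^\beta(P-u)=-\partial^\beta w$ holds on $B_r(x_0)$ and may be used freely inside the $L^2(B_r)$-norm of the pointwise product. Writing $\partial^\beta=c_\beta R_\beta\,\laps{\abs{\beta}}$ with $R_\beta:=\Rz_1^{\beta_1}\cdots\Rz_n^{\beta_n}$ a composition of Riesz operators (hence a zero-multiplier operator), each of the two resulting summands has exactly the form $M_1\laps{\abs{\beta}}(\cdot)\cdot M_2\lap^{\frac{n}{4}-\frac{\abs{\beta}}{2}}\varphi$ addressed by Lemma~\ref{la:lowerorderlocalest} with $s=\abs{\beta}$.

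For the $u$-piece, Lemma~\ref{la:lowerorderlocalest} combined with $\Vert \lapn\varphi\Vert_{L^2}\le 1$ gives
\[
\Vert \partial^\beta u\cdot M_\beta \lap^{\frac{n}{4}-\frac{\abs{\beta}}{2}}\varphi\Vert_{L^2(B_r)} \le C\brac{\Vert \lapn u\Vert_{L^2(B_{2\Lambda r})}+\Lambda^{-\abs{\beta}}\suml_{k=1}^\infty 2^{-k\abs{\beta}}\Vert \eta_{\Lambda r,x_0}^k\lapn u\Vert_{L^2}},
\]
and $\abs{\beta}\ge 1$ produces $\Lambda^{-\abs{\beta}}\le \Lambda^{-1}$ and $2^{-k\abs{\beta}}\le 2^{-k}$, yielding exactly the last two terms on the right-hand side of the proposition. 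For the $w$-piece, the same lemma applied with $u$ replaced by $w$ produces a bound of the same shape; since $\Vert \eta_{\Lambda r,x_0}^k \lapn w\Vert_{L^2}\le \Vert \lapn w\Vert_{L^2(\R^n)}$ and $\sum_k 2^{-k\abs{\beta}}$ converges, this bound collapses to $C\Vert \lapn w\Vert_{L^2(\R^n)}=C\Vert \lapn(\eta_{\Lambda r,x_0}(u-P))\Vert_{L^2(\R^n)}$, which is precisely the first contribution. Summing over the finitely many multiindices $\beta$ completes the proof.

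The only delicate point is the substitution $\partial^\beta(P-u)=-\partial^\beta w$ inside the $L^2(B_r)$-norm; this is legitimate because both functions agree pointwise on $B_r\subset B_{\Lambda r}$, where $\eta_{\Lambda r,x_0}\equiv 1$, and the remaining factor $M_\beta\lap^{\frac{n}{4}-\frac{\abs{\beta}}{2}}\varphi$ multiplies them pointwise. Everything else is a direct accounting of the decay factors $\Lambda^{-\abs{\beta}}\le \Lambda^{-1}$ and $2^{-k\abs{\beta}}\le 2^{-k}$ delivered by $\abs{\beta}\ge 1$.
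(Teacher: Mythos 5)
Your proof is correct and follows essentially the same route as the paper: apply Proposition~\ref{pr:lapsmonprod2} with $M=\mathrm{id}$, $s=\tfrac{n}{2}$ to express $\lapn(P\varphi)-P\lapn\varphi$ as a finite sum of products of lower-order operators, split $\partial^\beta P=-\partial^\beta\brac{\eta_{\Lambda r,x_0}(u-P)}+\partial^\beta u$ on $B_r(x_0)$ where $\eta_{\Lambda r,x_0}\equiv 1$, and estimate each piece via the localization machinery with $s=\abs{\beta}\in[1,N]\subset(0,\tfrac n2)$. The only cosmetic difference is that the paper estimates the $w$-piece directly by the global bound of Proposition~\ref{pr:lowerorderest} rather than by first invoking Lemma~\ref{la:lowerorderlocalest} and then collapsing the tail sum, but the resulting bound is the same.
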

\begin{proofP}{\ref{pr:pvarphiest}}
By Proposition~\ref{pr:lapsmonprod2} (where we take $M$ the identity and $s = \frac{n}{2}$) 
\[
\lapn (P \varphi) - P \lapn \varphi = \suml_{1 \leq \abs{\beta} \leq N}  \partial^\beta P\ M_\beta \lap^{\frac{n-2\abs{\beta}}{4}} \varphi.
\]
As we estimate the $L^2$-norm on $B_{r}$ and there $\eta_{\Lambda r} \equiv 1$, we will further rewrite
\[
\begin{ma}
&=& -\suml_{1 \leq \abs{\beta} \leq N}  \partial^\beta (\eta_{\Lambda r}(u-P)) M_\beta \lap^{\frac{n-2\abs{\beta}}{4}} \varphi\\
&&+ \suml_{1 \leq \abs{\beta} \leq N}  \partial^\beta u\ M_\beta \lap^{\frac{n-2\abs{\beta}}{4}} \varphi\\
&=:& \suml_{1 \leq \abs{\beta} \leq N} (I_\beta + II_\beta) \qquad \mbox{on $B_r(x_0)$}.
\end{ma}
\]
As $1 \leq \abs{\beta} \leq N < \frac{n}{2}$, we have by Lemma~\ref{la:lowerorderlocalest} for $v = \varphi$
\[
\begin{ma}
 \Vert II_\beta \Vert_{L^2(B_{r})} &\aleq{}& \Vert \lapn u \Vert_{L^2(B_{2\Lambda r})} + \Lambda^{-\abs{\beta}}\suml_{k=1}^\infty 2^{-k\abs{\beta}} \Vert \eta_{\Lambda r}^k \lapn u \Vert_{L^2}\\
&\leq&\Vert \lapn u \Vert_{L^2(B_{2\Lambda r})} + \Lambda^{-1}\suml_{k=1}^\infty 2^{-k} \Vert \eta_{\Lambda r}^k \lapn u \Vert_{L^2}.
\end{ma}
\]
We can write
\[
 I_\beta = M_\beta \lap^{\frac{2\abs{\beta}-n}{4}}\ \lapn (\eta_{\Lambda r} (v-P))\ M_{\beta} \lapms{\abs{\beta}} \lapn \varphi
\]
and by Proposition~\ref{pr:lowerorderest} applied to $\lapn (\eta_{\Lambda r} (u-P))$ and $\lapn \varphi$ for $s = \abs{\beta}$
\[
 \Vert I_\beta \Vert_{L^2(\R^n)} \aleq{} \Vert \lapn (\eta_{\Lambda r} (u-P)) \Vert_{L^2(\R^n)}.
\] 
\end{proofP}
\section{Local Estimates and Compensation Phenomena: Proof of Theorem~\ref{th:localcomp}}\label{sec:localTart}
Theorem~\ref{th:localcomp} is essentially a consequence of the following two results.

\begin{lemma}\label{la:hvphilocest}
There is a uniform constant $C > 0$ such that for any ball $B_r(x_0) \subset \R^n$, $\varphi \in C_0^\infty(B_{r}(x_0))$, $\Vert \lapn \varphi \Vert_{L^2} \leq 1$, and $\Lambda > 4$ as well as for any $v \in \Hf(\R^n)$,
\[
\Vert H(v,\varphi) \Vert_{L^2(B_{r}(x_0))}
\] 
\[
\leq 
C\ \left ([v]_{B_{4\Lambda r}(x_0),\frac{n}{4}} + \Vert \lapn v \Vert_{B_{2\Lambda r}(x_0)}+ \Lambda^{-\frac{1}{2}} \Vert \lapn v \Vert_{L^2(\R^n)} \right ).
\]
\end{lemma}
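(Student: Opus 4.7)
The plan is to localize $H(v,\varphi)$ by subtracting a mean-value polynomial of $v$ and then decomposing the remainder into near and far pieces relative to $x_0$. Let $N := \lceil n/2 \rceil - 1$ and let $P := P_{B_{4\Lambda r}(x_0), N}(v)$ be the polynomial of degree at most $N$ making $v - P$ satisfy \eqref{eq:meanvalueszero} on $B_{4\Lambda r}(x_0)$. Split
\[
H(v, \varphi) = H(v - P, \varphi) + H(P, \varphi),
\]
where $H(P,\varphi) = \lapn(P\varphi) - P\lapn\varphi$ via the formal identity $\lapn P \equiv 0$ (Proposition~\ref{pr:lapspol}). The polynomial piece $H(P,\varphi)$ is handled by a direct application of Proposition~\ref{pr:pvarphiest}, which produces the required $\|\lapn v\|_{L^2(B_{2\Lambda r}(x_0))}$ contribution and, after Cauchy--Schwarz on the tail $\sum_k 2^{-k}\|\eta^k_{\Lambda r,x_0}\lapn v\|_{L^2}$ together with $\sum_k|\eta^k_{\Lambda r,x_0}|^2 \leq 1$, a term of order $C\Lambda^{-1}\|\lapn v\|_{L^2(\R^n)}$, stronger than the claimed $\Lambda^{-1/2}$-tail.

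For $H(v-P,\varphi)$ I further split $v - P = w_1 + w_2$ with $w_1 := \eta_{\Lambda r,x_0}(v-P)$ and $w_2 := (1-\eta_{\Lambda r,x_0})(v-P) = \sum_{k \geq 1}\eta^k_{\Lambda r,x_0}(v-P)$, so that $\supp w_1 \subset B_{2\Lambda r}(x_0)$ and $w_2 \equiv 0$ on $B_{\Lambda r}(x_0) \supset \supp\varphi$. For the far piece: since $w_2\varphi \equiv 0$ globally and $w_2|_{B_r} \equiv 0$, on $B_r(x_0)$ we have $H(w_2,\varphi) = -\varphi\lapn w_2$. Dualizing against $L^2(B_r(x_0))$-functions and applying the disjoint-support Lemma~\ref{la:bs:disjsuppGen} to each annular summand, together with the bound $\|\eta^k_{\Lambda r,x_0}(v-P)\|_{L^2} \leq C(2^k\Lambda r)^{n/2}(1+|k|)\|\lapn v\|_{L^2}$ from Proposition~\ref{pr:estetarkvmp}, the resulting geometric series sums to a contribution of order $\Lambda^{-n/2}\|\lapn v\|_{L^2(\R^n)}$, also absorbed in the $\Lambda^{-1/2}$-tail.

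The near piece $H(w_1,\varphi)$ is the main term, and here the full compensation structure of $H$ is essential in order to recover the $n/4$-seminorm $[v]_{B_{4\Lambda r}(x_0),n/4}$ rather than the $n/2$-one that would arise from the naive estimate $\|\lapn w_1\|_{L^2}\|\lapn\varphi\|_{L^2}$ of Theorem~\ref{th:integrability}. Starting from the Fourier-side bound of Lemma~\ref{la:tart:prphuvwedgeest},
\[
|\widehat{H(w_1,\varphi)}(\xi)| \leq C\,\bigl(|\widehat{\laps{n/4}w_1}| \ast |\widehat{\laps{n/4}\varphi}|\bigr)(\xi)
\]
for $n \leq 2$ (with the analogous lower-order product for $n \geq 3$), I execute the Lorentz--Young argument from the proof of Theorem~\ref{th:integrability} but now split symmetrically between $w_1$ and $\varphi$, so the resulting estimate controls $\|H(w_1,\varphi)\|_{L^2}$ by $C\|\laps{n/4}w_1\|_{L^2}\|\laps{n/4}\varphi\|_{L^2}$ (modulo Lorentz-norm bookkeeping). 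Since $v - P$ satisfies the mean value condition on $B_{4\Lambda r}(x_0)$ up to order $N \geq \lceil n/4\rceil - 1$, Lemma~\ref{la:poincmv} with $s = n/4$, $t = 0$ gives $\|\laps{n/4}(\eta_{\Lambda r,x_0}(v-P))\|_{L^2} \leq C[v-P]_{B_{4\Lambda r}(x_0),n/4}$, which in turn is controlled by $C[v]_{B_{4\Lambda r}(x_0),n/4}$ via the minimality of the mean-value polynomial among polynomial shifts; for $\varphi$, Lemma~\ref{la:poincExt} absorbs $\|\laps{n/4}\varphi\|_{L^2}$ into $\|\lapn\varphi\|_{L^2} \leq 1$. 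Residual contributions from annuli at distance $2^k\Lambda r$ sum by Cauchy--Schwarz to a $\Lambda^{-1/2}\|\lapn v\|_{L^2(\R^n)}$ term. The main obstacle is precisely this symmetric Lorentz--Young step, which is a genuine refinement of Theorem~\ref{th:integrability} and must be carried out without passing through $\|\lapn w_1\|_{L^2}$ --- otherwise Poincar\'e would only yield $[v]_{B_{4\Lambda r},n/2}$, not the sharper $[v]_{B_{4\Lambda r},n/4}$.
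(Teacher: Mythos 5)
Before assessing the argument, note that the exponent $\tfrac{n}{4}$ in the statement appears to be a typo for $\tfrac{n}{2}$. The paper's own proof concludes with
$\Vert \lapn(\eta_{\Lambda r}(v-P))\Vert_{L^2} \aleq{} [v-P]_{B_{4\Lambda r},\frac{n}{2}} = [v]_{B_{4\Lambda r},\frac{n}{2}}$,
and the downstream application in Lemma~\ref{est:ELeq} pairs this lemma with the smallness condition \eqref{eq:estEleq:wnormsleqdelta}, which controls $[w]_{B_{10\Lambda r},\frac{n}{2}}$, not the $\tfrac{n}{4}$-seminorm. Your proposal chases the literal $\tfrac{n}{4}$, and that is exactly where it breaks.

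The scaffolding you set up -- subtract the mean-value polynomial $P$, write $v-P = w_1 + w_2$ with $w_1 = \eta_{\Lambda r}(v-P)$ and $w_2$ the far tail, handle $H(P,\varphi)$ by Proposition~\ref{pr:pvarphiest}, and handle the far piece $-\varphi\lapn w_2$ by Lemma~\ref{la:bs:disjsuppGen} and Proposition~\ref{pr:estetarkvmp} -- matches the paper's decomposition into $\widetilde{I},\widetilde{II},\widetilde{III}$, and those parts are sound. The gap is your treatment of the near piece $H(w_1,\varphi)$, which is the only place you deviate from the paper. You claim a ``symmetric Lorentz--Young'' refinement of the form
$\Vert H(w_1,\varphi)\Vert_{L^2} \leq C\,\Vert\laps{n/4}w_1\Vert_{L^2}\,\Vert\laps{n/4}\varphi\Vert_{L^2}$.
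No such estimate is obtainable by Young-type convolution inequalities: with both factors of $\widehat{\laps{n/4}w_1}\ast\widehat{\laps{n/4}\varphi}$ merely in $L^2$, Young gives $L^2\ast L^2 \hookrightarrow L^\infty$, not $L^2$, so on the Fourier side you never land back in $L^2$. The reason Theorem~\ref{th:integrability} does produce an $L^2$ (indeed $L^{2,1}$) bound is precisely that it keeps the full weight $\abs{\cdot}^{\frac{n}{2}}$ on one factor and uses $\abs{\cdot}^{-n/4}\in L^{4,\infty}$ for the other; the asymmetry is essential. There is no symmetric halving.

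Even setting that aside, the step $[v-P]_{B_{4\Lambda r},n/4}\leq C\,[v]_{B_{4\Lambda r},n/4}$ is unjustified: the invariance of $[\cdot]_{D,s}$ under polynomial shifts (Remark after Definition~\ref{def:lochnnorm}) holds only for polynomials of degree $< s$, while $P$ has degree $\lceil n/2\rceil-1$, which is $\geq n/4$ whenever $n\geq 3$. So this identification fails in precisely the dimensions where a nontrivial polynomial is needed.

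The fix is to not try to improve on Theorem~\ref{th:integrability}: apply it directly to get $\Vert H(w_1,\varphi)\Vert_{L^2}\aleq{}\Vert\lapn w_1\Vert_{L^2}$, and then invoke Lemma~\ref{la:poincmv} with $s=\tfrac{n}{2}$, $t=0$ to bound this by $[v-P]_{B_{4\Lambda r},\frac{n}{2}}=[v]_{B_{4\Lambda r},\frac{n}{2}}$ -- the invariance holds here because $P$ has degree $<\tfrac{n}{2}$. With $\tfrac{n}{4}$ replaced by $\tfrac{n}{2}$, the rest of your decomposition then gives the stated estimate.
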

\begin{proofL}{\ref{la:hvphilocest}}
We have for almost every point in $B_r \equiv B_r(x_0)$,
\[
\begin{ma}
H(v,\varphi) &=& \lapn (v \varphi) - v \lapn \varphi - \varphi \lapn v \\
&=& \lapn (\eta_{\Lambda r} v \varphi) - \eta_{\Lambda r} v \lapn \varphi - \varphi \lapn \left (\eta_{\Lambda r} v + (1-\eta_{\Lambda r}) v\right )\\
&=&  I - II - III.
\end{ma}
\]
Then we rewrite for a polynomial $P$ of order $\lceil \frac{n}{2} \rceil-1$ which we will choose below, using again that the support of $\varphi$ lies in $B_r$, so $\varphi \eta_{\Lambda r} = \varphi$ on $\R^n$,
\[
I = \lapn (\eta_{\Lambda r} (v-P) \varphi) + \lapn \brac{P \varphi}, 
\]
\[
II = \eta_{\Lambda  r} (v-P) \lapn \varphi + P \lapn \varphi,
\]
\[
III = \varphi \lapn (\eta_{\Lambda  r} (v-P)) + \varphi \lapn (\eta_{\Lambda  r} P) +   \varphi \lapn ((1-\eta_{\Lambda  r}) v ).
\]
Thus,
\[
 I-II-III = \widetilde{I} + \widetilde{II} - \widetilde{III},
\]
where
\[
\begin{ma}
\widetilde{I} &=& H(\eta_{\Lambda r} (v-P),\varphi),\\
\widetilde{II} &=& \lapn (P \varphi) - P \lapn \varphi,\\
\widetilde{III} &=& \varphi \lapn (P+(1-\eta_{\Lambda r}) (v-P) ).\\
\end{ma}
\]
Theorem~\ref{th:integrability} implies
\[
 \Vert \widetilde{I} \Vert_{L^2(\R^n)} \aleq{} \Vert \lapn (\eta_{\Lambda r} (v-P) ) \Vert_{L^2},
\] 
Proposition~\ref{pr:pvarphiest} states for $u = v$ and $s = \frac{n}{2}$ that
\[
\begin{ma}
&& \Vert \widetilde{II} \Vert_{L^2(B_r)}\\
&\aleq{}& \Vert \lapn \eta_{\Lambda   r}(v-P) \Vert_{L^2(\R^n)} + \Vert \lapn v \Vert_{L^2(B_{2\Lambda   r})} + \Lambda^{-1} \suml_{k=1}^\infty 2^{-k} \Vert \eta_{\Lambda r}^k \lapn v \Vert_{L^2(\R^n)}\\
&\aleq{}&  \Vert \lapn \eta_{\Lambda   r}(v-P) \Vert_{L^2(\R^n)} + \Vert \lapn v \Vert_{L^2(B_{2\Lambda   r})} + \Lambda^{-1} \Vert \lapn v \Vert_{L^2(\R^n)}.\\
\end{ma}
\]
It remains to estimate $\widetilde{III}$. Choose $P$ to be the polynomial such that $v-P$ satisfies the mean value condition \eqref{eq:meanvalueszero} for $N = \lceil \frac{n}{2} \rceil - 1$ and in $B_{2\Lambda r}(x_0)$.\\
We have to estimate for $\psi \in C_0^\infty(B_r)$, $\Vert \psi \Vert_{L^2} \leq 1$,
\[
\intl \widetilde{III} \psi = \intl \psi \varphi\ \lapn (P+(1-\eta_{\Lambda r})(v-P)).
\]
Note that
\[
 P+(1-\eta_{\Lambda r})(v-P) = \eta_{\Lambda r} P + (1-\eta_{\Lambda r}v) \in \Sw(\R^n),
\]
so we can write
\[
\begin{ma}
\intl \widetilde{III} \psi &=& \intl \lapn (\psi \varphi)\ P+(1-\eta_{\Lambda r})(v-P)\\
&=& \lim_{R \to \infty} \intl \lapn (\psi \varphi)\eta_R P + \intl \lapn (\psi \varphi) (1-\eta_{\Lambda r})(v-P).
\end{ma}
\]
By Remark \ref{rem:cutoffPolbdd} we have
\[
 \intl \lapn (\psi \varphi)\eta_R P = o(1) \quad \mbox{for $R \to \infty$},
\]
so in fact we only have to estimate for any $R > 1$
\[
\begin{ma}
 &&\suml_{k=1}^\infty \intl \psi\ \varphi\ \lapn (\eta_R \eta_{\Lambda r}^k (v-P))\\
&\overset{\sref{L}{la:bs:disjsuppGen}}{\aleq{}}& \suml_{k=1}^\infty (2^k \Lambda r)^{-\frac{3}{2} n}\ \Vert \varphi \Vert_{L^2}\ \Vert \eta_{\Lambda r}^k (v-P) \Vert_{L^1}\\
&\overset{\sref{L}{la:poinc}}{\aleq{}}& \suml_{k=1}^\infty (2^k \Lambda)^{-n} r^{-\frac{n}{2}}\ \Vert \eta_{\Lambda r}^k (v-P) \Vert_{L^2}\\
&=& \Lambda^{-\frac{n}{2}}\ \suml_{k=1}^\infty 2^{-\frac{n}{2} k}\ \brac{2^k \Lambda r}^{-\frac{n}{2}}\ \Vert \eta_{\Lambda r}^k (v-P) \Vert_{L^2}\\
&\overset{\sref{P}{pr:estetarkvmp}}{\aleq{}}& \Lambda^{-\frac{n}{2}}\ \suml_{k=1}^\infty 2^{-k\frac{n}{2}}(1+k)\ \Vert \lapn v \Vert_{L^2(\R^n)}\\
&\aleq{}& \Lambda^{-\frac{n}{2}} \Vert \lapn v \Vert_{L^2(\R^n)}\\
&\leq& \Lambda^{-\frac{1}{2}} \Vert \lapn v \Vert_{L^2(\R^n)}.\\
\end{ma}
\]
In order to finish the whole proof it is then only necessary to apply Lemma~\ref{la:poincmv} which implies that
\[
 \Vert \lapn \brac{\eta_{\Lambda r} (v-P)} \Vert_{L^2} \aleq{} [v-P]_{B_{4\Lambda r},\frac{n}{2}} = [v]_{B_{4\Lambda r},\frac{n}{2}}.
\]
\end{proofL}

\begin{lemma}\label{la:hwwlocest}
For any $v \in H^{\frac{n}{2}}(\R^n)$, $\varepsilon \in (0,1)$, there exists $\Lambda > 0$, $R > 0$, $\gamma > 0$ such that for all $x_0 \in \R^n$, $r < R$
\[
\begin{ma}
 &&\Vert H(v,v) \Vert_{L^2(B_r(x_0))}\\
&\leq& \varepsilon \brac{ [v]_{B_{4\Lambda r},\frac{n}{2}} + \Vert \lapn v \Vert_{L^2(B_{4\Lambda r})} }\\
&&+ C\ \Lambda^{\frac{1}{2}} \brac{ \sum_{k=1}^\infty 2^{-\gamma k} \Vert \lapn v \Vert_{L^2(A_k)}
+ \sum_{k=-\infty}^\infty 2^{-\gamma \abs{k}} [v]_{A_k,\frac{n}{2}}.
}
\end{ma}
\]
Here we set $A_k := B_{2^{k+4}4\Lambda r} \backslash B_{2^{k-1} r}$.
\end{lemma}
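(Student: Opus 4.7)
The plan is to mirror the strategy of Lemma~\ref{la:hvphilocest}, but since there is no compactly supported test function available, both arguments of $H$ will be localized by hand. Fix a large $\Lambda>4$ and let $P$ be the polynomial of degree $N=\lceil n/2\rceil-1$ for which $v-P$ satisfies the mean-value condition \eqref{eq:meanvalueszero} on $B_{4\Lambda r}(x_0)$. Setting $w:=\eta_{\Lambda r,x_0}(v-P)$ and $R:=(1-\eta_{\Lambda r,x_0})(v-P)$ one has $v=w+R+P$ globally, with $w\in H^{n/2}(\R^n)$ compactly supported in $B_{2\Lambda r}(x_0)$, $R$ vanishing on $B_{\Lambda r}(x_0)$, and $P$ a polynomial of degree strictly less than $n/2$. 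Bilinearity of $H$ then gives
\[
 H(v,v)=H(w,w)+2H(w,P)+2H(w,R)+H(P,P)+2H(P,R)+H(R,R),
\]
and each of these six pieces will be estimated on $B_r(x_0)$ by separate means.

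For the principal piece $H(w,w)$ I would apply Theorem~\ref{th:integrability} globally together with Lemma~\ref{la:poincmv} to obtain $\|H(w,w)\|_{L^2(\R^n)}\le C[v]_{B_{4\Lambda r}(x_0),n/2}^{2}$. The key observation is that, since $v\in H^{n/2}(\R^n)$, the seminorm $[v]_{A,n/2}$ is absolutely continuous with respect to the measure of $A$; hence, once $\Lambda$ has been fixed, one can shrink $R=R(v,\varepsilon,\Lambda)>0$ so much that $C[v]_{B_{4\Lambda r}(x_0),n/2}<\varepsilon$ whenever $r<R$. This produces the first of the two summands in the asserted bound, namely $\varepsilon\,[v]_{B_{4\Lambda r}(x_0),n/2}$.

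The remaining five bilinear pieces split into \emph{polynomial} terms and \emph{far-field} terms. For $H(w,P)$ and $H(P,P)$ the plan is to use the formal vanishing of $\lapn P$ (Proposition~\ref{pr:lapspol}) together with the fractional product rule Proposition~\ref{pr:lapsmonprod2} to rewrite them as finite sums $\sum_{1\le|\beta|\le N}\partial^\beta P\cdot M_\beta\lap^{(n-2|\beta|)/4}(\cdot)$ of genuinely lower-order products; each summand is then controlled by Lemma~\ref{la:lowerorderlocalest}, while the pointwise size of $\partial^\beta P$ on $B_r(x_0)$ is converted, via Lemma~\ref{la:mvestbrakShrpr}, into a convergent tail sum of $[v]_{A_k,n/2}$ terms. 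For the $R$-pieces one dyadically decomposes $R=\sum_{k\ge 1}\eta^k_{\Lambda r,x_0}(v-P)$ and, on each annulus, rewrites $\eta^k_{\Lambda r,x_0}(v-P)=\eta^k_{\Lambda r,x_0}(v-P_k)+\eta^k_{\Lambda r,x_0}(P_k-P)$, where $P_k$ is the annular mean-value polynomial of $v$. The first summand is controlled via the annular Poincar\'e inequality Lemma~\ref{la:poincmvAn}, furnishing the factor $[v]_{\tilde A_k,n/2}$; the second is again controlled by Lemma~\ref{la:mvestbrakShrpr}, supplying the decay $2^{-\gamma|k|}$. Since each piece of $R$ is supported away from $B_r(x_0)$, the disjoint-support estimate Corollary~\ref{co:bs:disjsupp} furnishes the additional negative power of $2^k\Lambda$ that, after summation, yields the two series $\sum_{k\ge 1}2^{-\gamma k}\|\lapn v\|_{L^2(A_k)}$ and $\sum_{k\in\Z}2^{-\gamma|k|}[v]_{A_k,n/2}$; all residual $\Lambda$-dependencies will then be absorbed into the single prefactor $C\Lambda^{1/2}$.

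The hardest part will be keeping the expansion of the cross-terms $H(w,P)$ and $H(P,R)$ tight enough to actually recover exponential decay in $|k|$: the polynomial $P$ has no a priori size control, so all of the decay must come from the comparison of mean-value polynomials on nested annuli supplied by Lemma~\ref{la:mvestbrakShrpr}, carefully balanced against the algebraic decay of the disjoint-support kernels and against the fractional lower-order bounds. Once this book-keeping is done, the final $\gamma>0$ will be the minimum of the rate $\tilde\gamma=\min(n-N,\gamma-N)$ appearing in Lemma~\ref{la:mvestbrakShrpr} and the algebraic disjoint-support rate coming from Corollary~\ref{co:bs:disjsupp}, and will therefore depend only on the dimension $n$.
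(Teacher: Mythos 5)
Your overall strategy---localize both arguments of $H$ via the cutoff $\eta_{\Lambda r}$ and a mean-value polynomial $P$, split $H(v,v)$ into near, far, and polynomial pieces, and treat the near piece $H(w,w)$ by Theorem~\ref{th:integrability} and Lemma~\ref{la:poincmv} together with uniform smallness of $[v]_{B_{4\Lambda r},\frac n2}$ for $r<R$---is indeed the paper's approach, and that part of your proposal is sound.

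There are, however, two genuine gaps. First, the decomposition $v=w+R+P$ with $P$ a raw polynomial and $R=(1-\eta_{\Lambda r})(v-P)$ makes several of your six pieces ill-defined: $R\sim -P$ near infinity so $R\notin L^2(\R^n)$, and $P\notin\Sw(\R^n)$, so $H(P,P)$, $H(w,P)$, $H(P,R)$, $H(R,R)$ are not covered by the definition \eqref{eq:Hdef} and cannot be freely manipulated. The paper deals with this by introducing a second cutoff $\tilde\eta_\rho$ at a very large scale $\rho$, replacing $P$ by the Schwartz function $P^\rho=\tilde\eta_\rho P$ and $R$ by $v_{-\Lambda}^\rho=\tilde\eta_\rho(1-\eta_{\Lambda r})(v-P)$, working with Schwartz functions throughout, and only at the end sending $\rho\to\infty$ (the error terms are exactly the pieces $V$ and $VI$ in the paper). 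Your proposal omits this regularization entirely.

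Second, and more substantively: your treatment of $H(P,P)$ via ``formal vanishing of $\lapn P$'' plus the product rule of Proposition~\ref{pr:lapsmonprod2} fails for $n\ge 3$. While $\lapn P$ does formally vanish (since $\deg P=N<\tfrac n2$), the quantity $H(P,P)=\lapn(P^2)-2P\lapn P$ reduces to $\lapn(P^2)$, and $P^2$ has degree up to $2N=2\lceil\tfrac n2\rceil-2$, which is $\ge\tfrac n2$ for every $n\ge 3$. Hence $\lapn(P^2)$ does \emph{not} formally vanish, and Proposition~\ref{pr:lapspol} does not apply; nor can Proposition~\ref{pr:lapsmonprod2} help, as that product rule requires the non-polynomial argument to be Schwartz, and here both arguments would be polynomials. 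The paper's crucial move---absent from your proposal---is the algebraic substitution $P^\rho\tilde\eta_\rho=v-v_\Lambda-v_{-\Lambda}^\rho\tilde\eta_\rho$ inside the term $III$, which converts the problematic commutator of $\lapn$ with $P$ against $P$ into commutators of $\lapn$ with $P$ against $v$, $v_\Lambda$ and $v_{-\Lambda}^\rho$, all of which are legitimate Schwartz inputs for the product rule and then amenable to Lemma~\ref{la:lowerorderlocalest} and Lemma~\ref{la:lowerorderlocalest2}. Without this substitution the purely polynomial piece cannot be estimated by the tools you list.
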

\begin{proof}
Let $\delta = \varepsilon \tilde{\delta} > 0 \in (0,1)$, where $\tilde{\delta}$ is a uniform constant whose value will be chosen later. Pick $\Lambda > 10$ depending on $\delta$ and $v$ such that
\begin{equation}\label{eq:loc:Lambdalapnvsmall}
 \Lambda^{-\frac{1}{2}}\ \Vert \lapn v \Vert_{L^2(\R^n)} \leq \delta.
\end{equation}
Depending on $\delta$ and $\Lambda$ choose $R > 0$ so small such that
\begin{equation}\label{eq:loc:vbLambdasmall} 
 [v]_{B_{10\Lambda r}(x_0),\frac{n}{2}} + \Vert \lapn v \Vert_{L^2(B_{10\Lambda r}(x_0))} \leq \delta,\quad \mbox{for all $x_0 \in \R^n$, $r < R$}.
\end{equation}
We can assume that $v \in C_0^\infty(\R^n)$. In fact, by Lemma~\ref{la:Tartar07:Lemma15.10} we can approximate $v$ in $\Hf(\R^n)$ by $v_k \in C_0^\infty(\R^n)$ such that \eqref{eq:loc:Lambdalapnvsmall} and \eqref{eq:loc:vbLambdasmall} are fulfilled for any $v_k$ with $2\delta$ instead of $\delta$ which is a uniform constant \emph{only} depending on $\varepsilon$.
Here one uses that
\[
 [v_k-v]_{\R^n,\frac{n}{2}} \overset{\sref{P}{pr:equivlaps}}{=} \Vert \lapn (v_k - v) \Vert_{L^2} \xrightarrow{k \to \infty} 0.
\]
By Theorem~\ref{th:integrability} and the bilinearity of $H(\cdot,\cdot)$,
\[
 \Vert H(v_k,v_k) - H(v,v)\Vert_{L^2(\R^n)} \xrightarrow{k \to \infty} 0.
\]
So both sides of the claim for $v_k$ converge to the respective sides of the claim for $v$, whereas the constants stay the same.\\
From now on let $r \in (0,R)$ and $x_0 \in \R^n$ be arbitrarily fixed and denote $B_r \equiv B_r(x_0)$. Set $P \equiv P_\Lambda \equiv P_{B_{2\Lambda r}}(v)$ the polynomial of degree $N := \lceil \frac{n}{2} \rceil - 1$ such that the mean value condition \eqref{eq:meanvalueszero} holds on $B_{2\Lambda r}(x_0)$. We denote $\eta_{\Lambda r} \equiv \eta_{\Lambda r, x_0}$ and $\tilde{\eta}_\rho := \eta_{\rho,0}$.\\
As $P$ is not a function in $\Sw(\R^n)$, we ``approximate'' it by $P^\rho := \tilde{\eta}_{\rho} P$, $\rho > \rho_0$ where we choose $\rho_0 > 2\max\{2\Lambda r+\abs{x_0},1\}$ such that $B_{\frac{1}{2}\rho_0}(0) \supset \supp v$. Note that in particular, we only work with $\rho > 0$ such that
\[
 \tilde{\eta}_\rho \equiv 1 \quad \mbox{on $\supp \eta_{2\Lambda r,x_0} \cup \supp v$, for all $\rho > \rho_0$}.
\]
Then,
\begin{equation} \label{eq:hvvest:v1}
 v = \tilde{\eta}_\rho v = \eta_{\Lambda r} (v- P) + \tilde{\eta}_\rho(1-\eta_{\Lambda r}) (v-P) + P^\rho =: v_\Lambda + v^\rho_{-\Lambda} + P^\rho. 
\end{equation}
Observe that all three terms on the right-hand side are functions of $\Sw(\R^n)$. %
%
We have
\begin{equation}\label{eq:hvvest:v2}
 v^2 = (v_\Lambda)^2 + (v_{-\Lambda}^\rho)^2 + \left (P^\rho \right )^2 + 2 v_\Lambda\ v^\rho_{-\Lambda} + 2 \left (v_\Lambda+ v^\rho_{-\Lambda}\right )\ P^\rho.
\end{equation}
As we want to estimate $H(v,v)$ on $B_r \equiv B_r(x_0)$, we are going to rewrite $H(v,v)\varphi$ for an arbitrary $\varphi \in C_0^\infty(B_r)$, such that $\Vert \varphi \Vert_{L^2(\R^n)} \leq 1$. For any $\rho > \rho_0$ (with the goal of letting $\rho \to \infty$ in the end), we will use the following facts
\[
 \varphi P^\rho = \varphi P,\quad v_\Lambda P^\rho = v_\Lambda P, \quad \varphi v^\rho_{-\Lambda} = 0.
\]
Now we start the rewriting process:
\[
\begin{ma}
 &&H(v,v) \varphi\\
&=& \left ( \lapn \brac{v^2} - 2 v \lapn v \right ) \varphi\\

&\overset{\eqref{eq:hvvest:v2}}{=}& \Big ( \lapn (v_\Lambda)^2 + \lapn (v^\rho_{-\Lambda})^2 + \lapn \left (P^\rho \right )^2 \\
&&+ 2 \lapn \left (v_\Lambda\ v^\rho_{-\Lambda}\right ) + 2 \lapn \left (\left (v_\Lambda+ v^\rho_{-\Lambda}\right )\ P^\rho\right )\\
&& - 2 v_\Lambda \lapn v_\Lambda - 2 v_\Lambda \lapn v^\rho_{-\Lambda} - 2 v_\Lambda \lapn P^\rho\\
&& - 2 P^\rho \lapn \brac{v_\Lambda + v^\rho_{-\Lambda}} - 2 P^\rho \lapn P^\rho \Big ) \varphi\\

&=& 
H(v_\Lambda,v_\Lambda) \varphi\\
&&+ 2 \left (\lapn \left (\left (v_\Lambda+ v^\rho_{-\Lambda}\right )\ P^\rho\right ) - P\ \lapn \left (v_\Lambda + v^\rho_{-\Lambda} \right ) \right ) \varphi\\
&&+ \brac{\lapn \left (P^\rho \right )^2 }\varphi\\
&&+ \left (\lapn (v_{-\Lambda}^\rho)^2 + 2 \lapn \left (v_\Lambda\ v_{-\Lambda}^\rho\right ) - 2 v_\Lambda \lapn v_{-\Lambda}^\rho\right )\varphi\\
&& - 2\left ( P\ \lapn P^\rho + v_{\Lambda} \lapn P^\rho \right )\varphi.\\
\end{ma}
\]
Now we add and substract terms, that vanish for $\rho \to \infty$, and arrive at
\[
\begin{ma}
 &&H(v,v) \varphi\\
&=& 
H(v_\Lambda,v_\Lambda) \varphi\\
&&+\ 2\ \left (\lapn \left (\left (v_\Lambda+ v^\rho_{-\Lambda}\right )\ P\right ) - P\ \lapn \left (v_\Lambda + v^\rho_{-\Lambda} \right ) \right ) \varphi\\
&&+\brac{\lapn \brac{\brac{\tilde{\eta}_\rho}^2 P P} - P \lapn \brac{\brac{\tilde{\eta}_\rho}^2 P} }\varphi\\
&&+ \left (\lapn (v_{-\Lambda}^\rho)^2 + 2 \lapn \left (v_\Lambda\ v_{-\Lambda}^\rho\right ) - 2 v_\Lambda \lapn v_{-\Lambda}^\rho\right )\varphi\\
&& +\left (P\ \lapn \brac{\brac{\tilde{\eta}_\rho}^2P} - 2\ P\ \lapn P^\rho - 2\ v_{\Lambda} \lapn P^\rho \right ) \varphi\\
&&  +\ 2\ \lapn \brac{v^\rho_{-\Lambda}(\tilde{\eta}_\rho -1)P}\ \varphi\\
&=:& \left (I + II + III + IV + V + VI\right ) \varphi. 
\end{ma}
\]
First we treat the terms $V$ and $VI$ which will be the parts vanishing for $\rho \to \infty$. \underline{As for $V$}, we have by Remark \ref{rem:cutoffPolbdd} using also that $\rho > 1$,
\[
 \Vert \lapn P^\rho \Vert_{L^\infty(\R^n)} \leq C_{r,\Lambda,v,x_0}\ \rho^{N-\frac{n}{2}} \leq C_{r,\Lambda,v,x_0} \rho^{-\frac{1}{2}},
\]
and by an analogous method one can see that the following holds, too:
\[
\Vert \lapn \brac{\brac{\tilde{\eta}_\rho}^2P} \Vert_{L^\infty(\R^n)} \leq C_{r,\Lambda,v,x_0}\ \rho^{N-\frac{n}{2}} \leq C_{r,\Lambda,v,x_0} \rho^{-\frac{1}{2}}.
\]
Consequently,
\[
 \Vert V \Vert_{L^2(B_r)} \leq C_{r,x_0,v,\Lambda} \rho^{-\frac{1}{2}}.
\]
Next, \underline{as for $VI$}, the product rule for polynomials, Proposition~\ref{pr:lapsmonprod2} for $M = Id$, $\varphi = v_{-\Lambda}^\rho (\tilde{\eta}_\rho -1) \in \Sw(\R^n)$, implies that for some zero-multiplier operator $M_\beta$,
\[
 \lapn \brac{v^\rho_{-\Lambda}(\tilde{\eta}_\rho -1)P} = \sum_{\abs{\beta} \leq N} \partial^\beta P\ M_{\beta} \lap^{\frac{n-2\abs{\beta}}{4}} \brac{v^\rho_{-\Lambda}(\tilde{\eta}_\rho -1)}.
\]
As a consequence, using that $P$ is a polynomial with coefficients depending on $\Lambda, r, v, x_0$,
\[
 \Vert VI \Vert_{L^2(B_r)} \leq C_{v,r,x_0,\Lambda} \sum_{\abs{\beta} \leq N} \Vert M_\beta \lap^{\frac{n-2\abs{\beta}}{4}} \brac{v^\rho_{-\Lambda}(\tilde{\eta}_\rho -1)} \Vert_{L^2(B_r)}.
\]
Now we use the disjoint support lemma, Lemma~\ref{la:bs:disjsuppGen}, to estimate for some $k_0 = k_0(\rho,x_0,\Lambda) \geq 1$ tending to $\infty$ as $\rho \to \infty$,
\[
\begin{ma}
 &&\Vert M_\beta \lap^{\frac{n-2\abs{\beta}}{4}} \brac{v^\rho_{-\Lambda}(\tilde{\eta}_\rho -1)} \Vert_{L^2(B_r)}\\
&\leq& \sum_{k=k_0}^\infty  \Vert M_\beta \lap^{\frac{n-2\abs{\beta}}{4}} \brac{\eta_{\Lambda r,x_0}^k (v-P) \brac{\tilde{\eta}_\rho (1-\tilde{\eta}_\rho)}} \Vert_{L^2(B_r)}\\
&\overset{\sref{L}{la:bs:disjsuppGen}}{\leq}& C_{r,\Lambda} \sum_{k=k_0}^\infty 2^{-k(n - \abs{\beta})} \Vert \brac{\eta_{\Lambda r,x_0}^k (v-P)} \Vert_{L^2(\R^n)}\\
&\overset{\sref{P}{pr:estetarkvmp}}{\leq}& C_{r,\Lambda} \sum_{k=k_0}^\infty 2^{-k(\frac{n}{2} - N)} (1+\abs{k})\ \Vert \lapn v \Vert_{L^2(\R^n)}.
\end{ma}
\]
As $N < \frac{n}{2}$, we have proven that
\[
 \Vert V \Vert_{L^2(B_r(x_0))} + \Vert VI \Vert_{L^2(B_r(x_0))} = o(1) \quad \mbox{for $\rho \to \infty$}.
\]
Next, \underline{we treat $I$}. By Theorem~\ref{th:integrability} and Lemma~\ref{la:poincmv} we have
\[
 \Vert I \Vert_{L^2(B_r)} \aleq{} \Vert \lapn v_\Lambda \Vert_{L^2(\R^n)}^2 \aleq{} \brac{[v]_{B_{4\Lambda r},\frac{n}{2}}}^2 \overset{\eqref{eq:loc:vbLambdasmall}}{\aleq{}} \delta\ [v]_{B_{4\Lambda r},\frac{n}{2}}.
\]
\underline{As for $II$}, by Proposition~\ref{pr:lapsmonprod2}, for any $w \in \Sw(\R^n)$ 
\[
 \begin{ma} 
&&\varphi \brac{\lapn (w\ P) - P \lapn w}\\
&=& \varphi \ \sum_{1 \leq \abs{\beta} \leq N} \partial^\beta P\ M_\beta \lap^{\frac{n-2\abs{\beta}}{4}} w\\
&\overset{\supp \varphi}{=}& \varphi \sum_{1 \leq \abs{\beta} \leq N}  \brac{\partial^\beta \brac{\eta_{\Lambda r}(P-v)}\ M_\beta \lap^{\frac{n-2\abs{\beta}}{4}} w + \partial^\beta v\ M_\beta \lap^{\frac{n-2\abs{\beta}}{4}} w},\\
\end{ma}
\]
so
\[
 \Vert II \Vert_{L^2(B_r)} \leq \sum_{1 \leq \abs{\beta} \leq N} II^\beta_{1,\Lambda} + II^\beta_{2,\Lambda} + II^\beta_{1,-\Lambda} + II^\beta_{2,-\Lambda},
\]
where 
\[
\begin{ma}
 II^\beta_{1,\Lambda} &=& \Vert \partial^\beta \brac{\eta_{\Lambda r}(P-v)}\  M_\beta \lap^{\frac{n-2\abs{\beta}}{4}} v_\Lambda \Vert_{L^2(B_r)}\\
&=& \Vert \partial^\beta v_\Lambda\  M_\beta \lap^{\frac{n-2\abs{\beta}}{4}} v_\Lambda \Vert_{L^2(B_r)},\\
 II^\beta_{2,\Lambda} &=& \Vert \partial^\beta v\  M_\beta \lap^{\frac{n-2\abs{\beta}}{4}} v_\Lambda \Vert_{L^2(B_r)},\\
 II^\beta_{1,-\Lambda} &=& \Vert \partial^\beta \brac{\eta_{\Lambda r}(P-v)}\  M_\beta \lap^{\frac{n-2\abs{\beta}}{4}} v^\rho_{-\Lambda} \Vert_{L^2(B_r)}\\
 &=&\Vert \partial^\beta v_\Lambda\  M_\beta \lap^{\frac{n-2\abs{\beta}}{4}} v^\rho_{-\Lambda} \Vert_{L^2(B_r)},\\
II^\beta_{2,-\Lambda} &=& \Vert \partial^\beta v\  M_\beta \lap^{\frac{n-2\abs{\beta}}{4}} v^\rho_{-\Lambda} \Vert_{L^2(B_r)}.
\end{ma}
\]
Observe that all the operators involved are of order strictly between $(0,\frac{n}{2})$. Consequently, by Proposition~\ref{pr:lowerorderest} and Poincar\'e's inequality, Lemma~\ref{la:poincmv},
\[
\begin{ma}
 II^\beta_{1,\Lambda} &\aleq{}& \Vert \lapn \brac{\eta_{\Lambda r}(P-v)} \Vert_{L^2(\R^n)}\ \Vert \lapn v_\Lambda \Vert_{L^2(\R^n)}\\
&\aleq{}& \brac{[v]_{B_{4\Lambda r},\frac{n}{2}}}^2\\
&\overset{\eqref{eq:loc:vbLambdasmall}}{\aleq{}}& \delta\ [v]_{B_{4\Lambda r},\frac{n}{2}}.
\end{ma}
\]
By Lemma~\ref{la:lowerorderlocalest} and Poincar\'e's inequality, Lemma~\ref{la:poincmv},
\[
\begin{ma}
 II^\beta_{2,\Lambda} &\aleq{}& \Vert \lapn v_\Lambda \Vert_{L^2} \brac{\Vert \lapn v \Vert_{L^2(B_{2\Lambda r})} + \Lambda^{\frac{n}{2}-\abs{\beta}}\sum_{k=1}^\infty 2^{-\frac{k}{2}} \Vert \eta_{\Lambda r}^{k} \lapn v \Vert_{L^2}}\\
&\aleq{}& [v]_{B_{4\Lambda r},\frac{n}{2}}\ \brac{\Vert \lapn v \Vert_{L^2(B_{4\Lambda r})} + \Lambda^{-\frac{1}{2}} \Vert \lapn v \Vert_{L^2}}\\
&\overset{\ontop{\eqref{eq:loc:vbLambdasmall}}{\eqref{eq:loc:Lambdalapnvsmall}}}{\aleq{}}& \delta\ \brac{\Vert \lapn v \Vert_{L^2(B_{4\Lambda r})} + [v]_{B_{4\Lambda r},\frac{n}{2}}}.
\end{ma}
\]
As for $II^\beta_{2,-\Lambda}$ and $II^\beta_{1,-\Lambda}$, we estimate for any $w \in \Sw(\R^n)$, 
\[
\begin{ma}
&&\Vert \partial^\beta w\  M_\beta \lap^{\frac{n-2\abs{\beta}}{4}} v^\rho_{-\Lambda} \Vert_{L^2(B_r)}\\
&\aleq{}&
\sum_{k=1}^\infty \Vert \partial^\beta\lapmn \brac{\eta_{4 r} \lapn w}\  M_\beta \lap^{\frac{n-2\abs{\beta}}{4}} \eta_{\Lambda r}^k (v-P) \tilde{\eta}_\rho \Vert_{L^2(B_r)}\\
&&+ \sum_{l,k=1}^\infty \Vert \partial^\beta\lapmn \brac{\eta_{4 r}^l \lapn w}\  M_\beta \lap^{\frac{n-2\abs{\beta}}{4}} \eta_{\Lambda r}^k (v-P) \tilde{\eta}_\rho \Vert_{L^2(B_r)}\\
&=:& \Sigma_1+ \Sigma_2.
\end{ma}
\]
We first concentrate on $\Sigma_1$. As before, by Lemma~\ref{la:bs:disjsuppGen} and using that $1 \leq \abs{\beta} < \frac{n}{2}$,
\[
\begin{ma}
 &&\Vert \partial^\beta\lapmn \brac{\eta_{4 r} \lapn w}\  M_\beta \lap^{\frac{n-2\abs{\beta}}{4}} \eta_{\Lambda r}^k (v-P) \tilde{\eta}_\rho \Vert_{L^2(B_r)}\\ 
&\aleq{}& \brac{2^k \Lambda r}^{-\frac{3}{2}n+\abs{\beta}} \Vert \partial^\beta\lapmn \brac{\eta_{4 r} \lapn w}\Vert_{L^2}\ \Vert \eta_{\Lambda r}^k (v-P) \Vert_{L^1}\\
&\overset{\sref{L}{la:lapmsest2}}{\aleq{}}& \brac{2^k \Lambda  r}^{-n+\abs{\beta}}  (4 r)^{\frac{n}{2}-\abs{\beta}} \Vert \eta_{4 r} \lapn w\Vert_{L^2}\ \Vert \eta_{\Lambda r}^k (v-P) \Vert_{L^2}\\
&\aeq{}& \Lambda^{\abs{\beta}-\frac{n}{2}}\ \Vert \eta_{4 r} \lapn w \Vert_{L^2}\ 2^{(\abs{\beta}-n)k}\ (\Lambda r)^{-\frac{n}{2}}  \Vert \eta_{\Lambda r}^k (v-P) \Vert_{L^2}.
\end{ma}
\]
Thus, by Proposition~\ref{pr:estetarkvmp} and as $\abs{\beta} < \frac{n}{2}$ (making $\sum_{k > 0} k\  2^{-k(\frac{n}{2}-\abs{\beta})}$ convergent),
\[
\begin{ma} 
\Sigma_1&\aleq{}& \Lambda^{\abs{\beta}-\frac{n}{2}} \Vert \eta_{4 r} \lapn w\Vert_{L^2}\ \Vert \lapn v \Vert_{L^2}\\
&\aleq{}& \Lambda^{-\frac{1}{2}} \Vert \lapn w \Vert_{L^2(B_{4\Lambda r})}\ \Vert \lapn v \Vert_{L^2(\R^n)}\\
&\overset{\eqref{eq:loc:Lambdalapnvsmall}}{\aleq{}}& \delta\ \Vert \lapn w \Vert_{L^2(B_{4\Lambda r})}.
\end{ma}
\]
For the estimate of $\Sigma_2$ we observe
\[
\begin{ma} 
&&\Vert \partial^\beta\lapmn \brac{\eta_{4 r}^l \lapn w}\  M_\beta \lap^{\frac{n-2\abs{\beta}}{4}} \eta_{\Lambda r}^k (v-P) \tilde{\eta}_\rho \Vert_{L^2(B_r)}\\
&\overset{\sref{L}{la:bs:disjsuppGen}}{\aleq{}}& (2^l r)^{-\frac{n}{2}-\abs{\beta}}\ \Vert \brac{\eta_{4 r}^l \lapn w} \Vert_{L^1}\ \Vert M_\beta \lap^{\frac{n-2\abs{\beta}}{4}} \eta_{\Lambda r}^k (v-P) \tilde{\eta}_\rho\Vert_{L^2(B_r)}\\
&\overset{\sref{L}{la:bs:disjsuppGen}}{\aleq{}}& (2^l  r)^{-\frac{n}{2}-\abs{\beta}}\  \Vert \brac{\eta_{4 r}^l \lapn w} \Vert_{L^1}\ \brac{2^k \Lambda r}^{-\frac{3}{2}n+\abs{\beta}} \Vert \eta_{\Lambda r}^k (v-P) \Vert_{L^1}\ r^{\frac{n}{2}}\\
&\aleq{}& r^{-\frac{n}{2}}\ 2^{-\abs{\beta}l}\ \Vert \brac{\eta_{4 r}^l \lapn w} \Vert_{L^2}\ \brac{2^k \Lambda}^{-n+\abs{\beta}} \Vert \eta_{\Lambda r}^k (v-P) \Vert_{L^2}.
\end{ma}
\]
Summing first over $k$ and then over $l$, using again Proposition~\ref{pr:estetarkvmp} and that $\abs{\beta} \in [1,N]$,
\[
\begin{ma} 
\Sigma_2 &\aleq{}& \Lambda^{-\frac{n}{2}+N}\ \sum_{l=1}^\infty 2^{-l} \Vert \eta_{4 r}^l \lapn w \Vert_{L^2}\ \Vert \lapn v \Vert_{L^2}\\
&\overset{\eqref{eq:loc:Lambdalapnvsmall}}{\aleq{}}& \delta\ \sum_{l=1}^\infty 2^{-l} \Vert \eta_{4 r}^l \lapn w \Vert_{L^2}. 
\end{ma}
\]
So we have shown that
\[
\begin{ma}
&&\Vert \partial^\beta w\  M_\beta \lap^{\frac{n-2\abs{\beta}}{4}} v_{-\Lambda}^\rho \Vert_{L^2(B_r)}\\
&\aleq{}&\delta\ \sum_{l=1}^\infty 2^{-l} \Vert \eta_{4 r}^l \lapn w \Vert_{L^2} + \delta \Vert \lapn w \Vert_{L^2(B_{4\Lambda r})}\\
&\aleq{}&\delta \Vert \lapn w \Vert_{L^2(\R^n)}.
\end{ma}
\]
Setting $w = v$ in the case of $II^\beta_{2,-\Lambda}$ and $w = v_\Lambda$ in the case of $II^\beta_{1,-\Lambda}$, this implies
\[
II^\beta_{1,-\Lambda} \aleq{} \delta \Vert \lapn v_\Lambda \Vert_{L^2} \aleq{} \delta\ [v]_{B_{4\Lambda r},\frac{n}{2}},
\]
and
\[
II^\beta_{2,-\Lambda} \aleq{} \sum_{l=1}^\infty 2^{-l} \Vert \lapn v \Vert_{L^2(A_l)} + \delta \Vert \lapn v \Vert_{L^2(B_{4\Lambda r)}}.
\]
\underline{As for $III$}, using yet again \eqref{eq:hvvest:v1}, we have
\[
 P_\rho \tilde{\eta}_\rho = v - v_{\Lambda} - v_{-\Lambda}^\rho \tilde{\eta}_\rho.
\]
As a consequence, we can rewrite
\[
\begin{ma}
 III &=& \brac{\lapn \brac{\brac{\tilde{\eta}_\rho}^2 P P} - P \lapn \brac{\brac{\tilde{\eta}_\rho}^2 P} }\varphi\\
&=& \brac{\lapn \brac{\brac{v - v_{\Lambda} - v_{-\Lambda}^\rho \tilde{\eta}_\rho}P} - P \lapn \brac{v - v_{\Lambda} - v_{-\Lambda}^\rho \tilde{\eta}_\rho}}\varphi.
\end{ma}
\]
Thus, the only part we have not estimated already in $II$ (or which is estimated exactly as in $II$, as the term containing $v^\rho_{-\Lambda} \tilde{\eta}_\rho$) is
\[
 \lapn \brac{v P} - P\lapn v.
\]
Again by Proposition~\ref{pr:lapsmonprod2}, this is decomposed into terms of the following form (for $1 \leq \abs{\beta} \leq N$)
\[
\begin{ma}
 &&\partial^\beta P\ M_\beta \lap^{\frac{n-2\abs{\beta}}{4}} v\\
&=& -\partial^\beta \brac{(v-P)(1-\eta_{\Lambda r})}\ M_\beta \lap^{\frac{n-2\abs{\beta}}{4}} v\\
&&- \partial^\beta \brac{(v-P)\eta_{\Lambda r}}\ M_\beta \lap^{\frac{n-2\abs{\beta}}{4}} v\\
&&+ \partial^\beta v\ M_\beta \lap^{\frac{n-2\abs{\beta}}{4}} v\\
&=:& III_1 + III_2 + III_3.
\end{ma}
\]
Of course,
\[
 \Vert III_1 \Vert_{L^2(B_r)} = 0.
\]
By Lemma~\ref{la:lowerorderlocalest},
\[
\begin{ma}
&&\Vert III_2 \Vert_{L^2(B_r)}\\
&\aleq{}& \Vert \lapn (v-P)\eta_{\Lambda r} \Vert_{L^2} \brac{ \Vert \lapn v \Vert_{L^2(B_{2\Lambda r})} + \Lambda^{-\frac{1}{2}} \sum_{k=1}^\infty 2^{-\frac{k}{2}} \Vert \lapn v \Vert_{L^2(A_k)}}\\
&\overset{\sref{L}{la:poincmv}}{\aleq{}}& [v]_{\frac{n}{2},4\Lambda r} \brac{ \Vert \lapn v \Vert_{L^2(B_{2\Lambda r})} + \sum_{k=1}^\infty 2^{-\frac{k}{2}} \Vert \lapn v \Vert_{L^2(A_k)}}\\
&\overset{\eqref{eq:loc:vbLambdasmall}}{\aleq{}}& \delta [v]_{\frac{n}{2},4\Lambda r} + \delta \sum_{k=1}^\infty 2^{-\frac{k}{2}} \Vert \lapn v \Vert_{L^2(A_k)}.
\end{ma}
\]
And by Lemma~\ref{la:lowerorderlocalest2} and \eqref{eq:loc:vbLambdasmall},
\[
 \Vert III_3 \Vert_{L^2(B_r)} \aleq{} \delta \Vert \lapn v \Vert_{L^2(B_{4\Lambda r})} + \sum_{k=1}^\infty 2^{-\frac{k}{2}} \Vert \lapn v \Vert_{L^2(A_k)}.
\]
Finally, \underline{we have to estimate $IV$}. Set
\[
 \tilde{A}_k := B_{2^{k+4}\Lambda r} \backslash B_{2^{k-4}\Lambda r}.
\]
Using Lemma~\ref{la:bs:disjsuppGen} the first term is done as follows (setting $P_k$ to be the polynomial of order $N$ where $v-P_k$ satisfies \eqref{eq:meanvalueszero} on $B_{2^{k+1} \Lambda r} \backslash B_{2^{k-1} \Lambda r}$)
\[
 \begin{ma} 
&&\Vert \lapn \brac{\eta_{\Lambda r}^k (1-\eta_{\Lambda r})\brac{\tilde{\eta}_{\rho}}^2 (v-P)^2} \Vert_{L^2(B_r)}\\
&\aleq{}& 2^{-k\frac{3}{2}n} \Lambda^{-\frac{3}{2}n} r^{-n}  \Vert \sqrt{\eta_{\Lambda r}^k} (v-P) \Vert_{L^2}^2\\
&\aleq{}& 2^{-k\frac{3}{2}n} \Lambda^{-\frac{3}{2}n} r^{-n}  \brac{\Vert \sqrt{\eta_{\Lambda r}^k} (v-P_k) \Vert_{L^2}^2 + 2^{nk} (\Lambda r)^n \Vert \sqrt{\eta_{\Lambda r}^k} (P-P_k) \Vert_{L^\infty}^2}\\
&\overset{\sref{L}{la:poincmvAn}}{\aleq{}}& 2^{-k\frac{3}{2}n} \Lambda^{-\frac{3}{2}n} r^{-n}  \brac{(2^k\Lambda r)^n \brac{[v]_{\tilde{A}_k,\frac{n}{2}}}^2  + 2^{nk} (\Lambda r)^n \Vert \sqrt{\eta_{\Lambda r}^k} (P-P_k) \Vert_{L^\infty}^2}\\  
&\overset{\sref{P}{pr:etarkpbmpkest}}{\aleq{}}& \Lambda^{-\frac{n}{2}}\ 2^{-k\frac{n}{2}}\ \brac{\brac{[v]_{\tilde{A}_k,\frac{n}{2}}}^2  + k\Vert \sqrt{\eta_{\Lambda r}^k} (P-P_k) \Vert_{L^\infty}\ \Vert \lapn v \Vert_{L^2}}\\  
&\aleq{}& \Lambda^{-\frac{n}{2}}\ 2^{-k\frac{n-\frac{1}{4}}{2}}\ \brac{\brac{[v]_{\tilde{A}_k,\frac{n}{2}}}^2  + \Vert \sqrt{\eta_{\Lambda r}^k} (P-P_k) \Vert_{L^\infty} \Vert \lapn v \Vert_{L^2}}.  
 \end{ma}
\]
Note that as $\frac{n}{2}-\frac{1}{8} > \lceil \frac{n}{2} \rceil -1$, on the one hand Lemma~\ref{la:mvestbrakShrpr} is applicable and on the other hand we have by Proposition~\ref{pr:equivlaps}
\[
 \sum_{k=1}^\infty 2^{-k \frac{n-\frac{1}{4}}{2}} \brac{[v]_{\tilde{A}_k,\frac{n}{2}}}^2 \aleq{} \Vert \lapn v \Vert_{L^2(\R^n)} \sum_{k=1}^\infty 2^{-k \frac{n-\frac{1}{4}}{2}} [v]_{\tilde{A}_k,\frac{n}{2}}.
\]
Consequently, we have for some $\gamma > 0$
\[
\begin{ma}
 \Vert \lapn (v^\rho_{-\Lambda})^2 \Vert_{L^2(B_r)} &\aleq{}& \brac{1+\Vert \lapn v \Vert_{L^2}}\ \sum_{k=-\infty}^\infty 2^{-\gamma \abs{k}} [v]_{\tilde{A}_k,\frac{n}{2}}\\
 &\overset{\eqref{eq:loc:Lambdalapnvsmall}}{\aleq{}}& \Lambda^{\frac{1}{2}}\ \sum_{k=-\infty}^\infty 2^{-\gamma \abs{k}} [v]_{\tilde{A}_k,\frac{n}{2}}. 
\end{ma}
\]
For the next term in $IV$, using the disjoint support as well as Poincar\'{e}'s inequality, Lemma~\ref{la:poinc} and Lemma~\ref{la:poincmv}, and the estimate on mean value polynomials, Proposition~\ref{pr:estetarkvmp}, and as 
\[
v_\Lambda v_{-\Lambda}^\rho = \sum_{k=1}^3 v_\Lambda \brac{\eta_{\Lambda r}^k \tilde{\eta}_\rho\ (v-P)},
\]
we can estimate
\[
\begin{ma}
&&\Vert \lapn \brac{v_\Lambda\ v_{-\Lambda}^\rho }\Vert_{L^2(B_r)}\\
&\overset{\sref{L}{la:bs:disjsuppGen}}{\leq}&
\sum_{k=1}^3\ \brac{2^k \Lambda r}^{-\frac{3}{2} n} \Vert v_\Lambda \Vert_{L^2}\ \Vert \eta_{\Lambda r}^k (v-P)\Vert_{L^2}\ r^{\frac{n}{2}}\\ 
&\overset{\sref{L}{la:poinc}}{\aleq{}}&
\sum_{k=1}^3\ \brac{2^k \Lambda r}^{-\frac{3}{2} n}\ \brac{\Lambda r}^{\frac{n}{2}} \Vert \lapn v_\Lambda \Vert_{L^2}\ \Vert \eta_{\Lambda r}^k (v-P)\Vert_{L^2}\ r^{\frac{n}{2}}\\ 
&\overset{\ontop{\sref{L}{la:poincmv}}{\sref{P}{pr:estetarkvmp}}}{\aleq{}}&\Lambda^{-\frac{n}{2}} [v]_{B_{4\Lambda r},\frac{n}{2}}\ \Vert \lapn v \Vert_{L^2(\R^n)}\\
&\overset{\eqref{eq:loc:Lambdalapnvsmall}}{\aleq{}}& \delta\ [v]_{B_{4\Lambda r},\frac{n}{2}}. 
\end{ma}
\]
Last but not least, 
\[
\begin{ma}
 &&\Vert v_\Lambda \lapn \eta_{\Lambda r}^k (v-P)\tilde{\eta}_\rho \Vert_{L^2(B_r)}\\
 &\overset{\sref{L}{la:bs:disjsuppGen}}{\aleq{}}& (2^k \Lambda r)^{-n} \Vert v_\Lambda \Vert_{L^2}\ \Vert \eta_{\Lambda r}^k (v-P) \Vert_{L^2}\\
 &\overset{\ontop{\sref{L}{la:poinc}}{\sref{L}{la:poincmv}}}{\aleq{}}& 2^{-nk} \brac{\Lambda r}^{-\frac{n}{2}} [v]_{B_{4\Lambda r},\frac{n}{2}}\ \Vert \eta_{\Lambda r}^k (v-P) \Vert_{L^2}\\
 &\overset{\eqref{eq:loc:vbLambdasmall}}{\aleq{}}& 2^{-k\frac{n}{2}} \delta \brac{ \brac{2^{k} \Lambda r}^{-\frac{n}{2}} \Vert \eta_{\Lambda r}^k (v-P_k) \Vert_{L^2} +  \Vert \eta_{\Lambda r}^k (P - P_k) \Vert_{L^\infty}}\\
 &\overset{\sref{L}{la:poincmvAn}}{\aleq{}}& \delta \brac{2^{-\frac{n}{2}k}\ [v]_{A_k,\frac{n}{2}} + 2^{-\frac{n}{2}k} \Vert \eta_{\Lambda r}^k \brac{ P-P_k} \Vert_{L^\infty} }.
 \end{ma}
\]
Again, as $\frac{n}{2} > N$, Lemma~\ref{la:mvestbrakShrpr} implies that for some $\gamma > 0$.
\[
 \Vert v_\Lambda \lapn v_{-\Lambda} \Vert_{L^2(B_r)} \aleq{} \sum_{k=-\infty}^\infty 2^{-\gamma\abs{k}} [v]_{A_k,\frac{n}{2}}.
\]
We conclude by taking $\delta = \tilde{\delta} \varepsilon$ for a uniformly small $\tilde{\delta} > 0$ which does \emph{not} depend on $\Lambda$ or $\Vert \lapn v \Vert_{L^2}$.
\end{proof}

\begin{small}\end{small}\section{Euler-Lagrange Equations}\label{sec:eleqn}
As in \cite{DR09Sphere} we will have two equations controlling the behavior of a critical point of $E_n$. First of all, we are going to use a different structure equation: Obviously, for any $u \in H^{\frac{n}{2}}(\R^n,\R^m)$ with $u(x) \in \S^{m-1}$ almost everywhere on a domain $D \subset \R^n$, we have for $w := \eta u$, $\eta \in C_0^\infty(D)$,
\[
 \sum_{i=1}^m w^i \lapn w^i = - \frac{1}{2} \sum_{i=1}^m H(w^i,w^i) + \lapn \eta,
\]
or in the contracted form
\begin{equation}\label{eq:structureeq}
 w \cdot \lapn w = -\frac{1}{2} H(w,w) + \lapn \eta.
\end{equation}

%
%
The Euler-Lagrange Equations are computed similar as in \cite{DR09Sphere}, \cite{Hel02}.
\begin{proposition}[Localized Euler-Lagrange Equation]\label{pr:eleq}
Let $\eta \in C_0^\infty(D)$ and $\eta \equiv 1$ on an open neighborhood of some ball $\tilde{D} \subset D$.\\
Let $u \in \Hf(\R^n,\R^m)$ be a critical point of $E_n(\cdot)$ on $D$, cf. Definition \ref{def:critpt}. Then $w := \eta u$ satisfies for every $\psi_{ij} \in C_0^\infty(\tilde{D})$, such that $\psi_{ij} = - \psi_{ji}$,
\begin{equation}\label{eq:ELeq}
 -\intl_{\R^n} w^i\ \lapn w^j\ \lapn \psi_{ij} = - \intl_{\R^n} a_{ij} \psi_{ij} + \intl_{\R^n} \lapn w^j\ H(w^i,\psi_{ij}).
\end{equation}
Here $a_{ij} \in L^2(\R^n)$ depends on the choice of $\eta$.
\end{proposition}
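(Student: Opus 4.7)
The plan is to derive the Euler--Lagrange identity by a standard tangent-variation argument and then to localize by replacing $u$ with $w=\eta u$, absorbing the resulting nonlocal remainders into an $L^2$ function $a_{ij}$.

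First, I would choose the variation $\varphi^j:=\psi_{ij}u^i$ (Einstein summation) with antisymmetric $\psi_{ij}\in C_0^\infty(\tilde{D})$. Since $u\in\S^{m-1}$ a.e.\ on $D$ and $\psi_{ij}=-\psi_{ji}$, one has $u\cdot\varphi=\psi_{ij}u^iu^j=0$ a.e.\ on $D$, so $\varphi$ is tangent to the sphere along $u$ and $d\Pi_u\varphi=\varphi-(u\cdot\varphi)u=\varphi$. Because $\psi_{ij}$ is smooth with compact support in $D$ and $u$ is bounded on $\supp\psi_{ij}$, a standard mollification/density argument (smoothing $u$ and passing to the limit in $H^{\frac{n}{2}}$, using continuity of the first variation) extends the critical-point identity of Definition~\ref{def:critpt} from smooth $\varphi$ to this choice. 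Differentiating $E_n$ at $t=0$ then yields
\[
0=\ddtz E_n(u_{t,\varphi})=2\intl_{\R^n}\lapn u^j\;\lapn(\psi_{ij}u^i).
\]

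Next, I would unfold the inner factor using the definition \eqref{eq:defH} of $H$:
\[
\lapn(\psi_{ij}u^i)=H(\psi_{ij},u^i)+\psi_{ij}\lapn u^i+u^i\lapn\psi_{ij}.
\]
When paired with $\lapn u^j$, the middle term is $(i,j)$-symmetric and therefore annihilated by the antisymmetric $\psi_{ij}$; using the symmetry $H(a,b)=H(b,a)$, I obtain the \emph{global} Euler--Lagrange identity
\[
-\intl_{\R^n} u^i\,\lapn u^j\,\lapn\psi_{ij}=\intl_{\R^n}\lapn u^j\;H(u^i,\psi_{ij}).
\]

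To pass to $w$, I would split $u=w+v$ with $v:=(1-\eta)u$, so that $\supp v$ is separated from $\tilde{D}\supset\supp\psi_{ij}$ by a positive distance. Substituting this decomposition on both sides of the global identity and subtracting the desired equation~\eqref{eq:ELeq} produces finitely many cross terms, each carrying a factor of $v$ paired with $\psi_{ij}$ under disjoint supports; concretely, one gets remainders of the form $\intl v^i\lapn u^j\,\lapn\psi_{ij}$, $\intl w^i\lapn v^j\,\lapn\psi_{ij}$, $\intl\lapn v^j\,H(u^i,\psi_{ij})$ and $\intl\lapn w^j\,H(v^i,\psi_{ij})$. For each of these, the linear functional $\psi_{ij}\mapsto(\text{remainder})$ is bounded by $C_{\eta,u}\,\|\psi_{ij}\|_{L^2(\tilde{D})}$ by the disjoint-support results of Section~\ref{sec:locEf} (Corollary~\ref{co:bs:disjsupp} and the argument behind Lemma~\ref{la:bs:localizing}); Riesz representation then produces an $L^2$-function representative. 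Summing these representatives yields the single $L^2$ function $a_{ij}$ in~\eqref{eq:ELeq} (which may be antisymmetrized in $(i,j)$ without loss, since it is tested against the antisymmetric $\psi_{ij}$).

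The main technical obstacle will be the two cross terms involving $H$: even though $v^i\psi_{ij}\equiv 0$ pointwise from disjoint supports, the nonlocal contributions inside $H(v^i,\psi_{ij})$ must be extracted by explicit unfolding $H(v^i,\psi_{ij})=-v^i\lapn\psi_{ij}-\psi_{ij}\lapn v^i$, after which each piece is estimated separately via the homogeneous-multiplier disjoint-support bound of Lemma~\ref{la:bs:disjsuppGen}; combined with $\lapn w^j,\lapn v^j\in L^2$, this shows each such functional is continuous on $L^2(\tilde{D})$ in $\psi_{ij}$ and completes the representation.
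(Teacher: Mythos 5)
Your plan is close to the paper's in spirit (same test functions $\varphi^j=\psi_{ij}u^i$, same use of antisymmetry to kill the symmetric $\lapn w^i\lapn w^j$ term, same ultimate appeal to the disjoint-support localization), but the order of operations you choose creates a genuine gap. You unfold $H$ first to get the global identity and only then split $u=w+v$; this scatters the nonlocal leakage into several cross terms that are \emph{not} individually bounded by $\Vert\psi_{ij}\Vert_{L^2(\tilde D)}$. Concretely, expand as you propose: $H(v^i,\psi_{ij})=-v^i\lapn\psi_{ij}-\psi_{ij}\lapn v^i$, so $\int\lapn w^j\,H(v^i,\psi_{ij})$ contains the piece $-\int(\lapn w^j\,\lapn v^i)\,\psi_{ij}$, and $\lapn w^j\,\lapn v^i$ is only $L^1$, not $L^2$; pairing with $\psi_{ij}$ gives a bound in $\Vert\psi_{ij}\Vert_{L^\infty}$, not $\Vert\psi_{ij}\Vert_{L^2}$, so Riesz representation does not apply. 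The same problem occurs for your cross term $\int w^i\lapn v^j\lapn\psi_{ij}$: here $w^i$ is supported \emph{around} $\tilde D$ (not away from it), so no disjoint-support lemma applies, and the naive bound carries $\Vert\lapn\psi_{ij}\Vert_{L^2}$, not $\Vert\psi_{ij}\Vert_{L^2}$. If you carefully collect all the cross terms and invoke the antisymmetry $\psi_{ij}=-\psi_{ji}$, these problematic $\lapn w\,\lapn v$-type pieces cancel in pairs, and the total remainder collapses to the single term $-\int\lapn v^j\,\lapn(w^i\psi_{ij})$ — which is exactly the form Lemma~\ref{la:bs:localizing} handles. But you cannot reach this by Riesz-representing each term on its own.

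The paper avoids the issue by a different order: it keeps the un-unfolded form $\int\lapn u^j\,\lapn(u^i\psi_{ij})=0$, replaces $u^i\psi_{ij}$ by $w^i\psi_{ij}$ (legitimate because $\psi_{ij}$ is supported where $\eta\equiv1$), moves $\int\lapn w^j\lapn(w^i\psi_{ij})$ to one side, and leaves $\int\lapn(w^j-u^j)\lapn(w^i\psi_{ij})$ as the \emph{only} remainder, which has the disjoint-support structure ($w^j-u^j$ vanishes near $\tilde D$, $w^i\psi_{ij}$ is supported inside $\tilde D$) so Lemma~\ref{la:bs:localizing} applies directly; only afterwards is $H$ unfolded. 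In short, the paper puts the localizing multiplier $w^i$ inside the $\lapn$ and unfolds $H$ last, which preserves compact support; you unfold $H$ first and multiply by $w^i$ outside the $\lapn$, which destroys the support structure term by term. To repair your argument, you would need to observe the antisymmetry-induced cancellations explicitly and re-collapse the cross terms into $\int\lapn v^j\lapn(w^i\psi_{ij})$ before estimating — at which point you have essentially reproduced the paper's proof.
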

\begin{remark}
Note in the following proof, that this result holds also if $u \in L^\infty(\R^n)$ and $\lapn u \in L^2(\R^n)$, the setting of \cite{DR09Sphere}.
\end{remark}

\begin{proofP}{\ref{pr:eleq}}
Let $\varphi \in C_0^\infty(D,\R^m)$. Recall that in Definition \ref{def:critpt} we have set
\[
 u_t = \begin{cases}
        u + t d\pi_u [\varphi] + o(t)\quad &\mbox{in $D$,}\\
	u\quad &\mbox{in $\R^n \backslash D$}.\\
       \end{cases}
\]
Then $u_t$ belongs to $\Hf(\R^n,\R^{m})$ and $u_t \in \S^{m-1}$ a.e. in $D$. Hence, Euler-Lagrange equations of the functional $E_n$ defined in \eqref{eq:energy} look like
\[
 \intl_{\R^n} \lapn u \cdot \lapn d\pi_u [\varphi] = 0, \qquad \mbox{for any $\varphi \in C_0^\infty(D)$.}
\]
In particular, for any $v \in C_0^\infty(D)$ such that $v \in T_u \S^{m-1}$ a.e. (i.e. $d\pi_u[v] = v$ in $D$)
\begin{equation}\label{eq:el:vtangential}
 \intl_{\R^n} \lapn u \cdot \lapn v = 0.
\end{equation}
Let $\psi_{ij} \in C_0^\infty(\tilde{D},\R)$, $1 \leq i,j \leq m$, $\psi_{ij} = - \psi_{ij}$. Then $v^j := \psi_{ij} u^i \in \Hf(\R^n)$, $1 \leq j \leq m$. Moreover, $u \cdot v = 0$. As for $x \in D$ the vector $u(x) \in \R^m$ is orthogonal to the tangential space of $\S^{m-1}$ at the point $u(x)$, this implies $v \in T_u \S^{m-1}$. Consequently, \eqref{eq:el:vtangential} holds for this $v$ by approximation\footnote{In fact, approximate this $v \in \Hf(\R^n)$ by $v_k \in C_0^\infty(\R^n)$, see Lemma~\ref{la:Tartar07:Lemma15.10}. By the Interpolation theorem we have for $\eta \in C_0^\infty(D)$, $\eta \equiv 1$ on $\tilde{D}$ that $\Vert \eta v_k -v \Vert_{\Hf} = \Vert \eta (v_k -v) \Vert_{\Hf} \leq C_\eta \Vert v_k -v \Vert_{\Hf}$}.\\
Let $\eta$ be the cutoff function from above, i.e. $\eta \in C_0^\infty(D)$, $\eta \equiv 1$ on an open neighborhood of the ball $\tilde{D} \subset D$ and set $w := \eta u$.\\
Because of $\supp \psi \subset \tilde{D}$ we have that $v^j = w^i \psi_{ij}$. Thus, by \eqref{eq:el:vtangential} 
\begin{equation}\label{eq:el:cutpde}
 \intl_{\R^n} \lapn w^j\ \lapn (w^i \psi_{ij}) = \intl_{\R^n} \lapn (w^j-u^j)\ \lapn (w^i \psi_{ij}).
\end{equation}
Observe that $w^i \in L^\infty(\R^n) \cap \Hf(\R^n)$ and by choice of $\eta$ and $\tilde{D}$, there exists $d > 0$ such that $\dist \brac{\supp (w^j-u^j), \tilde{D}}> d$. Hence, Lemma~\ref{la:bs:localizing} implies that there is $\tilde{a}_{j} \in L^2(\R^n)$, $\Vert \tilde{a} \Vert_{L^2(\R^n)} \leq C_{u,D,\tilde{D},\eta}$ such that
\begin{equation}\label{eq:el:localizinga}
 \intl_{\R^n} \lapn (w^j-u^j)\ \lapn \varphi = \intl_{\R^n} \tilde{a}_{j} \varphi \quad \mbox{for all $\varphi \in C_0^\infty(\tilde{D})$}.
\end{equation}
Consequently, for $a_{ij} := \tilde{a}_j w^i \in L^2(\R^n)$, (again by approximation)
\[
 \intl_{\R^n} \lapn (w^j-u^j)\ \lapn (w^i \varphi) = \intl_{\R^n} a_{ij} \varphi \quad \mbox{for all $\varphi \in C_0^\infty(\tilde{D})$}.
\]
Hence, \eqref{eq:el:cutpde} can be written as
\begin{equation}\label{eq:el:cutpde2}
 \intl_{\R^n} \lapn w^j\ \lapn (w^i \psi_{ij}) = \intl_{\R^n} a_{ij} \psi_{ij},
\end{equation}
which is valid for every $\psi_{ij} \in C_0^\infty(\tilde{D})$ such that $\psi_{ij} = -\psi_{ji}$.\\
Moving on, we have just by the definition of $H(\cdot,\cdot)$,
\begin{equation}\label{eq:el:prdrle} 
 \lapn (w^i \psi_{ij}) = \lapn w^i\ \psi_{ij} + w^i\ \lapn \psi_{ij} + H(w^i,\psi_{ij}).
\end{equation}
Hence, putting \eqref{eq:el:cutpde2} and \eqref{eq:el:prdrle} together
\[
\begin{ma}
&&-\intl_{\R^n} w^i\ \lapn w^j\ \lapn \psi_{ij}\\
 &=& - \intl_{\R^n} a_{ij} \psi_{ij} + \intl_{\R^n} \lapn w^j \ \lapn w^i\ \psi_{ij} +  \intl_{\R^n} \lapn w^j\ H(w^i,\psi_{ij})\\
&\overset{\psi_{ij} = -\psi_{ji}} {=}& - \intl_{\R^n} a_{ij} \psi_{ij} + \intl_{\R^n} \lapn w^j\ H(w^i,\psi_{ij}).
\end{ma}
\]
\end{proofP}

\begin{remark}
The only change one has to do here, if $u \not \in L^2(\R^n)$ but e.g. $u \in L^\infty(\R^n)$ is to prove \eqref{eq:el:localizinga} by an alternative for Lemma~\ref{la:bs:localizing}. In fact, if we assume only $f = w^j-u^j \in L^\infty(\R^n)$, we can still estimate for any $\varphi \in C_0^\infty(\tilde{D})$ and suitably chosen $\eta_{r,x_0}^k$
\[
\begin{ma}
 &&\intl f\ \lap^{\frac{n}{2}} \varphi\\
&\overset{\sref{L}{la:bs:disjsuppGen}}{\aleq{}}& \sum_{k=1}^\infty (2^k r)^{-2n} \Vert \eta_{r,x_0}^k f \Vert_{L^1}\ \Vert \varphi \Vert_{L^1}\\
&\aleq{}& \abs{\tilde{D}}^{\frac{1}{2}}\ \Vert \varphi \Vert_{L^2}\ \Vert f \Vert_{L^\infty} \sum_{k=1}^\infty (2^k r)^{-n}\\
&\aeq{}&  C_{D,\tilde{D}}\ \Vert \varphi \Vert_{L^2}\ \Vert f \Vert_{L^\infty}.
\end{ma}
\]
Thus, in exactly the same way as in the proof of Lemma~\ref{la:bs:localizing} we conclude the existence of $\tilde{a}$ as in \eqref{eq:el:localizinga}.
\end{remark}

\section{Homogeneous Norm for the Fractional Sobolev Space}\label{sec:homognormhn2}
We recall from Section~\ref{ss:idlaps} the definition of the ``homogeneous norm'' $[u]_{D,s}$: If $s \geq 0$, $s \not \in \N_0$,
\[
\left ([u]_{D,s} \right )^2 :=  \intl_{D} \intl_{D} \frac{\abs{\nabla^{\lfloor s \rfloor}u(z_1) - \nabla^{\lfloor s \rfloor}u(z_2)}^2}{\abs{z_1-z_2}^{n+2(s-\lfloor s \rfloor)}} \ dz_1\ dz_2.
\]
Otherwise, $[u]_{D,s}$ is just $\Vert \nabla^s u \Vert_{L^2(D)}$. \\
\subsection{Comparison results for the homogeneous norm}
The goal of this section is the following lemma which compares for balls $B$ the size of $[u]_{B,\frac{n}{2}}$ to the size of $\Vert \lapn u \Vert_{L^2(B)}$. Obviously, these two semi-norms are not equivalent. In fact, take for instance any nonzero $u \in H^{\frac{n}{2}}(\R^n)$ with support outside of $B$. Then $[u]_{B,\frac{n}{2}}$ vanishes, but $\lapn u$ can not be constantly zero (cf. Lemma~\ref{la:bs:uniqueness}). Anyway, these two semi-norms can be compared in the following sense:
\begin{lemma}\label{la:comps01}
There is a uniform $\gamma > 0$ such that for any $\varepsilon >0$, $n \in \N$, there exists a constant $C_\varepsilon > 0$ such that for any $v \in \Hf(\R^n)$, $B_r \equiv B_r(x) \subset \R^n$
\[
\begin{ma}
 [v]_{B_r,\frac{n}{2}} \leq \varepsilon [v]_{B_{8r},\frac{n}{2}} &+& C_\varepsilon \Big [\Vert \lapn v \Vert_{L^2(B_{16r})}\\
&&+ \sum_{k=1}^\infty 2^{-nk}  \Vert \eta_{8r}^{k} \lapn v \Vert_{L^2}\\
&&+ \suml_{j=-\infty}^\infty 2^{-\gamma \abs{j}}\ [v]_{\tilde{A}_j,\frac{n}{2}} \Big ] \\
\end{ma}
\]
where $\tilde{A}_j = B_{2^{j+5}r} \backslash B_{2^{j-5}r}$.
\end{lemma}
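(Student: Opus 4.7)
\medskip
\noindent\textbf{Proof plan.}
The plan is first to reduce the inequality to a statement about $\Vert \lapn w\Vert_{L^2}$ for a suitable cut-off $w$, and then to decompose $\lapn w$ into pieces each of which can be handled by the machinery already developed (Poincar\'e with mean value, compensation in $H$, localization for operators with disjoint support).

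Set $N := \lceil \tfrac{n}{2}\rceil - 1$ and let $P := P_{B_{8r},N}(v)$ be the mean value polynomial so that $v-P$ satisfies \eqref{eq:meanvalueszero} on $B_{8r}$. Since $\deg P \leq N < \tfrac{n}{2}$ we have $[v]_{B_r,\frac{n}{2}} = [v-P]_{B_r,\frac{n}{2}}$, so I would replace $v$ by $u := v-P$ without loss of generality. Let $w := \eta_{2r}u$. Because $\eta_{2r}\equiv 1$ on $B_{2r}\supset B_r$, one has $w = u$ on $B_r$ and hence $[v]_{B_r,\frac{n}{2}} = [w]_{B_r,\frac{n}{2}} \leq [w]_{\R^n,\frac{n}{2}}$. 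The latter Gagliardo quantity is, via Proposition~\ref{pr:equivlaps} applied to $\nabla^{\lfloor n/2\rfloor}w$ (if $n$ is odd) together with the $L^2$-boundedness of the Riesz transforms, controlled by $\Vert \lapn w\Vert_{L^2(\R^n)}$, and for even $n$ this is immediate. So the whole task is to bound $\Vert \lapn(\eta_{2r}u)\Vert_{L^2}$ by the right-hand side of the lemma.

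Expand via the defect operator $H$ of \eqref{eq:defH}:
\[
\lapn(\eta_{2r}u) = \eta_{2r}\lapn u + u\,\lapn \eta_{2r} + H(\eta_{2r},u).
\]
Since $\lapn P = 0$ in the formal sense of Proposition~\ref{pr:lapspol} (with the localization argument from Remark after it), the first term reduces to $\eta_{2r}\lapn v$, which is bounded by $\Vert \lapn v\Vert_{L^2(B_{16r})}$. For the third term I would invoke Lemma~\ref{la:hvphilocest} (the local version of the compensation theorem) with some auxiliary $\Lambda$, getting the estimate in terms of $[u]_{B_{4\Lambda r},\frac{n}{2}}$, $\Vert \lapn u\Vert_{L^2(B_{2\Lambda r})}$ and $\Lambda^{-1/2}\Vert \lapn u\Vert_{L^2(\R^n)}$; afterwards I replace $u$ by $v$ on all the norms involving $\lapn$ (since $\lapn P = 0$ formally) and use $[u]_{\cdot,\frac{n}{2}} = [v]_{\cdot,\frac{n}{2}}$. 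The middle term $u\,\lapn \eta_{2r}$ is the most delicate: decompose $u = \sum_{j} \eta_{2r}^{\,j}(v-P)$ according to the dyadic annuli $A_j$ and apply the disjoint-support Lemma~\ref{la:bs:disjsuppGen} to each far piece ($j\geq 1$), leaving a local core piece that is controlled by Lemma~\ref{la:poincmv}/Lemma~\ref{la:poincmvAn} with mean value. The $L^1$-norms of $\eta_{2r}^{\,j}(v-P)$ appearing in the disjoint-support estimates are controlled by Proposition~\ref{pr:estetarkvmp} and then bounded, after comparing $P$ to the annular mean value polynomials $P_{A_j}$ through Lemma~\ref{la:mvestbrakShrpr}, by a geometric sum $\sum_j 2^{-\gamma|j|}[v]_{\tilde A_j,\frac{n}{2}}$, plus a tail in $\sum_k 2^{-nk}\Vert \eta^{k}_{8r}\lapn v\Vert_{L^2}$ coming from re-expressing pointwise values of $v-P$ on far annuli through $\lapn v$.

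The main obstacle is obtaining the factor $\varepsilon$ in front of $[v]_{B_{8r},\frac{n}{2}}$ rather than a bare constant: a direct application of Lemma~\ref{la:poincmv} to $\Vert \lapn(\eta_{2r}(v-P))\Vert_{L^2}$ gives only a uniform $C$. To beat this down to $\varepsilon$ I would, as in the proof of Lemma~\ref{la:hwwlocest}, introduce an auxiliary large scale $\Lambda = \Lambda(\varepsilon)$ and use the cutoff $\eta_{\Lambda r}$ in place of $\eta_{2r}$ on the piece of $u$ responsible for the Gagliardo norm on $B_{8r}$, while routing the leftover ``annular'' contributions into the decaying sums $\sum_j 2^{-\gamma|j|}[v]_{\tilde A_j,\frac{n}{2}}$ and $\sum_k 2^{-nk}\Vert \eta_{8r}^{k}\lapn v\Vert_{L^2}$. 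Choosing $\Lambda$ so that the resulting prefactor $\Lambda^{-\sigma}$ (some fixed $\sigma>0$ arising from the disjoint-support estimate) is less than $\varepsilon$ yields the inequality, with $C_\varepsilon$ absorbing the $\Lambda$-dependence of all remainder terms. The bookkeeping between the $8r$ on the left-hand Gagliardo term and the larger scales $2^j r$ showing up in the annular estimates will be the main technical nuisance, and it is handled precisely by Lemma~\ref{la:mvestbrakShrpr}, which quantifies how fast the mean value polynomials on adjacent annuli converge to each other in terms of $[v]_{\tilde A_j,\frac{n}{2}}$.
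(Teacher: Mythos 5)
Your reduction $[v]_{B_r,\frac{n}{2}} \leq [w]_{\R^n,\frac{n}{2}} \lesssim \Vert \lapn w \Vert_{L^2(\R^n)}$ with $w := \eta_{2r}(v-P)$ is where the proof breaks: any term-by-term $L^2$ estimate of $\lapn w$ from that point on produces a \emph{fixed} constant in front of $[v]_{B_{8r},\frac{n}{2}}$, not an $\varepsilon$. Concretely, the middle term $u\,\lapn\eta_{2r}$ already spoils things. Its ``core'' contribution is $\eta_{2r}^0(v-P)\,\lapn\eta_{2r}$, and the only estimate available is $\Vert v-P \Vert_{L^2(B_{4r})}\,\Vert\lapn\eta_{2r}\Vert_{L^\infty} \lesssim r^{n/2}[v]_{B_{8r},\frac{n}{2}}\cdot r^{-n/2} = C[v]_{B_{8r},\frac{n}{2}}$, again with a uniform $C$. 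Enlarging the cutoff to $\eta_{\Lambda r}$ as you suggest does not help: Poincar\'e then gives $\Vert\lapn(\eta_{\Lambda r}(v-P))\Vert_{L^2}\lesssim[v]_{B_{4\Lambda r},\frac{n}{2}}$ with a $\Lambda$-independent constant, and there is no mechanism that trades $[v]_{B_{4\Lambda r},\frac{n}{2}}$ for $\varepsilon[v]_{B_{8r},\frac{n}{2}}$ plus allowed remainders. The $\Lambda^{-\sigma}$ gains from the disjoint-support lemma hit only the far annular pieces, never the local core. Unlike Lemma~\ref{la:hwwlocest}, there is also no smallness hypothesis on $r$ here to invoke, so the auxiliary-scale trick from that proof is unavailable; and note that Lemma~\ref{la:hvphilocest} does not apply to $H(\eta_{2r},u)$ since its second argument must be a normalized test function in $\Hf$, which $u = v - P$ is not.

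What the paper does instead is \emph{not} estimate $\Vert\lapn w\Vert_{L^2}$ directly. It expresses the square $[v]_{B_r,\frac{n}{2}}^2$ as a bilinear pairing: after enlarging the double integral to $\R^n\times\R^n$ and invoking Proposition~\ref{pr:eqpdeflapscpr}, one obtains $[v]_{B_r,\frac{n}{2}}^2 \lesssim \Vert\lapn\tilde{v}\Vert_{L^2}\cdot \sup_\varphi\int\lapn\tilde{v}\cdot M\lapn\varphi$ (with $\tilde{v}=\eta_{2r}(v-P_{2r})$, $M$ a zero-multiplier, and the $\sup$ over $\varphi\in C_0^\infty(B_{4r})$ with $\Vert\lapn\varphi\Vert_{L^2}\leq 1$, one candidate being essentially $\tilde v$ itself, suitably normalized). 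Young's inequality then puts $\varepsilon$ on the first factor --- which Poincar\'e controls by $[v]_{B_{8r},\frac{n}{2}}$ --- and $\varepsilon^{-1}$ on the second factor, and it is the second factor that is decomposed term by term: the piece $\int\lapn v\cdot M\lapn\varphi$ gives $\Vert\lapn v\Vert_{L^2(B_{16r})}$ plus the $\sum 2^{-nk}\Vert\eta_{8r}^k\lapn v\Vert_{L^2}$ tail (via disjoint support against $\varphi$), and the pieces $\int\lapn(\eta_{2r}^k(v-P_{2r}))\cdot M\lapn\varphi$ give the $\sum 2^{-\gamma|j|}[v]_{\tilde A_j,\frac{n}{2}}$ tail after comparing $P_{2r}$ to the annular mean value polynomials via Lemma~\ref{la:mvestbrakShrpr}. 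So your choice of auxiliary lemmas for the far pieces is essentially correct, but the duality/squaring step that creates the $\varepsilon$ is the missing idea, and without it the argument returns only $[v]_{B_r,\frac{n}{2}}\leq C\,[v]_{B_{8r},\frac{n}{2}} + (\text{remainder})$, which is useless for the absorption in Lemma~\ref{la:west}.
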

\begin{proofL}{\ref{la:comps01}}
It suffices to prove this for $v \in \Sw(\R^n)$, as $\Sw(\R^n)$ is dense in $\Hf(\R^n)$. Set $N := \lceil \frac{n}{2} \rceil-1$, $s := \frac{n}{2}-N \in \{\frac{1}{2},1\}$, and let $P_{2r}$ be the polynomial of degree $N$ such that the mean value condition \eqref{eq:meanvalueszero} holds for $N$ and $B_{2r}$. Let at first $n$ be odd. Set $\tilde{v} := \eta_{2r} (v-P_{2r})$. Note that
\begin{equation}
\label{eq:comps01:veqvmp}
\tilde{v} = v - P_{2r} \quad \mbox{on $B_r$}.
\end{equation}
Consequently,
\[
\begin{ma} 
 \brac{[v]_{B_r,\frac{n}{2}} }^{2} &\overset{\eqref{eq:comps01:veqvmp}}{=}& \brac{[\tilde{v}]_{B_r,\frac{n}{2}}}^{2}\\
&\overset{s:=\frac{1}{2}}{\leq}& \sum_{\abs{\alpha} = N}\ \intl_{\R^n}\ \intl_{\R^n} \frac{ (\partial^\alpha \tilde{v}(x) - \partial^\alpha \tilde{v}(y))(\partial^\alpha \tilde{v}(x) - \partial^\alpha \tilde{v}(y) )}{\abs{x-y}^{n+2s}}\ dx\ dy\\ 
&\overset{\sref{P}{pr:eqpdeflapscpr}}{\aeq{}}& \sum_{\abs{\alpha} = N}\ \intl_{\R^n} \laps{s} \partial^\alpha \tilde{v} \ \laps{s} \partial^\alpha \tilde{v}.
\end{ma}
\]
Thus,
\[
 \brac{[v]_{B_r,\frac{n}{2}}}^2 \aleq{N} \Vert \lapn \tilde{v} \Vert_{L^2}\  \sup_{\ontop{\varphi \in C_0^\infty(B_{4r}(0))}{\Vert \lapn \varphi \Vert_{L^2} \leq 1}} \intl_{\R^n} \lapn \tilde{v}\ M\lapn \varphi,
\]
where $M$ is a zero-multiplier operator. One checks that by a similar argument this also holds for $n$ even. Using Young's inequality,
\[
\begin{ma}
 [v]_{B_r,\frac{n}{2}} &\aleq{N}& \varepsilon \Vert \lapn \tilde{v} \Vert_{L^2} + \frac{1}{\varepsilon} \sup_{\ontop{\varphi \in C_0^\infty(B_{4r})}{\Vert \lapn \varphi \Vert_{L^2} \leq 1}} \intl_{\R^n} \lapn \tilde{v}\ M\lapn \varphi\\
&\overset{\sref{L}{la:poincmv}}{\aleq{}}& \varepsilon [v]_{B_{8r},\frac{n}{2}} + \frac{1}{\varepsilon} \sup_{\ontop{\varphi \in C_0^\infty(B_{4r})}{\Vert \lapn \varphi \Vert_{L^2} \leq 1}} \intl_{\R^n} \lapn \tilde{v}\ M\lapn \varphi.\\
\end{ma}
\]
For such a $\varphi \in C_0^\infty(B_{4r})$, $\Vert \lapn \varphi \Vert_{L^2} \leq 1$ we decompose
\[
\begin{ma}
 &&\intl_{\R^n} \lapn \tilde{v}\ M\lapn \varphi\\
&\overset{\sref{P}{pr:lapspol}}{=}& \intl_{\R^n} \lapn v \ M\lapn \varphi\\
&& - \sum_{k=1}^\infty\ \intl_{\R^n} \lapn \brac{\eta_{2r}^k (v-P_{2r})} \ M\lapn \varphi\\
&=&\intl_{\R^n} \lapn v \ \eta_{8r}M\lapn \varphi\\
&& + \sum_{k=1}^\infty\ \intl_{\R^n} \lapn v \ \eta^k_{8r}M\lapn \varphi\\
&& - \sum_{k=1}^\infty\ \intl_{\R^n} \lapn \brac{\eta_{2r}^k (v-P_{2r})} \ M\lapn \varphi\\
&=:& I + \sum_{k=1}^\infty II_k - \sum_{k=1}^\infty III_k.
\end{ma}
\]
In fact, to apply Proposition~\ref{pr:lapspol} correctly, we should have used a similar argument as in the proof of Lemma~\ref{la:hwwlocest}. That is, we should have approximated $v$ by compactly supported functions, then for such functions we should have decomposed for some $\tilde{\eta}_\rho$, $\rho \geq  \rho_0$, where $\rho_0$ depends on the support of $v$, $r$, $x$ such that $B_{2\rho}(0)$ contains the support of $v$ and $\tilde{v}$,
\[
 \lapn \tilde{v} = \lapn v + \lapn (\tilde{v}-v) = \lapn v + \sum_{k = 1}^\infty \lapn \brac{\eta_{2r}^k \tilde{\eta}_\rho (v-P_{2r})} + \lapn \brac{\tilde{\eta}_\rho P}.
\]
Then one would have applied Remark \ref{rem:cutoffPolbdd} to see that $\Vert M \lapn \brac{\tilde{\eta}_\rho P} \Vert_{L^\infty}$ tends to zero as $\rho \to \infty$. But we will omit the details, and continue instead.\\
Obviously, using H\"ormander's theorem \cite{Hoermander60},
\[
 \abs{I} \aleq{} \Vert \lapn v \Vert_{L^2(B_{8r})}.
\]
Moreover, for any $k \in \N$ by Lemma~\ref{la:bs:disjsuppGen} and Poincar\'{e}'s inequality, Lemma~\ref{la:poinc},
\[
\begin{ma}
 \abs{II_k} &\aleq{}& \brac{2^kr}^{-n}\  \Vert \eta_{8r}^{k} \lapn v \Vert_{L^2}\ r^{n}\\
&=& 2^{-nk}\  \Vert \eta_{8r}^{k} \lapn v \Vert_{L^2}.
\end{ma}
\]
As for $III_k$, let for $k \in \N$, $P_{2r}^k$ the polynomial which makes $v-P_{2r}^k$ satisfy the mean value condition \eqref{eq:meanvalueszero} on $B_{2^{k+2}r} \backslash B_{2^{k}r}$. If $k \geq 3$,
\[
\begin{ma}
 \abs{III_k} &\overset{\sref{L}{la:bs:disjsuppGen}}{\aleq{}}& r^{-\frac{n}{2}}\ \brac{2^{k}}^{-\frac{3}{2}n}\ \Vert \eta_{2r}^k (v-P_{2r}) \Vert_{L^2}\\
&\aleq{}& r^{-\frac{n}{2}}\ 2^{-\frac{3}{2}nk}\ \brac{\Vert \eta_{2r}^k (v-P^k_{2r}) \Vert_{L^2} + 2^{k\frac{n}{2}} r^{\frac{n}{2}} \Vert \eta_{2r}^k (P_{2r}-P^k_{2r}) \Vert_{L^\infty}}\\
&\overset{\sref{L}{la:poincmvAn}}{\aleq{}}& \ 2^{-nk}\ \brac{ [v]_{\tilde{A}_k,\frac{n}{2}} + \Vert \eta_{2r}^k (P_{2r}-P^k_{2r}) \Vert_{L^\infty}}.
\end{ma}
\]
This and Lemma~\ref{la:mvestbrakShrpr} imply for a $\gamma > 0$,
\[
 \sum_{k=3}^\infty III_k \aleq{}  \suml_{j=-\infty}^\infty 2^{-\abs{j} \gamma}\ [v]_{\tilde{A}_j,\frac{n}{2}}.
\]
It remains to estimate $III_1$, $III_2$ (where we can not use the disjoint support lemma, Lemma~\ref{la:bs:disjsuppGen}). Let from now on $k=1$ or $k=2$. By Lemma~\ref{la:poincmvAn}
\[
 \Vert \lapn\eta_{2r}^k (v-P^k_{2r}) \Vert_{L^2} \aleq{} [v]_{\tilde{A}_k,\frac{n}{2}},
\]
so
\[
\begin{ma}
 III_k &\leq& \Vert \lapn \brac{\eta_{2r}^k (v-P^k_{2r}) }\Vert_{L^2} + \Vert \lapn \brac{\eta_{2r}^k \left (P^k_{2r} - P_{2r} \right ) }\Vert_{L^2}\\
&\aleq{}&  [v]_{\tilde{A}_k,\frac{n}{2}} + \Vert \lapn\brac{\eta_{2r}^k \left (P^k_{2r} - P_{2r} \right )} \Vert_{L^2}.
 \end{ma}
\]
The following will be similar to the calculations in the proof of Lemma~\ref{la:poincmv} and Proposition~\ref{pr:mvpoinc}. Set
\[
w_{\alpha,\beta}^k := \partial^\alpha \eta_{2r}^k\ \partial^\beta \brac{P^k_{2r} - P_{2r}}.
\]
We calculate for odd $n \in \N$,
\[
\Vert \lapn\brac{\eta_{2r}^k \brac{P^k_{2r} - P_{2r}} }\Vert_{L^2}^2 \aleq{} \sum_{\abs{\alpha} + \abs{\beta} = \frac{n-1}{2}} [w_{\alpha,\beta}^k]_{\R^n,\frac{1}{2}}^2.
\]
Note that $\supp w_{\alpha,\beta}^k \subset B_{2^{k+2}r} \backslash B_{2^k r}$, so
\[
\begin{ma}
&&[w^k_{\alpha,\beta}]_{\R^n,\frac{1}{2}}^2\\
&\aleq{}& \Vert w^k_{\alpha,\beta} \Vert^2_{L^\infty} \intl_{\tilde{A}_k} \intl_{\R^n \backslash B_{40r}} \frac{1}{\abs{x-y}^{n+1}}\ dx\ dy\\
&&+ \Vert \nabla w^k_{\alpha,\beta}\Vert^2_{L^\infty}\ \intl_{\tilde{A}_k} \intl_{B_{\frac{1}{4}r}} \frac{1}{\abs{x-y}^{n-1}}\ dx\ dy\\
&&+ \Vert \nabla w^k_{\alpha,\beta}\Vert^2_{L^\infty}\ \intl_{\tilde{A}_k} \intl_{B_{40r} \backslash B_{\frac{1}{4}r}} \frac{1}{\abs{x-y}^{n-1}}\ dx\ dy\\
&\aleq{}& \Vert w^k_{\alpha,\beta} \Vert^2_{L^\infty} r^{n-1} + r^{n+1} \Vert \nabla w^k_{\alpha,\beta} \Vert^2_{L^\infty}\\
&\aleq{}& \max_{\abs{\delta} \leq \frac{n+1}{2}} r^{2\abs{\delta}} \Vert \partial^\delta (P_{2r} - P^k_{2r})\Vert_{L^\infty(\supp \eta_{2r}^k)}^2\\
&\aeq{}& \max_{\abs{\delta} \leq N} r^{2\abs{\delta}} \Vert \partial^\delta (P_{2r} - P^k_{2r})\Vert_{L^\infty(\supp \eta_{2r}^k)}^2.
\end{ma}
\]
Taking the square root, we have shown that
\[
\sum_{k=1}^2 \Vert \lapn\brac{\eta_{2r}^k \brac{P^k_{2r} - P_{2r}} }\Vert_{L^2} \aleq{} \max_{\abs{\delta} \leq N} r^{\abs{\delta}} \sum_{k=1}^2 \Vert \partial^\beta (P_{2r} - P^k_{2r})\Vert_{L^\infty(\supp \eta_{2r}^k)}.
\]
Of course, the same holds true if $n \in \N$ is even. Now, in the proof of Lemma~\ref{la:mvestbrakShrpr}, more precisely in \eqref{eq:mvpoincSiagammaClaim}, it was shown that 
\[
\begin{ma}
&&\sum_{k=1}^2 \Vert \partial^\delta (P_{2r} - P^k_{2r}) \Vert_{L^\infty(\tilde{A}_k)}\\
&\aleq{}&\sum_{k=1}^\infty 2^{-nk}\ \Vert \partial^\delta (P_{2r} - P^k_{2r}) \Vert_{L^\infty(\tilde{A}_k)}\\
&=&\sum_{k=1}^\infty 2^{-nk}\ \Vert \partial^\delta (Q^{\abs{\delta}}_{2r} - Q^{\abs{\delta}}_{k} \Vert_{L^\infty(\tilde{A}_k)}\\
&\overset{\eqref{eq:mvpoincSiagammaClaim}}{\aleq{}}& r^{-\abs{\delta}} \sum_{j=-\infty}^\infty 2^{-(n-N) \abs{j}} [v]_{\tilde{A}_j,\frac{n}{2}}.
\end{ma}
\]
Here, of course, we have set \[Q_{2r}^{\abs{\delta}} := Q_{B_{2r},N}^{\abs{\delta}}\] and \[Q_{k}^{\abs{\delta}} := Q_{B_{2^{k+2}r} \backslash B_{2^{k}r},N}^{\abs{\delta}}.\]
This concludes the proof.
\end{proofL}

\subsection{Localization of the homogeneous Norm}
For the convenience of the reader, we will repeat the proof of the following result in \cite{DR09Sphere}.
\begin{lemma}\label{la:homogloc}(\cite[Theorem A.1]{DR09Sphere})\\
For any $s > 0$ there is a constant $C_s > 0$ such that the following holds. For any $v \in \Sw(\R^n)$, $r > 0$, $x \in \R^n$,
\[
\brac{[v]_{B_{r}(x),s}}^2 \leq C_s \sum_{k=-\infty}^{-1} \brac{[v]_{A_k,s}}^2.
\]
Here $A_k$ denotes $B_{2^{k+1}r}(x) \backslash B_{2^{k-1}r}(x)$.
\end{lemma}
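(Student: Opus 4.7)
The strategy is to decompose $B_r(x)$ into disjoint dyadic shells and bound the double integral defining $[v]_{B_r,s}^2$ block by block. In the integer case $s \in \N_0$ the quantity $[v]_{B_r,s}^2 = \Vert \nabla^s v \Vert_{L^2(B_r)}^2$ is additive under any disjoint decomposition, so writing $B_r(x) = \bigsqcup_{k \leq 0} D_k$ with $D_k := B_{2^k r}(x) \setminus B_{2^{k-1}r}(x)$ and observing that each $D_k$ sits inside $A_l$ for some $l \leq -1$ immediately gives the claim.

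For the fractional case, set $t := s - \lfloor s \rfloor \in (0,1)$ and $g := \nabla^{\lfloor s \rfloor} v$. With the same disjoint decomposition I write
\[
[v]_{B_r,s}^2 = \suml_{j,k \leq 0} I_{j,k}, \qquad I_{j,k} := \intl_{D_j} \intl_{D_k} \frac{\abs{g(z_1)-g(z_2)}^2}{\abs{z_1-z_2}^{n+2t}}\,dz_1\,dz_2.
\]
The near-diagonal contributions $\abs{j-k}\leq 1$ are handled directly by $[v]_{A_l,s}^2$ for some $l\leq -1$, since in that case $D_j\cup D_k\subset A_l$. For the off-diagonal blocks, say $j > k+1$ (the reverse case is symmetric), I use $\abs{z_1-z_2}\asymp 2^j r$ on $D_j\times D_k$ and insert both means $(g)_{D_j}$, $(g)_{D_k}$ via
\[
\abs{g(z_1)-g(z_2)}^2 \leq 3\brac{\abs{g(z_1)-(g)_{D_j}}^2 + \abs{g(z_2)-(g)_{D_k}}^2 + \abs{(g)_{D_j}-(g)_{D_k}}^2}.
\]

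The first two summands yield, using the local fractional Poincar\'e estimate $\intl_A\abs{g-(g)_A}^2 \aleq{} (\diam A)^{2t}\,[v]_{A,s}^2$ (a direct consequence of Jensen's inequality and $|x-y|\leq \diam A$ inserted into the double integral), the bounds $2^{(k-j)n}[v]_{A_j,s}^2$ and $2^{-2t(j-k)}[v]_{A_k,s}^2$ respectively, both of which sum geometrically in $j-k$. The cross mean-value term is handled by telescoping along intermediate shells,
\[
\abs{(g)_{D_j}-(g)_{D_k}} \leq \suml_{l=k}^{j-1}\abs{(g)_{D_l}-(g)_{D_{l+1}}} \aleq{} \suml_{l=k}^{j-1}(2^l r)^{t-n/2}[v]_{A_l,s},
\]
where each adjacent difference is bounded via the same Poincar\'e estimate on $A_l\supset D_l\cup D_{l+1}$. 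A weighted Cauchy--Schwarz with weights $\alpha_l := 2^{(j-l)\varepsilon}$ for some fixed $\varepsilon\in(0,2t)$ then gives $\abs{(g)_{D_j}-(g)_{D_k}}^2 \aleq{} \suml_{l=k}^{j-1} 2^{(j-l)\varepsilon}(2^l r)^{2t-n}[v]_{A_l,s}^2$. Multiplying by the scaling prefactor $2^{-2jt+kn}r^{n-2t}$ which arises from the double integral on $D_j\times D_k$ and then summing over $k\leq l$, $j>l$, the exponents in $j$ and $k$ cancel cleanly to yield $\suml_{l\leq -1}[v]_{A_l,s}^2$ with a constant depending only on $s$.

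The main obstacle is precisely the cross mean-value term: a naive application of Cauchy--Schwarz on the telescoping chain would introduce a $(j-k)$ factor which, after summation in $j,k$, produces a spurious $\abs{l}$-loss. The device of the exponential weight $2^{(j-l)\varepsilon}$, tuned to lie below the decay rate $2^{-2t}$ supplied by the Poincar\'e prefactor, is what eliminates this logarithmic loss and closes the geometric-summation bookkeeping.
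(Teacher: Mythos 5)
Your proof is correct and follows essentially the same strategy as the paper: reduce to the fractional part $t = s - \lfloor s \rfloor$ acting on $g = \nabla^{\lfloor s \rfloor}v$, decompose the ball into dyadic shells, treat the near-diagonal blocks by direct inclusion into a single $A_l$, and handle the off-diagonal blocks by inserting shell-mean values and telescoping. The local Poincar\'e estimate $\int_A|g-(g)_A|^2 \aleq{} |A|^{-1}(\diam A)^{n+2t}[v]_{A,s}^2$ that you derive from Jensen is exactly what the paper uses, and your bookkeeping of the prefactors is sound.

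The one place your account diverges from the paper is the final paragraph. You assert that plain Cauchy--Schwarz on the telescoping chain ``would introduce a $(j-k)$ factor which, after summation in $j,k$, produces a spurious $|l|$-loss,'' and you introduce the weighted version $\alpha_l = 2^{(j-l)\varepsilon}$ to avoid this. That diagnosis is not correct: the paper uses precisely the naive Cauchy--Schwarz, picks up the $(k-l)$ factor, and then shows it is harmless. The point is that the prefactor $2^{-2jt+kn}$ carries genuine geometric decay in \emph{both} free indices, and a polynomial factor is absorbed by geometric decay with no loss. Concretely, the paper's elementary computation $\sum_{k\geq a}kq^k \leq C_q\,q^a(a+1)$ applied once in $j$ and once in $k$ collapses $\sum_{j>l}\sum_{k\leq l}(j-k)\,2^{-2jt}2^{kn}$ to a constant times $2^{l(n-2t)}$, which exactly cancels the $2^{l(2t-n)}$ from the telescoping bound; there is no residual factor of $|l|$. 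Your weighted device is a valid and arguably cleaner alternative (it moves the absorption argument into the choice of weight), but it is not the thing that saves the proof, and the ``main obstacle'' you identify is not actually an obstacle.
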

\begin{proofL}{\ref{la:homogloc}}
This is obvious for any $s \in \N$. Moreover, it suffices to prove the case $s \in (0,1)$, as for $\tilde{s} > 1$,
\[
 [v]_{D,\tilde{s}} = [\nabla^{\lfloor \tilde{s} \rfloor}v]_{D,\tilde{s}-\lfloor \tilde{s} \rfloor} \quad \mbox{for any domain $D \subset \R^n$}.
\]
So let $s \in (0,1)$. Denote
\[
\tilde{A}_k := B_{2^{k+1}r}(x) \backslash B_{2^{k}r}(x),
\]
and set
\[
(v)_k := \mvintl_{A_k} v,\quad \mbox{and}\quad (v)_{\tilde{k}} := \mvintl_{\tilde{A}_k} v,
\]
as well as
\[
[v]_k := [v]_{A_k,s},\quad \mbox{and} \quad [v]_r := [v]_{B_r(x),s}.
\]
With these notations,
\[
\begin{ma}
[v]_{r} &\leq& \sum_{k,l=-\infty}^{-1}\ \intl_{\tilde{A}_k} \intl_{\tilde{A}_l} \frac{\abs{v(x)-v(y)}^2}{\abs{x-y}^{n+2s}}\ dx\ dy\\
&\leq& 3 \sum_{k=-\infty}^{-1} [v]_k^2\\
&&+2\sum_{k=-\infty}^{-1} \sum_{l=-\infty}^{k-2}\ \intl_{\tilde{A}_k} \intl_{\tilde{A}_l} \frac{\abs{v(x)-v(y)}^2}{\abs{x-y}^{n+2s}}\ dx\ dy.\\
\end{ma}
\]
For $x \in \tilde{A}_k$ and $y \in \tilde{A}_l$ and $l \leq k-2$,
\[
\begin{ma}
&&\frac{\abs{v(x)-v(y)}^2}{\abs{x-y}^{n+2s}}\\
&\aleq{s}& \brac{2^{k}r}^{-n-2s} \abs{v(x)-v(y)}^2\\
&\aleq{s}& \brac{2^{k}r}^{-n-2s} \brac{\abs{v(x)-(v)_{\tilde{k}}}^2 + \abs{v(y) - (v)_{\tilde{l}}}^2 + \abs{(v)_{\tilde{l}} - (v)_{\tilde{k}}}^2}\\
&\aleq{s}& \brac{2^{k}r}^{-n-2s} \brac{\abs{v(x)-(v)_{\tilde{k}}}^2 + \abs{v(y) - (v)_{\tilde{l}}}^2 + \abs{l-k}\sum_{i=l}^{k-1}\abs{(v)_{\tilde{i}} - (v)_{\widetilde{i+1}}}^2}.\\
&=:& I + II + III.
\end{ma}
\]
As for $I$ and $II$, we have
\[
\intl_{\tilde{A}_k} \abs{v-(v)_{\tilde{k}}}^2 \aleq{} \frac{1}{\abs{A_{\tilde{k}}}} \brac{2^k r}^{n+2s} [v]_k^2
\]
and 
\[
\intl_{\tilde{A}_l} \abs{v-(v)_{\tilde{l}}}^2 \aleq{} \frac{1}{\abs{A_{\tilde{l}}}} \brac{2^l r}^{n+2s} [v]_l^2.
\]
Consequently,
\[
\begin{ma}
&&\sum_{k=-\infty}^{-1} \sum_{l=-\infty}^{k-2} \intl_{\tilde{A}_k} \intl_{\tilde{A}_l} I\\
&\leq& \sum_{k=-\infty}^{-1} \sum_{l=-\infty}^{k-2} \frac{\abs{\tilde{A}_l}}{\abs{\tilde{A}_k}} [v]_k^2\\
&\aleq{}& \sum_{k=-\infty}^{-1} [v]_k^2 \sum_{l=-\infty}^{k-2} 2^{l-k}\\
&\aleq{}& \sum_{k=-\infty}^{-1} [v]_k^2.
\end{ma}
\]
Similarly,
\[
\begin{ma}
&&\sum_{l=-\infty}^{-1} \sum_{k=l+1}^{-1} \intl_{\tilde{A}_k} \intl_{\tilde{A}_l} II\\
&\aleq{s}& \sum_{l=-\infty}^{-1} \sum_{k=l+1}^{-1} 2^{2(l-k)s}  [v]_l^2 \\
&\aleq{s}& \sum_{l=-\infty}^{-1} [v]_l^2.\\
\end{ma}
\]
As for $III$, we have
\[
\begin{ma}
&&\abs{(v)_{\tilde{i}} - (v)_{\widetilde{i+1}}}^2\\
&\aleq{}& \brac{2^i r}^{-2n}\ 2^{i(n+2s)}r^{n+2s}\ [v]_i^2\\
&=& 2^{(-n+2s)i}\ r^{-n+2s}\ [v]_i^2.
\end{ma}
\]
This implies that we have to estimate
\[
\begin{ma}
&&\sum_{k=-\infty}^{-1} \sum_{l=-\infty}^{k-2} \sum_{i=l}^{k-1} (k-l) 2^{-k(n+2s)}r^{-n-2s} \abs{A_l} \abs{A_k} 2^{(-n+2s)i} r^{-n+2s} [v]_i^2\\
&=& \sum_{i=-\infty}^{-2} 2^{(-n+2s)i}\ [v]_i^2 \sum_{l=-\infty}^{i} \sum_{k=i+1}^{-1} (k-l)\ 2^{-2ks}\ 2^{ln}.
\end{ma}
\]
Now, for any $a \in \Z$, $q \in [0,1)$
\[
 \sum_{k=a}^\infty k q^k =          
q^a \sum_{k=0}^\infty (k+a) q^{k}
= q^a \brac{\sum_{k=0}^\infty k q^{k} + a \sum_{k=0}^\infty q^{k}}
\leq C_q\ q^a\ (a+1)
\]
Consequently for any $l \leq i$,
\[
\begin{ma}
&&\sum_{k=i+1}^0 (k-l)\ 2^{-2ks}\\
&\leq&  2^{-2ls} \sum_{k= i+1}^\infty (k-l)\ 2^{-2(k-l)s}\\
&=& 2^{-2ls} \sum_{\tilde{k}= i+1-l}^\infty \tilde{k}\ 2^{-2\tilde{k}s}\\
&\aleq{s}& 2^{-2ls} (i-l+2)\ 2^{-2s(i-l)}\\
&=& 2^{-2si}\ (i-l+2),\\
\end{ma}
\]
and
\[
\begin{ma}
&&\sum_{l=-\infty}^i 2^{ln} (i-l+2)\\
&=& 2^{ni} \sum_{l=-\infty}^i 2^{(l-i)n} (i-l+2)\\
&\leq& 2^{ni} \sum_{l=-\infty}^0 2^{ln} \brac{2-l}\\
&\aeq{}&2^{ni}.
\end{ma}
\]
Thus,
\[
\sum_{i=-\infty}^{-2} 2^{(-n+2s)i}\ [v]_i^2 \sum_{l=-\infty}^{i}\sum_{k=i+1}^{-1}  (k-l)\ 2^{-2ks}\ 2^{ln}
\aleq{} \sum_{i=-\infty}^{-2} [v]_i^2.
\]
\end{proofL}
\begin{remark}\label{rem:akegal}
By the same reasoning as in Lemma~\ref{la:homogloc}, one can also see that for two Annuli-families of different width, say $A_{k} := B_{2^{k+\lambda}r} \backslash B_{2^{k-\lambda r}}$ and $\tilde{A}_k := B_{2^{k+\Lambda}r} \backslash B_{2^{k-\Lambda r}}$ we can compare
\[
[v]_{A_k,s} \leq C_{\lambda,\Lambda,s} \sum_{l=k-N_{\lambda,\Lambda}}^{k+N_{\lambda,\Lambda}} [v]_{\tilde{A}_l,s}.
\]
In particular we don't have to be too careful about the actual choice of the width of the family $A_k$ for quantities like
\[
\sum_{k=-\infty}^\infty 2^{-\gamma \abs{k}} [v]_{A_k,s},
\]
as long as we can afford to deal with constants depending on the change of width, i.e. if we can afford to have e.g.
\[
C_{\Lambda,\lambda,\gamma,s} \sum_{l=-\infty}^\infty 2^{-\gamma \abs{l}} [v]_{\tilde{A}_l,s};
\]
In fact this is because of
\[
\begin{ma}
&&\sum_{k=-\infty}^\infty 2^{-\gamma \abs{k}} [v]_{A_k,s}\\
&\leq& \sum_{k=-2N+1}^{2N-1} [v]_{A_k,s} + \sum_{k=-\infty}^{-2N} 2^{\gamma k}\ [v]_{A_k,s} + \sum_{k=2N}^{\infty} 2^{-\gamma k}\ [v]_{A_k,s}\\

&\aleq{\Lambda,\lambda}& \sum_{k=-2N+1}^{2N-1} \sum_{l=k-N}^{k+N} [v]_{\tilde{A}_l,s} + \sum_{k=-\infty}^{-2N} \sum_{l=k-N}^{k+N} 2^{\gamma k}\ [v]_{\tilde{A}_l,s}\\
&& + \sum_{k=2N}^{\infty} \sum_{l=k-N}^{k+N} 2^{-\gamma k}\ [v]_{\tilde{A}_l,s}\\

&\aleq{\Lambda,\lambda}& 4N 2^{3\gamma N} \sum_{l=-3N}^{3N} 2^{-\gamma \abs{l}}\ [v]_{\tilde{A}_l,s} + 2^{\gamma N} \sum_{k=-\infty}^{-2N} \sum_{l=k-N}^{k+N} 2^{\gamma l}\ [v]_{\tilde{A}_l,s}\\
&& + 2^{\gamma N}\sum_{k=2N}^{\infty} \sum_{l=k-N}^{k+N}2^{-\gamma l}\ [v]_{\tilde{A}_l,s}\\
&\aleq{\Lambda,\lambda}&  \sum_{l=-3N}^{3N} 2^{-\gamma \abs{l}}\ [v]_{\tilde{A}_l,s} + 2N \sum_{l=-\infty}^{-N} 2^{\gamma l}\ [v]_{\tilde{A}_l,s} + 2N\ \sum_{l=N}^{\infty} 2^{-\gamma l}\ [v]_{\tilde{A}_l,s}\\
&\leq& C_{\Lambda,\lambda,\gamma} \sum_{l=-\infty}^\infty 2^{-\gamma \abs{l}}\ [v]_{\tilde{A}_l,s}.
\end{ma}
\]
Of course, the same argument holds for $[v]_{A_k,s}$ replaced by $\Vert \laps{s} v \Vert_{L^2(A_k)}$, too.
\end{remark}
\section{Growth Estimates: Proof of Theorem~\ref{th:regul}}\label{sec:growth}
In this section, we derive growth estimates from equations \eqref{eq:structureeq} and \eqref{eq:ELeq}, similar to the usual Dirichlet-Growth estimates.
\begin{lemma}\label{la:estStrEq}
Let $w \in \Hf(\R^n,\R^m)$, $\varepsilon > 0$. Then there exist constants $\Lambda > 0$, $R > 0$, $\gamma > 0$ such that if $w$ is a solution of \eqref{eq:structureeq}, then for any $x_0 \in \R^n$, $r \in (0,R)$
\[
\begin{ma}
 &&\Vert w \cdot \lapn w \Vert_{L^2(B_r(x_0))}\\
&\leq& \varepsilon \brac{\Vert \lapn w \Vert_{L^2(B_{4\Lambda r})} + [w]_{B_{4\Lambda r},\frac{n}{2}}}\\
&&+ C_{\Lambda, w} \brac{r^\frac{n}{2} + \sum_{k=1}^\infty 2^{-\gamma k} \Vert \lapn w \Vert_{L^2(A_k)}
+ \sum_{k=-\infty}^\infty 2^{-\gamma \abs{k}} [w]_{A_k,\frac{n}{2}}}.
\end{ma}
\]
Here, $A_k = B_{2^{k+1} r}(x_0) \backslash B_{2^{k-1} r}(x_0)$.
\end{lemma}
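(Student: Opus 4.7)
The plan is to read off the lemma directly from the structure equation \eqref{eq:structure} and the localized compensation estimate already proven. Recall that a solution $w \in \Hf(\R^n,\R^m)$ of \eqref{eq:structureeq} satisfies, pointwise a.e.,
\[
w \cdot \lapn w \;=\; -\tfrac{1}{2}\, H(w,w) \;+\; \tfrac{1}{2}\,\lapn (\eta^2),
\]
where $\eta \in C_0^\infty(D)$ is the cutoff used to define $w = \eta u$. The triangle inequality gives
\[
\Vert w \cdot \lapn w \Vert_{L^2(B_r(x_0))} \;\leq\; \tfrac{1}{2}\Vert H(w,w) \Vert_{L^2(B_r(x_0))} \;+\; \tfrac{1}{2}\Vert \lapn (\eta^2) \Vert_{L^2(B_r(x_0))},
\]
and the two terms will be estimated separately.

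For the first term I would apply Lemma~\ref{la:hwwlocest} with $v = w$ and the prescribed $\varepsilon$. This gives uniform constants $\Lambda > 0$, $R > 0$, $\gamma > 0$ (depending on $w$ and $\varepsilon$) such that for every $x_0 \in \R^n$ and $r \in (0,R)$,
\[
\Vert H(w,w) \Vert_{L^2(B_r(x_0))} \leq \varepsilon\,\bigl([w]_{B_{4\Lambda r},\frac{n}{2}} + \Vert \lapn w \Vert_{L^2(B_{4\Lambda r})}\bigr) + C\,\Lambda^{\frac{1}{2}}\!\left(\sum_{k=1}^\infty 2^{-\gamma k}\Vert \lapn w\Vert_{L^2(\tilde A_k)} + \sum_{k=-\infty}^\infty 2^{-\gamma \abs{k}}[w]_{\tilde A_k,\frac{n}{2}}\right),
\]
where $\tilde A_k$ denotes the slightly wider annuli appearing in Lemma~\ref{la:hwwlocest}. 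The factor $\Lambda^{1/2}$ gets absorbed into the $\Lambda$- and $w$-dependent constant $C_{\Lambda,w}$; to pass from the annuli $\tilde A_k$ to the annuli $A_k = B_{2^{k+1}r}\setminus B_{2^{k-1}r}$ used in the statement, I invoke Remark~\ref{rem:akegal}, which shows that such a change of annular width only costs a constant depending on $\Lambda$ and $\gamma$.

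For the second term, since $\eta \in C_0^\infty(D)$ we have $\eta^2 \in C_0^\infty(\R^n)$, hence $\lapn(\eta^2) \in L^\infty(\R^n) \cap L^2(\R^n)$ (this can be read off, for instance, from Proposition~\ref{pr:etarkgoodest} applied to a rescaled $\eta^2$). Consequently
\[
\Vert \lapn(\eta^2) \Vert_{L^2(B_r(x_0))} \;\leq\; \Vert \lapn(\eta^2) \Vert_{L^\infty(\R^n)}\,\abs{B_r}^{1/2} \;\leq\; C_\eta\, r^{\frac{n}{2}},
\]
and since $\eta$ is part of the construction from $w$, the constant $C_\eta$ is absorbed into $C_{\Lambda,w}$. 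Combining everything yields exactly the stated inequality.

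There is no genuine obstacle here: the lemma is a direct corollary of the identity \eqref{eq:structure} together with the already established Lemma~\ref{la:hwwlocest}. The only bookkeeping issue is the reconciliation of annulus widths, handled by Remark~\ref{rem:akegal}, and the observation that the $\lapn(\eta^2)$ term, being the fractional Laplacian of a fixed test function, contributes only the harmless volume term $C_{\Lambda,w}\, r^{n/2}$.
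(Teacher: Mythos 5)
Your proof is correct and follows the paper's own argument essentially line for line: splitting via the structure equation \eqref{eq:structureeq}, bounding $\Vert H(w,w)\Vert_{L^2(B_r)}$ by Lemma~\ref{la:hwwlocest}, controlling $\Vert\lapn(\eta^2)\Vert_{L^2(B_r)}$ by the boundedness of $\lapn(\eta^2)$ (as in Proposition~\ref{pr:etarkgoodest}), and reconciling annulus widths via Remark~\ref{rem:akegal}. No discrepancies.
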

\begin{proofL}{\ref{la:estStrEq}}
By \eqref{eq:structureeq},
\[
\Vert w \cdot \lapn w \Vert_{L^2(B_r)} \leq \Vert H(w,w) \Vert_{L^2(B_r)} + \Vert \lapn \eta^2 \Vert_{L^2(B_r)}.
\]
As $\lapn \eta^2$ is bounded (by a similar argument as the one in the proof of Proposition~\ref{pr:etarkgoodest}),
\[
\Vert \lapn \eta^2 \Vert_{L^2(B_r)} \leq C_{\eta} r^{\frac{n}{2}}.
\]
We conclude by applying Lemma~\ref{la:hwwlocest}, using also Remark \ref{rem:akegal}.
\end{proofL}
The next lemma is a simple consequence of H\"older and Poincar\'{e} inequality, Lemma~\ref{la:poinc}.
\begin{lemma}\label{la:growth:avp}
Let $a \in L^2(\R^n)$. Then
\[
\intl_{\R^n} a\ \varphi \leq C\ r^{\frac{n}{2}}\ \Vert a \Vert_{L^2(\R^n)}\ \Vert \lapn \varphi \Vert_{L^2(\R^n)}
\]
for any $\varphi \in C_0^\infty(B_r(x_0))$, $r > 0$.
\end{lemma}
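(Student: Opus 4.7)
The plan is to apply Cauchy--Schwarz and then invoke the scaled Poincar\'{e} inequality from Lemma~\ref{la:poinc}. First I would write
\[
\intl_{\R^n} a\ \varphi \leq \Vert a \Vert_{L^2(\R^n)}\ \Vert \varphi \Vert_{L^2(\R^n)}
\]
by Cauchy--Schwarz, where the integral on $\R^n$ is really an integral on $B_r(x_0)$ since $\supp \varphi \subset B_r(x_0)$.

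Next, since $\varphi \in C_0^\infty(B_r(x_0)) \subset H^{\frac{n}{2}}(\R^n)$ with $\supp \varphi \subset \overline{B_r(x_0)}$, Lemma~\ref{la:poinc} applied with $s = \frac{n}{2}$ and $D = B_r(x_0)$ yields
\[
\Vert \varphi \Vert_{L^2(\R^n)} \leq C_{B_r(x_0),\frac{n}{2}}\ \Vert \lapn \varphi \Vert_{L^2(\R^n)}.
\]
By the scaling statement in Lemma~\ref{la:poinc} (together with translation invariance of $\lapn$, noted in the remark following that lemma), we have $C_{B_r(x_0),\frac{n}{2}} = C_{B_1(0),\frac{n}{2}}\ r^{\frac{n}{2}}$, so the constant $C$ in the claim may be taken to be $C_{B_1(0),\frac{n}{2}}$, which is a uniform constant depending only on the dimension $n$.

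Combining the two estimates gives the result. There is no obstacle here; the statement is essentially just a restatement of the (scaled) Poincar\'{e} inequality paired with duality on $L^2$, and the only care required is to correctly track the scaling factor $r^{\frac{n}{2}}$ coming from the homogeneity of $\lapn$ on functions supported in $B_r(x_0)$.
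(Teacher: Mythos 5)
Your proof is correct and is exactly the approach the paper intends; the paper itself only remarks that the lemma "is a simple consequence of H\"older and Poincar\'{e} inequality, Lemma~\ref{la:poinc}" without writing out the details. Your careful tracking of the scaling factor $r^{\frac{n}{2}}$ via $C_{D,s} = C_{\tilde D,s} r^s$ and translation invariance is precisely the observation that makes the statement go through.
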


\begin{lemma}\label{est:ELeq}
For any $w \in \Hf\cap L^\infty(\R^n,\R^m)$ and any $\varepsilon > 0$ there are constants $\Lambda > 0$, $R > 0$ such that if $w$ is a solution to \eqref{eq:ELeq} for some ball $\tilde{D} \subset \R^n$ then for any $B_{\Lambda r}(x_0) \subset \tilde{D}$, $r \in (0,R)$ and any skew-symmetric $\alpha \in \R^{n\times n}$, $\abs{\alpha} \leq 2$,
\[
\Vert w^i \alpha_{ij} \lapn w^{j} \Vert_{L^2(B_r(x_0))} \leq \varepsilon \Vert \lapn w \Vert_{L^2(B_{\Lambda r}(x_0))} + C_{\varepsilon,\tilde{D},w} \brac{r^{\frac{n}{2}} + \sum_{k=1}^\infty 2^{-nk}\ \Vert \lapn w \Vert_{L^2(A_k)}}.
\]
Here, $A_k = B_{2^{k+1} r}(x_0) \backslash B_{2^{k-1} r}(x_0)$.
\end{lemma}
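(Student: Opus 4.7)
The plan is to combine the Hodge-type duality of Theorem~\ref{th:localest} with the Euler-Lagrange equation \eqref{eq:ELeq}. Applying Theorem~\ref{th:localest} to $v := \chi_{B_r(x_0)}\,(w^i \alpha_{ij}\lapn w^j)$, whose support lies in $\bar B_r(x_0)$, reduces the task to controlling
\[
\sup_{\varphi \in C_0^\infty(B_{\Lambda_2 r}(x_0))} \frac{1}{\Vert \lapn \varphi \Vert_{L^2}}\,\intl_{B_r(x_0)} w^i \alpha_{ij}\lapn w^j\,\lapn\varphi,
\]
where $\Lambda_2$ is the universal constant from that theorem. Choose $\Lambda\geq \Lambda_2$ big enough that $B_{\Lambda r}(x_0)\subset\tilde D$; then $\psi_{ij}:=\alpha_{ij}\varphi$ is admissible as a skew-symmetric test-field in \eqref{eq:ELeq}. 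Split $\int_{B_r}=\int_{\R^n}-\int_{\R^n\setminus B_r}$ and treat the two pieces separately.

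For the global integral, test \eqref{eq:ELeq} against $\psi_{ij}=\alpha_{ij}\varphi$ and use the bilinearity of $H$ to obtain
\[
\intl_{\R^n} w^i \alpha_{ij}\lapn w^j\,\lapn\varphi = \alpha_{ij}\intl a_{ij}\,\varphi - \alpha_{ij}\intl \lapn w^j\,H(w^i,\varphi).
\]
The $a_{ij}$-term is estimated by Lemma~\ref{la:growth:avp} combined with the a-priori bound $\Vert a_{ij}\Vert_{L^2}\leq C_{\tilde D,w}$ from Proposition~\ref{pr:eleq}, yielding $C_{\tilde D,w}\,r^{n/2}\Vert \lapn\varphi\Vert_{L^2}$. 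For the $H$-term, split $\R^n=B_{\Lambda_2 r}\cup(\R^n\setminus B_{\Lambda_2 r})$. On $B_{\Lambda_2 r}$, invoke the localised compensation Theorem~\ref{th:localcomp}: for $r<R(w,\varepsilon)$ it gives $\Vert H(w^i,\varphi)\Vert_{L^2(B_{\Lambda_2 r})}\leq\varepsilon\Vert\lapn\varphi\Vert_{L^2}$, so Cauchy--Schwarz against $\lapn w^j$ produces the desired $\varepsilon\Vert\lapn w\Vert_{L^2(B_{\Lambda r})}\Vert\lapn\varphi\Vert_{L^2}$. Outside $B_{\Lambda_2 r}$, decompose $\lapn w^j$ via annular cutoffs $\eta^k_{\Lambda_2 r}$ ($k\geq 1$); since $\supp\eta^k_{\Lambda_2 r}$ is disjoint from $\supp\varphi$, the $\varphi\lapn w^i$-summand of $H(w^i,\varphi)$ vanishes there, and the remaining pieces $\lapn(w^i\varphi)$ and $w^i\lapn\varphi$ are controlled by the disjoint-support Lemma~\ref{la:bs:disjsuppGen} (multiplier of order $n/2$, separation $\sim 2^k\Lambda_2 r$), producing after summation $C_{\Lambda_2,w}\sum_{k\geq 1}2^{-nk}\Vert\lapn w\Vert_{L^2(A_k)}\Vert\lapn\varphi\Vert_{L^2}$.

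For the residual $\int_{\R^n\setminus B_r}f\,\lapn\varphi$, decompose $1-\chi_{B_r}=(\eta^0_r-\chi_{B_r})+\sum_{k\geq 1}\eta^k_r$. The first summand is supported in $B_{2r}\setminus B_r\subset A_1$, so $\Vert(\eta^0_r-\chi_{B_r})f\Vert_{L^2}\leq C_w\Vert\lapn w\Vert_{L^2(A_1)}$, contributing a $k=1$ entry to the geometric sum. Each annular term $\int\eta^k_r f\,\lapn\varphi$ with $\supp\eta^k_r$ disjoint from $\supp\varphi$ (that is, $k\geq 1+\log_2\Lambda_2$) is again estimated by Lemma~\ref{la:bs:disjsuppGen}, with the familiar $2^{-nk}$ decay; the finitely many exceptional small-$k$ terms are bounded trivially via $\Vert\eta^k_r f\Vert_{L^2}\leq C_w\Vert\lapn w\Vert_{L^2(A_k)}$ and absorbed into $C_{\varepsilon,\tilde D,w}\,2^{-nk}\Vert\lapn w\Vert_{L^2(A_k)}$ by compensating with the boundedly-many geometric weights $2^{nk}$. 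The hard part is the extraction of the small factor $\varepsilon$: it rests exclusively on Theorem~\ref{th:localcomp}, and requires the splitting of $\int\lapn w^j\,H(w^i,\varphi)$ into near and far parts to be performed \emph{before} any crude Cauchy--Schwarz, so that the global, not small, $L^2$-norm of $H(w^i,\varphi)$ never appears in the final bound.
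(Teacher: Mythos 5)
Your proposal follows the same overall route as the paper: dualize via Theorem~\ref{th:localest}, test \eqref{eq:ELeq} with $\psi_{ij}=\alpha_{ij}\varphi$, isolate the $a_{ij}$-term by Lemma~\ref{la:growth:avp}, and split the $H$-contribution into a near part where compensation yields the $\varepsilon$-factor and a far part handled by disjoint support. The two cosmetic differences from the paper -- using $\chi_{B_r}$ rather than the smooth $\eta_r$, and citing Theorem~\ref{th:localcomp} as a packaged black box rather than unwinding Lemma~\ref{la:hvphilocest} with an explicit choice of large auxiliary $\Lambda$ -- are equivalent in content, since the first merely adds one extra boundary annulus to the residual and the second merely hides the large-$\Lambda$ choice inside the proof of Theorem~\ref{th:localcomp}.

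One technical point is stated too loosely and, as literally written, breaks. For the far part of the $H$-term you decompose $\lapn w^j$ with cutoffs $\eta^k_{\Lambda_2 r}$, $k\geq 1$, and claim that their supports are disjoint from $\supp\varphi\subset B_{\Lambda_2 r}$, so Lemma~\ref{la:bs:disjsuppGen} applies. But $\supp\eta^1_{\Lambda_2 r}\subset B_{4\Lambda_2 r}\setminus B_{\Lambda_2 r}$ touches $B_{\Lambda_2 r}$, so the separation parameter $d$ in Lemma~\ref{la:bs:disjsuppGen} can be taken arbitrarily small and the resulting constant $d^{-n-\frac{n}{2}}$ blows up; the lemma yields nothing for $k=1$. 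The paper circumvents this by pairing $\varphi\in C_0^\infty(B_{\Lambda_1 r})$ with cutoffs $\eta^k_{4\Lambda_1 r}$, so that every $k\geq 1$ enjoys a uniform gap of at least $3\Lambda_1 r$. Your argument needs the same fix -- enlarge the annular base to, say, $\eta^k_{4\Lambda_2 r}$, or treat the first one or two annuli by the trivial Cauchy--Schwarz bound $\Vert\eta^k_{\Lambda_2 r}\lapn w\Vert_{L^2}\,\Vert H(w^i,\varphi)\Vert_{L^2}$ combined with Theorem~\ref{th:integrability}, exactly as you already do for the finitely many exceptional small-$k$ terms in the $\int_{\R^n\setminus B_r}$ residual. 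Once that is patched, the proof is sound.
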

\begin{proofL}{\ref{est:ELeq}}
Let $\delta = C \varepsilon> 0$ for a uniform constant $C$ which will be clear later. Set $\Lambda_1 > 1$ ten times the uniform constant $\Lambda$ from Theorem~\ref{th:localest} and choose $\Lambda_2 > 10$ such that
\begin{equation}\label{eq:estEleq:Lambda2}
\brac{\Lambda_2}^{-\frac{1}{2}} \Vert \lapn w \Vert_{L^2(\R^n)} \leq \delta.
\end{equation}
We then define $\Lambda := 10 \Lambda_1 \Lambda_2$. Choose $R > 0$ such that 
\begin{equation}\label{eq:estEleq:wnormsleqdelta}
[w]_{B_{10\Lambda r},\frac{n}{2}}+\Vert \lapn w \Vert_{L^2(B_{10\Lambda r})} \leq \delta \mbox{\quad for any $x_0 \in \R^n$, $r \in (0,R)$}.
\end{equation}
Fix now any $r \in (0,R)$, $x_0 \in \R^n$ such that $B_{\Lambda r}(x_0) \subset \tilde{D}$. For the sake of brevity, we set $v := w^i \alpha_{ij} \lapn w^j$. By Theorem~\ref{th:localest}
\[
\Vert v \Vert_{L^2(B_r)} \leq \Vert \eta_r v \Vert_{L^2} \leq C \sup_{\ontop{\varphi \in C_0^\infty(B_{\Lambda_1 r}(x_0))}{\Vert \lapn \varphi \Vert_{L^2} \leq 1}} \intl \eta_r\ v\ \lapn \varphi.
\]
We have for such a $\varphi \in C_0^\infty(B_{\Lambda_1 r}(x_0))$, $\Vert \lapn \varphi \Vert_{L^2} \leq 1$, 
\[ 
\begin{ma}
\intl_{\R^n} \eta_r v\ \lapn \varphi &=& \intl v\ \lapn \varphi + \intl (\eta_r-1)\ v\ \lapn \varphi\\
&=:& I + II.
\end{ma}
\]
In order to estimate $II$, we use the compact support of $\varphi$ in $B_{{\Lambda_1} r}$ and apply Corollary \ref{co:bs:disjsupp} and Poincar\'{e}'s inequality, Lemma~\ref{la:poinc}:
\[
\begin{ma}
II &=& \intl (\eta_r-1) v\ \lapn \varphi\\
&\overset{\ontop{\sref{C}{co:bs:disjsupp}}{\sref{L}{la:poinc}}}{\leq}& C_{{\Lambda_1}} \sum_{k=1}^\infty 2^{-nk}\ \Vert \eta^k_{r} v \Vert_{L^2}\ \Vert \lapn \varphi \Vert_{L^2(\R^n)}\\
&\leq& C_{{\Lambda_1}} \sum_{k=1}^\infty 2^{-nk}\ \Vert \eta^k_{r} v \Vert_{L^2}\\
&\leq& C_{{\Lambda_1}} \Vert w \Vert_{L^\infty}\ \sum_{k=1}^\infty 2^{-nk}\ \Vert \eta^k_{r} \lapn w \Vert_{L^2}\\
\end{ma}
\] 
In fact, this inequality is first true for $k \geq K_{\Lambda_1}$ (when we can guarantee a disjoint support of $\eta_{r}^k$ and $\varphi$). By choosing a possibly bigger constant $C_{\Lambda_1}$ it holds also for any $k \geq 1$.\\
The remaining term $I$ is controlled by the PDE \eqref{eq:ELeq}, setting $\psi_{ij} := \alpha_{ij} \varphi$ which is an admissible test function:
\[
\begin{ma}
I &\overset{\eqref{eq:ELeq}}{=}& \intl_{\R^n} a_{ij}\ \alpha_{ij}\ \varphi - \alpha_{ij}\ \intl_{\R^n} \lapn w^j\ H(w^i,\varphi)\\
&=:& I_1 - \alpha_{ij}\intl_{\R^n} \eta_{4{\Lambda_1} r}\ \lapn w^j\ H(w^i,\varphi) - \alpha_{ij}\sum_{k=1}^\infty\ \intl_{\R^n} \eta^k_{4{\Lambda_1} r}\ \lapn w^j\ H(w^i,\varphi)\\
&=:& I_1 - I_2 - \sum_{k=1}^\infty I_{3,k}.
\end{ma}
\]
By Lemma~\ref{la:growth:avp},
\[
I_1 \leq C_{\Lambda_1} r^\frac{n}{2}\ \Vert a \Vert_{L^2}.
\]
By Lemma~\ref{la:hvphilocest} (taking $r = \Lambda_1 r$ and $\Lambda = \Lambda_2$) and the choice of $\Lambda_2$ and $R$, \eqref{eq:estEleq:Lambda2} and \eqref{eq:estEleq:wnormsleqdelta},
\[
I_2 \aleq{} \delta\ \Vert \eta_{4{\Lambda_2} r} \lapn w \Vert_{L^2}.
\]
As for $I_{3,k}$, because the support of $\varphi$ and $\eta^k_{4{\Lambda_1} r}$ is disjoint, by Lemma~\ref{la:bs:disjsuppGen},
\[
\begin{ma}
&&\intl_{\R^n} \eta^k_{4{\Lambda_1} r} \lapn w^j H(w^i,\varphi)\\
&=& \intl_{\R^n} \eta^k_{4{\Lambda_1} r} \lapn w^j \brac{\lapn (w^i \varphi) - w^i \lapn \varphi} \\
&\overset{\sref{L}{la:bs:disjsuppGen}}{\aleq{}}& C_{\Lambda_1}\ \brac{2^k r}^{-n} \Vert \eta^k_{4{\Lambda_1} r} \lapn w^j \Vert_{L^2}
\Vert w \Vert_{L^\infty}\ r^{n}\\
&\aeq{}& \Vert w \Vert_{L^\infty}\ 2^{-nk}\ \Vert \eta^k_{4{\Lambda_1} r} \lapn w^j \Vert_{L^2}.\\
\end{ma}
\]
Using Remark \ref{rem:akegal}, we conclude.
\end{proofL}

\begin{lemma}\label{la:west}
Let $w \in \Hf\cap L^\infty(\R^n,\R^m)$ satisfy \eqref{eq:structureeq}  and \eqref{eq:ELeq} (for some ball $\tilde{D}$, and some $\eta$). Assume furthermore that $w(y) \in \S^{m-1}$ for almost every $y \in \tilde{D}$. Then for any $\varepsilon > 0$ there is $\Lambda > 0$, $R > 0$ and $\gamma > 0$, such that for all $r \in (0,R)$, $x_0 \in \R^n$ such that $B_{\Lambda r}(x_0) \subset \tilde{D}$,
\[
\begin{ma}
&&[w]_{B_r,\frac{n}{2}} + \Vert \lapn w \Vert_{L^2(B_r)}\\
& \leq& \varepsilon \brac{[w]_{B_{\Lambda r},\frac{n}{2}} + \Vert \lapn v \Vert_{L^2(B_{\Lambda r})}}\\
&&+ C_{\varepsilon} \sum_{k=-\infty}^\infty 2^{-\gamma \abs{k}} \brac{[w]_{A_k,\frac{n}{2}} + \Vert \lapn w \Vert_{L^2(A_k)}}\\
&& + C_{\varepsilon} r^{\frac{n}{2}}.
\end{ma}
\]
Here, $A_k = B_{2^{k+1} r}(x_0) \backslash B_{2^{k-1} r}(x_0)$.
\end{lemma}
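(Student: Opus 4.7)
My plan rests on a pointwise identity that decouples $\lapn w$ into a ``normal'' and a collection of ``tangential'' combinations. Since $|w|=1$ a.e.\ on $\tilde D$ and $B_r\subset\tilde D$, multiplying $\lapn w^j$ by $\sum_i w^iw^i=1$ and rearranging yields, a.e.\ on $B_r$,
\[
\lapn w^j = w^j\brac{w^i\lapn w^i} + \suml_{i=1}^m w^i\brac{w^i\lapn w^j - w^j\lapn w^i}.
\]
For each pair $(i,j)$ the quantity $w^i\lapn w^j-w^j\lapn w^i$ has the form $w^k\alpha_{kl}\lapn w^l$ for the skew-symmetric matrix $\alpha\in\R^{m\times m}$ with $\alpha_{ij}=1$, $\alpha_{ji}=-1$ and all other entries zero, in particular $|\alpha|\le 2$. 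Since $w\in L^\infty(\R^n)$, taking $L^2(B_r)$-norms gives
\[
\Vert\lapn w\Vert_{L^2(B_r)} \leq C_{w,m}\Vert w\cdot\lapn w\Vert_{L^2(B_r)} + C_{w,m}\suml_{i,j}\Vert w^k\alpha^{(i,j)}_{kl}\lapn w^l\Vert_{L^2(B_r)}.
\]

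To the first summand I apply Lemma~\ref{la:estStrEq} (based on the structure equation~\eqref{eq:structureeq}); to each term in the second sum I apply Lemma~\ref{est:ELeq} (based on the Euler--Lagrange equation~\eqref{eq:ELeq}). Choosing the $\varepsilon$-parameter small enough in both lemmas (to absorb the constant $C_{w,m}$) and $\Lambda$ large enough and $R$ small enough so both lemmas are simultaneously applicable with $B_{\Lambda r}(x_0)\subset\tilde D$, this already produces, for some $\gamma>0$,
\[
\Vert\lapn w\Vert_{L^2(B_r)} \leq \tfrac{\varepsilon}{2}\brac{\Vert\lapn w\Vert_{L^2(B_{\Lambda r})}+[w]_{B_{\Lambda r},\frac n2}} + C_{\varepsilon,w}\brac{r^{\frac n2} + \suml_{k=-\infty}^\infty 2^{-\gamma|k|}\brac{[w]_{A_k,\frac n2} + \Vert\lapn w\Vert_{L^2(A_k)}}},
\]
after rewriting all tail sums (coming from different annular families in Lemmas~\ref{la:estStrEq} and~\ref{est:ELeq}) in a uniform shape by means of Remark~\ref{rem:akegal}.

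It remains to handle the $[w]_{B_r,\frac n2}$ half of the claim, which I do with Lemma~\ref{la:comps01}: it bounds $[w]_{B_r,\frac n2}$ by $\tilde\varepsilon\,[w]_{B_{8r},\frac n2}$ plus $C_{\tilde\varepsilon}\Vert\lapn w\Vert_{L^2(B_{16r})}$ plus the same annular tail as above. Into the $L^2$-term on the right I feed the bound for $\Vert\lapn w\Vert_{L^2(B_{\cdot})}$ already obtained, applied at the enlarged scale $16r$, with a correspondingly enlarged $\Lambda$ so that $B_{16\Lambda r}(x_0)$ still lies in $\tilde D$. Adding the two estimates and choosing $\tilde\varepsilon$ small enough (in terms of $\varepsilon$ from the statement) so that the $[w]_{B_{8r}}$ and $\Vert\lapn w\Vert_{B_{\Lambda r}}$ contributions can be absorbed on the left after one takes, say, a geometric sum over dyadic scales — or simply accepting them on the right with the $\varepsilon$ prefactor — gives the claim.

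The main obstacle will be purely bookkeeping: tuning the hierarchy of parameters $(R,\Lambda,\tilde\varepsilon)$ so that all three auxiliary lemmas apply simultaneously with sufficiently small error, and so that the three distinct tail sums (from Lemmas~\ref{la:comps01}, \ref{la:estStrEq}, \ref{est:ELeq}, each with its own annular width and its own decay rate) can be consolidated into a single sum $\sum_{k} 2^{-\gamma|k|}([w]_{A_k,\frac n2}+\Vert\lapn w\Vert_{L^2(A_k)})$ with a \emph{common} $\gamma>0$ — Remark~\ref{rem:akegal} takes care of the width issue, and $\gamma$ is then chosen as the minimum of the three decay rates. No genuinely new analytic ingredient beyond the pointwise algebraic identity above is needed.
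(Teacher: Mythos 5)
Your proposal is correct and takes essentially the same route as the paper's own proof. The one point worth noting: the paper's proof passes directly from the combined bound on $\Vert w\cdot\lapn w\Vert_{L^2(B_{16r})}$ and $\Vert w^i\alpha_{ij}\lapn w^j\Vert_{L^2(B_{16r})}$ (for all skew-symmetric $\alpha$, $\abs\alpha\le 2$) to a bound on $\Vert\lapn w\Vert_{L^2(B_{16r})}$ by invoking $\abs w=1$ on $\tilde D$ without recording the algebraic step; your explicit pointwise decomposition $\lapn w^j = w^j(w\cdot\lapn w) + \sum_i w^i(w^i\lapn w^j - w^j\lapn w^i)$ is precisely the identity that justifies this, so you have filled in a step the paper leaves implicit rather than doing something different. (Incidentally the paper's statement of Lemma~\ref{est:ELeq} writes $\alpha\in\R^{n\times n}$ where clearly $\alpha\in\R^{m\times m}$ is intended, as in your version.) The rest — applying Lemma~\ref{la:estStrEq} to the normal part, Lemma~\ref{est:ELeq} to each tangential combination, Lemma~\ref{la:comps01} for the homogeneous seminorm, and Remark~\ref{rem:akegal} plus a choice of the smallest $\gamma$ to merge the annular tails — is exactly what the paper does, including the final parameter tuning $\delta=(C_\varepsilon)^{-1}\varepsilon$.
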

\begin{proofL}{\ref{la:west}}
Let $\varepsilon > 0$ be given and $\delta := \delta_\varepsilon$ to be chosen later. Take from Lemma~\ref{la:estStrEq} and Lemma~\ref{est:ELeq} the smallest $R$ to be our $R > 0$ and the biggest $\Lambda$ to be our $\Lambda > 20$, such that the following holds: For any skew symmetric matrix $\alpha \in \R^{n\times n}$, $\abs{\alpha} \leq 2$ and any $B_{\Lambda r}(x_0) \equiv B_{\Lambda} \subset \tilde{D}$, $r \in (0,R)$ and for a certain $\gamma > 0$
\[
\begin{ma}
&&\Vert w \cdot \lapn w \Vert_{L^2(B_{16r})} + \Vert w^i \alpha_{ij} \lapn w^{j} \Vert_{L^2(B_{16r})}\\
&\leq& \delta \brac{\Vert \lapn w \Vert_{L^2(B_{\Lambda r})} + [w]_{B_{\Lambda r},\frac{n}{2}}}\\
&&+ C_{\delta, w} \brac{r^\frac{n}{2} + \sum_{k=-\infty}^\infty 2^{-\gamma \abs{k}} \brac{\Vert \lapn w \Vert_{L^2(A_k)} + [w]_{A_k,\frac{n}{2}}}}.
\end{ma}
\] 
In particular, as $\abs{w} = 1$ on $B_{16r}(x_0) \subset \tilde{D}$ we have
\begin{equation}\label{eq:lapnwallest}
\Vert \lapn w \Vert_{L^2(B_{16r})} \leq \delta \brac{\Vert \lapn w \Vert_{L^2(B_{\Lambda r})} + [w]_{B_{\Lambda r},\frac{n}{2}}}
+ C_{\delta, w} \brac{r^\frac{n}{2} + \sum_{k=-\infty}^\infty 2^{-\gamma \abs{k}} \brac{\Vert \lapn w \Vert_{L^2(A_k)} + [w]_{A_k,\frac{n}{2}}}}.
\end{equation}
Then, by Lemma~\ref{la:comps01} we have for a certain $\gamma > 0$ (possibly smaller than the one chosen before)
\[
\begin{ma}
&&[w]_{B_r,\frac{n}{2}} + \Vert \lapn w \Vert_{L^2(B_r)}\\
&\overset{\sref{L}{la:comps01}}{\leq}& 
\varepsilon [w]_{B_{16r}} + C_{\varepsilon} \left (\Vert \lapn w \Vert_{L^2(B_{16r})} + \sum_{k=-\infty}^\infty 2^{-\gamma \abs{k}} \brac{[w]_{A_k,\frac{n}{2}} + \Vert \lapn w \Vert_{L^2(A_k)}} \right )\\
&\overset{\eqref{eq:lapnwallest}}{\aleq{}}& \varepsilon [w]_{B_{16r}} + \delta C_{\varepsilon} \brac{\Vert \lapn w \Vert_{L^2(B_{\Lambda r})} + [w]_{B_{\Lambda r},\frac{n}{2}}}\\
&& +C_{\varepsilon,\delta,w, \tilde{D}} \left (r^\frac{n}{2} + \sum_{k=-\infty}^\infty 2^{-\gamma \abs{k}} \brac{[w]_{A_k,\frac{n}{2}} + \Vert \lapn w \Vert_{L^2(A_k)}} \right ).\\
\end{ma}
\]
Thus, if we set $\delta := \brac{C_{\varepsilon}}^{-1} \varepsilon$, the claim is proven.
\end{proofL}

Finally, we can prove Theorem~\ref{th:regul}, which is an immediate consequence of the following theorem and the Euler-Lagrange-Equations, Lemma~\ref{pr:eleq}.
\begin{theorem}\label{th:wreg}
Let $w \in \Hf(\R^n) \cap L^\infty$ as in Lemma~\ref{la:west}. Then for any $E \subset \tilde{D}$ with positive distance from $\partial \tilde{D}$ there is $\beta > 0$ such that $w \in C^{0,\beta}(E)$.
\end{theorem}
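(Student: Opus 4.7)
The strategy is standard Dirichlet-growth: iterate Lemma~\ref{la:west} to extract polynomial decay of the pseudonorm
\[
\Phi(x_0, r) := [w]_{B_r(x_0), \frac{n}{2}} + \Vert \lapn w \Vert_{L^2(B_r(x_0))},
\]
then translate this decay into H\"older continuity via a Campanato-type characterization. Fix $E \subset \tilde D$ with $d_0 := \dist(E,\partial\tilde D) > 0$. Choose $\varepsilon \in (0,1)$ small (to be fixed below) and let $\Lambda, R, \gamma$ be the constants supplied by Lemma~\ref{la:west}; shrink $R$ if necessary so that $\Lambda R < d_0$, which guarantees the hypothesis $B_{\Lambda r}(x_0) \subset \tilde D$ for all $x_0 \in E$ and $r \in (0,R)$.

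The first step recasts the annular tail in Lemma~\ref{la:west} in a form amenable to iteration. Denote $\Phi_{A_k}(x_0,r) := [w]_{A_k,\frac n2} + \Vert \lapn w\Vert_{L^2(A_k)}$ for $A_k = B_{2^{k+1}r}(x_0)\setminus B_{2^{k-1}r}(x_0)$. For $k \le 0$ the annulus $A_k$ sits inside $B_{r}(x_0)$; invoking Lemma~\ref{la:homogloc} together with the trivial $\Vert \lapn w\Vert_{L^2(A_k)} \le \Vert \lapn w\Vert_{L^2(B_r(x_0))}$, the contribution $\sum_{k\le 0} 2^{-\gamma|k|}\Phi_{A_k}$ is bounded by $C\,\Phi(x_0,\Lambda r)$ and can be absorbed into $\varepsilon\,\Phi(x_0,\Lambda r)$ at the cost of an enlarged $\tilde\varepsilon$ of the same order. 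For $k \ge 1$ we use $A_k \subset B_{2^{k+1}r}(x_0)$ to get $\Phi_{A_k}(x_0,r) \le \Phi(x_0, 2^{k+1} r)$. This reduces Lemma~\ref{la:west} to a scalar inequality
\[
\Phi(x_0,r) \le \tilde\varepsilon\,\Phi(x_0,\Lambda r) + C_\varepsilon \suml_{k=1}^\infty 2^{-\gamma k}\,\Phi(x_0, 2^{k+1} r) + C_\varepsilon\, r^{n/2}.
\]

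The second step is the iteration itself. Setting $r_j := \Lambda^{-j} R$ and $\phi_j := \Phi(x_0, r_j)$, the preceding estimate becomes a discrete recursion of the form
\[
\phi_j \le \tilde\varepsilon\,\phi_{j-1} + C_\varepsilon \suml_{k\ge 1} 2^{-\gamma k}\,\phi_{j-K_k} + C_\varepsilon\,\Lambda^{-jn/2},
\]
where $K_k = K_k(\Lambda) \in \N$ satisfies $2^{k+1} r_j \in [r_{j-K_k-1},r_{j-K_k})$ and we agree $\phi_j = \phi_0$ for $j\le 0$. This is the recursion treated in the appendix of \cite{DR09Sphere}: provided $\tilde\varepsilon$ is small enough relative to $2^{-\gamma}$, iteration yields a geometric decay $\phi_j \le C\,\Lambda^{-\beta_0 j}$ for some $\beta_0 \in (0, n/2)$. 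By monotonicity of $\Phi(x_0,\cdot)$, this extends to $\Phi(x_0,r) \le C r^{\beta_0}$ for all $r \in (0,R)$, with $C$ independent of $x_0 \in E$.

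The third step converts the decay into H\"older regularity. Let $P^r \equiv P_{B_{4r}(x_0),N}(w)$ with $N = \lceil n/2\rceil - 1$ be the mean-value polynomial of \eqref{eq:pmeanvaluedef}. Lemma~\ref{la:poincmv} together with Lemma~\ref{la:poinc} gives
\[
\Vert w - P^r\Vert_{L^2(B_r(x_0))} \le \Vert \eta_{r,x_0}(w-P^r)\Vert_{L^2(\R^n)} \le C r^{n/2}\,[w]_{B_{4r}(x_0),\frac n2} \le C r^{n/2+\beta_0}.
\]
Lemma~\ref{la:mvestbrakShrpr} shows that the constant term $P^r(x_0)$ is Cauchy as $r\to 0$ with rate $r^{\beta_0}$, and comparing mean-value polynomials at two nearby centers $x_0,x_1 \in E$ yields $|P^{|x_0-x_1|}(x_0) - P^{|x_0-x_1|}(x_1)| \le C|x_0-x_1|^{\beta_0}$. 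Combined with the $L^\infty$-bound on $w$ and Lebesgue's differentiation theorem, one recovers the standard Campanato oscillation estimate $\mvint_{B_r(x_0)}|w-(w)_{B_r}|^2 \le C r^{2\beta}$ with $\beta := \min(\beta_0,1)$, from which $w \in C^{0,\beta}(E)$ follows.

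The main obstacle is the recursion in the second step: the annular tail in Lemma~\ref{la:west} couples $\Phi$ across all scales, so a naive Morrey-type iteration cannot close. The remedy is quantitative — the smallness of $\varepsilon$ relative to $2^{-\gamma}$, together with the summability $\sum_k 2^{-\gamma k} < \infty$, allows the tail to be absorbed into a geometric decay — and this is exactly the mechanism of the iteration argument in \cite[Appendix]{DR09Sphere}.
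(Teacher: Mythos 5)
The main obstacle you identify in step two is exactly where your argument breaks down, and your proposed remedy does not actually resolve it. For $k \leq 0$ each annulus $A_k$ lies in $B_r(x_0)$, so $\Phi_{A_k} \leq \Phi(x_0,r)$, and summing with the weights $2^{\gamma k}$ gives $\sum_{k\leq 0} 2^{-\gamma |k|}\Phi_{A_k} \leq C\,\Phi(x_0,r)$. But in Lemma~\ref{la:west} this sum carries the prefactor $C_\varepsilon$, which blows up as $\varepsilon \to 0$. Folding this into the leading term therefore yields $\tilde\varepsilon = \varepsilon + C\,C_\varepsilon$, which is \emph{large}, not small; the same observation applies if you move the bound to the left-hand side, since $C\,C_\varepsilon > 1$ prevents absorption. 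Consequently the scalar recursion
\[
\phi_j \le \tilde\varepsilon\,\phi_{j-1} + C_\varepsilon \suml_{k\ge 1} 2^{-\gamma k}\,\phi_{j-K_k} + C_\varepsilon\,\Lambda^{-jn/2}
\]
has $\tilde\varepsilon > 1$ and cannot possibly imply geometric decay. The phrase ``provided $\tilde\varepsilon$ is small enough relative to $2^{-\gamma}$'' assumes a condition that your own reduction fails to deliver.

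The paper avoids this trap by \emph{not} reducing to a pointwise recursion in $r$. It squares the estimate of Lemma~\ref{la:west}, sets $a_k(r) := [w]_{A_k(r),\frac n2}^2 + \Vert\lapn w\Vert_{L^2(A_k(r))}^2$, and passes to the cumulative dyadic sums $\sum_{k\le N} a_k$ relative to a \emph{fixed} base radius. The resulting inequality \eqref{eq:wreg:growthak} has the structure of Lemma~\ref{la:iteration}, whose whole point is that the coefficient $\Lambda_1$ (i.e.\ your $C_\varepsilon$) on the small-scale weighted tail is allowed to be arbitrarily large: one picks $K$ with $2^{-\gamma K}\Lambda_1 \le \tfrac14$, splits the tail at $N-K$, and uses the decaying weight to beat $\Lambda_1$. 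That index-shift trick has no analogue in your scalar $\phi_j$ formulation, because it operates on the internal structure of the cumulative sum rather than on the ball quantity as a whole. To make your plan work you would need to reproduce this mechanism — in which case you are essentially rewriting the paper's argument. Your third step also departs from the paper's route (mean-value polynomials via Lemma~\ref{la:mvestbrakShrpr} and Lebesgue differentiation, versus the iterated-Poincar\'e Dirichlet Growth Theorem~\ref{la:it:dg}); that part is not an outright error, but it is sketchier than what the appendix provides and relies on comparisons of mean-value polynomials that you have not carried out.
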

\begin{proofT}{\ref{th:wreg}}
Squaring the estimate of Lemma~\ref{la:west}, we have for arbitary $\varepsilon > 0$ some $\Lambda > 0$ (which we can chose w.l.o.g. to be $2^{K_\Lambda -1}$ for some $K_\Lambda \in \N$), $R > 0$ and $\gamma > 0$ and any $B_r(x_0) \subset \R^n$ where $B_{\Lambda r}(x_0) \subset \tilde{D}$, $r \in (0,R]$
\[
\begin{ma}
&&\brac{[w]_{B_r,\frac{n}{2}}}^2 + \brac{\Vert \lapn w \Vert_{L^2(B_r)}}^2\\
&\leq& 4\varepsilon^2 \brac{[w]_{B_{\Lambda r},\frac{n}{2}}^2 + \Vert \lapn w \Vert_{L^2(B_{\Lambda r})}^2}\\
&&+ C_{\varepsilon} \sum_{k=-\infty}^\infty 2^{-\gamma \abs{k}} \brac{[w]_{A_k(r),\frac{n}{2}}^2 + \Vert \lapn w \Vert^2_{L^2(A_k(r))}}\\
&&+ C_{\varepsilon} r^{n}.
\end{ma}
\]
Here,
\[
 A_k(r) \equiv A_k(r,x_0) = B_{2^{k+1}r}(x_0) \backslash B_{2^{k-1}r}(x_0).
\]
Set
\[
a_k(r) \equiv a_k(r,x_0) := [w]_{A_k(r),\frac{n}{2}}^2 + \Vert \lapn w \Vert_{L^2(A_k(r))}^2.
\]
Then, for some uniform $C_1 > 0$ and $c_1 > 0$ and $K \equiv K_\Lambda \in \N$ such that $2^{K_\Lambda-1} = \Lambda$
\[
\Vert \lapn w \Vert_{L^2(B_{\Lambda r})}^2 \leq C_1 \sum_{k=-\infty}^{K_\Lambda} a_k(r),
\]
and by Lemma~\ref{la:homogloc} also
\[
[w]_{B_{\Lambda r},\frac{n}{2}}^2 \leq C_1 \sum_{k=-\infty}^{K_\Lambda} a_k(r),
\]
and of course,
\[
[w]_{B_r,\frac{n}{2}}^2 + \Vert \lapn w \Vert_{L^2(B_r)}^2  \geq c_1 \sum_{k=-\infty}^{-1} a_k(r),
\]
as well as $\Vert a_k(r) \Vert_{l^1(\Z)} \aleq{} \Vert \lapn w \Vert_{L^2(\R^n)}^2$. Choosing $\varepsilon > 0$ sufficiently small to absorb the effects of the independent constants $c_1$ and $C_1$, this implies
\begin{equation}\label{eq:wreg:growthak}
\sum_{k=-\infty}^{-1} a_k(r) \leq \frac{1}{2}\sum_{k=-\infty}^{K_\Lambda} a_k(r) + C\sum_{k=-\infty}^\infty 2^{-\gamma\abs{k}} a_k(r) + C r^{n}
\end{equation}
This is valid for any $B_r(x_0) \subset B_{\Lambda r}(x_0) \subset \tilde{D}$, where $r \in (0,R)$. Let $E$ be a bounded subset of $\tilde{D}$ with proper distance to the boundary $\partial \tilde{D}$. Let $R_0 \in (0,R)$ such that for any $x_0 \in E$ the ball $B_{2\Lambda R_0}(x_0) \subset \tilde{D}$. Fix some arbitrary $x_0 \in E$. Let now for $k \in \Z$,
\[
b_k \equiv b_k(x_0) := [w]_{A_k(\frac{R_0}{2}),\frac{n}{2}}^2 + \Vert \lapn w \Vert_{L^2(A_k(\frac{R_0}{2}))}^2 = a_k(\frac{R_0}{2}).
\]
Then for any $N \leq 0$,
\[
 \begin{ma}
  \sum_{k=-\infty}^N b_k &=& \sum_{k=-\infty}^N a_{k}(\frac{R_0}{2})\\
&=& \sum_{k=-\infty}^{-1} a_{k+(N+1)}(\frac{R_0}{2})\\
&=& \sum_{k=-\infty}^{-1} a_{k}(2^{N}R_0)\\
&\overset{\eqref{eq:wreg:growthak}}{\leq}& \frac{1}{2}\sum_{k=-\infty}^{K_{\Lambda} } a_k(2^{N}R_0) + C \sum_{k=-\infty}^\infty 2^{-\gamma\abs{k}} a_k(2^N R_0) + C\ R_0^n\ 2^{nN}\\
&\leq& \frac{1}{2}\sum_{k=-\infty}^{K_{\Lambda}+N+1} a_k(\frac{R_0}{2}) + C\ 2^\gamma \sum_{k=-\infty}^\infty 2^{-\gamma\abs{k-N}} a_k(\frac{R_0}{2}) + C\ R_0^n\ 2^{nN}\\
&=&  \frac{1}{2}\sum_{k=-\infty}^{K_{\Lambda}+N+1} b_k + C\ 2^\gamma \sum_{k=-\infty}^\infty 2^{-\gamma\abs{k-N}} b_k + C\ R_0^n\ 2^{nN}
 \end{ma}
\]
Consequently, by Lemma~\ref{la:iteration}, for a $N_0 < 0$ and a $\beta > 0$ (not depending on $x_0$),
\[
\sum_{k=-\infty}^N b_k \leq C\ 2^{\beta N}, \quad \mbox{for any $N \leq N_0$}.
\]
This implies in particular for $\tilde{R}_0 = 2^{N_0}R_0$ (again using Lemma~\ref{la:homogloc} )
\[
[v]_{B_r(x_0),\frac{n}{2}} \leq C_{R_0}\ r^{\frac{\beta}{2}} \mbox{\quad for all $r < \tilde{R}_0$ and $x_0 \in E$}.
\]
Finally, Dirichlet Growth Theorem, Theorem~\ref{la:it:dg}, implies that $v \in C^{0,\beta}(E)$.
\end{proofT}

\renewcommand{\thesection}{A}
\renewcommand{\thesubsection}{A.\arabic{subsection}}
\section{Ingredients for the Dirichlet Growth Theorem}
\subsection{Iteration Lemmata}
With the same argument as in \cite[Proposition A.1]{DR09Sphere} the following Iteration Lemma can be proven.
\begin{lemma}\label{la:driteration}
Let $a_k \in l^1(\Z)$, $a_k \geq 0$ for any $k \in \Z$ and assume that there is $\alpha > 0$, $\Lambda > 0$ such that for any $N \leq 0$
\begin{equation}\label{eq:dritersumknak}
 \sum_{k=-\infty}^N a_k \leq \Lambda \left (\sum_{k=N+1}^\infty 2^{\gamma(N+1-k)} a_k + 2^{\alpha N} \right ).
\end{equation}
Then there is $\beta \in (0,1)$, $\Lambda_2 > 0$ such that for any $N \leq 0$
\[
 \sum_{k=-\infty}^N a_k \leq 2^{\beta N} \Lambda_2.
\]
\end{lemma}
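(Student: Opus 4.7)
Write $f(N) := \sum_{k=-\infty}^N a_k$. The goal is to promote the recursive inequality in the hypothesis into the exponential decay $f(N) \leq \Lambda_2\, 2^{\beta N}$ for $N\leq 0$. The strategy is twofold: first rewrite the weighted tail in the hypothesis as an expression in $f$ itself, and then close the estimate by a backward induction on $N \leq 0$.

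For the first step I would apply Abel summation to $\sum_{k>N} 2^{-\gamma(k-N-1)} a_k$, using $a_k = f(k)-f(k-1)$. Since $f$ is bounded (as $a_k\in\ell^1$) while the weights $2^{-\gamma(k-N-1)}$ decay exponentially in $k$, the boundary term at $+\infty$ vanishes, and a direct telescoping computation yields
\[
\sum_{k>N} 2^{-\gamma(k-N-1)} a_k \;=\; -f(N) \;+\; (1-2^{-\gamma})\sum_{k>N} 2^{-\gamma(k-N-1)} f(k).
\]
Substituting into the hypothesis and collecting the $f(N)$ terms gives
\[
f(N) \;\leq\; c \sum_{k>N} 2^{-\gamma(k-N-1)} f(k) \;+\; C\, 2^{\alpha N},
\]
with $c:=\Lambda(1-2^{-\gamma})/(1+\Lambda)$ and $C:=\Lambda/(1+\Lambda)$. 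The crucial feature is that $c/(1-2^{-\gamma}) = \Lambda/(1+\Lambda) < 1$, which is what will drive the geometric improvement.

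For the second step I would fix $\beta \in (0,\min(\alpha,\gamma))$ small enough that
\[
q \;:=\; \frac{c\cdot 2^\beta}{1-2^{\beta-\gamma}} \;<\; 1,
\]
which is possible by continuity since $q\to \Lambda/(1+\Lambda) <1$ as $\beta\to 0^+$. I would then prove $f(N) \leq A\, 2^{\beta N}$ for all $N\leq 0$ by backward induction on $N$, with a constant $A$ to be chosen. The base case $N=0$ holds whenever $A\geq \|a\|_{\ell^1}$. For the inductive step, I would split the sum $\sum_{k>N}$ at $k=0$: the piece over $N<k\leq 0$ is controlled via the inductive hypothesis by $A\,2^{\beta(N+1)}\sum_{j\geq 0} 2^{(\beta-\gamma) j} = A\,2^{\beta(N+1)}/(1-2^{\beta-\gamma})$, yielding contribution $qA\,2^{\beta N}$; the piece over $k\geq 1$ is controlled by the trivial bound $f(k)\leq \|a\|_{\ell^1}$, yielding contribution $O(2^{\gamma N})$. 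Combining with $C\,2^{\alpha N}$ and using $\beta\leq\min(\alpha,\gamma)$ so that $2^{\alpha N},\, 2^{\gamma N}\leq 2^{\beta N}$, one gets $f(N)\leq (qA+C')\,2^{\beta N}$. Choosing $A := \max\bigl(\|a\|_{\ell^1},\, C'/(1-q)\bigr)$ closes the induction, with $\Lambda_2 := A$.

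The main obstacle is striking the right balance between the three scales $\alpha$, $\gamma$, and the target exponent $\beta$: one must take $\beta$ small enough that $q<1$ yet $\beta\leq \min(\alpha,\gamma)$, and simultaneously choose $A$ large enough to dominate both the $\ell^1$ norm (for the base case) and the additive error $C'$ (for the iteration). Everything else reduces to summing the geometric series that appear on either side of the $0$-split.
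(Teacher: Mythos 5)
Your proof is correct. It shares the first half of the argument with the paper's: both perform Abel summation ($a_k = A_k - A_{k-1}$) to rewrite the weighted tail $\sum_{k>N} 2^{\gamma(N+1-k)}a_k$ in terms of the partial sums $A_k=f(k)$, move the resulting $-\Lambda f(N)$ term to the left-hand side, and exploit that the effective contraction ratio $\Lambda/(1+\Lambda)$ is strictly below $1$; your $c=\Lambda(1-2^{-\gamma})/(1+\Lambda)$ is exactly the paper's $\tau$. The two arguments then diverge in how the resulting recursion $f(N)\leq c\sum_{k>N}2^{-\gamma(k-N-1)}f(k)+C\,2^{\alpha N}$ is closed. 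You prove the target bound $f(N)\leq A\,2^{\beta N}$ directly by strong backward induction on $N\leq 0$: you fix $\beta\in(0,\min(\alpha,\gamma))$ small enough that $q=c\,2^\beta/(1-2^{\beta-\gamma})<1$, split the tail at $k=0$, absorb the $N<k\leq 0$ piece via the inductive hypothesis into $qA\,2^{\beta N}$, bound the $k\geq 1$ piece by the $l^1$ norm to get $O(2^{\gamma N})$, and take $A$ large enough that $qA+C'\leq A$. The paper instead iterates the one-step inequality $K$ times, tracking a decaying prefactor $\tau_{K+1}$ on the surviving tail together with an accumulated error $\sum_{k=0}^{K}\tau_k 2^{\alpha(N-K+k)}$, and only at the end balances the two contributions by choosing $K=\lfloor\abs{\tilde N}/2\rfloor$. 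Your route is somewhat cleaner: the exponent $\beta$ is fixed once and there is no second optimization over the number of iterates. The paper's route has the minor advantage that the decay rate emerges automatically from the geometric decay of the $\tau_k$ rather than being guessed in advance; for the statement at hand, which only asserts the existence of \emph{some} $\beta>0$, the two are equivalent.
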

\begin{proofL}{\ref{la:driteration}}
Set for $N \leq 0$
\[
A_N := \sum_{k=-\infty}^N a_k.
\]
Then obviously,
\[
a_k = A_k - A_{k-1}.
\]
Equation \eqref{eq:dritersumknak} then reads as (note that $A_N \in l^\infty(\Z)$)
\[
\begin{ma}
A_N &\leq& \Lambda \brac{\sum_{k=N+1}^\infty 2^{\gamma(N+1-k)} \brac{A_k-A_{k-1}} + 2^{\alpha N} }\\
&=& \Lambda  \brac{\sum_{k=N+1}^\infty 2^{\gamma(N+1-k)} A_k - \sum_{k=N+2}^\infty 2^{\gamma(N-(k-1))} A_{k-1} - A_N + 2^{\alpha N} }\\
&=& \Lambda  \brac{\sum_{k=N+1}^\infty 2^{\gamma(N+1-k)} A_k - \sum_{k=N+1}^\infty 2^{\gamma(N-k)} A_{k} - A_N + 2^{\alpha N} }\\
&=& \Lambda  \brac{\sum_{k=N+1}^\infty 2^{\gamma(N+1-k)} A_k - 2^{-\gamma}\sum_{k=N+1}^\infty 2^{\gamma(N-k+1)} A_{k} - A_N + 2^{\alpha N} }\\
&=& \Lambda  \brac{(1-2^{-\gamma})\sum_{k=N+1}^\infty 2^{\gamma(N+1-k)} A_k - A_N + 2^{\alpha N} }.
\end{ma}
\]
This calculation is correct as $(A_k)_{k \in \Z} \in l^\infty(\Z)$ and $\brac{2^{\gamma {N+1-k}}}_{k = N}^\infty \in l^1([N,N+1,\ldots,\infty])$ because of the condition $\gamma > 0$. Otherwise we could not have used linearity for absolutely convergent series.\\
We have shown that \eqref{eq:dritersumknak} is equivalent to
\[
A_N \leq \frac{\Lambda}{1+\Lambda} \brac{1-2^{-\gamma}} \sum_{k=N+1}^\infty 2^{\gamma(N+1-k)} A_k + \frac{\Lambda}{1+\Lambda} 2^{\alpha N}.
\]
Set $\tau := \frac{\Lambda}{\Lambda+1}\brac{1-2^{-\gamma}}$. Then, for all $N \leq 0$,
\begin{equation}\label{eq:it:1step}
A_N \leq \tau \sum_{k=N+1}^\infty 2^{\gamma (N+1-k)} A_k + 2^{\alpha N}.
\end{equation}
Set
\[
\tau_k := \begin{cases}
						1 \quad &\mbox{if $k = 0$},\\
						\tau (\tau + 2^{-\gamma})^{k-1}\quad &\mbox{if $k \geq 1$}.
					\end{cases}
\]
Then for any $K \geq 0$, $N \leq 0$,
\begin{equation}\label{eq:it:IA}
A_{N-K} \leq \tau_{K+1} \sum_{k=N+1}^\infty 2^{\gamma(N+1-k)} A_k + \sum_{k=0}^{K} \tau_k 2^{\alpha (N-K+k)}.
\end{equation}
In fact, this is true for $K = 0$, $N \leq 0$ by \eqref{eq:it:1step}. Moreover, if we assume that \eqref{eq:it:IA} holds for some $K \geq 0$ and all $N \leq 0$, we compute
\[
\begin{ma}
&&A_{N-K-1}\\
&=& A_{(N-1)-K}\\
&\overset{\eqref{eq:it:IA}}{\leq}& \tau_{K+1} \sum_{k=N}^\infty 2^{\gamma(N-k)} A_k + \sum_{k=0}^K \tau_k 2^{\alpha (N-1-K+k)}\\
&=&\tau_{K+1} \brac{ A_N + 2^{-\gamma}\sum_{k=N+1}^\infty 2^{\gamma(N+1-k)} A_k}\\
&&+ \sum_{k=0}^K \tau_k 2^{\alpha (N-1-K+k)}\\
&\overset{\eqref{eq:it:1step}}{\leq}& \tau_{K+1} \brac{ \tau \sum_{k=N+1}^\infty 2^{\gamma (N+1-K)} A_k + 2^{\alpha N} + 2^{-\gamma}\sum_{k=N+1}^\infty 2^{\gamma(N+1-k)} A_k}\\
&&+ \sum_{k=0}^K \tau_k 2^{\alpha (N-1-K+k)}\\
&\leq& \tau_{K+1} (\tau + 2^{-\gamma}) \sum_{k=N+1}^\infty 2^{\gamma(N+1-k)} A_k + \tau_{K+1} 2^{\alpha N}\\
&&+ \sum_{k=0}^K \tau_k 2^{\alpha (N-(K+1)+k)}\\
&=& \tau_{K+2} \sum_{k=N+1}^\infty 2^{\gamma(N+1-k)} A_k + \sum_{k=0}^{K+1} \tau_k 2^{\alpha (N-(K+1)+k)}.
\end{ma}
\]
This proves \eqref{eq:it:IA} for any $K \geq 0$ and $N \leq 0$. As $\tau_{K} \leq 1$,
\[
A_{N-K} \leq C_\gamma \tau_{K+1} A_\infty + 2^{\alpha N} C_\alpha.
\]
So for any $\tilde{N} \leq 0$,
\[
\begin{ma}
A_{\tilde{N}} &=& A_{(\tilde{N}+\left \lfloor \frac{\abs{\tilde{N}}}{2} \right \rfloor) - \left \lfloor \frac{\abs{\tilde{N}}}{2} \right \rfloor}\\
&\leq& C_\gamma \brac{A_\infty + 1}\ \tau_{\left \lfloor \frac{\abs{\tilde{N}}}{2} \right \rfloor} + 2^{\alpha (\tilde{N}+\left \lfloor \frac{\abs{\tilde{N}}}{2} \right \rfloor)}\\
&\leq& C_{\gamma,\alpha}\ \brac{A_\infty + 1}\ \brac{\tau_{\left \lfloor \frac{\abs{\tilde{N}}}{2} \right \rfloor} + 2^{-\alpha \frac{\abs{\tilde{N}}}{2}}}.
\end{ma}
\]
Using now that $\tau_{k} \leq 2^{-\theta k}$ for all $k \geq 0$ and some $\theta > 0$, have shown that
\[
A_{\tilde{N}} \leq C_{\gamma,\alpha} \brac{A_\infty+1}\ 2^{\mu \tilde{N}}.
\]
for some small $\mu > 0$.
\end{proofL}

As a consequence the following Iteration Lemma holds, too.
\begin{lemma}\label{la:iteration}
For any $\Lambda_1,\Lambda_2,\gamma > 0$, $L \in \N$ there exists a constant $\Lambda_3 > 0$ and an integer $\bar{N} \leq 0$ such that the following holds. Let $(a_k) \in l^1(\Z)$, $a_k \geq 0$ for any $k \in \Z$ such that for every $N \leq 0$,
\begin{equation}\label{eq:it:bigguy}
 \suml_{k=-\infty}^N a_k \leq \frac{1}{2} \suml_{k=-\infty}^{N+L} a_k + \Lambda_1 \suml_{k=-\infty}^N 2^{\gamma \brac{k-N}} a_k + \Lambda_2 \suml_{k=N+1}^\infty 2^{\gamma(N-k)} a_k + \Lambda_2 2^{\gamma N}.
\end{equation}
Then for any $N \leq \bar{N}$,
\[
 \suml_{k=-\infty}^N a_k \leq \Lambda_3 \suml_{k=N+1}^{\infty} 2^{\gamma (N-k)} a_k + \Lambda_3 2^{\gamma N}
\]
and consequently by Lemma~\ref{la:driteration} for some $\beta \in (0,1)$, $\Lambda_4 > 0$ (depending only on $\Vert a_k \Vert_{l^1(\Z)}$, $\Lambda_3$) and for any $N \leq \bar{N}$
\[
 \sum_{k=-\infty}^N a_k \leq \Lambda_4 2^{\beta N}.
\]
\end{lemma}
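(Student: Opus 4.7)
The goal is to reduce the hypothesis of Lemma~\ref{la:iteration} to that of Lemma~\ref{la:driteration}. Throughout, write $A_N := \sum_{k \leq N} a_k$, $T_N := \sum_{k \leq N} 2^{\gamma(k-N)} a_k$ (the ``past'' weighted sum), and $S_N := \sum_{k > N} 2^{\gamma(N-k)} a_k$ (the ``future'' weighted sum), so that the hypothesis reads
\[
 A_N \leq \tfrac{1}{2} A_{N+L} + \Lambda_1 T_N + \Lambda_2 S_N + \Lambda_2 2^{\gamma N}, \qquad N \leq 0.
\]

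First, I would absorb the $\tfrac{1}{2} A_{N+L}$ term by iterating the hypothesis at the shifted points $N, N+L, N+2L, \ldots$. Since the coefficient is exactly $\tfrac{1}{2}$, stopping after $J_N := \lfloor -N/L \rfloor$ steps (so that every intermediate index remains $\leq 0$) and using the trivial a priori bound $A_{N+J_N L} \leq \Vert a \Vert_{\ell^1}$ gives
\[
 A_N \leq 2^{-J_N} \Vert a \Vert_{\ell^1} + \sum_{j=0}^{J_N - 1} 2^{-j} \bigl[\Lambda_1 T_{N+jL} + \Lambda_2 S_{N+jL} + \Lambda_2 2^{\gamma(N+jL)}\bigr].
\]
The boundary term is already of ``source'' type, $2^{-J_N} \Vert a \Vert_{\ell^1} \leq \Vert a \Vert_{\ell^1}\, 2^{N/L}$. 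Exchanging summation orders in the three remaining sums via a direct Fubini calculation (the inner $j$-sums are geometric with ratio $2^{-(1+\gamma L)}$ or $2^{\gamma L - 1}$), one shows that for $\tilde{\gamma} := \min(\gamma, 1/L) > 0$ and $\alpha := \min(\gamma, 1/L)$, and for constants depending only on $\Lambda_1, \Lambda_2, \gamma, L$,
\begin{equation*}
 A_N \leq \widetilde{\Lambda}_1 T_N + \widetilde{\Lambda}_2 \widetilde{S}_N + \widetilde{\Lambda}_2 2^{\alpha N} \qquad \text{for every } N \leq 0, \qquad (\star)
\end{equation*}
where $\widetilde{S}_N := \sum_{k > N} 2^{-\tilde{\gamma}(k-N)} a_k$. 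The ``past'' piece of $\sum_j 2^{-j} T_{N+jL}$ localises near $k = N$ and produces $T_N$, while its ``future'' piece as well as $\sum_j 2^{-j} S_{N+jL}$ produce the weaker weighted future sum $\widetilde{S}_N$.

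The main obstacle is now to remove the $\widetilde{\Lambda}_1 T_N$ term from $(\star)$. The naive estimate $T_N \leq A_N$ only suffices when $\widetilde{\Lambda}_1 < 1$, which is not guaranteed, so I would exploit the assumption $(a_k) \in \ell^1(\Z)$, $a_k \geq 0$, which forces $A_N \to 0$ as $N \to -\infty$. The plan is: pick $\bar{N} \leq 0$ so negative that $A_{\bar{N}}$ lies below a suitable threshold (depending on $\widetilde{\Lambda}_1$ and hence on the sequence), and combine the summation-by-parts identity
\[
 T_N = A_N - (1 - 2^{-\gamma}) \sum_{j \geq 1} 2^{-\gamma(j-1)} A_{N-j}
\]
(which follows from $T_N = a_N + 2^{-\gamma} T_{N-1}$ together with $a_N = A_N - A_{N-1}$ and the fact that $2^{-\gamma J} A_{N-J-1} \to 0$) with reapplications of $(\star)$ at the shifted indices $N-j$. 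The deficit term couples $T_N$ back to the past averages $A_{N-j}$, which in turn are bounded by $\widetilde{\Lambda}_2(\widetilde{S}_{N-j} + 2^{\alpha(N-j)})$ plus a remainder of size $\widetilde{\Lambda}_1 A_{\bar{N}}$ that can be absorbed into the left-hand side once $\bar{N}$ is chosen small enough. This yields
\[
 A_N \leq \Lambda_3 \sum_{k > N} 2^{\tilde{\gamma}(N+1-k)} a_k + \Lambda_3 2^{\alpha N}, \qquad N \leq \bar{N},
\]
which is precisely the hypothesis of Lemma~\ref{la:driteration} (with $\gamma$ there replaced by $\tilde{\gamma}$ and $\alpha$ in the role of the source exponent). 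An application of that lemma then gives the geometric decay $\sum_{k \leq N} a_k \leq \Lambda_4 \,2^{\beta N}$ for some $\beta \in (0,1)$ and all $N \leq \bar{N}$. The hard part will be making the absorption in this last step fully rigorous in the regime $\widetilde{\Lambda}_1 \gg 1$: one must track the interplay between the smallness of $A_{\bar{N}}$ and the possibly large constant $\widetilde{\Lambda}_1$ carefully enough that the iterated past averages do not accumulate.
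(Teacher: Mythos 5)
Your first absorption step (iterating the hypothesis to kill the $\tfrac12 A_{N+L}$ term) is more elaborate than what the paper does -- the paper simply writes $\tfrac12 A_{N+L} = \tfrac12 A_N + \tfrac12 \sum_{k=N+1}^{N+L} a_k$, absorbs $\tfrac12 A_N$ into the left-hand side, and then dominates the finite block $\sum_{k=N+1}^{N+L} a_k$ by $2^{\gamma L}\,S_N$ -- but your version could be made to work, at worst with a weaker exponent $\min(\gamma, 1/L)$ in the source term.

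The genuine problem is in your treatment of the past weighted sum $T_N$. You propose to exploit $\ell^1$-smallness of $(a_k)$ to choose $\bar N$ so negative that $A_{\bar N}$ falls below a threshold tied to $\widetilde\Lambda_1$, and then absorb the accumulated past terms. But this makes $\bar N$ depend on the particular sequence $(a_k)$ -- on how quickly its $\ell^1$ tail decays -- whereas the lemma explicitly requires $\Lambda_3$ and $\bar N$ to be determined by $\Lambda_1,\Lambda_2,\gamma,L$ \emph{before} the sequence is given. This uniformity is not cosmetic: in the application (Theorem~\ref{th:wreg}) the sequence is $b_k(x_0)$ and varies with the base point $x_0$, and the proof needs $N_0$ (and hence the radius $\tilde R_0 = 2^{N_0}R_0$) to be independent of $x_0$. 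Your argument would force $N_0 = N_0(x_0)$, which breaks the final Morrey--Campanato estimate unless you supply the extra observation that $\Vert b_k(x_0)\Vert_{\ell^1}$ is uniformly bounded -- a hypothesis the lemma does not and should not contain.

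The idea you are missing is a purely combinatorial split of $T_N$. After the first absorption one has, for all $N\leq 0$,
\[
A_N \leq 2\Lambda_1\, T_N + \widetilde\Lambda_2\, S_N + \widetilde\Lambda_2\, 2^{\gamma N}.
\]
Now pick $K\in\N$, depending only on $\Lambda_1$ and $\gamma$, so that $2\Lambda_1\,2^{-\gamma K}\leq \tfrac12$, and write
\[
T_N = \sum_{k\leq N-K} 2^{\gamma(k-N)}a_k \;+\; \sum_{k=N-K+1}^{N} 2^{\gamma(k-N)}a_k.
\]
The far-past piece is bounded by $2^{-\gamma K} A_{N-K}\leq 2^{-\gamma K} A_N$, so its contribution $2\Lambda_1\cdot 2^{-\gamma K}A_N\leq \tfrac12 A_N$ can be absorbed regardless of how large $\Lambda_1$ is. The near-past piece is a finite sum over $K$ terms, and after replacing $N$ by $N-K$ it sits to the \emph{right} of the new left endpoint, hence is controlled by $2^{\gamma K}$ times a future-weighted sum $S_{N-K}$. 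This produces the hypothesis of Lemma~\ref{la:driteration} at the shifted index $N-K$, with constants depending only on $\Lambda_1,\Lambda_2,\gamma,L$ and with $\bar N = -K$. No smallness of $A_N$ (beyond finiteness) is ever used, which is exactly what makes the constants sequence-independent.
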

\begin{proofL}{\ref{la:iteration}}
Firstly, \eqref{eq:it:bigguy} implies by absorption of $\frac{1}{2} \sum_{k=-\infty}^N a_k$ to the right hand side,
\[
\begin{ma}
 &&\suml_{k=-\infty}^N a_k\\
&\overset{\eqref{eq:it:bigguy}}{\leq}& 2 \suml_{k=N+1}^{N+L} a_k + 2\Lambda_1 \suml_{k=-\infty}^N 2^{\gamma (k-N)} a_k\\
&&+ 2\Lambda_2 \suml_{k=N+1}^\infty 2^{\gamma(N-k)} a_k + \Lambda_2 2^{\gamma N}\\
&\leq& 2^{\gamma L+1} \suml_{k=N+1}^{N+L} 2^{\gamma(N-k)} a_k + 2\Lambda_1 \suml_{k=-\infty}^N 2^{\gamma (k-N)} a_k\\
&&+ 2\Lambda_2 \suml_{k=N+1}^\infty 2^{\gamma(N-k)} a_k +\Lambda_2 2^{\gamma N}\\
&\leq& 2\Lambda_1 \suml_{k=-\infty}^N 2^{\gamma (k-N)} a_k\\
&&+ \brac{2^{\gamma L+1} + 2\Lambda_2}\ \suml_{k=N+1}^\infty 2^{\gamma(N-k)} a_k +\Lambda_2 2^{\gamma N}.\\
\end{ma}
\]
Next, choose $K \in \N$ such that $2^{-\gamma K} \leq \frac{1}{4\Lambda_1}$. Then,
\[
\begin{ma}
 &&\suml_{k=-\infty}^N a_k\\
 &\leq& 2\Lambda_1 \suml_{k=-\infty}^{N-K} 2^{\gamma (k-N)} a_k + 2\Lambda_1 \suml_{k=N-K+1}^N 2^{\gamma (k-N)} a_k\\
&&\quad+ \brac{2^{\gamma L+1} + 2\Lambda_2}\ \suml_{k=N+1}^\infty 2^{\gamma(N-k)} a_k + \Lambda_2 2^{\gamma N}\\
&\leq& \frac{1}{2} \suml_{k=-\infty}^{N-K} a_k + 2\Lambda_1 \suml_{k=N-K+1}^N a_k\\
&&+ \brac{2^{\gamma L+1} + 2\Lambda_2}\ \suml_{k=N+1}^\infty 2^{\gamma(N-k)} a_k +  \Lambda_2 2^{\gamma N}.
\end{ma}
\]
Consequently, again by absorbing
\[
\begin{ma}
 &&\suml_{k=-\infty}^{N-K} a_k\\
 &\leq& 4\Lambda_1 \suml_{k=N-K+1}^N a_k + \brac{2^{\gamma L+2} + 4\Lambda_2}\ \suml_{k=N+1}^\infty 2^{\gamma(N-k)} a_k +2 \Lambda_2 2^{\gamma N}\\
&\leq& 4\Lambda_1 2^{\gamma K} \suml_{k=N-K+1}^N 2^{\gamma(N-K-k)} a_k\\
&&\quad + 2^{\gamma K}\brac{2^{\gamma L+2} + 4\Lambda_2}\ \suml_{k=N+1}^\infty 2^{\gamma(N-K-k)} a_k +2 \Lambda_2 2^{\gamma N}\\
&\leq& \brac{4\Lambda_1 2^{\gamma K} + 2^{\gamma K}\brac{2^{\gamma L+2} + 4\Lambda_2}}\ \suml_{k=N-K+1}^\infty 2^{\gamma(N-K-k)} a_k\\
&&+2 \Lambda_2 2^K\ 2^{\gamma {N-K}}\\
&=:& \Lambda_3\ \brac{\suml_{k=N-K+1}^\infty 2^{\gamma(N-K-k)} a_k + 2^{\gamma \brac{N-K}}}.
\end{ma}
\]
This is valid for any $N \leq 0$, so for any $\tilde{N} \leq -K$
\[
 \suml_{k=-\infty}^{\tilde{N}} a_k \leq \Lambda_3\ \brac{\suml_{k=\tilde{N}+1}^\infty 2^{\gamma(\tilde{N}-k)} a_k + 2^{\gamma \tilde{N}}}.
\]
We conclude by Lemma~\ref{la:driteration}.
\end{proofL}

\subsection{A fractional Dirichlet Growth Theorem}
In this section we will state and prove a Dirichlet Growth-Type theorem using mainly Poincar\'{e}'s inequality. For an approach by potential analysis, we refer to \cite{Adams75}, in particular \cite[Corollary after Proposition 3.4]{Adams75}.\\
Let us introduce some quantities related to Morrey- and Campanato spaces as treated in \cite{GiaquintaMI83} for some domain $D \subset \R^n$, $\lambda > 0$
\[
 J_{D,\lambda,R}(v) := \sup_{\ontop{x \in D}{0 < \rho < R}}\ \brac{\rho^{-\lambda}\ \intl_{D \cap B_{\rho}(x)} \abs{v}^2}^{\frac{1}{2}}
\]
and
\[
 M_{D,\lambda,R}(v) := \sup_{\ontop{x \in D}{0 < \rho < R}}\ \brac {\rho^{-\lambda}\ \intl_{D \cap B_{\rho}(x)} \abs{v-(v)_{D \cap B_\rho (x)}}^2}^{\frac{1}{2}}.
\]
Moreover, let us denote by $C^{0,\alpha}(D)$, $\alpha \in (0,1)$ all H\"older continuous functions with the exponent $\alpha$. Then the following relations hold:
\begin{lemma}[Integral Characterization of H\"older continuous functions]\label{la:it:cshoelder}
(See \cite[Theorem III.1.2]{GiaquintaMI83})\\
Let $D \subset \R^n$ be a smoothly bounded set, and $\lambda \in (n,n+2)$, $v \in L^2(D)$. Then $v \in C^{0,\alpha}(D)$ for $\alpha = \frac{\lambda-n}{2}$ if and only if for some $R > 0$
\[
 M_{D,\lambda,R}(v) < \infty.
\]
\end{lemma}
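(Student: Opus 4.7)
The strategy is the classical Campanato--Meyers argument. I will prove both implications separately, relying on the smooth boundedness of $D$ through the fact that there exist constants $c_D, R_0 > 0$ such that $\abs{D \cap B_\rho(x)} \geq c_D \rho^n$ for every $x \in D$ and every $\rho \in (0,R_0)$; this replaces full balls by their intersection with $D$ throughout the computation.

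The easy direction, $C^{0,\alpha} \Rightarrow M_{D,\lambda,R} < \infty$ with $\alpha = (\lambda-n)/2$, goes as follows. Given $v \in C^{0,\alpha}(D)$ with H\"older seminorm $[v]_\alpha$, for any ball $B_\rho(x)$ and any $y \in D \cap B_\rho(x)$ one has $\abs{v(y) - (v)_{D\cap B_\rho(x)}} \leq [v]_\alpha (2\rho)^\alpha$ by the standard estimate $\abs{v(y)-(v)_{D\cap B_\rho(x)}} \leq \fint_{D \cap B_\rho(x)} \abs{v(y)-v(z)}\, dz$. Squaring and integrating over $D \cap B_\rho(x)$ yields an upper bound of order $\rho^{n+2\alpha} = \rho^\lambda$, giving $M_{D,\lambda,R}(v) \leq C[v]_\alpha$ for any $R > 0$.

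The substantial direction is the converse. Fix $x \in D$ and define $\rho_k = 2^{-k} R'$ for $R' := \min\{R,R_0\}$. The key step is a Cauchy estimate for the sequence of averages $v_k := (v)_{D \cap B_{\rho_k}(x)}$. Using $\abs{D \cap B_{\rho_k}(x)} \geq c_D \rho_k^n$ and the Campanato bound, I will estimate
\begin{equation*}
\abs{v_{k+1} - v_k}^2 \leq \frac{1}{\abs{D\cap B_{\rho_{k+1}}(x)}} \intl_{D\cap B_{\rho_{k+1}}(x)} \abs{v - v_k}^2 \leq C\, \rho_k^{-n} \intl_{D\cap B_{\rho_k}(x)} \abs{v - v_k}^2 \leq C\, M_{D,\lambda,R}(v)^2\, \rho_k^{\lambda - n}.
\end{equation*}
Since $\lambda > n$, this gives a geometrically summable sequence, so $\tilde v(x) := \lim_{k\to \infty} v_k$ exists. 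By the Lebesgue differentiation theorem $\tilde v = v$ a.e.\ in $D$, and a telescoping summation yields the pointwise decay
\begin{equation*}
\abs{v_k - \tilde v(x)} \leq C\, M_{D,\lambda,R}(v)\, \rho_k^{(\lambda-n)/2}.
\end{equation*}

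To obtain the H\"older estimate for two points $x, y \in D$ with $\abs{x-y} =: \rho < R'/4$, I will compare both $\tilde v(x)$ and $\tilde v(y)$ to a common average over $D \cap B_{2\rho}(x)$ (which contains a controlled portion of $D \cap B_{\rho}(y)$ by the smooth boundedness), bounding each difference using the Campanato-style estimate above applied with starting radius comparable to $\rho$. This produces $\abs{\tilde v(x) - \tilde v(y)} \leq C\, M_{D,\lambda,R}(v)\, \rho^{(\lambda-n)/2}$, so that the a.e.\ representative $\tilde v$ lies in $C^{0,\alpha}(D)$.

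The main technical obstacle is the last comparison step for two distinct points, where one must carefully control the measures of $D \cap B_\rho(x)$ versus $D \cap B_\rho(y)$ near the boundary of $D$. The smoothness assumption on $\partial D$ provides a uniform interior-cone condition, yielding $\abs{D \cap B_\rho(z)} \geq c_D \rho^n$ uniformly in $z \in D$ for $\rho < R_0$, which is exactly what makes both the Cauchy argument and the two-point comparison work with constants independent of the base point.
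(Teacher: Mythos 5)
The paper does not prove this lemma at all; it simply cites it as Theorem III.1.2 of Giaquinta's monograph \cite{GiaquintaMI83}. Your proof plan is the classical Campanato--Meyers argument that one finds in that reference, and it is correct: the dyadic Cauchy estimate for averages $v_k = (v)_{D\cap B_{\rho_k}(x)}$, the geometric summability from $\lambda > n$, identification of the limit with $v$ a.e.\ via Lebesgue differentiation, and the two-point comparison through a common ball all go through exactly as you describe, with the uniform measure density $\abs{D\cap B_\rho(x)}\geq c_D\rho^n$ (guaranteed by the smooth boundary) supplying the constants. There is nothing to compare against in the paper itself, but the argument you propose is precisely the one the cited source uses.
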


\begin{lemma}[Relation between Morrey- and Campanato spaces]\label{la:it:mscs}
(See \cite[Proposition III.1.2]{GiaquintaMI83})\\
Let $D \subset \R^n$ be a smoothly bounded set, and $\lambda \in (1,n)$, $v \in L^2(D)$. Then for a constant $C_{D,\lambda} > 0$
\[
 J_{D,\lambda,R}(v) \leq C_{D,\lambda,R}\ \brac{\Vert v \Vert_{L^2(D)} + M_{D,\lambda,R}(v)}.
\]
\end{lemma}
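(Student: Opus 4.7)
The plan is to use a dyadic telescoping argument on concentric balls to compare $v$ at a small scale with its mean on a large ball, letting the Campanato semi-norm $M_{D,\lambda,R}$ control the differences of successive means and the $L^2$-norm control the top-scale mean.

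First, I would fix $x\in D$ and $\rho\in(0,R)$, and pick an integer $K\geq 0$ such that $2^K\rho\leq R<2^{K+1}\rho$. Setting $\rho_k:=2^k\rho$ for $0\leq k\leq K$, and $\mu_k:=(v)_{D\cap B_{\rho_k}(x)}$, I would exploit the smoothness of $\partial D$ to ensure $|D\cap B_{\rho_k}(x)|\geq c_D\,\rho_k^n$. Cauchy--Schwarz and the Campanato hypothesis then give
\[
|\mu_k-\mu_{k+1}|^2\leq \frac{1}{|D\cap B_{\rho_k}(x)|}\int_{D\cap B_{\rho_{k+1}}(x)}|v-\mu_{k+1}|^2\leq C_D\,\rho_k^{-n}\,\rho_{k+1}^{\lambda}\,M_{D,\lambda,R}(v)^2,
\]
so that $|\mu_k-\mu_{k+1}|\leq C\,M_{D,\lambda,R}(v)\,\rho_k^{(\lambda-n)/2}$. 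Summing the geometric series from $k=0$ to $K-1$ (which converges since $\lambda<n$), I obtain $|\mu_0-\mu_K|\leq C\,M_{D,\lambda,R}(v)\,\rho^{(\lambda-n)/2}$.

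Next I would bound $\mu_K$ crudely by $|\mu_K|^2\leq C_D\,\rho_K^{-n}\|v\|_{L^2(D)}^2\leq C_{D,R}\|v\|_{L^2(D)}^2$, and then use the elementary decomposition
\[
\intl_{D\cap B_\rho(x)}|v|^2\leq 2\intl_{D\cap B_\rho(x)}|v-\mu_0|^2+2|B_\rho|\,|\mu_0|^2
\leq 2\,M_{D,\lambda,R}(v)^2\,\rho^{\lambda}+C\,\rho^n\,|\mu_0|^2.
\]
Inserting $|\mu_0|^2\leq C\,M_{D,\lambda,R}(v)^2\,\rho^{\lambda-n}+C_R\,\|v\|_{L^2(D)}^2$ and using $\rho<R$ together with $\lambda<n$ to convert $\rho^n$ into $R^{n-\lambda}\rho^{\lambda}$ yields, after division by $\rho^\lambda$,
\[
\rho^{-\lambda}\intl_{D\cap B_\rho(x)}|v|^2\leq C\,M_{D,\lambda,R}(v)^2+C_{D,\lambda,R}\,\|v\|_{L^2(D)}^2.
\]
Taking the supremum over admissible $x$ and $\rho$ and extracting a square root gives the claim.

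The main obstacle is essentially bookkeeping rather than conceptual: one must secure the uniform lower bound $|D\cap B_\rho(x)|\geq c_D\rho^n$ (which is where the smoothness of $\partial D$ is used) and check that $\lambda<n$ is what makes the dyadic geometric series convergent, since otherwise the chain of mean differences would diverge as the scale shrinks. The regime $\rho$ close to $R$ requires no telescoping at all, as then $\rho^{-\lambda}\leq R^{-\lambda}$ and the bound follows directly from $\|v\|_{L^2(D)}$.
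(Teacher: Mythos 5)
Your proof is correct, and it is essentially the standard dyadic telescoping argument used in Giaquinta's Proposition III.1.2, which the paper cites without reproducing a proof; since the paper gives no proof of its own here, your argument simply supplies one, and it follows the same route as the reference. The key points are all in order: the lower volume bound $\abs{D\cap B_\rho(x)}\geq c_D\rho^n$ from the smoothness of $\partial D$, the convergence of $\sum_k 2^{k(\lambda-n)/2}$ from $\lambda<n$, the crude bound on the top-scale mean $\mu_K$ by $\|v\|_{L^2(D)}$, and the conversion $\rho^n\leq R^{n-\lambda}\rho^\lambda$; the restriction $\lambda>1$ in the statement is not actually needed for your argument (only $\lambda<n$ is), which is harmless since the lemma's hypothesis is just stronger than required.
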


As a consequence of Lemma~\ref{la:it:mscs} we have
\begin{lemma}\label{la:it:mdleqmdnN}
Let $D \subset \R^n$ be a convex, smoothly bounded domain. Set $N := \lceil \frac{n}{2} \rceil -1$. Then if $v \in L^2(D)$, $\lambda \in (n,n+2)$,
\[
 M_{D,\lambda,R}(v) \leq  C_{D,\lambda,R} \brac{\Vert v \Vert_{H^{N}(D)} + \suml_{\abs{\alpha} = N} M_{D ,\lambda-2N,R}(\partial^\alpha v)}.
\]
\end{lemma}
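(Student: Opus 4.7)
The plan is to perform an $N$-fold iteration alternating two steps: a Poincaré inequality on the convex set $B_\rho(x) \cap D$ (Lemma~\ref{la:poincCMV}), which trades the Campanato-type quantity $M$ at a given level $\lambda-2k$ for the Morrey-type quantity $J$ at level $\lambda-2(k+1)$ evaluated on the gradient; and Lemma~\ref{la:it:mscs}, which converts $J$ back into $M$ at the cost of an $L^2$ error term. Since at each step one order of differentiation is consumed and the exponent drops by $2$, after $N$ steps we are left with the $N$-th derivatives of $v$ and accumulated $L^2$-errors of intermediate derivatives, exactly as in the statement.

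For the base step, I fix $x \in D$, $\rho < R$ and set $B := B_\rho(x) \cap D$. The intersection of two convex sets is convex, so $B$ is convex with $\diam(B) \leq 2\rho$, and Lemma~\ref{la:poincCMV} yields
\[
\intl_B \abs{v - (v)_B}^2 \leq C \rho^2 \intl_B \abs{\nabla v}^2.
\]
Multiplying by $\rho^{-\lambda}$, writing $\abs{\nabla v}^2 = \sum_{\abs{\alpha}=1}\abs{\partial^\alpha v}^2$, and taking the supremum over admissible $(x,\rho)$ gives
\[
M_{D,\lambda,R}(v)^2 \leq C \sum_{\abs{\alpha}=1} J_{D,\lambda-2,R}(\partial^\alpha v)^2.
\]
Since $\lambda \in (n,n+2)$ forces $\lambda-2 \in (n-2,n) \subset (1,n)$ for $n \geq 3$, I can apply Lemma~\ref{la:it:mscs} to each $\partial^\alpha v$ to obtain
\[
\sum_{\abs{\alpha}=1} J_{D,\lambda-2,R}(\partial^\alpha v)^2 \leq C\brac{\Vert \nabla v \Vert_{L^2(D)}^2 + \sum_{\abs{\alpha}=1} M_{D,\lambda-2,R}(\partial^\alpha v)^2}.
\]

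Iterating this coupled Poincaré/Lemma~\ref{la:it:mscs} step on each $\partial^\alpha v$ a total of $N$ times, and collecting the accumulated $L^2$-errors into the single norm $\Vert v \Vert_{H^N(D)}$, one arrives at
\[
M_{D,\lambda,R}(v)^2 \leq C \brac{\Vert v \Vert_{H^N(D)}^2 + \sum_{\abs{\alpha}=N} M_{D,\lambda-2N,R}(\partial^\alpha v)^2},
\]
whence the claim by taking square roots and using subadditivity. The cases $n \in \{1,2\}$, where $N = 0$, are tautological.

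The only real point to check is that the parameter stays inside the admissible range $(1,n)$ of Lemma~\ref{la:it:mscs} at each of the $N$ iterations; this is immediate from $N = \lceil n/2\rceil - 1$ (so $2N \leq n-1$) and $\lambda \in (n,n+2)$, which together give $\lambda - 2k \in (n-2k, n-2k+2) \subset (1,n)$ for every $1 \leq k \leq N$. There is no deeper analytic obstacle, just careful index bookkeeping.
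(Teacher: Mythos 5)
Your proposal is correct and follows essentially the same route as the paper's proof: apply the mean-value Poincaré inequality (Lemma~\ref{la:poincCMV}) on the convex set $B_\rho(x)\cap D$ to pass from $M_{D,\lambda,R}(v)$ to $J_{D,\lambda-2,R}(\nabla v)$, then use Lemma~\ref{la:it:mscs} to trade $J$ back for $M$ at the cost of an $L^2$ error, and iterate $N$ times while tracking that the Campanato exponent stays in the admissible window $(1,n)$. Your explicit verification that $\lambda-2k\in(n-2k,n-2k+2)\subset(1,n)$ for $1\le k\le N$ is exactly the ``$\lambda-2N>0$'' bookkeeping the paper invokes, and the treatment of $n\in\{1,2\}$ as tautological since $N=0$ is correct.
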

\begin{proofL}{\ref{la:it:mdleqmdnN}}
For any $r \in (0,R)$, $x \in D$ set $B_r \equiv B_r(x)$. As $D$ is convex, also $B_r \cap D$ is convex, so by classic Poincar\'e inequality on convex sets, Lemma~\ref{la:poincCMV},
\[
\begin{ma}
\intl_{D \cap B_r} \abs{v-(v)_{D \cap B_r}}^2 &\overset{\sref{L}{la:poincCMV}}{\leq}& C \diam(D \cap B_r)^2\ \intl_{D \cap B_r} \abs{\nabla v}^2\\
&\aleq{}& r^2 \intl_{D \cap B_r} \abs{\nabla v}^2.
\end{ma}
\]
Consequently,
\[
 M_{D,\lambda,R}(v) \leq C_n\ J_{D, \lambda-2,R}(\nabla v).
\]
As $\lambda \in (n,n+2)$, that is in particular $\lambda - 2 < n$, by Lemma~\ref{la:it:mscs},
\[
 J_{D, \lambda-2,R}(\nabla v) \leq C_{D,R,\lambda}\ \left (\Vert \nabla v \Vert_{L^2(D)} + M_{D,\lambda,R}(\nabla v) \right ).
\]
Iterating this estimate $N$ times, using that $\lambda - 2N > 0$, we conclude.
\end{proofL}

Finally, we can prove a sufficient condition for H\"older continuity on $D$ expressed by the growth of $\lapn v$:
\begin{lemma}[Dirichlet Growth Theorem]\label{la:it:dg}
Let $D \subset \R^n$ be a smoothly bounded, convex domain, let $v \in H^{\frac{n}{2}}(\R^n)$ and assume there are constants $\Lambda > 0$, $\alpha \in (0,1)$, $R > 0$ such that
\begin{equation}\label{eq:smallgrowthlapnv}
 \sup_{\ontop{r \in (0,R)}{x \in D}} r^{-\alpha} [v]_{B_r(x),\frac{n}{2}} \leq \Lambda.
\end{equation}
Then $v \in C^{0,\alpha}(D)$.
\end{lemma}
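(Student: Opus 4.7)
The strategy is to chain Lemma~\ref{la:it:cshoelder} with Lemma~\ref{la:it:mdleqmdnN}. Setting $\lambda := n + 2\alpha$ and $N := \lceil n/2 \rceil - 1$, Lemma~\ref{la:it:cshoelder} reduces H\"older continuity with exponent $\alpha$ to the finiteness of $M_{D,\lambda,R}(v)$, and Lemma~\ref{la:it:mdleqmdnN} in turn reduces the latter (since $v\in \Hf(\R^n)$ is in particular in $H^N(D)$) to controlling $M_{D,\lambda-2N,R}(\partial^\beta v)$ for every multiindex $\beta$ with $\abs{\beta}=N$. Note that $\lambda-2N=n-2N+2\alpha$ equals $2+2\alpha$ if $n$ is even and $1+2\alpha$ if $n$ is odd, while $\lambda=n+2\alpha\in(n,n+2)$ because $\alpha\in(0,1)$, so both lemmas are admissible.

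The required Campanato bound on $\partial^\beta v$ follows from the growth assumption \eqref{eq:smallgrowthlapnv} by a single Poincar\'e-type step tailored to the parity of $n$. If $n$ is even, then by definition $[v]_{B_\rho(x),n/2}=\Vert \nabla^{n/2} v\Vert_{L^2(B_\rho(x))}$, and the convex mean-value Poincar\'e inequality (Lemma~\ref{la:poincCMV}) applied to $\partial^\beta v\in H^1$ on the convex set $D\cap B_\rho(x)$ gives
\[\intl_{D\cap B_\rho(x)} \abs{\partial^\beta v-(\partial^\beta v)_{D\cap B_\rho(x)}}^2 \aleq{} \rho^2 \intl_{B_\rho(x)} \abs{\nabla^{n/2} v}^2 = \rho^2\,[v]_{B_\rho(x),n/2}^2 \aleq{} \Lambda^2\,\rho^{2+2\alpha}.\]
If $n$ is odd, then $\lfloor n/2\rfloor=N$ and $[v]_{B_\rho(x),n/2}^2$ is the Gagliardo-type double integral with kernel $\abs{z_1-z_2}^{-n-1}$; Jensen's inequality, together with the smooth boundedness of $D$ (which guarantees $\abs{D\cap B_\rho(x)}\ageq{}\rho^n$ for $x\in\bar D$ and small $\rho$) and the trivial bound $\abs{z_1-z_2}\leq 2\rho$ on $B_\rho(x)\times B_\rho(x)$, yields
\[\intl_{D\cap B_\rho(x)} \abs{\partial^\beta v-(\partial^\beta v)_{D\cap B_\rho(x)}}^2 \aleq{} \frac{1}{\rho^n}\intl_{B_\rho(x)}\intl_{B_\rho(x)} \abs{\nabla^N v(z_1)-\nabla^N v(z_2)}^2\,dz_1\,dz_2 \aleq{} \rho\,[v]_{B_\rho(x),n/2}^2 \aleq{} \Lambda^2\,\rho^{1+2\alpha}.\]

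In either parity, dividing by $\rho^{\lambda-2N}$ and taking the supremum over $x\in D$ and $\rho\in(0,R)$ gives $M_{D,\lambda-2N,R}(\partial^\beta v)\leq C\Lambda$. Feeding this into Lemma~\ref{la:it:mdleqmdnN} shows $M_{D,n+2\alpha,R}(v)<\infty$, and Lemma~\ref{la:it:cshoelder} then delivers $v\in C^{0,\alpha}(D)$. There is no genuine obstacle here: the whole argument is a clean assembly of the Morrey-Campanato machinery already set up in the appendix, the only mildly delicate point being the parity-dependent choice of the elementary Poincar\'e/Jensen step that converts the fractional seminorm $[v]_{B_\rho,n/2}$ into a Campanato-type bound on $\nabla^N v$.
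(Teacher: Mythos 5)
Your proof is correct and follows essentially the same route as the paper: reduce to a Campanato bound on $\nabla^N v$ via Lemma~\ref{la:it:cshoelder} and Lemma~\ref{la:it:mdleqmdnN}, then obtain that bound from \eqref{eq:smallgrowthlapnv} by a Jensen (odd $n$) or Poincar\'e (even $n$) step on $D\cap B_\rho$, using the smooth boundary to control $\abs{D\cap B_\rho}$ from below. The only difference is that you spell out both parities with the consistent choice $N=\lceil n/2\rceil-1$, whereas the paper works out the odd case and refers to the even case as ``similar.''
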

\begin{proofL}{\ref{la:it:dg}}
We only treat the case where $n$ is odd, the even dimension case is similar. Set $N := \lfloor \frac{n}{2} \rfloor$. We have for any $x \in D$, $r \in (0,R)$, $D_r \equiv D_r(x):= B_r(x) \cap D$, using that the boundary of $D$ is smooth and thus $\abs{D_r(x)} \geq c_D \abs{B_r(x)}$ for any $x \in D$ (because there are no sharp outward cusps in $D$)
\[
\begin{ma}
 &&\intl_{D_r} \abs{\nabla^N v(x) - \brac{\nabla^N v}_{D_r}}^2\\
&\aleq{}& \frac{\brac{\diam(D_r)}^{2(n-N)}} {\abs{D_r}}\ \intl_{D_r} \intl_{D_r} \frac{\abs{\nabla^N v(x)-\nabla^N v(y)}^2}{\abs{x-y}^{2(n-N)}}\ dx\ dy\\
&\aleq{}& r^{n-2N} \brac{[v]_{B_r(x),\frac{n}{2}}}^2\\
&\overset{\eqref{eq:smallgrowthlapnv}}{\aleq{}}& r^{n-2N+2\alpha} \Lambda^2.
\end{ma}
\]
Thus, for $\lambda = n+2\alpha \in (n,n+2)$ 
\[
M_{D ,\lambda-2N,R}(\nabla^N v) \aleq{} \Lambda.
\]
By Lemma~\ref{la:it:mdleqmdnN} this implies
\[
 M_{D ,\lambda,R}(v) \aleq{} \Lambda+\Vert v \Vert_{H^{N}(D)} < \infty
\]
which by Lemma~\ref{la:it:cshoelder} is equivalent to $v \in C^{0,\alpha}(D)$.
\end{proofL}

\bibliographystyle{alpha}%
\bibliography{bib}%
\vspace{2em}
\begin{tabbing}
\quad\=Armin Schikorra\\
\>RWTH Aachen University\\
\>Institut f\"ur Mathematik\\
\>Templergraben 55\\
\>52062 Aachen\\
\>Germany\\
\\
\>email: schikorra@instmath.rwth-aachen.de\\
\>page: www.instmath.rwth-aachen.de/$\sim$schikorra
\end{tabbing}
\end{document}